\documentclass[11pt]{article}

\usepackage{amsmath,amssymb,amsthm,mathtools,bm,fullpage,mathrsfs}
\usepackage{graphicx}
\usepackage[round]{natbib}
\usepackage[colorlinks=true,linkcolor=blue,citecolor=blue,urlcolor=blue]{hyperref}
\usepackage{refcount}
\usepackage[capitalize,noabbrev]{cleveref}
\crefname{auxlemma}{Lemma}{Lemmas}
\Crefname{auxlemma}{Lemma}{Lemmas}
\crefname{lbaux}{Lemma}{Lemmas}
\Crefname{lbaux}{Lemma}{Lemmas}
\crefname{algocf}{Algorithm}{Algorithms}
\Crefname{algocf}{Algorithm}{Algorithms}
\newcommand{\CrefInTitle}[2]{\texorpdfstring{\Cref{#2}}{#1~\getrefnumber{#2}}}
\usepackage{tikz}
\usetikzlibrary{arrows.meta,calc,positioning,patterns}
\usepackage{xcolor}
\definecolor{cI}  {RGB}{31,119,180}
\definecolor{cII} {RGB}{44,160,44}
\definecolor{cIII}{RGB}{255,127,14}
\definecolor{cIV} {RGB}{214,39,40}
\usepackage[ruled,vlined]{algorithm2e}
\usepackage{placeins}
\usepackage{enumerate}

\DontPrintSemicolon
\SetKwInput{KwIn}{Input}
\SetKwInput{KwOut}{Output}

\newtheorem{theorem}{Theorem}
\newtheorem{lemma}[theorem]{Lemma}
\newtheorem{proposition}[theorem]{Proposition}

\newtheorem{corollary}[theorem]{Corollary}

\newcommand{\E}{\mathbb{E}}
\newcommand{\R}{\mathbb{R}}
\newcommand{\Pbb}{\mathbb{P}}

\newcommand{\TV}{\mathrm{TV}}
\newcommand{\KL}{\mathrm{KL}}
\newcommand{\chiTwo}{\chi^2}
\newcommand{\cH}{\mathcal{H}}
\newcommand{\cF}{\mathcal{F}}
\newcommand{\cD}{\mathcal{D}}

\newcommand{\Var}{\mathrm{Var}}
\newcommand{\GS}{\mathrm{GS}}
\renewcommand{\Re}{\operatorname{Re}}
\renewcommand{\Im}{\operatorname{Im}}

\newcommand{\Lap}{\mathrm{Lap}}
\newcommand{\Unif}{\mathrm{Unif}}
\newcommand{\iid}{\mathrm{i.i.d.}}
\newcommand{\Arg}{\operatorname{Arg}}

\title{\LARGE Sharp Minimax Rates for Smooth Two-Sample Testing under Central Differential Privacy}
\author{Ilmun Kim\\
Department of Mathematical Sciences \\ KAIST \\[1ex]
\texttt{ilmunk@kaist.ac.kr}}
\date{\today}

\begin{document}

\maketitle
\begin{abstract}
We establish sharp minimax limits for two-sample testing of H\"older-smooth densities under central differential privacy. Given two independent samples, the goal is to decide whether the underlying distributions are identical or separated in $L_1$ distance, while releasing only an $\varepsilon$-differentially private decision. We show that privacy changes the classical smooth-testing boundary through multiple regimes: the optimal separation radius is the maximum of four terms, consisting of the classical nonprivate rate and three distinct privacy-induced barriers. Which barrier is active depends on the privacy budget and the smoothness-to-dimension ratio, yielding a sharp phase diagram. Our upper bound discretizes the samples, applies a private discrete two-sample test to the resulting histograms, and chooses the bin resolution to balance approximation bias, sampling fluctuations, and privacy noise. The procedure also admits a permutation-calibrated implementation with finite-sample type~I error control. For the lower bounds, we combine smooth perturbation constructions with privacy-specific coupling and transport inequalities, showing that all four terms are unavoidable. Finally, when the smoothness is unknown, we develop a multiscale private test that attains the optimal adaptive rate and prove a matching lower bound. Adaptation costs exactly an iterated-logarithmic factor, and this cost appears only in the classical nonprivate term.
\end{abstract}

\section{Introduction}
\label{sec:intro}

Two-sample testing asks whether two independent samples come from the same distribution. Canonical examples include comparing treatment and control groups, populations observed at different sites or time periods, or demographic cohorts. The same question is central in modern data applications, for example when assessing distribution shift between training and deployment populations \citep{QuioneroCandelaSugiyamaSchwaighoferLawrence2009,KohSagawaMarklundEtAl2021} or auditing algorithmic systems across groups \citep{KleinbergLakkarajuLeskovecLudwigMullainathan2018,ObermeyerPowersVogeliMullainathan2019}. In many of these settings, the data are individual-level and sensitive, including medical records, financial histories, and profiles derived from personal data. Even a binary testing decision may still disclose some information, as changing one record can affect the released output and potentially signal whether an individual participated or influenced the result.

This paper studies two-sample testing under central differential privacy (DP) \citep{DworkRoth2014}. In this model, a trusted data holder, such as a hospital system, platform, agency, or secure data environment, may use the raw samples internally, but the released output must satisfy differential privacy. We investigate how central differential privacy changes the separation required for reliable two-sample testing. For H\"older-smooth densities, we show that the sharp $L_1$ separation radius is the maximum of the classical nonprivate smooth-testing term and three distinct privacy terms. The dominant term changes across privacy regimes, reflecting the interaction among smooth approximation, discrete closeness testing after binning, and the noise needed to privatize the final decision.

\subsection{Problem Setup and Rate Preview} \label{sec:problem-setup}
We compare two unknown smooth distributions from independent samples $X_1,\dots,X_{N_X}\stackrel{\iid}{\sim} f$ and $Y_1,\dots,Y_{N_Y}\stackrel{\iid}{\sim} g$, where $f$ and $g$ are unknown densities on $[0,1]^d$ belonging to a H\"older class of smoothness $s>0$. For real numbers $a$ and $b$, let $a\vee b:=\max\{a,b\}$ and $a\wedge b:=\min\{a,b\}$.  We write $N:=N_X\wedge N_Y$ for the smaller sample size.  The goal is to distinguish
\begin{align*}
H_0:\ f \equiv g
\qquad\text{versus}\qquad
H_1:\ \|f-g\|_1\ge r,
\end{align*}
using a centrally differentially private test, where $\|f-g\|_1:=\int_{[0,1]^d}|f-g|\,dx$ denotes the $L_1$ distance. The procedure may access the raw samples internally, but the released decision must satisfy $\varepsilon$-differential privacy. Formal definitions of the H\"older class, privacy constraint, and minimax separation radius are given in \Cref{sec:setup}.

We measure separation in $L_1$ distance. Since $\|f-g\|_1$ is twice the total variation distance, it has a direct operational interpretation as twice the largest discrepancy between the probabilities assigned by $f$ and $g$ to measurable events. It is also invariant under bijective transformations of the sample space, making it a natural scale-free metric for continuous testing problems~\citep[e.g.,][]{Ingster1987,BalakrishnanWasserman2019,DuboisBerrettButucea2023,ChhorCarpentier2025}.

Our main theorem, \Cref{thm:main-rate}, identifies the sharp $L_1$ separation radius under central DP. In the regime $N\varepsilon\ge1$, the rate is
\begin{align*}
N^{-2s/(4s+d)}
\;\vee\;
(N\sqrt{\varepsilon})^{-2s/(2s+d)}
\;\vee\;
(N^{3/2}\varepsilon)^{-2s/(4s+d)}
\;\vee\;
(N\varepsilon)^{-1}.
\end{align*}
The first term is the classical nonprivate smooth-testing rate. The other three are privacy terms, and different terms dominate in different privacy regimes. The phase transition depends on the smoothness $s$ and dimension $d$, as described in \Cref{cor:phase-transition}.

\subsection{Contributions}
The paper makes four main contributions.

\begin{itemize}
\item We give the minimax phase diagram for smooth two-sample testing under central DP. The optimal $L_1$ separation radius is the maximum of one classical smooth-testing term and three privacy terms, whose dominance changes with the privacy level and the smoothness-to-dimension ratio. In rough or high-dimensional settings all four regimes appear, whereas in smoother, lower-dimensional settings one intermediate privacy regime disappears. The precise phase diagram is stated in \Cref{cor:phase-transition} and illustrated in \Cref{fig:phase-envelope}.

\item We construct tests attaining the optimal rate. The procedure bins the observations, applies a private two-sample test to the resulting histograms, and chooses the resolution to balance approximation error, sampling noise, and privacy noise (\Cref{sec:ub-binning}). Smoothness places the binned distributions in a bounded-histogram subclass; optimizing the corresponding private discrete bounds then yields the continuous four-term rate. A key discrete ingredient is a bounded-histogram refinement of private closeness testing: the heavy-element nonprivate term present in unrestricted discrete closeness testing disappears for histograms induced by bounded densities. We also develop finite-sample implementations based on private permutation calibration (\Cref{alg:mcperm,cor:holder-l1-private-explicit-rate}), including a Rao--Blackwellized variant that averages out the auxiliary splitting randomness, preserves the same sensitivity bound, and admits a linear-time implementation. To our knowledge, this is the first permutation-calibrated test for $L_1$ two-sample closeness to achieve minimax-optimal sample complexity under central differential privacy.

\item We prove matching lower bounds for all four terms. The classical nonprivate term follows from smooth hypercube perturbations, while the linear privacy term follows from a fixed smooth perturbation combined with a DP coupling argument (\Cref{prop:nonprivate-lb,prop:linear-lb}). The two intermediate privacy terms require sharper DP-specific tools, namely a coupling argument \citep{AcharyaSunZhang2018} and a new transport inequality for differentially private tests (\Cref{lem:transport}). These arguments show that each term in the upper bound is individually sharp.

\item We develop an adaptive procedure for testing with unknown smoothness. Over compact smoothness ranges, it evaluates the Rao--Blackwellized statistic over a dyadic grid of resolutions and privately calibrates the resulting multiscale score. The procedure matches the fixed-smoothness privacy terms and pays only an iterated-logarithmic factor in the classical nonprivate term (\Cref{sec:ub-adaptive-smoothness}). We also prove a matching adaptive lower bound for the classical term, even without imposing a privacy constraint, showing that the iterated-logarithmic loss is unavoidable uniformly over the smoothness range.
\end{itemize}

\subsection{Related Work}
\label{sec:intro-related}

We now place the results in the context of work on nonparametric two-sample testing, private distribution testing, and private nonparametric inference.

\medskip\noindent\textbf{Classical nonprivate smooth testing.}
The nonprivate benchmark for smooth goodness-of-fit and two-sample testing is the Ingster-type separation rate $N^{-2s/(4s+d)}$ \citep{Ingster1987,IngsterSuslina2003}. This theory has been studied in multivariate density models \citep{AriasCastroPelletierSaligrama2018} and refined through local minimax radii, adaptive procedures, and permutation-calibrated tests \citep{Spokoiny1996,FromontLaurent2006,ButuceaTribouley2006,BalakrishnanWasserman2019,KimBalakrishnanWasserman2022,ChhorCarpentier2025}. The first term in \Cref{thm:main-rate} recovers this classical benchmark. Our contribution is to characterize the additional separation induced by requiring the released decision to satisfy central differential privacy.

\medskip\noindent\textbf{Discrete distribution testing and private closeness testing.}
The upper bound relies on discrete two-sample testing, commonly known as closeness testing, applied to the histograms obtained by binning the continuous samples. For discrete $L_1$ separation $\tau$, the nonprivate problem over a $k$-point domain has two sample-complexity terms, $\sqrt{k}/\tau^2$ and $k^{2/3}/\tau^{4/3}$ \citep{BatuFortnowRubinfeldSmithWhite2013,ChanDiakonikolasValiantValiant2014,DiakonikolasGouleakisKanePeeblesPrice2021,CanonneSun2022}. Early private goodness-of-fit work developed chi-squared procedures calibrated after privacy noise is added \citep{GaboardiLimRogersVadhan2016,RogersKifer2017}. Under central DP, identity and closeness testing exhibit several distinct privacy-induced terms, rather than a single effective-sample-size correction \citep{CaiDaskalakisKamath2017,AliakbarpourDiakonikolasRubinfeld2018,AcharyaSunZhang2018,Zhang2021}. Permutation-based ideas for private distribution testing appear in \citet{AliakbarpourDiakonikolasKaneRubinfeld2019}. Broader structural results and methods for central-DP hypothesis testing, including simple and high-dimensional hypotheses, have also been developed \citep{CanonneKamathMcMillanSmithUllman2019,CanonneKamathMcMillanUllmanZakynthinou2020,Narayanan2022}, while \citet{KazanShiGroceBray2023} propose a general test-of-tests framework. These results are complementary to ours, as they focus on model classes different from the smooth nonparametric two-sample problem studied here. In our setting, $k=\kappa^d$ is chosen by the procedure. Smoothness places the binned problem in a bounded-histogram subclass, and optimizing the resulting private discrete bounds over $\kappa$ produces the three privacy-dependent continuous terms in \eqref{eq:rate-combined}. Our lower-bound arguments are also related to general private analogues of Le Cam, Fano, and Assouad methods developed by \citet{AcharyaSunZhang2021}.

\medskip\noindent\textbf{Local privacy models.}
For discrete distribution testing under local differential privacy (LDP), foundational work establishes sample-optimal protocols with sharp distinctions between public-coin and private-coin mechanisms \citep{AcharyaCanonneFreitagTyagi2019,AcharyaCanonneFreitagSunTyagi2021}. For H\"older and Besov densities, goodness-of-fit rates under local privacy are given in \citet{LamWeilLaurentLoubes2022} and \citet{DuboisBerrettButucea2023}.  Recent work gives minimax-optimal LDP procedures for continuous two-sample testing over H\"older/Besov classes \citep{MunKwakKim2025}, while \citet{KentBerrettYu2026} give non-interactive and interactive procedures that are optimal up to logarithmic factors for general discrete and smooth continuous classes. These works give the closest local-private analogues of our problem. In contrast, we study the central model, where the raw samples may be used internally and only the released binary decision must be private. The resulting rate has a different phase structure, reflecting the weaker privacy constraint and smaller privacy penalty of the central model. In particular, central privacy allows raw-data binning and permutation calibration before the final release, whereas local protocols must privatize information at the individual-message level. Related phase transitions and adaptive procedures have been studied for federated nonparametric goodness-of-fit testing in a Gaussian white-noise model under distributed DP constraints \citep{CaiChakrabortyVuursteen2024}; that setting differs from the central-DP two-sample density model considered here.

\medskip\noindent\textbf{Private tests and estimation for continuous data.}
Within central privacy, several procedures have been developed for continuous two-sample and related nonparametric testing problems, but they target notions different from the smooth $L_1$ minimax boundary studied here. Kernel-based private two-sample tests build on the MMD/RKHS framework \citep{GrettonBorgwardtRaschScholkopfSmola2012} and use finite-dimensional approximations to kernel mean embeddings with privatized empirical feature moments, or apply private permutation calibration directly to MMD statistics \citep{RajLawSejdinovicPark2019,KimSchrab2026}. Their alternatives are mainly expressed through RKHS metrics rather than H\"older $L_1$ separation. Other central-DP nonparametric tests include private rank-based procedures \citep{CouchKazanShiBrayGroce2019} and Kolmogorov--Smirnov-type tests with finite-sample, distribution-free calibration \citep{AwanWang2025}. These works emphasize practical methodology, whereas our focus is the sharp smooth $L_1$ minimax two-sample boundary. For one-sample continuous goodness-of-fit (GOF) testing, \citet{KwakAhnLeePark2024} propose central-DP procedures based on discretization followed by private discrete GOF testing. Private equivalence testing under $\mathcal{A}_k$-type discrepancy \citep{OmerSheffet2024} is also complementary, addressing related questions rather than the sharp $L_1$ minimax rate over H\"older classes. On the estimation side, private nonparametric density estimation is studied in \citet{WassermanZhou2010,BarberDuchi2014,LalanneGarivierGribonval2023Cost}. More general minimax lower-bound and transport methods for private estimation and testing are developed by \citet{LalanneGarivierGribonval2023Complexity}. Testing and estimation, however, have different objectives and generally different minimax rates. As in nonprivate smooth testing, the testing boundary is not obtained by first estimating the two densities in $L_1$ and comparing the estimates. In the private problem, it is set by binning bias, discrete closeness testing, and privacy noise.

\subsection{Organization of the Paper}
\label{sec:intro-organization}
The paper is organized as follows. \Cref{sec:setup} gives the setup and notation. \Cref{sec:results} states the main minimax rate, sample-complexity bounds, and phase-transition consequences. \Cref{sec:ub} proves upper bounds via binning, private discrete closeness testing, and finite-sample implementations. \Cref{sec:lb} proves matching lower bounds using goodness-of-fit reductions, smooth hypercubes, and privacy-specific transport and coupling arguments. \Cref{sec:ub-adaptive-smoothness} treats adaptation to unknown smoothness and proves the corresponding adaptive lower bound. \Cref{sec:simulation} gives numerical illustrations, and \Cref{sec:discussion} discusses implications and open problems. The appendices collect auxiliary lemmas, tools, and deferred proofs.

\paragraph{Notation.}
The symbols $\lesssim$, $\gtrsim$, and $\asymp$ denote inequalities and equalities up to positive multiplicative constants. Unless stated otherwise, these constants may depend only on fixed problem parameters, such as $(d,s,L,\gamma,\beta)$, and on auxiliary fixed bounds such as $M$ when they are explicitly present, but never on $N$ or $\varepsilon$. For a positive integer $k$, we write $[k]:=\{1,\dots,k\}$ and let $\Delta_k:=\{v\in\R^k:v_i\ge0,\,\sum_i v_i=1\}$ denote the probability simplex. For vectors $v\in\R^k$, $\|v\|_1=\sum_{i=1}^k |v_i|$, $\|v\|_2=(\sum_{i=1}^k v_i^2)^{1/2}$, and $\|v\|_\infty=\max_{i=1,\dots,k} |v_i|$. For functions $u$ on $[0,1]^d$, $\|u\|_p$ denotes the Lebesgue $L_p$ norm. We write $\Lap(b)$ for the centered Laplace distribution with scale parameter $b$, with density $x\mapsto (2b)^{-1}\exp(-|x|/b)$. All logarithms are natural unless a base is specified.

\section{Model and Minimax Formulation}
\label{sec:setup}

\subsection{Smooth Two-Sample Model}

We observe two independent samples
\begin{align*}
X_1,\ldots,X_{N_X}\stackrel{\iid}{\sim} f,
\qquad
Y_1,\ldots,Y_{N_Y}\stackrel{\iid}{\sim} g,
\end{align*}
where $f$ and $g$ are Lebesgue densities on $[0,1]^d$. We assume that both densities belong to the same H\"older density class: for fixed $s>0$ and $L>1$, $f,g\in\cD_s^d(L)$, where $\cD_s^d(L)$ is defined below. The minimax rate is governed by the smaller sample size, and we set $N:=N_X\wedge N_Y$.

Let $d\ge 1$, $s>0$, $L>1$, $q:=\lceil s\rceil-1$, and $\eta:=s-q\in(0,1]$. The H\"{o}lder ball $\cH_s^d(L)$ is the set of all $h:[0,1]^d\to\R$ with
\begin{align*}
\max_{|\alpha|\le q}\|D^\alpha h\|_\infty\le L
\qquad\text{and}\qquad
\max_{|\alpha|=q}[D^\alpha h]_{C^\eta}\le L,
\end{align*}
where $\alpha=(\alpha_1,\ldots,\alpha_d)\in\mathbb{Z}_{\ge 0}^d$ is a multi-index, $|\alpha|=\alpha_1+\cdots+\alpha_d$,
\begin{align*}
D^\alpha h
:=
\frac{\partial^{|\alpha|}h}
{\partial x_1^{\alpha_1}\cdots\partial x_d^{\alpha_d}},
\end{align*}
and $[u]_{C^\eta}:=\sup_{x\ne y}|u(x)-u(y)|/\|x-y\|_2^\eta$. Thus, when $s$ is an integer, the definition requires derivatives of order $s-1$ to be Lipschitz. The H\"{o}lder density class is
\begin{align*}
\cD_s^d(L)
:=
\left\{
h\in\cH_s^d(L):
h\ge 0,\ \int_{[0,1]^d}h(x)\,dx=1
\right\};
\end{align*}
the condition $L>1$ ensures $\cD_s^d(L)$ contains the uniform density.

For $f,g\in\cD_s^d(L)$, write
\begin{align*}
\|f-g\|_1:=\int_{[0,1]^d}|f(x)-g(x)|\,dx.
\end{align*}
The testing problem is the composite null $H_0:f=g$ against alternatives separated in $L_1$:
\begin{align*}
H_1(r):=\{(f,g)\in\cD_s^d(L)^2:\|f-g\|_1\ge r\}.
\end{align*}

\subsection{Differential Privacy}

We adopt the central differential privacy framework of \citet{DworkMcSherryNissimSmith2006}. For an integer $n$, a measurable randomized mechanism $\mathcal M:([0,1]^d)^n\to\mathcal{Y}$ is $\varepsilon$-differentially private ($\varepsilon$-DP) if, for every pair of neighboring datasets $D,D'\in([0,1]^d)^n$ differing in one record and every measurable $S\subseteq\mathcal{Y}$,
\begin{align*}
\Pbb\bigl(\mathcal M(D)\in S\bigr)
\le
e^{\varepsilon}\,\Pbb\bigl(\mathcal M(D')\in S\bigr).
\end{align*}
In the two-sample problem, privacy is imposed on the pooled dataset
\begin{align*}
D=(X_{1:N_X},Y_{1:N_Y})\in([0,1]^d)^{N_X+N_Y},
\end{align*}
so neighboring datasets may differ in any one $X$-record or any one $Y$-record. Thus the central-DP constraint applies to the full inferential procedure as a function of the pooled sample. Tests are randomized binary mechanisms. For such a test $\mathcal M$, write $p_{\mathcal M}(D):=\Pbb(\mathcal M(D)=1)$, where the probability is over the internal randomness of $\mathcal M$. Thus $p_{\mathcal M}(D)$ is the rejection probability on dataset $D$, and differential privacy limits how much this rejection probability can change under a one-record modification of $D$.

For $f,g\in\cD_s^d(L)$, let
\begin{align*}
P_{f,g}^{N_X,N_Y}:=P_f^{\otimes N_X}\otimes P_g^{\otimes N_Y},
\qquad
\E_{f,g}^{N_X,N_Y}:=\E_{P_{f,g}^{N_X,N_Y}}.
\end{align*}
Then $\E_{f,g}^{N_X,N_Y}[p_{\mathcal M}]$ is the rejection probability of $\mathcal M$ when the two samples have densities $f$ and $g$. We write $\Phi_{\gamma,\varepsilon}^{\mathrm{cDP}}$ for the class of $\varepsilon$-DP tests whose type~I error is at most $\gamma$ uniformly over the composite null. The dependence on $N_X,N_Y,d,s,L$ is suppressed:
\begin{align*}
\Phi_{\gamma,\varepsilon}^{\mathrm{cDP}}
:=
\left\{
\mathcal M:([0,1]^d)^{N_X+N_Y}\to\{0,1\}\ :\
\mathcal M\text{ is }\varepsilon\text{-DP},\
\sup_{h\in\cD_s^d(L)}
\E_{h,h}^{N_X,N_Y}[p_{\mathcal M}]\le\gamma
\right\}.
\end{align*}
The type~II error of $\mathcal M$ at $(f,g)\in\cD_s^d(L)^2$ is $1-\E_{f,g}^{N_X,N_Y}[p_{\mathcal M}]$.

\subsection{Minimax Separation Radius}
\label{sec:setup-minimax}

Fix error levels $\gamma,\beta\in(0,1)$ with $\gamma+\beta<1$. The minimax $L_1$ separation radius $r^*_{\mathrm{2samp}}(N_X,N_Y,\varepsilon)$ is the smallest signal size at which some private test attains the prescribed uniform type~I and type~II errors. Formally, it is the infimum over separations $r$ for which there exists $\mathcal M\in\Phi_{\gamma,\varepsilon}^{\mathrm{cDP}}$ with type~II error at most $\beta$ uniformly over all alternatives $(f,g)\in\cD_s^d(L)^2$ satisfying $\|f-g\|_1\ge r$:
\begin{align*}
r^*_{\mathrm{2samp}}(N_X,N_Y,\varepsilon)
:=
\inf\Biggl\{
r>0 :
\exists\,\mathcal M\in\Phi_{\gamma,\varepsilon}^{\mathrm{cDP}}
\text{ s.t.\ }
\sup_{\substack{f,g\in\cD_s^d(L)\\\|f-g\|_1\ge r}}
\Bigl(1-\E_{f,g}^{N_X,N_Y}[p_{\mathcal M}]\Bigr)\le\beta
\Biggr\}.
\end{align*}
We characterize $r^*_{\mathrm{2samp}}(N_X,N_Y,\varepsilon)$ up to constants depending only on $(\gamma,\beta,d,s,L)$, as a function of $N=N_X\wedge N_Y$ and $\varepsilon$. When only this smaller sample size matters, we use the abbreviation $r^*_{\mathrm{2samp}}(N,\varepsilon)$ for the corresponding quantity. In the results below, the rate depends on the two sample sizes only through $N_X\wedge N_Y$, up to constants independent of the imbalance. Thus rate statements suppress dependence on the fixed smoothness, dimension, H\"older radius, and error levels, but not on $N$ or $\varepsilon$.

\section{Main Results}
\label{sec:results}

We now state the main rate theorem and two consequences. The minimax separation rate is the maximum of one classical term, two intermediate privacy terms, and a final linear privacy barrier, created by the interaction among smoothness, binning, and central-DP noise. \Cref{cor:sample-complexity,cor:phase-transition} give the corresponding sample-complexity and phase-transition forms.

\subsection{Minimax Two-Sample Rate under Central Differential Privacy}
\label{sec:results-rate}

The following theorem gives the minimax-optimal $L_1$ separation radius when $N\varepsilon\ge1$. This restriction excludes the regime $N\varepsilon<1$, where the radius is already of constant order. Indeed, when $N\varepsilon<1$, the linear privacy term $(N\varepsilon)^{-1}$ is at least of constant order, while the $L_1$ distance between two densities is at most two.

\begin{theorem}[Minimax separation radius under central DP]
\label{thm:main-rate}
For every fixed $\gamma,\beta\in(0,1)$ with $\gamma+\beta<1$ and $N\varepsilon\ge 1$,
\begin{equation}
r^*_{\mathrm{2samp}}(N,\varepsilon)
\;\asymp\;
N^{-2s/(4s+d)}
\vee
(N\sqrt\varepsilon)^{-2s/(2s+d)}
\vee
(N^{3/2}\varepsilon)^{-2s/(4s+d)}
\vee
(N\varepsilon)^{-1}.
\label{eq:rate-combined}
\end{equation}
\end{theorem}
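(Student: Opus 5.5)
The proof splits into an upper bound and four separate lower bounds, one for each term in \eqref{eq:rate-combined}. Throughout we may take $N_X=N_Y=N$. For the upper bound, extra samples can be discarded. For the lower bounds, adding samples to the larger group only helps the tester, so giving both groups $N$ samples can only make the lower bound easier to prove, not harder.

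\textbf{Upper bound.} Partition $[0,1]^d$ into $k=\kappa^d$ cubes of side $1/\kappa$ and let $p,q\in\Delta_k$ be the induced histograms of $f,g$.
\begin{itemize}
\item \emph{Bias.} Projection onto piecewise constants contracts $L_1$. Hölder smoothness then gives $\|p-q\|_1\ge \|f-g\|_1 - C\kappa^{-s}$, via a standard smoothing argument comparing $f-g$ with its bin averages. So choosing $\kappa\asymp r^{-1/s}$ keeps a discrete separation $\tau\asymp r$.
\item \emph{Bounded histograms.} Bounded densities give $\|p\|_\infty,\|q\|_\infty\le L/k$. For such bounded histograms, a private closeness test (a centered chi-square/collision statistic with bounded sensitivity, perturbed by Laplace noise or privately permutation-calibrated) succeeds once
\[
N\gtrsim \frac{\sqrt k}{\tau^2}+\frac{k^{1/2}}{\tau\sqrt{\varepsilon}}+\frac{k^{1/3}}{\tau^{4/3}\varepsilon^{2/3}}+\frac{1}{\tau\varepsilon}.
\]
Boundedness is exactly what removes the heavy-element term $k^{2/3}/\tau^{4/3}$.
\item \emph{Optimizing.} Substitute $k=r^{-d/s}$ and $\tau=r$, and solve each term for $r$: $N\asymp r^{-(4s+d)/(2s)}$, $N\sqrt\varepsilon\asymp r^{-(2s+d)/(2s)}$, $N^{3/2}\varepsilon\asymp r^{-(4s+d)/(2s)}$, $N\varepsilon\asymp r^{-1}$. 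These give exactly the four terms in \eqref{eq:rate-combined}.
\item \emph{Sensitivity.} The private discrete test needs a sensitivity bound for the statistic. Truncating bin counts at a level of order $N/k+\log$ terms keeps the sensitivity $O(1+N/k)$ relative to the signal while only affecting events of small probability.
\end{itemize}

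\textbf{Lower bounds.} It suffices to lower bound the goodness-of-fit problem with $g=$ uniform, because a two-sample test can simulate the known sample. Each term uses its own construction.
\begin{itemize}
\item \emph{Nonprivate term $N^{-2s/(4s+d)}$.} Use Ingster's hypercube: $f_\xi=1+\rho\sum_{j\le k}\xi_j\psi_j$, with bumps $\psi_j$ of height $h^{s}$ on cells of side $h=k^{-1/d}$. Bounding the $\chi^2$ divergence of the mixture gives indistinguishability when $N^2 h^{4s}\cdot h^{d}\lesssim 1$, i.e.\ $r\asymp h^s\asymp N^{-2s/(4s+d)}$.
\item \emph{Linear term $(N\varepsilon)^{-1}$.} Take a single fixed perturbation $f=1+r\psi$. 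The product measures can be coupled so that datasets differ in $O(Nr)$ records in expectation. Group privacy then makes rejection probabilities differ by at most $e^{O(\varepsilon N r)}$, so $r\lesssim 1/(N\varepsilon)$ is undetectable.
\item \emph{Term $(N^{3/2}\varepsilon)^{-2s/(4s+d)}$.} Use the hypercube mixture with Poissonization. Apply the coupling lemma of \citet{AcharyaSunZhang2018}: if a coupling between the null and the mixture has expected Hamming distance $D$, then no $\varepsilon$-DP test succeeds when $\varepsilon D\lesssim1$. Per cell, the null and mixture counts differ in distribution by roughly $N h^{d}\rho$ against fluctuations $\sqrt{Nh^d}$. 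This gives an expected Hamming distance of order $k\cdot (N h^d \rho)^2/\sqrt{N h^d}$ in the relevant regime. Setting $\varepsilon$ times this to a constant and $r=\rho h^s$ should yield the $N^{3/2}\varepsilon$ scaling.
\item \emph{Term $(N\sqrt\varepsilon)^{-2s/(2s+d)}$.} The intermediate regime is handled with the transport inequality \Cref{lem:transport}. It bounds the difference in expected rejection probabilities of an $\varepsilon$-DP test by $\varepsilon$ times a Wasserstein/Hamming transport cost plus a $\chi^2$-type term, applied cellwise to the mixture. The target is a coupling cost of order $k(N h^d\rho)^2$ (rather than linear), producing $\varepsilon N^2 r^{2}h^{d}\lesssim 1$ with $k=h^{-d}$ and $r\asymp h^s$, which gives $(N\sqrt\varepsilon)^{-2s/(2s+d)}$.
\end{itemize}

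\textbf{Main obstacle.} The hardest step is matching the exponents in the two intermediate lower bounds. The per-cell couplings must be computed in the right regime of $Nh^d$ (small versus large expected cell counts), and the choice of $h$ must ensure that the active regime of the construction coincides with the regime where that term dominates. If the exponents do not line up at first, I would adjust the bump amplitude $\rho$ so that the perturbed densities stay in $\cD_s^d(L)$ and re-balance $h$ against it.
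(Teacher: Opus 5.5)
Your overall architecture matches the paper: binning, a private closeness test on bounded histograms, and optimization over $\kappa$ for the upper bound; a goodness-of-fit reduction, Ingster's hypercube for $N^{-2s/(4s+d)}$, and a fixed perturbation with maximal coupling for $(N\varepsilon)^{-1}$ for the lower bound. Your exponent algebra is also correct. However, the steps that carry the new content are not established.

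\textbf{Upper bound.} You assert the four-term bounded-histogram sample complexity, but the tester you sketch cannot attain it.
\begin{itemize}
\item Take a chi-square/collision statistic with sensitivity $O(1+N/k)$. In the dense regime $n\gtrsim k$, its mean is about $n^2\|p-q\|_2^2\ge n^2\tau^2/k$ against sensitivity $\asymp n/k$. The signal-to-sensitivity ratio is therefore only $n\tau^2$, and privacy forces $n\gtrsim 1/(\tau^2\varepsilon)$.
\item In the sparse regime you need $n\gtrsim\sqrt k/(\tau\sqrt\varepsilon)$ with $k\ge n$.
\item Either way, after binning you get at best $r\gtrsim(N\varepsilon)^{-1/2}$, whatever $\kappa$ you choose. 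This is polynomially larger than $(N^{3/2}\varepsilon)^{-2s/(4s+d)}\vee(N\varepsilon)^{-1}$ in the interior of the regimes where those terms are active.
\item A normalized chi-square with $O(1)$ sensitivity does not help: its mean is only $\asymp n\tau^2$ for spread-out differences.
\end{itemize}
The missing idea is an $O(1)$-sensitivity statistic whose mean grows faster in the dense regime. The paper uses the split $\ell_1$ statistic $Z_n$, which has sensitivity $4$ (\Cref{lem:mult-bdhist-sensitivity}) and mean $\E Z_n\gtrsim\min\{n\tau,n^2\tau^2/k,n^{3/2}\tau^2/\sqrt k\}$ via \Cref{lem:binomial-comparison-no-heavy}. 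It pairs this with $\Var(Z_n)+\Var(Z_n^\pi)\lesssim_M n\wedge n^2/k$ on $\mathcal P_{k,M}$. That bound comes from Efron--Stein with $|g(a\pm1,b,c,d)-g(a,b,c,d)|\le 2\mathbf 1\{b\neq c\}$, plus collision-sensitive concentration for the permuted statistic. The $n^2/k$ variance is what removes $k^{2/3}/\tau^{4/3}$.

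Your bias step also fails for $s>1$. Comparing $f-g$ with its bin averages gives only $O(\kappa^{-1})$. The inequality with constant one is in fact false for $s>2$: in $d=1$, take $f-g=2c\cos(2\pi x)$ with $\kappa\equiv 2\pmod 4$. Cells are then centred at the zeros, where $\int_C(f-g)=0$ but $\int_C|f-g|\asymp c\kappa^{-2}$. What holds, and suffices, is $\|p_f-p_g\|_1\ge\frac12\|f-g\|_1-C\kappa^{-s}$. This needs blockwise Taylor polynomials and uniform stability of cell averaging on polynomials (\Cref{lem:binning-approx}).

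\textbf{Intermediate lower bounds.} You have the two tools swapped, and neither step works as written.

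\Cref{lem:transport} is not a Hamming-transport bound. It states $\E_Q[p_{\mathcal M}]-\E_0[p_{\mathcal M}]\le\varepsilon\sqrt{N\KL(Q\|P_0^N)/2}$, proved via bounded differences $1-e^{-\varepsilon}$, McDiarmid, and Donsker--Varadhan. Combined with $\KL(\bar P_\kappa\|P_0^N)\lesssim N^2\varrho^4\kappa^{-(4s+d)}$, it gives the $(N^{3/2}\varepsilon)^{-2s/(4s+d)}$ term directly, with no Poissonization or coupling.

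Your coupling argument for that term tracks cell-count discrepancies. But each hypercube bump must integrate to zero over its cell, or $f_\xi$ is not a density for every sign pattern. So cell counts have identical laws under $P_0^N$ and $\bar P_\kappa$, and all the discrepancy lies in within-cell positions. Let $\lambda$ be the bump height. Coupling positions with \Cref{lem:bern-mixture} costs $\lambda^2T_j(T_j-1)$ in a cell with $T_j$ points, i.e.\ $\lambda^2N^2/K$ in total. That yields only the $(N\sqrt\varepsilon)^{-2s/(2s+d)}$ term. To reach the third term by coupling you would need a within-cell coupling of order $\lambda^2T_j^{3/2}$, which you do not supply.

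Conversely, the $\lambda^2N^2/K$ coupling is exactly how the paper proves the $(N\sqrt\varepsilon)$ term. It uses alternatives with exactly uniform cell masses (\Cref{lem:lb-local-bumps}) and couples cell indices identically. Within-cell draws are written through latent signs of bias $\pm\lambda$ (\Cref{lem:lb-cellwise-signs}), and then \Cref{lem:dp-coupling} is applied with $K\asymp(L/\lambda)^{d/s}$. You named the right target cost $k(Nh^d\rho)^2$, but not the coupling that achieves it.

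\Cref{lem:transport} cannot substitute for that coupling. For these constructions its bound $\varepsilon N^{3/2}\lambda^2K^{-1/2}$ exceeds the coupling bound $\varepsilon N^2\lambda^2/K$ by the factor $(K/N)^{1/2}$. That factor is larger than one precisely in the sparse regime, where the second term is active.

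A minor point: taking $N_X=N_Y=N$ is not without loss of generality for lower bounds, since a bound at $(N,N)$ does not transfer to $(N,N_Y)$ with $N_Y>N$. Your goodness-of-fit reduction, which simulates the uniform sample as internal randomness, covers unequal sizes anyway.
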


The upper and lower bounds agree up to constants for each of the four terms, and every term in \eqref{eq:rate-combined} is necessary for minimax optimality. The first term is the classical smooth-testing rate. The remaining three terms arise from privacy constraints and correspond to distinct interactions between histogram approximation, discrete closeness testing, and privacy noise. Depending on the privacy regime and on the smoothness-to-dimension ratio, different terms determine the minimax rate. As \Cref{cor:phase-transition} shows, the $(N\sqrt\varepsilon)^{-2s/(2s+d)}$ term appears on the phase diagram only when the class is sufficiently rough, namely $s<d/4$.

\subsection{Sample Complexity and Privacy Phase Transition}

The next two corollaries restate the rate in two different ways. First, inverting the rate gives the optimal sample complexity needed to detect a prescribed separation $r$. Second, setting $\varepsilon=N^{-\alpha}$ turns the four terms into exponent functions, whose lower envelope identifies the active phase. The first consequence follows by directly inverting the rate in \Cref{thm:main-rate}.

\begin{corollary}[Sample complexity]\label{cor:sample-complexity}
For a target separation $r\in(0,2]$, the minimax-optimal sample size $N^*(r,\varepsilon):=\inf\{N:r^*_{\mathrm{2samp}}(N,\varepsilon)\le r\}$ satisfies
\begin{equation*}
N^*(r,\varepsilon)
\;\asymp\;
r^{-(4s+d)/(2s)}
\;\vee\;
\varepsilon^{-1/2}\,r^{-(2s+d)/(2s)}
\;\vee\;
\varepsilon^{-2/3}\,r^{-(4s+d)/(3s)}
\;\vee\;
(\varepsilon r)^{-1}.
\end{equation*}
\end{corollary}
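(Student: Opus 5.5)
The corollary follows by inverting the four-term rate of \Cref{thm:main-rate}. I will work with the explicit function
\begin{align*}
R(N,\varepsilon):=N^{-2s/(4s+d)}\vee(N\sqrt\varepsilon)^{-2s/(2s+d)}\vee(N^{3/2}\varepsilon)^{-2s/(4s+d)}\vee(N\varepsilon)^{-1}.
\end{align*}
The theorem gives constants $0<c\le C$ with $cR(N,\varepsilon)\le r^*_{\mathrm{2samp}}(N,\varepsilon)\le CR(N,\varepsilon)$ whenever $N\varepsilon\ge1$.

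\textbf{Step 1: invert each term.} Each term of $R$ is strictly decreasing in $N$, so each can be solved for $N$:
\begin{itemize}
\item $N^{-2s/(4s+d)}\le r$ is equivalent to $N\ge r^{-(4s+d)/(2s)}$;
\item $(N\sqrt\varepsilon)^{-2s/(2s+d)}\le r$ is equivalent to $N\ge \varepsilon^{-1/2}r^{-(2s+d)/(2s)}$;
\item $(N^{3/2}\varepsilon)^{-2s/(4s+d)}\le r$ is equivalent to $N^{3/2}\ge \varepsilon^{-1}r^{-(4s+d)/(2s)}$, that is, $N\ge\varepsilon^{-2/3}r^{-(4s+d)/(3s)}$;
\item $(N\varepsilon)^{-1}\le r$ is equivalent to $N\ge(\varepsilon r)^{-1}$.
\end{itemize}
A maximum of terms is at most $r$ exactly when every term is. So $R(N,\varepsilon)\le r$ if and only if $N\ge \bar N(r,\varepsilon)$, where $\bar N$ denotes the four-term maximum on the right-hand side of the corollary.

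\textbf{Step 2: absorb the constants.} Each term is a power $N^{-a_j}$ with $a_j>0$, and the exponents $a_j$ depend only on $(s,d)$. Hence for a constant $\lambda\ge1$ we have $R(\lambda N,\varepsilon)\ge\lambda^{-\max_j a_j}R(N,\varepsilon)$ and $R(\lambda N,\varepsilon)\le\lambda^{-\min_j a_j}R(N,\varepsilon)$. Similarly, $\bar N(\theta r,\varepsilon)\asymp\bar N(r,\varepsilon)$ for any fixed $\theta>0$, with constants depending on $\theta,s,d$.

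\textbf{Step 3: upper bound on $N^*$.} Take $N=\bar N(r/C,\varepsilon)\asymp\bar N(r,\varepsilon)$ (rounded up to an integer). Then $R(N,\varepsilon)\le r/C$, so $r^*_{\mathrm{2samp}}\le r$. It remains to check the side condition $N\varepsilon\ge1$. Since $r\le2$, the fourth term gives $N\ge C/(\varepsilon r)\ge C/(2\varepsilon)$, which is at least $1/\varepsilon$ once $C\ge2$; we may enlarge $C$ to ensure this.

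\textbf{Step 4: lower bound on $N^*$.} Suppose $r^*_{\mathrm{2samp}}(N,\varepsilon)\le r$. If $N\varepsilon\ge1$, the theorem gives $cR(N,\varepsilon)\le r$, so $N\ge\bar N(r/c,\varepsilon)\gtrsim\bar N(r,\varepsilon)$. If $N\varepsilon<1$, I need an argument that the radius is of constant order there. The intended route is to apply the linear-privacy lower bound (the fixed-perturbation coupling argument behind \Cref{prop:linear-lb}), which I expect to give $r^*_{\mathrm{2samp}}\ge c_0$ for a constant $c_0$ in this regime. Then $r\ge c_0$, and for such $r$ every term of $\bar N$ is $\lesssim 1\vee\varepsilon^{-1}$. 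This requires $N\gtrsim\bar N$ whenever $N\ge1$ and $\varepsilon\gtrsim1$; when $\varepsilon<1$ the constraint $N^*\ge1$ alone does not suffice.

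This case, $N\varepsilon<1$ together with the monotonicity and infimum bookkeeping, is the main obstacle. Two routes are available: restrict the corollary to $r$ below a small constant, or use the lower bound $(N\varepsilon)^{-1}\wedge 1$, which gives $r\gtrsim 1$ and so $N\gtrsim(\varepsilon r)^{-1}\gtrsim\bar N$ up to constants. The latter covers the case provided $\bar N\lesssim(\varepsilon r)^{-1}$ when $r\asymp1$ and $\varepsilon\le1$. This holds because the other terms are $\lesssim\varepsilon^{-1/2},\varepsilon^{-2/3}\le\varepsilon^{-1}$.

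Combining Steps 3 and 4 yields $N^*(r,\varepsilon)\asymp\bar N(r,\varepsilon)$.
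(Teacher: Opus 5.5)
Your route is the paper's: invert the four terms of \Cref{thm:main-rate}, use the upper bound to exhibit a feasible $N$, use the lower bound when $N\varepsilon\ge1$, and handle $N\varepsilon<1$ with the linear-privacy coupling. Steps 1--3 and the $N\varepsilon\ge1$ half of Step 4 are fine, and your check that the chosen $N$ has $N\varepsilon\ge1$ is more careful than the paper's proof.

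\textbf{The gap is your second route for $N\varepsilon<1$.} From $r\ge r^*_{\mathrm{2samp}}(N,\varepsilon)\gtrsim 1\wedge(N\varepsilon)^{-1}$ with $N\varepsilon<1$ you get only $r\gtrsim1$. The step ``and so $N\gtrsim(\varepsilon r)^{-1}$'' does not follow. In this regime the opposite holds: $N<\varepsilon^{-1}\le2(\varepsilon r)^{-1}$.

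The uncapped bound $r\gtrsim(N\varepsilon)^{-1}$ comes from the fixed perturbation $f_r=1+\lambda_r\psi$. That perturbation is an admissible alternative only for $r\le c_1$.

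In fact no argument can give the lower bound on all of $(0,2]$. For $r>2$ the alternative set is empty, so the never-reject test is feasible and $r^*_{\mathrm{2samp}}(N,\varepsilon)\le2$ for every $N$. Hence $N^*(2,\varepsilon)=1$, whereas the displayed formula is of order $\varepsilon^{-1}$ as $\varepsilon\to0$.

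\textbf{You therefore need your first route.} Fix $r_0\le\min\{c_1,\,2\log(1/(\gamma+\beta))\}$, and suppose $r<r_0$ and $r^*_{\mathrm{2samp}}(N,\varepsilon)\le r$.
\begin{enumerate}
\item[(a)] By \Cref{lem:gof-to-twosamp-cdp}, some private GOF test works at a separation $r''\in(r,r_0)$.
\item[(b)] Since $r''<c_1$, the perturbation $f_{r''}$ is admissible.
\item[(c)] The coupling argument in the proof of \Cref{prop:linear-lb} does not use $N\varepsilon\ge1$. It gives $N\varepsilon\ge2\log(1/(\gamma+\beta))/r''>1$.
\end{enumerate}
So the case $N\varepsilon<1$ is vacuous for $r<r_0$, and your $N\varepsilon\ge1$ argument completes the lower bound there. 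For $r\ge r_0$ the upper bound still holds, but the lower bound can fail, as at $r=2$.

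The paper's one-line treatment (``constant-order separation requirement \ldots\ no loss for the converse'') amounts to the same restriction. The corollary should therefore be read for $r$ below a fixed constant.
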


The four terms in $N^*$ reflect the same four rate-determining effects as \Cref{thm:main-rate}. Their relative dominance depends on the privacy level, and the following reparameterization makes the phase boundaries explicit. The only purpose of the notation below is to track which of the four terms in \eqref{eq:rate-combined} is largest when the privacy budget is written as a power of $N$.

\begin{corollary}[Privacy phase transition]\label{cor:phase-transition}
Let $\varepsilon=N^{-\alpha}$ with $\alpha\in[0,1]$.  Define
\begin{align*}
\rho_1 := \frac{2s}{4s+d},\qquad
\rho_2(\alpha) := \frac{2s}{2s+d}\!\left(1-\frac{\alpha}{2}\right),\qquad
\rho_3(\alpha) := \frac{2s}{4s+d}\!\left(\frac{3}{2}-\alpha\right),\qquad
\rho_4(\alpha) := 1-\alpha.
\end{align*}
Then, uniformly for $\alpha\in[0,1]$,
\begin{align*}
r^*_{\mathrm{2samp}}(N,N^{-\alpha})\asymp N^{-\rho(\alpha)},
\qquad
\rho(\alpha):=\min\{\rho_1,\rho_2(\alpha),\rho_3(\alpha),\rho_4(\alpha)\},
\end{align*}
where $\rho$ is the lower envelope of the four exponent functions. Thus the active regime is determined by the exponent attaining the lower envelope. Equivalently:

\begin{enumerate}[(a)]
\item If $s<d/4$, then $ 0<\alpha_{12}<\alpha_{23}<\alpha_{34}<1, $ where $\alpha_{12}=4s/(4s+d)$, $\alpha_{23}=(d-2s)/d$, and $\alpha_{34}=(s+d)/(2s+d)$. In this case,
\begin{align*}
\rho(\alpha)=
\begin{cases}
\rho_1,
& 0\le \alpha\le \alpha_{12},\\[2pt]
\rho_2(\alpha),
& \alpha_{12}\le \alpha\le \alpha_{23},\\[2pt]
\rho_3(\alpha),
& \alpha_{23}\le \alpha\le \alpha_{34},\\[2pt]
\rho_4(\alpha),
& \alpha_{34}\le \alpha\le 1.
\end{cases}
\end{align*}
\item If $s\ge d/4$, then $\alpha_{34}=(s+d)/(2s+d)$, and
\begin{align*}
\rho(\alpha)=
\begin{cases}
\rho_1,
& 0\le \alpha\le \tfrac12,\\[2pt]
\rho_3(\alpha),
& \tfrac12\le \alpha\le \alpha_{34},\\[2pt]
\rho_4(\alpha),
& \alpha_{34}\le \alpha\le 1.
\end{cases}
\end{align*}
\end{enumerate}

See \Cref{fig:phase-envelope} for quantitative lower-envelope plots.
\end{corollary}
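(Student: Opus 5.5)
The plan is to substitute $\varepsilon=N^{-\alpha}$ into \Cref{thm:main-rate} and then analyse the lower envelope of four affine functions of $\alpha$.

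\textbf{Reduction to exponents.} Since $\alpha\le1$, we have $N\varepsilon=N^{1-\alpha}\ge1$, so \Cref{thm:main-rate} applies for every $\alpha\in[0,1]$. Its implied constants depend only on $(\gamma,\beta,d,s,L)$ and not on $N$ or $\varepsilon$, hence they are uniform in $\alpha$. Substituting gives
\begin{align*}
N\sqrt\varepsilon&=N^{1-\alpha/2}, &
N^{3/2}\varepsilon&=N^{3/2-\alpha}, &
N\varepsilon&=N^{1-\alpha}.
\end{align*}
So the four terms in \eqref{eq:rate-combined} are exactly $N^{-\rho_1},N^{-\rho_2(\alpha)},N^{-\rho_3(\alpha)},N^{-\rho_4(\alpha)}$. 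The maximum of four powers of $N$ equals $N^{-\min_j\rho_j(\alpha)}$, which gives $r^*_{\mathrm{2samp}}(N,N^{-\alpha})\asymp N^{-\rho(\alpha)}$ uniformly in $\alpha$.

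\textbf{Envelope structure.} Each $\rho_j$ is affine in $\alpha$, with slopes
\[
0,\qquad -\frac{s}{2s+d},\qquad -\frac{2s}{4s+d},\qquad -1 .
\]
These are strictly decreasing in this order; the only nontrivial comparison is $s/(2s+d)<2s/(4s+d)$, which is equivalent to $4s+d<4s+2d$. The minimum of affine functions is concave. Its active pieces therefore occur in order of increasing steepness, and each function is active on at most one interval. Hence $\rho$ runs through a subsequence of $\rho_1,\rho_2,\rho_3,\rho_4$, in that order.

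\textbf{Breakpoints.} Solving the pairwise equalities is routine algebra:
\begin{align*}
\rho_1=\rho_2 &\iff \alpha=\alpha_{12}=\frac{4s}{4s+d}, &
\rho_2=\rho_3 &\iff \alpha=\alpha_{23}=\frac{d-2s}{d},\\
\rho_3=\rho_4 &\iff \alpha=\alpha_{34}=\frac{s+d}{2s+d}, &
\rho_1=\rho_3 &\iff \alpha=\tfrac12 .
\end{align*}
For example, $\rho_2=\rho_3$ reduces to $(4s+d)(1-\alpha/2)=(2s+d)(3/2-\alpha)$, which simplifies to $s-d/2=-d\alpha/2$.

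\textbf{When $\rho_2$ is active.} This is the only step needing care. At $\alpha=0$, $\rho_1=2s/(4s+d)$ is strictly below $\rho_2(0)$, $\rho_3(0)$ and $\rho_4(0)=1$, so the envelope starts on $\rho_1$. At $\alpha=1$, $\rho_4(1)=0<\rho_1,\rho_2(1),\rho_3(1)$, so it ends on $\rho_4$. Because $\rho_2$ lies between $\rho_1$ and $\rho_3$ in the slope order, it appears on the envelope iff $\rho_2(\alpha)<\rho_1$ at the crossing $\alpha=\tfrac12$ of its neighbours. This holds iff $\tfrac12>\alpha_{12}$, i.e.\ iff $s<d/4$.

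\textbf{Case (a), $s<d/4$.} Here $\alpha_{12}<\tfrac12<\alpha_{23}$, since $\alpha_{23}>\tfrac12$ is also equivalent to $s<d/4$. It remains to confirm $\alpha_{23}<\alpha_{34}$ and that $\rho_3$ is genuinely active. Both follow from concavity and the slope ordering once one checks $\rho_3(\alpha_{23})<\rho_4(\alpha_{23})$. This amounts to $\alpha_{23}<\alpha_{34}$, which is elementary: $(d-2s)(2s+d)<d(s+d)$ holds since $-4s^2-ds<0$. Also $\alpha_{34}<1$ always, and $\alpha_{12}>0$ since $s>0$.

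\textbf{Case (b), $s\ge d/4$.} Now $\rho_2$ drops out, and the envelope is $\rho_1$ up to $\alpha=\tfrac12$, then $\rho_3$ up to $\alpha_{34}$, then $\rho_4$. The ordering $\tfrac12<\alpha_{34}$ holds since $2(s+d)>2s+d$. In the boundary case $s=d/4$, the three lines $\rho_1,\rho_2,\rho_3$ meet at $\alpha=\tfrac12$, so either description is correct there.
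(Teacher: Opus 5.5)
Your proposal is correct and follows the same route as the paper. You substitute $\varepsilon=N^{-\alpha}$ into \Cref{thm:main-rate}, which is valid since $N\varepsilon=N^{1-\alpha}\ge1$, and then read off the lower envelope of the four affine exponents from their crossings $\alpha_{12},\alpha_{23},\alpha_{34}$ and $\tfrac12$.

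The differences are presentational. Your concavity and slope-ordering argument makes explicit the ``direct comparison'' that the paper leaves implicit. You decide whether $\rho_2$ is active by testing it at the $\rho_1$--$\rho_3$ crossing $\alpha=\tfrac12$. The paper instead rules $\rho_2$ out for $s\ge d/4$ by noting $\alpha_{23}\le\tfrac12\le\alpha_{12}$, so that $\rho_2\ge\rho_1$ on $[0,\alpha_{12}]$ and $\rho_2\ge\rho_3$ on $[\alpha_{23},1]$.
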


\begin{proof}
The proof is in \Cref{app:proof:cor:phase-transition}.
\end{proof}

\begin{figure}[t]
\centering
\begin{tikzpicture}[>=Latex, font=\small]
\tikzset{
  phaseIbg/.style={
    fill=cI!12,
    postaction={pattern=north east lines, pattern color=black!16}
  },
  phaseIIbg/.style={
    fill=cII!12,
    postaction={pattern=north west lines, pattern color=black!16}
  },
  phaseIIIbg/.style={
    fill=cIII!12,
    postaction={pattern=dots, pattern color=black!18}
  },
  phaseIVbg/.style={
    fill=cIV!12,
    postaction={pattern=crosshatch, pattern color=black!13}
  },
  rhoOne/.style={cI, dashed, line width=0.85pt},
  rhoTwo/.style={cII, densely dashed, line width=0.85pt},
  rhoThree/.style={cIII, dash pattern=on 4pt off 1.6pt on 0.8pt off 1.6pt, line width=0.85pt},
  rhoFour/.style={cIV, dotted, line width=1.0pt}
}
\fill[phaseIbg]   (0,0) rectangle (1.667,3.8);
\fill[phaseIIbg]  (1.667,0) rectangle (3.75,3.8);
\fill[phaseIIIbg] (3.75,0) rectangle (4.5,3.8);
\fill[phaseIVbg]  (4.5,0) rectangle (5,3.8);
\begin{scope}
  \clip (0,0) rectangle (5,3.8);
  \draw[black!25, thin] (1.667,0) -- (1.667,3.8);
  \draw[black!25, thin] (3.75,0)  -- (3.75,3.8);
  \draw[black!25, thin] (4.5,0)   -- (4.5,3.8);
  \draw[cI,   line width=2.2pt] (0,3.333)     -- (1.667,3.333);
  \draw[cII,  line width=2.2pt] (1.667,3.333) -- (3.75,2.5);
  \draw[cIII, line width=2.2pt] (3.75,2.5)    -- (4.5,2.0);
  \draw[cIV,  line width=2.2pt] (4.5,2.0)     -- (5,0);
  \draw[rhoOne]   (0,3.333) -- (5,3.333);
  \draw[rhoTwo]   (0,4.0)   -- (5,2.0);
  \draw[rhoThree] (0,5.0)   -- (5,1.667);
  \draw[rhoFour]  (0,20)    -- (5,0);
  \filldraw[fill=white, draw=black!70, line width=0.8pt] (1.667,3.333) circle (2pt);
  \filldraw[fill=white, draw=black!70, line width=0.8pt] (3.75,2.5)    circle (2pt);
  \filldraw[fill=white, draw=black!70, line width=0.8pt] (4.5,2.0)     circle (2pt);
\end{scope}
\draw[black!35] (0,0) rectangle (5,3.8);
\draw[->, black!75, line width=0.6pt] (0,0) -- (5.5,0)
  node[right, font=\scriptsize] {$\alpha$};
\draw[->, black!75, line width=0.6pt] (0,0) -- (0,4.1)
  node[above, font=\scriptsize, xshift=4pt] {$\rho(\alpha)$};
\foreach \x/\lbl in {
    0/{$0$}, 1.667/{$\tfrac{1}{3}$}, 3.75/{$\tfrac{3}{4}$},
    4.5/{$\tfrac{9}{10}$}, 5/{$1$}}{
  \draw[black!70] (\x,-0.08) -- (\x,0.08);
  \node[below=3pt, font=\scriptsize] at (\x,0) {\lbl};
}
\foreach \y/\lbl in {2.5/{$\tfrac{1}{8}$}, 3.333/{$\tfrac{1}{6}$}}{
  \draw[black!70] (-0.08,\y) -- (0.08,\y);
  \node[left=3pt, font=\scriptsize] at (0,\y) {\lbl};
}
\node[font=\scriptsize\bfseries, cI!80!black, fill=white, fill opacity=0.78,
      text opacity=1, inner sep=1pt] at (0.833,0.3) {I};
\node[font=\scriptsize\bfseries, cII!80!black, fill=white, fill opacity=0.78,
      text opacity=1, inner sep=1pt] at (2.708,0.3) {II};
\node[font=\scriptsize\bfseries, cIII!80!black, fill=white, fill opacity=0.78,
      text opacity=1, inner sep=1pt] at (4.125,0.3) {III};
\node[font=\scriptsize\bfseries, cIV!80!black, fill=white, fill opacity=0.78,
      text opacity=1, inner sep=1pt] at (4.70,0.3) {IV};
\node[right=2pt, font=\scriptsize, cI]   at (5, 3.333) {$\rho_1$};
\node[right=2pt, font=\scriptsize, cII]  at (5, 2.10)  {$\rho_2$};
\node[right=2pt, font=\scriptsize, cIII] at (5, 1.55)  {$\rho_3$};
\node[right=2pt, font=\scriptsize, cIV]  at (5, 0.25)  {$\rho_4$};
\node[above=3pt, font=\small\bfseries] at (2.5,3.8)
  {$s=1,\ d=8$\ \ {\normalfont\small($s<d/4$)}};
\begin{scope}[xshift=7.8cm]
\fill[phaseIbg]   (0,0)   rectangle (2.5,3.8);
\fill[phaseIIIbg] (2.5,0) rectangle (3.75,3.8);
\fill[phaseIVbg]  (3.75,0) rectangle (5,3.8);
\begin{scope}
  \clip (0,0) rectangle (5,3.8);
  \draw[black!25, thin] (2.5,0)  -- (2.5,3.8);
  \draw[black!25, thin] (3.75,0) -- (3.75,3.8);
  \draw[cI,   line width=2.2pt] (0,3.333)   -- (2.5,3.333);
  \draw[cIII, line width=2.2pt] (2.5,3.333) -- (3.75,2.5);
  \draw[cIV,  line width=2.2pt] (3.75,2.5)  -- (5,0);
  \draw[rhoOne] (0,3.333) -- (5,3.333);
  \draw[rhoTwo] (2.4,3.8) -- (5,2.5);
  \draw[rhoThree] (1.8,3.8) -- (5,1.667);
  \draw[rhoFour]  (0,10)    -- (5,0);
  \filldraw[fill=white, draw=black!70, line width=0.8pt] (2.5,3.333) circle (2pt);
  \filldraw[fill=white, draw=black!70, line width=0.8pt] (3.75,2.5)  circle (2pt);
\end{scope}
\draw[black!35] (0,0) rectangle (5,3.8);
\draw[->, black!75, line width=0.6pt] (0,0) -- (5.5,0)
  node[right, font=\scriptsize] {$\alpha$};
\draw[->, black!75, line width=0.6pt] (0,0) -- (0,4.1)
  node[above, font=\scriptsize, xshift=4pt] {$\rho(\alpha)$};
\foreach \x/\lbl in {0/{$0$}, 2.5/{$\tfrac{1}{2}$}, 3.75/{$\tfrac{3}{4}$}, 5/{$1$}}{
  \draw[black!70] (\x,-0.08) -- (\x,0.08);
  \node[below=3pt, font=\scriptsize] at (\x,0) {\lbl};
}
\foreach \y/\lbl in {1.667/{$\tfrac{1}{6}$}, 2.5/{$\tfrac{1}{4}$}, 3.333/{$\tfrac{1}{3}$}}{
  \draw[black!70] (-0.08,\y) -- (0.08,\y);
  \node[left=3pt, font=\scriptsize] at (0,\y) {\lbl};
}
\node[font=\scriptsize\bfseries, cI!80!black, fill=white, fill opacity=0.78,
      text opacity=1, inner sep=1pt] at (1.25,0.3) {I};
\node[font=\scriptsize\bfseries, cIII!80!black, fill=white, fill opacity=0.78,
      text opacity=1, inner sep=1pt] at (3.125,0.3) {III};
\node[font=\scriptsize\bfseries, cIV!80!black, fill=white, fill opacity=0.78,
      text opacity=1, inner sep=1pt] at (4.375,0.3) {IV};
\node[right=2pt, font=\scriptsize, cI]       at (5, 3.333) {$\rho_1$};
\node[right=2pt, font=\scriptsize, cII]      at (5, 2.62)  {$\rho_2$};
\node[right=2pt, font=\scriptsize, cIII]     at (5, 1.55)  {$\rho_3$};
\node[right=2pt, font=\scriptsize, cIV]      at (5, 0.17)  {$\rho_4$};
\node[above=3pt, font=\small\bfseries] at (2.5,3.8)
  {$s=2,\ d=4$\ \ {\normalfont\small($s\ge d/4$)}};
\end{scope}
\end{tikzpicture}
\caption{Phase structure of the minimax rate $r^*_{\mathrm{2samp}}(N,\varepsilon)\asymp N^{-\rho(\alpha)}$ under $\varepsilon=N^{-\alpha}$. The bold curve is the lower envelope $\rho(\alpha)=\min_i\rho_i(\alpha)$.  Thin dashed, dash-dotted, or dotted curves show the individual exponent functions, and shaded patterned regions indicate the active regimes. Left ($s=1,d=8$, $s<d/4$): all four exponents attain the envelope, producing four phases with transitions at $\alpha=\tfrac13,\tfrac34,\tfrac9{10}$. Right ($s=2,d=4$, $s\ge d/4$): $\rho_2$ is inactive, so Phase~II disappears and the remaining transitions occur at $\alpha=\tfrac12$ and $\alpha=\tfrac34$.}
\label{fig:phase-envelope}
\end{figure}

\section{Upper Bounds}
\label{sec:ub}
We prove the upper bound in \Cref{thm:main-rate} by binning the samples and applying a private discrete two-sample test to the resulting binned data.  The proof separates the discrete testing problem from the approximation step.  On the discrete side, we exploit the fact that bounded densities induce histograms with uniformly bounded cell masses, which allows a private tester with a sharper dependence than in the unrestricted discrete problem.  On the continuous side, a binning lemma transfers separation between smooth densities to separation between their histograms, up to an approximation loss. Balancing the binning bias against the sampling and privacy costs of the discrete test gives the rate in \Cref{thm:main-rate}.

We first establish the discrete testing result, then transfer it to smooth densities by binning, and finally give a Rao--Blackwellized implementation. Throughout this section, we keep the notation $N=N_X\wedge N_Y$. In the balanced split construction below, $n$ denotes the size of each artificial half; thus $2n\le N$ observations from each sample are used before splitting. We write $k$ for the discrete domain size. In the continuous binned problem, $k=\kappa^d$, where $\kappa$ is the side resolution of the regular grid.

\subsection{Private Two-Sample Testing for Discrete Distributions}
\label{sec:ub-discrete}

We begin with a balanced multinomial model, which captures the main technical ingredients for the upper bound.  For $p,q\in\Delta_k$, suppose that, for notational simplicity, we observe $2n$ labels from each distribution:
\begin{align*}
S^X_1,\ldots,S^X_{2n}\stackrel{\iid}{\sim} p,
\qquad
S^Y_1,\ldots,S^Y_{2n}\stackrel{\iid}{\sim} q.
\end{align*}
For unequal original sample sizes, we may choose $2n\le N_X\wedge N_Y$ and discard the remaining observations; this affects only constant factors in the final upper bounds.  A more refined version that uses all observations efficiently is discussed in \Cref{sec:rb-implementation}.

The upper bound uses discrete closeness testing after binning.  For reference, in the unrestricted discrete problem, the optimal central-DP sample complexity, up to constant factors, is \citep[Theorem~7]{Zhang2021}
\begin{align*}
\underbrace{\frac{k^{1/2}}{\tau^2}
+\frac{k^{2/3}}{\tau^{4/3}}}_{\text{nonprivate}}
+\underbrace{
\frac{\sqrt{k}}{\tau\sqrt{\varepsilon}}
+\frac{k^{1/3}}{\tau^{4/3}\varepsilon^{2/3}}
+\frac{1}{\tau\varepsilon}
}_{\text{privacy-driven}},
\end{align*}
where $\tau$ is the discrete $\ell_1$ separation parameter and $k$ is the number of bins. Crucially, the binned distributions arising in the upper-bound argument belong to a structured subclass of $\Delta_k$. If the density satisfies $f\le M$ and the domain is partitioned into $k$ equal-volume cells, then every induced cell probability is at most $M/k$.  Thus binning maps bounded densities into the flat discrete subclass
\begin{align*}
\mathcal{P}_{k,M}
:=
\{v\in\Delta_k:\|v\|_\infty\le M/k\}.
\end{align*}
On this subclass, the heavy-element contribution $k^{2/3}/\tau^{4/3}$ from unrestricted closeness testing is no longer the relevant nonprivate bottleneck.  Uniformly over $p,q\in\mathcal P_{k,M}$, the split statistic has nonprivate cost $\sqrt{k}/\tau^2$, while retaining the same privacy-dependent costs as in the unrestricted problem.  This bounded-histogram refinement is the discrete testing ingredient behind the continuous upper bound developed in \Cref{sec:ub-binning}.

One technical distinction of our analysis is that it remains entirely within the multinomial model, avoiding the Poissonization used in prior nonprivate and private closeness-testing analyses \citep{DiakonikolasGouleakisKanePeeblesPrice2021,Zhang2021}. The closest related multinomial analysis is due to \citet{CanonneSun2022}, whose coordinatewise binomial comparison argument assumes $p,q\in[0,1/4]$. We prove a version valid for all $p,q\in[0,1]$, adapted to our normalization, and then specialize it to the bounded-histogram subclass $\mathcal P_{k,M}$. See \Cref{sec:expectation_gap}.

\paragraph{Split Statistic and Monte Carlo Permutation Test.}
At the discrete level, we use a private version of the split closeness statistic. Given two samples from $p$ and $q$, each with $2n$ observations, we randomly split each sample into two halves and let $C^{X,1},C^{X,2},C^{Y,1},C^{Y,2}\in\mathbb N^k$ be the four resulting histograms. Each histogram is based on $n$ observations. We then compute
\begin{align*}
\begin{aligned}
Z_n
:=
\sum_{i=1}^{k}
\bigl\{
|C_i^{X,1}-C_i^{Y,1}|
+
|C_i^{X,2}-C_i^{Y,2}|
-
|C_i^{X,1}-C_i^{X,2}|
-
|C_i^{Y,1}-C_i^{Y,2}|
\bigr\},
\end{aligned}
\end{align*}
the split statistic introduced by \citet{DiakonikolasGouleakisKanePeeblesPrice2021} for nonprivate two-sample testing. It compares cross-sample discrepancies with the within-sample fluctuations induced by the random split. The statistic tends to be large when $p$ and $q$ differ, while under $p=q$ its expectation is zero. Moreover, it is stable in the sense that changing one record can change its value by at most four in the worst case. Consequently, only a modest amount of noise is required to privatize the statistic under differential privacy. The corresponding constant-sensitivity property is also the key observation underlying the central-DP multinomial test of \citet{Zhang2021}.  However, because the required calibration constants in that work are not made explicit, the result does not immediately translate into an implementable finite-sample test.  We address this issue by using the Monte Carlo permutation approach.

We now introduce the notation used to implement the random split and its permutation calibration.  For integers $a,b\ge1$, let
\begin{align*}
\mathfrak L_{a,b}
\coloneqq
\left\{
\ell\in\{X,Y\}^{a+b}:
|\ell^{-1}(X)|=a,\ |\ell^{-1}(Y)|=b
\right\}
\end{align*}
be the set of two-sample labelings with $a$ labels $X$ and $b$ labels $Y$, where $\ell^{-1}(G):=\{r:\ell(r)=G\}$.  In the balanced split construction, $\mathfrak L_{2n,2n}$ is the set of balanced two-sample labelings of the $4n$ observations in the pooled sample.  For $\ell\in\mathfrak L_{2n,2n}$, let $\Lambda_n(\ell)$ be the set of four-block refinements $\lambda=(A_{X,1},A_{X,2},A_{Y,1},A_{Y,2})$ such that, for each group indicator $G\in\{X,Y\}$, the sets $A_{G,1}$ and $A_{G,2}$ are disjoint subsets of $\ell^{-1}(G)$ and both have size $n$.  Given the pooled discrete sample
\begin{align*}
W:=(W_1,\ldots,W_{4n}) \coloneqq (S_1^X,\ldots,S_{2n}^X,S_1^Y,\ldots,S_{2n}^Y)\in[k]^{4n},
\end{align*}
and $\lambda\in\Lambda_n(\ell)$, define
\begin{align*}
C_i^{G,a}(W,\lambda)
:=
\sum_{j=1}^{4n}
\mathbf 1\{W_j=i,\ j\in A_{G,a}\},
\qquad i\in[k].
\end{align*}
The corresponding split statistic is
\begin{align*}
\begin{aligned}
Z_n(W,\lambda)
&:=
\sum_{i=1}^{k}
\Bigl\{
|C_i^{X,1}(W,\lambda)-C_i^{Y,1}(W,\lambda)|
+|C_i^{X,2}(W,\lambda)-C_i^{Y,2}(W,\lambda)|
\\
&\qquad
-|C_i^{X,1}(W,\lambda)-C_i^{X,2}(W,\lambda)|
-|C_i^{Y,1}(W,\lambda)-C_i^{Y,2}(W,\lambda)|
\Bigr\}.
\end{aligned}
\end{align*}
Let $\ell^{(0)}\in\mathfrak L_{2n,2n}$ denote the observed two-sample labeling induced by the pooled sample $W$. Thus the statistic $Z_n$ introduced earlier is $Z_n(W,\lambda^{(0)})$ when $\lambda^{(0)}\sim\Unif(\Lambda_n(\ell^{(0)}))$ is the random split of the observed two-sample labeling.

With this notation in place, we apply the pure-DP permutation calibration of \citet{KimSchrab2026}, a private extension of the classical permutation test for statistics with bounded sensitivity.  The test adds independent Laplace noise, with a common scale, to the observed statistic and to the statistics computed from randomly permuted data, and then uses the resulting permutation p-value. The complete procedure is summarized in \Cref{alg:mcperm}.

\begin{algorithm}[t]
\caption{Discrete DP Monte Carlo permutation test
}
\label{alg:mcperm}
 \KwIn{Two samples of discrete labels $S^X_1,\ldots,S^X_{2n}$ and
$S^Y_1,\ldots,S^Y_{2n}$ in $[k]$. Privacy budget
$\varepsilon$, level $\gamma$, number of Monte Carlo relabelings
$B_{\mathrm{perm}}$.}
Let $W=(W_1,\ldots,W_{4n})$ be the pooled discrete labels obtained by
concatenating the two input samples, and let
$\ell^{(0)}\in\mathfrak L_{2n,2n}$ be the induced two-sample labeling\;
Draw $\lambda^{(0)}\sim\Unif(\Lambda_n(\ell^{(0)}))$\;
Compute $T_0:=Z_n(W,\lambda^{(0)})$\;
Draw $\zeta_0,\zeta_1,\dots,\zeta_{B_{\mathrm{perm}}}
\stackrel{\iid}{\sim}\Lap(1)$ and set
$M_0:=T_0+(8/\varepsilon)\zeta_0$\;
\For{$b=1,\dots,B_{\mathrm{perm}}$}{
  Draw $\ell^{(b)}\sim\Unif(\mathfrak L_{2n,2n})$, and then draw
  $\lambda^{(b)}\sim\Unif(\Lambda_n(\ell^{(b)}))$\;
  Compute $T_b:=Z_n(W,\lambda^{(b)})$\;
  Set $M_b:=T_b+(8/\varepsilon)\zeta_b$\;
}
Set
$\widehat p_{\mathrm{dpperm}}=
\{1+\sum_{b=1}^{B_{\mathrm{perm}}}\mathbf{1}\{M_b\ge M_0\}\}/(B_{\mathrm{perm}}+1)$\;
Reject $H_0$ if  $\widehat p_{\mathrm{dpperm}}\le\gamma$\;
\end{algorithm}

Let $\Psi_{\mathrm{dpperm},\gamma}$ denote the binary decision returned by the DP permutation test in \Cref{alg:mcperm}.  The next proposition records its finite-sample validity and privacy guarantees.

\begin{proposition}[Validity and privacy of the DP permutation test]
\label{prop:dpperm-validity-privacy}
For every $\varepsilon\in(0,\infty]$, $\gamma\in[0,1]$, and every $B_{\mathrm{perm}}\ge 1$, the test $\Psi_{\mathrm{dpperm},\gamma}$ is $\varepsilon$-differentially private and satisfies
\begin{align*}
\sup_{p\in\Delta_k}
\Pbb_{p,p}\bigl(\Psi_{\mathrm{dpperm},\gamma}=1\bigr)
\le \gamma.
\end{align*}
\end{proposition}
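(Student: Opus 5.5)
The proof has two independent parts: type~I validity through exchangeability, and privacy through a sensitivity bound and the Laplace mechanism.

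\textbf{Validity.} Under $p=q$ the pooled vector $W=(W_1,\ldots,W_{4n})$ is i.i.d.\ from $p$, so its law is invariant under every permutation of the $4n$ indices. I would write each $T_b$ as $Z_n(W,\lambda^{(b)})$, where $\lambda^{(b)}$ is distributed as follows: draw $\ell^{(b)}$ uniformly from $\mathfrak L_{2n,2n}$, then draw $\lambda^{(b)}$ uniformly from $\Lambda_n(\ell^{(b)})$. This law is simply the uniform law on all ordered four-block partitions of $[4n]$ into blocks of size $n$, and it is invariant under index permutations.

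For $b=0$ the labeling is fixed at $\ell^{(0)}$ rather than uniform. I would handle this by exchangeability: let $\pi$ be a uniform permutation of $[4n]$. Then $W\circ\pi$ has the same law as $W$, and $\pi$ maps a fixed labeling to a uniform one. Hence, under the null, the tuple
\[
\bigl(Z_n(W,\lambda^{(0)}),Z_n(W,\lambda^{(1)}),\ldots,Z_n(W,\lambda^{(B_{\mathrm{perm}})})\bigr)
\]
has the same joint law as the tuple in which all $B_{\mathrm{perm}}+1$ splits are i.i.d.\ uniform partitions, independent of $W$. Conditionally on $W$, the $T_b$ are then i.i.d. The $\zeta_b$ are i.i.d.\ and independent of everything else, so the vector $(M_0,\ldots,M_{B_{\mathrm{perm}}})$ is exchangeable.

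The standard rank argument now applies. Ties among the $M_b$ occur with probability zero because the Laplace law is continuous, and the rank of $M_0$ is then uniform. This gives $\Pbb(\widehat p_{\mathrm{dpperm}}\le\gamma)\le\gamma$ for every $p$; the case $\varepsilon=\infty$ (no noise) is handled the same way, since $\ge$ in the p-value is conservative under ties.

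\textbf{Privacy.} Fix all auxiliary randomness $(\lambda^{(0)},\ldots,\lambda^{(B_{\mathrm{perm}})})$, whose law does not depend on the data values. Changing one record $W_j$ moves one unit between two cells in exactly one of the four histograms. By the triangle inequality each absolute difference involving that histogram changes by at most $1$. There are two such terms per affected cell and two affected cells, so $|\Delta Z_n|\le 4$ for every fixed split. Consequently the vector $(T_0,\ldots,T_{B_{\mathrm{perm}}})$ changes by at most $4$ in each coordinate.

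A naive vector Laplace mechanism would cost $\varepsilon(B_{\mathrm{perm}}+1)/2$, which is too much. This is the main obstacle, and I would resolve it by invoking the argument of \citet{KimSchrab2026}. The output depends only on the indicators $\mathbf 1\{M_b-M_0\ge0\}$. If the observed statistic's noise is at scale $8/\varepsilon$, one can couple neighboring datasets by shifting $\zeta_0$ by $\Delta T_0$ and each $\zeta_b$ by $\Delta T_b$, with each shift bounded by $4$. However, the shifts on the $\zeta_b$ cannot all be paid for individually.

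The trick I would use instead is a monotonicity argument. Replace $T_b$ by $T_b'$ with $|T_b-T_b'|\le4$. The rejection event $\{\#\{b:M_b\ge M_0\}\le c\}$ then satisfies
\[
\{\text{reject on }D'\}\supseteq\{\text{reject on }D \text{ with } T_0 \text{ increased by } 8\}.
\]
This holds because each $M_b-M_0$ moves by at most $8$, all absorbed into the single shift of $\zeta_0$. Shifting $\zeta_0\sim\Lap(1)$ by $8/(8/\varepsilon)=\varepsilon$ in scaled units changes probabilities by at most a factor $e^{\varepsilon}$. Thus $\Pbb_D(\text{reject})\le e^{\varepsilon}\Pbb_{D'}(\text{reject})$, and the same argument for the complementary event gives $\varepsilon$-DP of the binary output. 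Averaging over the auxiliary randomness preserves the inequality.
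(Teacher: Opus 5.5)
Your argument is correct in substance and uses the same decomposition as the paper. The ingredients are the sensitivity bound $\GS(Z_n)\le 4$ (the paper's \Cref{lem:mult-bdhist-sensitivity}), the Laplace scale $8/\varepsilon=2\GS/\varepsilon$, and type~I control obtained by symmetrizing with an auxiliary uniform permutation of the pooled indices, so that the observed split is exchangeable with the Monte Carlo splits. The difference is in how much is proved directly. The paper cites \citet[Theorems~1--2]{KimSchrab2026} through \Cref{lem:private-permutation-cited} for both privacy and validity. You instead reprove the privacy part with a single-shift argument on $\zeta_0$, which shows why the factor $2$ in the scale suffices and why $B_{\mathrm{perm}}$ does not enter the privacy cost.

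There is one sign error to fix in your key inclusion. The rejection event is increasing in $T_0$, so raising $T_0$ by $8$ can only enlarge it. If $\{\text{reject on }D'\}\supseteq\{\text{reject on }D\text{ with }T_0+8\}$ held, you would get $\Pbb_{D'}(\text{reject})\ge\Pbb_D(\text{reject})$ at no privacy cost. The inclusion in fact fails, for example when $T_0'=T_0-4$ and $T_b'=T_b+4$.

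What does hold is the implication $M_b'-M_0'\ge0\Rightarrow M_b-M_0\ge-8$. This gives $\{\text{reject on }D'\}\supseteq\{\text{reject on }D\text{ with }T_0\text{ decreased by }8\}$, which amounts to shifting $\zeta_0$ by $-\varepsilon$. The Laplace density ratio then bounds the probability of the latter event below by $e^{-\varepsilon}\Pbb_D(\text{reject})$, which is exactly the inequality you state. The ``increased by $8$'' inclusion is the correct one for the acceptance event.
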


\begin{proof}
The proof is in \Cref{app:proof:prop:dpperm-validity-privacy}.
\end{proof}
This proposition is a direct application of \citet[Theorems~1--2]{KimSchrab2026} to $Z_n$ via \Cref{lem:private-permutation-cited}; the sensitivity bound $\GS(Z_n)\le4$ is established in \Cref{lem:mult-bdhist-sensitivity}. We next record the corresponding power guarantee, which will serve as the discrete testing building block for the continuous upper bound: over the bounded-histogram class $\mathcal P_{k,M}$, the DP permutation calibration attains the private closeness bound obtained from the unrestricted rate after removing the nonprivate heavy-element term $k^{2/3}/\tau^{4/3}$.

\begin{proposition}[Bounded-histogram closeness testing by DP permutation]
\label{prop:private-bounded-hist-perm}
Fix $M\ge 1$, $\varepsilon\in(0,\infty]$, and set $ B_{\mathrm{perm}} = \bigl\lceil 6\gamma^{-1}\log(2\beta^{-1})\bigr\rceil.$ There exists $C=C(M,\gamma,\beta)>0$ such that the following holds. For every $\tau\in(0,2]$, suppose that
\begin{align*}
n \ge C
\left[
\frac{\sqrt{k}}{\tau^2}
+
\frac{\sqrt{k}}{\tau\sqrt{\varepsilon}}
+
\frac{k^{1/3}}{\tau^{4/3}\varepsilon^{2/3}}
+
\frac{1}{\tau\varepsilon}
\right].
\end{align*}
Then the test $\Psi_{\mathrm{dpperm},\gamma}$ has type~II error at most $\beta$, uniformly over all $p,q\in\mathcal P_{k,M}$ with $\|p-q\|_1\ge\tau$.
\end{proposition}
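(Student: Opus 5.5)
The plan is to reduce the power analysis of the Monte Carlo permutation test to two separate estimates, and then combine them with the permutation-quantile argument of \citet{KimSchrab2026}.

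\textbf{Reduction.} The test rejects when $\widehat p_{\mathrm{dpperm}}\le\gamma$. This happens as soon as $M_0$ exceeds all but at most $\lfloor\gamma(B_{\mathrm{perm}}+1)\rfloor-1$ of the $M_b$. It therefore suffices to show that, with probability at least $1-\beta$,
\[
M_0 \;>\; \text{the } (1-\gamma/2)\text{-quantile of the conditional law of } M_b \text{ given } W.
\]
The choice $B_{\mathrm{perm}}=\lceil 6\gamma^{-1}\log(2\beta^{-1})\rceil$ then handles the Monte Carlo step through a binomial Chernoff bound. The $M_b$ are conditionally i.i.d.\ given $W$, so the number exceeding that quantile is $\mathrm{Bin}(B_{\mathrm{perm}},\gamma/2)$. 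This count falls below $\gamma(B_{\mathrm{perm}}+1)-1$ with probability at least $1-\beta/2$.

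Next I bound the permuted statistic. Under a uniform relabeling, the four blocks are exchangeable given $W$. Hence $\E[T_b\mid W]=0$, and a bounded-differences (or Efron--Stein) argument for the random split gives
\[
\Var(T_b\mid W)\lesssim \sum_i \min(c_i,\,c_i^{\,?})\lesssim n ,
\]
where $c_i$ is the pooled count in bin $i$. I would prove this via Chebyshev, using a variance bound on $Z_n$ under the null with pooled distribution $(p+q)/2\in\mathcal P_{k,M}$. Conditional on $W$ this is a sampling-without-replacement version, and I would either control it directly or bound its expectation over $W$ and apply Markov. Adding $\Lap(8/\varepsilon)$ noise, the conditional quantile is therefore at most
\[
C_\gamma\Bigl(\sqrt{\textstyle\sum_i \E\min(c_i,\cdot)}+\varepsilon^{-1}\Bigr).
\]
More precisely, the natural null variance is $\sum_i\min(n\bar p_i,(n\bar p_i)^2)$. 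For $\bar p_i\le M/k$ this is at most $n^2\sum_i\bar p_i^2\le Mn^2/k$, and also at most $n$.

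\textbf{Mean gap.} This is the key step and the main obstacle. Using the multinomial expectation-gap lemma of \Cref{sec:expectation_gap} (the extension of \citet{CanonneSun2022} to all $p,q\in[0,1]$), I expect a coordinatewise lower bound of the form
\[
\E Z_n\gtrsim\sum_i \min\Bigl(n|p_i-q_i|,\ \frac{n^2(p_i-q_i)^2}{\sqrt{n(p_i+q_i)}+1}\Bigr).
\]
On $\mathcal P_{k,M}$ we have $n(p_i+q_i)\le 2Mn/k$. I split into the regime $n\gtrsim k$, where the denominator is about $\sqrt{n/k}$, and the regime $n\lesssim k$, where it is about $1$. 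Cauchy--Schwarz or Hölder over coordinates with $\sum_i|p_i-q_i|\ge\tau$ then gives
\[
\E Z_n\gtrsim \min\bigl(n\tau,\ n^{3/2}\tau^2/\sqrt{k},\ n^2\tau^2/k\bigr).
\]
The clipping at $n|p_i-q_i|$ must be handled by discarding coordinates where it binds, which costs constants depending on $M$.

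The fluctuation of $Z_n$ itself under the alternative also has to be controlled. Since $\GS(Z_n)\le 4$ (\Cref{lem:mult-bdhist-sensitivity}) under the random split, McDiarmid gives a fluctuation of order $O(\sqrt n)$. That is too crude, so I would instead use an Efron--Stein bound of the form $\Var Z_n\lesssim \E Z_n+\sum_i\min(np_i,n^2p_i^2)$. Together with $|\zeta_0|\lesssim\varepsilon^{-1}$, Chebyshev then reduces the problem to verifying
\[
\E Z_n\ \ge\ C'\bigl(\sqrt{\min(n,n^2/k)}+\varepsilon^{-1}\bigr).
\]

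\textbf{Matching the four terms.} The comparison of $\E Z_n$ with the nonprivate noise level $\min(\sqrt n,n/\sqrt k)$ is required in both regimes and yields $n\gtrsim\sqrt k/\tau^2$. The comparison with $\varepsilon^{-1}$ yields the three privacy terms, one for each piece of the mean lower bound: $n^2\tau^2/k\gtrsim\varepsilon^{-1}$ gives $n\gtrsim \sqrt k/(\tau\sqrt\varepsilon)$; $n^{3/2}\tau^2/\sqrt k\gtrsim\varepsilon^{-1}$ gives $n\gtrsim k^{1/3}/(\tau^{4/3}\varepsilon^{2/3})$; and $n\tau\gtrsim\varepsilon^{-1}$ gives $n\gtrsim 1/(\tau\varepsilon)$. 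Since the mean bound is a minimum, requiring $n$ to exceed the sum of all four thresholds covers every case. The hardest part is the uniform expectation-gap lower bound without Poissonization, which I would take from \Cref{sec:expectation_gap}.
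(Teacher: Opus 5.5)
Your outline follows the paper's proof. The paper reduces power to $\E[Z]-\E[Z^\pi]\gtrsim\sqrt{\Var(Z)+\Var(Z^\pi)}+\varepsilon^{-1}$ by citing Theorem~4 of \citet{KimSchrab2026}, where you re-derive the Monte Carlo step with a binomial Chernoff bound. It bounds $\Var(Z)\le 64Mn^2/k$ by Efron--Stein with the refined sensitivity $|g(a\pm1,b,c,d)-g(a,b,c,d)|\le2\mathbf 1\{b\ne c\}$ (\Cref{lem:bounded-hist-var-binomial}). That is what your Efron--Stein bound amounts to once both samples are included, and the extra $\E Z_n$ term is not needed. It takes the mean from \Cref{lem:binomial-comparison-no-heavy}, aggregated over coordinates with $|p_i-q_i|\ge\tau/(2k)$, and finishes with the same split into $n\le k$ and $n>k$.

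The one step you leave open is the $n^2/k$ bound for the permuted statistic, where you write $\min(c_i,c_i^{?})$. This is exactly where the heavy-element term is removed, so it needs a specific idea.

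\begin{itemize}
\item Your suggested comparison with an i.i.d.\ null from $(p+q)/2$ is not literally valid. The relabeled blocks are drawn without replacement from a pool containing exactly $2n$ draws from each of $p$ and $q$.
\item A bounded-differences bound over all $4n$ positions only gives $\Var(Z^\pi\mid W)\lesssim n$. In the sparse regime this would force $n^2\tau^2/k\gtrsim\sqrt n$, which reinstates the $k^{2/3}/\tau^{4/3}$ term.
\end{itemize}

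The missing observation is that a bin with pooled count $T_i\le1$ contributes $g=0$ under every relabeling, because $g$ vanishes at each standard basis vector. Hence $Z^\pi$ depends only on the labels of the $\sum_iT_i\mathbf 1\{T_i\ge2\}\le\sum_iT_i(T_i-1)$ observations in colliding bins, and it is $4$-Lipschitz in those labels. Concentration on the balanced multislice (\Cref{lem:balanced-multislice-conc}) then gives $\Var(Z^\pi\mid W)\lesssim n\wedge\sum_iT_i(T_i-1)$. Finally, $\E\sum_iT_i(T_i-1)\lesssim n^2\sum_i(p_i^2+q_i^2)\le 2Mn^2/k$ (\Cref{lem:perm-second-moment,cor:perm-second-moment-bounded-hist}). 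With this in place of your placeholder, and Markov over $W$ as you propose, your argument goes through.
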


\begin{proof}
The proof is in \Cref{app:proofs-private-bounded-hist-perm}.
\end{proof}

We now instantiate \Cref{alg:mcperm} for continuous samples.  Binning loses signal through approximation bias but converts the problem into the bounded histogram setting just described.  Thus, after the discrete tester is established, the continuous upper bound reduces to choosing a resolution $\kappa$ that balances approximation bias against discrete testing cost.

\subsection{Binned Test and Upper Bounds}
\label{sec:ub-binning}

Fix a resolution $\kappa$, and let $\mathrm{cell}_\kappa$ be the regular partition map from $[0,1]^d$ to $[\kappa^d]$, obtained by dividing the cube into $\kappa^d$ equal-volume cells.  We convert the continuous observations into discrete labels by setting
\begin{align*}
\widetilde X_j^{(\kappa)}=\mathrm{cell}_\kappa(X_j),
\qquad
\widetilde Y_j^{(\kappa)}=\mathrm{cell}_\kappa(Y_j),
\end{align*}
so that $\widetilde X_j^{(\kappa)},\widetilde Y_j^{(\kappa)}\in[\kappa^d]$. Using $2n$ observations from each sample and binning them at resolution $\kappa$ gives two label samples in $[\kappa^d]$.  The random split into four blocks of size $n$ is then performed internally by \Cref{alg:mcperm}. Thus the continuous test at resolution $\kappa$ is the discrete DP permutation test with $k=\kappa^d$, preceded by a deterministic binning step.

It remains to check that this binning step places the induced discrete problem within the bounded-histogram setting of the previous subsection, and that it does not lose too much $L_1$ separation. To keep track of the uniform bound entering the discrete tester, we introduce the bounded smooth density class
\begin{align*}
\cF_s^d(L,M)
=
\left\{
f:[0,1]^d\to[0,\infty):
\int_{[0,1]^d} f(x)\,dx=1,\;
f\in\cH_s^d(L),\;
\|f\|_\infty\le M
\right\}.
\end{align*}
For $f\in\cF_s^d(L,M)$, let $p_f^{(\kappa)}\in\Delta_{\kappa^d}$ denote the cell-probability vector induced by binning at resolution $\kappa$. Since every cell has volume $\kappa^{-d}$ and $\|f\|_\infty\le M$, we immediately have $ p_f^{(\kappa)}\in\mathcal P_{\kappa^d,M}. $ The remaining ingredient is to control the approximation error introduced by binning. By the uniform binning approximation lemma (\Cref{lem:binning-approx}), with $C_{\mathrm{bin},s}:=C_{\mathrm{bin}}(d,s,s,L)$,
\begin{align*}
\|p_f^{(\kappa)}-p_g^{(\kappa)}\|_1
\ge
\frac12\|f-g\|_1-C_{\mathrm{bin},s}\kappa^{-s}.
\end{align*}
Hence, whenever $\|f-g\|_1\ge r$, the corresponding binned distributions satisfy
\begin{align*}
\|p_f^{(\kappa)}-p_g^{(\kappa)}\|_1
\ge
\tau_\kappa(r)
:=
\frac r2-C_{\mathrm{bin},s}\kappa^{-s}.
\end{align*}
Applying \Cref{prop:private-bounded-hist-perm} with $k=\kappa^d$ and $\tau=\bar\tau_\kappa(r):=\min\{1,\tau_\kappa(r)\}$ therefore yields the following resolution-dependent guarantee whenever $\bar\tau_\kappa(r)>0$.

\begin{theorem}[Private upper bound over $\cF_s^d(L,M)$]
\label{thm:holder-l1-private-upper-d}
Fix $d\in\mathbb{N}$, $s>0$, $L>0$, and $M\ge 1$, and let $N=N_X\wedge N_Y$ and $C_{\mathrm{bin},s}:=C_{\mathrm{bin}}(d,s,s,L)$. There exists $C=C(d,s,L,M,\gamma,\beta)>0$ such that the following holds. For every $r\in(0,2]$, $\varepsilon>0$, and $\kappa\in\mathbb N$, define
\begin{align*}
\bar\tau_\kappa(r)
:=
\min\left\{1,\,\frac r2-C_{\mathrm{bin},s}\kappa^{-s}\right\}.
\end{align*}
If $\bar\tau_\kappa(r)>0$ and
\begin{equation}
N \ge C
\left(
\frac{\kappa^{d/2}}{\bar\tau_\kappa(r)^2}
+
\frac{\kappa^{d/2}}{\bar\tau_\kappa(r)\sqrt{\varepsilon}}
+
\frac{\kappa^{d/3}}{\bar\tau_\kappa(r)^{4/3}\varepsilon^{2/3}}
+
\frac{1}{\bar\tau_\kappa(r)\varepsilon}
\right),
\label{eq:holder-private-sample-cond-d}
\end{equation}
then the binned version of \Cref{alg:mcperm} with resolution $\kappa$ is an $\varepsilon$-DP test of $H_0:f=g$ against $H_1:\|f-g\|_1\ge r$ over $\cF_s^d(L,M)$, with type~I error at most $\gamma$ and type~II error at most $\beta$. Moreover, for another constant $C'=C'(d,s,L,M,\gamma,\beta)>0$, there exists a choice of resolution $\kappa$ for which the same guarantee holds whenever
\begin{equation}
r
\ge
C'
\inf_{\kappa\in\mathbb{N}}
\left\{
\kappa^{-s}
+
\frac{\kappa^{d/4}}{\sqrt{N}}
+
\frac{\kappa^{d/2}}{N\sqrt{\varepsilon}}
+
\frac{\kappa^{d/4}}{N^{3/4}\sqrt{\varepsilon}}
+
\frac{1}{N\varepsilon}
\right\}.
\label{eq:holder-private-separation-d}
\end{equation}
\end{theorem}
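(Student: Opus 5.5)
The proof has two parts: a direct reduction to \Cref{prop:private-bounded-hist-perm} for the first claim, and an explicit choice of $\kappa$ for the second.

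\emph{First claim.} Privacy and type~I validity need no new work.

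\begin{itemize}
\item Binning by $\mathrm{cell}_\kappa$ is a fixed, record-wise map. Changing one continuous record therefore changes exactly one discrete label, so neighboring continuous datasets map to neighboring discrete datasets.
\item Hence \Cref{prop:dpperm-validity-privacy} makes the composite test $\varepsilon$-DP.
\item Under $f=g$ the binned samples are i.i.d.\ from the common vector $p_f^{(\kappa)}$. The same proposition then bounds the type~I error by $\gamma$, uniformly over $\cF_s^d(L,M)$.
\end{itemize}

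For power, fix $(f,g)$ with $\|f-g\|_1\ge r$.

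\begin{itemize}
\item By the remark preceding the theorem, $p_f^{(\kappa)},p_g^{(\kappa)}\in\mathcal P_{\kappa^d,M}$.
\item By \Cref{lem:binning-approx}, $\|p_f^{(\kappa)}-p_g^{(\kappa)}\|_1\ge\tau_\kappa(r)\ge\bar\tau_\kappa(r)>0$. Also $\bar\tau_\kappa(r)\le 1\le 2$, so it is an admissible separation in \Cref{prop:private-bounded-hist-perm}.
\item Take $n=\lfloor N/2\rfloor$, so that $2n\le N$ observations from each sample are used.
\item Condition \eqref{eq:holder-private-sample-cond-d}, with $C$ equal to twice the constant of \Cref{prop:private-bounded-hist-perm} plus a margin for the floor, gives $n\ge C_0[\cdots]$ with $k=\kappa^d$ and $\tau=\bar\tau_\kappa(r)$. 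For the floor: each bracket term is at least $1$ (since $\bar\tau\le1$, $\varepsilon$-free term $\kappa^{d/2}/\bar\tau^2\ge1$), so $N\ge 2$ and $\lfloor N/2\rfloor\ge N/4$.
\end{itemize}

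The type~II bound $\beta$ then follows directly from \Cref{prop:private-bounded-hist-perm}.

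\emph{Second claim.} Let $\kappa^\star$ be a near-minimizer of the bracket in \eqref{eq:holder-private-separation-d}; the infimum over $\mathbb N$ is attained up to a factor $2$, which is absorbed into $C'$. Write $B(\kappa^\star)$ for the bracket value.

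\begin{itemize}
\item Choose $C'\ge 4C_{\mathrm{bin},s}$. Then $C_{\mathrm{bin},s}(\kappa^\star)^{-s}\le r/4$, so $\tau_{\kappa^\star}(r)\ge r/4$.
\item Since $r\le2$, this gives $\bar\tau:=\bar\tau_{\kappa^\star}(r)\ge r/4$, and $\bar\tau>0$.
\end{itemize}

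It then suffices to verify \eqref{eq:holder-private-sample-cond-d} with $\bar\tau$ replaced by $r/4$. Each of the four terms is handled by one term of the bracket, using $r\ge C'B(\kappa^\star)$:

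\begin{itemize}
\item $r\ge C'(\kappa^\star)^{d/4}N^{-1/2}$ gives $N\ge C'^2(\kappa^\star)^{d/2}/r^2$, which covers the first term.
\item $r\ge C'(\kappa^\star)^{d/2}/(N\sqrt\varepsilon)$ gives $N\ge C'(\kappa^\star)^{d/2}/(r\sqrt\varepsilon)$, which covers the second.
\item $r\ge C'(\kappa^\star)^{d/4}/(N^{3/4}\sqrt\varepsilon)$ gives $N^{3/4}\ge C'(\kappa^\star)^{d/4}/(r\sqrt\varepsilon)$. Raising to the power $4/3$ yields $N\ge C'^{4/3}(\kappa^\star)^{d/3}/(r^{4/3}\varepsilon^{2/3})$, which covers the third.
\item $r\ge C'/(N\varepsilon)$ gives $N\ge C'/(r\varepsilon)$, which covers the fourth.
\end{itemize}

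Each term in \eqref{eq:holder-private-sample-cond-d} is then at most $4^2 C/C'^{\,\min(1,4/3,2)}\cdot N$. Choosing $C'$ large relative to $C$ and $4C_{\mathrm{bin},s}$ makes their sum at most $N/C$, as required.

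\emph{Main obstacle.} The argument is mechanical bookkeeping. The only delicate points are the two noted above: ensuring the bias $C_{\mathrm{bin},s}\kappa^{-s}$ takes at most a fixed fraction of $r$, which gives positivity of $\bar\tau$, and matching the exponent $4/3$ in the third term. The substantive work lies in \Cref{prop:private-bounded-hist-perm} and \Cref{lem:binning-approx}, which are assumed.
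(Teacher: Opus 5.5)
Your proposal is correct and follows the paper's proof essentially step for step: reduce to \Cref{prop:private-bounded-hist-perm} with $k=\kappa^d$, $\tau=\bar\tau_\kappa(r)$ and block size $\lfloor N/2\rfloor$. For the second claim, take $C'\ge 4C_{\mathrm{bin},s}$ so that $\bar\tau_\kappa(r)\ge r/4$ at a near-minimizing $\kappa$, and compare the four sample-size terms one by one. (Minor wording only: binning changes at most one label, not exactly one, and only the $\varepsilon$-free bracket term is guaranteed to be at least $1$, which is all the floor argument needs.)
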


\begin{proof}
The proof is in \Cref{app:proofs-continuous-upper}.
\end{proof}

The five terms in \eqref{eq:holder-private-separation-d} have simple interpretations. Binning at resolution $\kappa$ introduces the binning bias $\kappa^{-s}$. After binning, the problem has $k=\kappa^d$ cells, and the usual nonprivate sampling fluctuation contributes $\kappa^{d/4}/\sqrt N$. Privacy adds the next two $\kappa$-dependent terms, which come from the interaction of the number of cells, sampling noise, and the Laplace noise used for private calibration. The last term, $(N\varepsilon)^{-1}$, does not depend on $\kappa$ and is the irreducible linear privacy cost. Thus choosing $\kappa$ amounts to balancing the binning bias against the three $\kappa$-dependent testing costs.

\paragraph{Bandwidth Optimization and Explicit Rate}

Optimizing the bound in \Cref{thm:holder-l1-private-upper-d} yields the rate in \Cref{thm:main-rate}. Balancing the bias $\kappa^{-s}$ with the three $\kappa$-dependent stochastic and privacy terms leads to the resolutions
\begin{align*}
\kappa_1\asymp N^{2/(4s+d)},\qquad
\kappa_2\asymp (N\sqrt\varepsilon)^{2/(2s+d)},\qquad
\kappa_3\asymp (N^{3/2}\varepsilon)^{2/(4s+d)},
\end{align*}
which produce the corresponding separation scales
\begin{align*}
N^{-2s/(4s+d)},\qquad
(N\sqrt{\varepsilon})^{-2s/(2s+d)},\qquad
(N^{3/2}\varepsilon)^{-2s/(4s+d)}.
\end{align*}
The remaining term $(N\varepsilon)^{-1}$ is independent of $\kappa$. We summarize the resulting rate in the following corollary.

\begin{corollary}[Explicit binned private upper rate]
\label{cor:holder-l1-private-explicit-rate}
Under the assumptions of \Cref{thm:holder-l1-private-upper-d}, there exists $C=C(d,s,L,M,\gamma,\beta)>0$ such that the same guarantee holds whenever
\begin{equation}
r
\ge
C\left[
N^{-2s/(4s+d)}
\vee
(N\sqrt{\varepsilon})^{-2s/(2s+d)}
\vee
(N^{3/2}\varepsilon)^{-2s/(4s+d)}
\vee
(N\varepsilon)^{-1}
\right].
\label{eq:explicit-holder-private-rate-max}
\end{equation}
Consequently, since $\cD_s^d(L)\subset\cF_s^d(L,L)$, the same rate is achieved over $\cD_s^d(L)$.
\end{corollary}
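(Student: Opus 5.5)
The plan is to derive the corollary from the second part of \Cref{thm:holder-l1-private-upper-d}: I will show that the infimum in \eqref{eq:holder-private-separation-d} is bounded, up to constants depending only on $(d,s)$, by the four-term maximum in \eqref{eq:explicit-holder-private-rate-max}. Write the bracket as $B(\kappa)=\kappa^{-s}+T_1(\kappa)+T_2(\kappa)+T_3(\kappa)+(N\varepsilon)^{-1}$, with
\begin{align*}
T_1(\kappa)=\kappa^{d/4}N^{-1/2},\qquad T_2(\kappa)=\kappa^{d/2}(N\sqrt\varepsilon)^{-1},\qquad T_3(\kappa)=\kappa^{d/4}N^{-3/4}\varepsilon^{-1/2}.
\end{align*}
Each $T_j$ is increasing in $\kappa$ and $\kappa^{-s}$ is decreasing. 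Balancing $\kappa^{-s}$ against $T_j$ gives the solution $\kappa_j^\star$ listed before the corollary. These are $\kappa_1^\star=N^{2/(4s+d)}$, $\kappa_2^\star=(N\sqrt\varepsilon)^{2/(2s+d)}$ and $\kappa_3^\star=(N^{3/2}\varepsilon)^{2/(4s+d)}$, and at each balance point the common value is $R_1=N^{-2s/(4s+d)}$, $R_2=(N\sqrt\varepsilon)^{-2s/(2s+d)}$ and $R_3=(N^{3/2}\varepsilon)^{-2s/(4s+d)}$, respectively.

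The key step is to take $\kappa^\star:=\min\{\kappa_1^\star,\kappa_2^\star,\kappa_3^\star\}$ and set $\kappa=\lceil\kappa^\star\rceil$. Monotonicity then does the work. Since $\kappa^\star\le\kappa_j^\star$ for every $j$, we get $T_j(\kappa^\star)\le T_j(\kappa_j^\star)=R_j$. Since $\kappa^\star=\kappa_{j_0}^\star$ for some $j_0$, we get $(\kappa^\star)^{-s}=R_{j_0}\le\max_j R_j$. Rounding up to an integer changes $\kappa$ by at most a factor of $2$ when $\kappa^\star\ge1$. This inflates each $T_j$ by at most $2^{d/2}$ and can only decrease $\kappa^{-s}$. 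Therefore $B(\lceil\kappa^\star\rceil)\lesssim R_1\vee R_2\vee R_3\vee(N\varepsilon)^{-1}$, and the claimed bound follows with $C$ chosen as $C'$ times this constant.

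The main obstacle is the degenerate case $\kappa^\star<1$, which happens for very small $N$ or $\varepsilon$. In that case $\kappa=1$, so the bias term $\kappa^{-s}=1$ must be absorbed elsewhere. If $\kappa^\star=\kappa_{j_0}^\star<1$, then $R_{j_0}=(\kappa_{j_0}^\star)^{-s}>1$, so the right-hand side of \eqref{eq:explicit-holder-private-rate-max} is at least $C$. For $C\ge2$, no $r\in(0,2]$ satisfies the condition at all, and the statement is vacuous. Alternatively, one can note that any $r$ meeting a threshold of order one already satisfies the hypothesis of the theorem with $\kappa=1$, since $B(1)\lesssim 1+R_j$-type terms. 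Either way, enlarging $C$ handles this case.

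The final sentence of the corollary is immediate. Every $h\in\cD_s^d(L)$ satisfies $\|h\|_\infty\le L$, because the $\alpha=0$ term in the H\"older norm is bounded by $L$. Hence $\cD_s^d(L)\subset\cF_s^d(L,L)$. Applying the result with $M=\max\{L,1\}=L$ (recall $L>1$) then gives the same rate, with both type~I and type~II control, over $\cD_s^d(L)$.
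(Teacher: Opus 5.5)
Your proposal is correct and follows essentially the same route as the paper. Both arguments choose the resolution at the smallest balance point, so that monotonicity bounds each increasing term by its balanced value and the bias by the largest one, and both dismiss the degenerate case because the threshold then exceeds $2$. The differences are cosmetic: the paper merges the two $\kappa^{d/4}$ terms into a single coefficient and afterwards splits the result via $(u+v)^t\lesssim u^t+v^t$, whereas you keep three separate balance points and also make the integer rounding explicit.
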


\begin{proof}
The proof is in \Cref{app:proof:cor-holder-private-explicit-rate}.
\end{proof}

The preceding corollary completes the analysis of the split-sample test. This construction, however, uses only a balanced subsample of size $2n\le N_X\wedge N_Y$ from each sample and relies on an artificial random split, which can degrade finite-sample performance. The next subsection removes both limitations by Rao--Blackwellization. The resulting statistic uses all binned observations, handles unequal sample sizes directly, and admits an exact finite-sum representation that can be evaluated in $O(N_X+N_Y)$ unit-cost arithmetic operations once the contingency table is available. This refinement requires a more involved construction but preserves the same upper-bound guarantees.

\subsection{Rao--Blackwellized Statistic and Guarantees}
\label{sec:rb-implementation}

We now replace the split statistic by its Rao--Blackwellized analogue, obtained by averaging over the auxiliary splitting randomness.  This removes the artificial split from the implemented statistic and improves finite-sample performance, while leaving the rate guarantees unchanged. We use the same labeling and refinement notation as in \Cref{sec:ub-discrete}, with the following extension to arbitrary sample sizes. Throughout this subsection the binning resolution is fixed, so we suppress its dependence in the notation. For the pooled binned sample
\begin{align*}
W:=(\widetilde X_1,\ldots,\widetilde X_{N_X},
\widetilde Y_1,\ldots,\widetilde Y_{N_Y})
\in[k]^{N_X+N_Y},
\end{align*}
let $\ell^{(0)}$ denote the observed two-sample labeling, and set $m=\left\lfloor (N_X\wedge N_Y)/2\right\rfloor.$ For $\ell\in\mathfrak L_{N_X,N_Y}$, let $\Lambda_m(\ell)$ denote the corresponding collection of four-block refinements, with $m$ observations in each of the blocks $(X,1),(X,2),(Y,1),(Y,2)$.  For $\lambda\in\Lambda_m(\ell)$, write $Z_m(W,\lambda)$ for the split statistic defined in \Cref{sec:ub-discrete}, with block size $m$.

The Rao--Blackwellized statistic associated with a labeling $\ell$ is
\begin{align*}
\overline Z_m(W,\ell)
:=
\E_{\lambda\sim\Unif(\Lambda_m(\ell))}
\bigl[
Z_m(W,\lambda)
\bigr],
\end{align*}
and the statistic used on the observed data is
\begin{align*}
\overline Z
:=
\overline Z_m(W,\ell^{(0)}).
\end{align*}
When $N_X=N_Y=2m$, $\overline Z$ is the conditional mean, given $(W,\ell^{(0)})$, of the ordinary randomly split statistic used in \Cref{alg:mcperm}.  For general $N_X,N_Y$, the same formula averages over all balanced refinements in $\Lambda_m(\ell)$, so the implemented statistic does not depend on a particular split or on a fixed choice of unused observations.  We then use this averaged statistic in the same private permutation-calibration framework, replacing each single-split value $Z_m(W,\lambda)$ by $\overline Z_m(W,\ell)$.  The resulting procedure is summarized in \Cref{alg:rb-dpperm}.

\begin{algorithm}[!t]
\caption{Rao--Blackwellized DP Monte Carlo permutation test}
\label{alg:rb-dpperm}
\KwIn{Pooled binned labels $W=(W_1,\ldots,W_{N_X+N_Y})$, observed two-sample
labeling $\ell^{(0)}$ with class sizes $N_X,N_Y$. Privacy budget
$\varepsilon$, level $\gamma$, number of Monte Carlo relabelings
$B_{\mathrm{perm}}$.}
Set $m=\lfloor (N_X\wedge N_Y)/2\rfloor$\;
Compute $T_0:=\overline Z_m(W,\ell^{(0)})$, where $\overline Z_m$ is the
Rao--Blackwellized statistic obtained by averaging the split statistic over all
admissible internal splits of the two label classes\;
Draw $\zeta_0,\zeta_1,\dots,\zeta_{B_{\mathrm{perm}}}
\stackrel{\iid}{\sim}\Lap(1)$ and set
$M_0:=T_0+(8/\varepsilon)\zeta_0$\;
\For{$b=1,\dots,B_{\mathrm{perm}}$}{
  Draw $\ell^{(b)}\sim\Unif(\mathfrak L_{N_X,N_Y})$ independently\;
  Compute $T_b:=\overline Z_m(W,\ell^{(b)})$\;
  Set $M_b:=T_b+(8/\varepsilon)\zeta_b$\;
}
Set
$\widehat p_{\mathrm{rb\text{-}dpperm}}=
\{1+\sum_{b=1}^{B_{\mathrm{perm}}}\mathbf{1}\{M_b\ge M_0\}\}/(B_{\mathrm{perm}}+1)$\;
Reject $H_0$ if  $\widehat p_{\mathrm{rb\text{-}dpperm}}\le\gamma$\;
\end{algorithm}

Let $\Psi_{\mathrm{rb\text{-}dpperm},\gamma}$ denote the binary decision returned by the Rao--Blackwellized DP permutation test in \Cref{alg:rb-dpperm}.

Although the definition of $\overline Z$ involves an average over many refinements, it admits an exact linear-time implementation from the nonzero full-sample bin-count pairs.  The closed form and algorithmic derivation are given in \Cref{app:proof:prop:rb-linear-time-main-text}.

\begin{proposition}[Exact linear-time Rao--Blackwellized evaluation]
\label{prop:rb-linear-time-main-text}
Given two full binned samples of sizes $N_X$ and $N_Y$, equivalently the contingency table of complete bin-count pairs represented by its nonzero entries, the statistic $\overline Z$ can be computed exactly using $O(N_X+N_Y)$ operations and $O(N_X+N_Y)$ memory.
\end{proposition}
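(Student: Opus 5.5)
The plan is to write $\overline Z$ as a sum over bins, $\overline Z=\sum_{i=1}^k \E_\lambda[\phi_i]$, where $\phi_i$ is the bin-$i$ summand of the split statistic. The key observation is that, under a uniformly random refinement $\lambda\sim\Unif(\Lambda_m(\ell^{(0)}))$, the block counts $C_i^{X,1},C_i^{X,2}$ in bin $i$ depend only on the full-sample count $a_i:=\#\{j:\ell^{(0)}(j)=X,W_j=i\}$. Indeed, choosing two disjoint blocks of size $m$ uniformly from the $N_X$ $X$-points makes $(C_i^{X,1},C_i^{X,2})$ multivariate hypergeometric with parameters $(N_X;a_i;m,m)$. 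Similarly, $(C_i^{Y,1},C_i^{Y,2})$ depends only on $b_i$ and is independent of the $X$-part. Hence each bin's contribution is a deterministic function $F(a_i,b_i)$, with
\begin{align*}
F(a,b)=2\,\E|H^X_a-H^Y_b|-\E|H^X_a-H'^X_a|-\E|H^Y_b-H'^Y_b|.
\end{align*}
Here $H^X_a\sim\mathrm{Hyp}(N_X,a,m)$, $H^Y_b$ is the analogous independent variable, and $(H^X_a,H'^X_a)$ is the pair of counts in two disjoint $m$-blocks. By exchangeability of the two blocks, both cross terms have the same expectation, which gives the factor $2$. Empty bins contribute $F(0,0)=0$, so the sum runs only over the nonzero contingency entries, of which there are at most $N_X+N_Y$.

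The cost of evaluating all $F(a_i,b_i)$ is what I must control. I will compute each term from marginal pmfs. The pmf of $H^X_a$ is supported on $\{0,\dots,\min(a,m)\}$ and can be generated by the ratio recursion $P(h+1)/P(h)=\frac{(a-h)(m-h)}{(h+1)(N_X-a-m+h+1)}$ in $O(\min(a,m)+1)$ operations, starting from a log-factorial table precomputed once in $O(N_X+N_Y)$. For the within-sample term, conditional on $H^X_a=h$, the count in the second block is $\mathrm{Hyp}(N_X-m,a-h,m)$. A naive double sum would therefore cost $O(a^2)$. Instead I will use the identity $\E|U-V|=\E(U+V)-2\E\min(U,V)$, or equivalently express $\E|U-V|$ through tail sums $\sum_t P(U\le t<V)+P(V\le t<U)$. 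Alternatively, I will note that the two-block counts together with the leftover count form a trivariate hypergeometric whose joint pmf factorizes, which allows a closed-form one-dimensional expression for $\E|H-H'|$ via partial sums computed by recursion. For the cross term $\E|H^X_a-H^Y_b|$, independence gives $\sum_t [F_X(t)(1-F_Y(t))+F_Y(t)(1-F_X(t))]$ over $t\le \max(a,b)$, which takes $O(a+b+1)$ operations using cumulative sums.

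Summing these costs over the nonzero bins gives $O\bigl(\sum_i(a_i+b_i+1)\bigr)=O(N_X+N_Y)$ operations. Memory is $O(N_X+N_Y)$, accounting for the factorial table and per-bin temporary arrays that are reused across bins.

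The main obstacle is the within-sample term $\E|H-H'|$. This is where the dependence between the two disjoint blocks could force quadratic work per bin. I would first try to derive a telescoping formula: $\E|H-H'|=2\sum_{t\ge0}P(H'\le t<H)$, combined with the recursion in $t$ of $P(H'\le t, H=h)$ obtained from the hypergeometric ratio. If that proves unwieldy, I would fall back on an exact closed form for $\E\min(H,H')$ derived from the exchangeable urn representation, checking that it is evaluable in $O(a+1)$ steps.
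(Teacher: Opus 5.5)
Your reduction of $\overline Z$ to a sum of per-bin functions $F(a_i,b_i)$ is correct and matches the paper's closed-form lemma. Your cross-term computation also works: independent hypergeometric marginals and the tail-sum formula with CDFs cost $O(\min\{a,m\}+\min\{b,m\}+1)$ per bin, which is a valid alternative to the paper's prefix-sum evaluation over the smaller support and sums to $O(N_X+N_Y)$ over nonempty bins.

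The gap is the one you flagged yourself: you give no linear-cost evaluation of the within-sample term $\E|H-H'|$, and the routes you list do not supply one. Both $2\sum_t\Pbb(H'\le t<H)$ and $\E(H+H')-2\E\min(H,H')$ need the joint law of the dependent pair $(H,H')$. A recursion in $t$ for $\Pbb(H'\le t,H=h)$ still fills a two-dimensional array, and that array is genuinely large in unbalanced designs. For example, take $N_Y=N_X/2$ (so $2m\approx N_X/2$) and a bin holding $a\approx N_X/2$ of the $X$-points. Then every pair in $\{0,\dots,m\}^2$ is in the support of $(H,H')$, giving $\Theta(N_X^2)$ points, so an exact computation along these lines is quadratic. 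The other alternatives (a factorizing trivariate pmf, an urn closed form for $\E\min(H,H')$) are left unverified, and this is exactly the step on which the linear-time claim hinges.

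The missing idea is to condition on the combined block count $S:=H+H'\sim\mathrm{Hypergeom}(N_X,a,2m)$. Given $S=s$, the first-block count is $V\sim\mathrm{Hypergeom}(2m,s,m)$ and $|H-H'|=|2V-s|$, so
\begin{align*}
\E|H-H'|=\sum_{s=0}^{2m}h_{N_X,a,2m}(s)\,d_m(s),
\qquad
d_m(s):=\E|2V-s|.
\end{align*}
The function $d_m$ depends only on $(m,s)$, not on $a$, $N_X$, the bin, or the sample. A single table of length $2m+1$ therefore serves every bin of both samples, after which each within-sample term costs $O(\min\{a,2m\}+1)$.

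The table must itself be built in $O(m)$ total time, not by $2m+1$ separate $O(m)$ sums. The paper does this by identifying $2V-s$ with the position $R_s$ after $s$ steps of a uniformly random arrangement of $m$ steps $+1$ and $m$ steps $-1$. This gives
\begin{align*}
d_m(s+1)=\Bigl(1-\frac{1}{2m-s}\Bigr)d_m(s)+\Pbb(R_s=0),
\qquad
\Pbb(R_{2j}=0)=\binom{m}{j}^2\Big/\binom{2m}{2j},
\end{align*}
with the latter updated by a one-step ratio. With these two ingredients your per-bin accounting goes through.

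One minor point: the support of $\mathrm{Hypergeom}(N_X,a,m)$ starts at $\max\{0,a+m-N_X\}$, not at $0$. This does not affect the cost bound.
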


\begin{proof}
The proof is in \Cref{app:proof:prop:rb-linear-time-main-text}.
\end{proof}

For the guarantees, two facts are needed.  First, Rao--Blackwellization does not affect the privacy or null-calibration argument.  If, conditional on $(W,\ell^{(0)})$, $\lambda^{(0)}\sim\Unif(\Lambda_m(\ell^{(0)}))$, then
\begin{align*}
\overline Z_m(W,\ell^{(0)})
=
\E\!\left[
Z_m(W,\lambda^{(0)})
\mid W,\ell^{(0)}
\right].
\end{align*}
Thus the averaged statistic has the same mean as the single-split statistic, and its variance does not exceed that of the single-split statistic.  Moreover, since $Z_m(W,\lambda)$ has global sensitivity at most four uniformly over $\lambda$, the same bound is inherited by the average. For $w,w'\in[k]^{N_X+N_Y}$, write
\begin{align*}
d_H(w,w'):=\sum_{i=1}^{N_X+N_Y}\mathbf 1\{w_i\ne w_i'\}
\end{align*}
for their Hamming distance. Then
\begin{align*}
\sup_{\substack{w,w'\in[k]^{N_X+N_Y}\\ d_H(w,w')\le1}}
\left|
\overline Z_m(w,\ell)-\overline Z_m(w',\ell)
\right|
\le 4
\end{align*}
for every admissible labeling $\ell$.  Hence the private permutation calibration applies with the same Laplace scale after replacing $Z_m(W,\lambda)$ by $\overline Z_m(W,\ell)$.  These elementary facts are recorded in \Cref{lem:rb-basic-properties}.

The second fact is the corresponding observed-versus-permuted moment comparison. Let $\ell^\pi$ be a uniformly random labeling with $N_X$ labels $X$ and $N_Y$ labels $Y$, independent of the data, and write $\overline Z^{\pi}:=\overline Z_m(W,\ell^\pi)$ and $\overline Z^{(0)}:=\overline Z_m(W,\ell^{(0)})$. With $m=\lfloor (N_X\wedge N_Y)/2\rfloor$, \Cref{lem:rb-perm-moment-bounds} shows that, for arbitrary $p,q\in\Delta_k$, the permuted statistic is conditionally centered, $\E[\overline Z^\pi\mid W]=0$, and, for a universal constant $C>0$, $\Var(\overline Z^{(0)})+\Var(\overline Z^\pi)\le Cm$. Expectation preservation, the coordinatewise comparison in \Cref{lem:binomial-comparison-no-heavy}, and an aggregation of the resulting bounds imply that, whenever $p,q\in\Delta_k$ satisfy $\|p-q\|_1\ge\tau$, the observed mean satisfies the bound below for a universal constant $c>0$:
\begin{align*}
\E_{p,q}\!\left[\overline Z^{(0)}\right]
\ge c\min\!\left\{
m\tau,\frac{m^2\tau^2}{k},\frac{m^{3/2}\tau^2}{\sqrt{k}}
\right\}.
\end{align*}
If, in addition, $p,q\in\mathcal P_{k,M}$, the variance satisfies the improved bound below for a constant $C_M>0$ depending only on $M$:
\begin{align*}
\Var(\overline Z^{(0)})+\Var(\overline Z^\pi)
\le C_M\min\!\left\{\frac{m^2}{k},m\right\}.
\end{align*}
These Rao--Blackwellized moment bounds yield both the optimal unrestricted private closeness rate and its bounded-histogram refinement. In the unrestricted simplex, combining the distribution-free $\sqrt m$ fluctuation scale with the $m^2\tau^2/k$ signal regime yields the classical heavy-element term $k^{2/3}/\tau^{4/3}$. For $p,q\in\mathcal P_{k,M}$, the sharper variance bound eliminates this heavy-element term from the sample-complexity requirement.

\begin{proposition}[Rao--Blackwellized private closeness testing]
\label{prop:private-closeness-rb}
\label{prop:private-bounded-hist-rb}
Fix $\varepsilon\in(0,\infty]$ and $\gamma,\beta\in(0,1)$ with $\gamma+\beta<1$, and set $B_{\mathrm{perm}}=\lceil 6\gamma^{-1}\log(2\beta^{-1})\rceil$. For every $k\ge2$ and sample sizes $N_X,N_Y$, the Rao--Blackwellized permutation test in \Cref{alg:rb-dpperm} is $\varepsilon$-differentially private and satisfies
\begin{align*}
\sup_{p\in\Delta_k}
\Pbb_{p,p}\!\left(\Psi_{\mathrm{rb\text{-}dpperm},\gamma}=1\right)
\le\gamma.
\end{align*}
Let $N=N_X\wedge N_Y$. The following power guarantees hold.

\begin{enumerate}[(i)]
\item \textbf{Unrestricted simplex.} There exists $C=C(\gamma,\beta)>0$ such that, for every $\tau\in(0,2]$, the type~II error is at most $\beta$, uniformly over all $p,q\in\Delta_k$ satisfying $\|p-q\|_1\ge\tau$, whenever
\begin{align}
N \ge C \left[
\frac{\sqrt{k}}{\tau^2}
+ \frac{k^{2/3}}{\tau^{4/3}}
+ \frac{\sqrt{k}}{\tau\sqrt{\varepsilon}}
+ \frac{k^{1/3}}{\tau^{4/3}\varepsilon^{2/3}}
+ \frac{1}{\tau\varepsilon}
\right].
\label{eq:rb-unrestricted-private-rate}
\end{align}

\item \textbf{Bounded-histogram refinement.} For every $M\ge1$, there exists $C_M=C_M(M,\gamma,\beta)>0$ such that, for every $\tau\in(0,2]$, the type~II error is at most $\beta$, uniformly over all $p,q\in\mathcal P_{k,M}$ satisfying $\|p-q\|_1\ge\tau$, whenever
\begin{align}
N \ge C_M \left[
\frac{\sqrt{k}}{\tau^2}
+ \frac{\sqrt{k}}{\tau\sqrt{\varepsilon}}
+ \frac{k^{1/3}}{\tau^{4/3}\varepsilon^{2/3}}
+ \frac{1}{\tau\varepsilon}
\right].
\label{eq:rb-flat-private-rate}
\end{align}
\end{enumerate}
\end{proposition}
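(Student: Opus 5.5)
The plan is to treat privacy and validity as a direct transfer from the single-split analysis, and then derive power from the Rao--Blackwellized moment bounds via a Chebyshev argument inside the private permutation framework.

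\emph{Privacy and validity.} By \Cref{lem:rb-basic-properties}, $\overline Z_m(\cdot,\ell)$ has global sensitivity at most $4$ for every labeling $\ell$. The statistics $T_0,\dots,T_{B_{\mathrm{perm}}}$ are obtained by applying this fixed functional to relabelings that are drawn independently of the data. Hence the hypotheses of \Cref{lem:private-permutation-cited}, i.e.\ \citet[Theorems~1--2]{KimSchrab2026}, hold with Laplace scale $8/\varepsilon$, exactly as in \Cref{prop:dpperm-validity-privacy}. This gives $\varepsilon$-DP. For validity, under $p=q$ the pooled sample $W$ is exchangeable, so $(T_0,\dots,T_B)$ is exchangeable given the multiset of $W$. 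The added noises are i.i.d., and ties are handled by the ``$\ge$'' convention, so $\widehat p$ is super-uniform and the type~I error is at most $\gamma$.

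\emph{Power.} I would reuse the generic power lemma behind \Cref{prop:private-bounded-hist-perm}. With $B_{\mathrm{perm}}=\lceil 6\gamma^{-1}\log(2\beta^{-1})\rceil$, it suffices to show that with probability at least $1-\beta/2$ the observed noisy statistic $M_0$ exceeds a high quantile of the permuted noisy statistics. Concretely, it is enough that
\begin{align*}
\E_{p,q}[\overline Z^{(0)}]\ \ge\ C_0\Bigl(\sqrt{\Var(\overline Z^{(0)})+\Var(\overline Z^\pi)}+\varepsilon^{-1}\Bigr)
\end{align*}
for a large constant $C_0=C_0(\gamma,\beta)$. The reason is that, by \Cref{lem:rb-perm-moment-bounds}, $\E[\overline Z^\pi\mid W]=0$. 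Chebyshev then controls $\overline Z^{(0)}$ and each $\overline Z^{\pi}$, Laplace tails control the noise at scale $8/\varepsilon$, and a union or binomial-count argument over the $B_{\mathrm{perm}}$ draws finishes the step.

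Now plug in the moment bounds, with $m\asymp N$ and signal $\mu:=c\min\{m\tau,\,m^2\tau^2/k,\,m^{3/2}\tau^2/\sqrt k\}$. We need $\mu\gtrsim\sigma+1/\varepsilon$, and I would verify this one branch of the minimum at a time.
\begin{itemize}
\item[(a)] Noise term, $\mu\gtrsim 1/\varepsilon$. The branch $m\tau$ gives $N\gtrsim 1/(\tau\varepsilon)$. The branch $m^2\tau^2/k$ gives $N\gtrsim \sqrt{k}/(\tau\sqrt\varepsilon)$. The branch $m^{3/2}\tau^2/\sqrt k$ gives $N\gtrsim k^{1/3}/(\tau^{4/3}\varepsilon^{2/3})$.
\item[(b)] Unrestricted sampling term, $\sigma\lesssim\sqrt m$. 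The branch $m\tau\gtrsim\sqrt m$ needs $N\gtrsim\tau^{-2}$. The branch $m^2\tau^2/k\gtrsim\sqrt m$ gives $N\gtrsim k^{2/3}/\tau^{4/3}$. The branch $m^{3/2}\tau^2/\sqrt k\gtrsim\sqrt m$ gives $N\gtrsim\sqrt k/\tau^2$. Together these give \eqref{eq:rb-unrestricted-private-rate}, using $\tau^{-2}\le\sqrt k/\tau^2$.
\item[(c)] Bounded-histogram sampling term, $\sigma\lesssim\sqrt{C_M\min\{m^2/k,m\}}$, split by regime.
\begin{itemize}
\item[$\bullet$] If $m\le k$, then $\sigma\lesssim m/\sqrt k$. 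The branch $m\tau\gtrsim m/\sqrt k$ holds because $\tau\le 2$ and, more relevantly, $\tau\gtrsim k^{-1/2}$ is implied by the sample condition. The branch $m^2\tau^2/k\gtrsim m/\sqrt k$ needs $m\gtrsim\sqrt k/\tau^2$. The branch $m^{3/2}\tau^2/\sqrt k\gtrsim m/\sqrt k$ needs $m\gtrsim\tau^{-4}$, which is implied by $m\gtrsim\sqrt k/\tau^2$ in the relevant range.
\item[$\bullet$] If $m>k$, then $\sigma\lesssim\sqrt m$ and the branches give $\sqrt k/\tau^2$ again. The heavy term is avoided because, once $m\le k$, the signal branch $m^2\tau^2/k$ only has to beat $m/\sqrt k$, not $\sqrt m$.
\end{itemize}
This yields \eqref{eq:rb-flat-private-rate}.
\end{itemize}

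\emph{Main obstacle.} The hard part is the case bookkeeping in (c). Each branch of the minimum in $\mu$ must be matched against both branches of the variance bound, and the constraint $\tau\le 2$ together with the given sample condition must be shown to exclude the combinations that would regenerate $k^{2/3}/\tau^{4/3}$. A secondary issue is the rounding $m=\lfloor N/2\rfloor$, which requires $N\ge2$; the sample condition forces $N$ to be large, so this is not a real restriction.
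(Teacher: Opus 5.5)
Your proposal is correct and follows the paper's proof essentially step for step. Privacy and validity come from the sensitivity-$4$ bound in \Cref{lem:rb-basic-properties} together with \Cref{lem:private-permutation-cited}, and power comes from the mean--variance sufficient condition of \citet[Theorem~4]{KimSchrab2026}, checked branch by branch against $\sqrt{m}$ in the unrestricted case, against $v_m=m/\sqrt{k}\wedge\sqrt{m}$ for bounded histograms, and against $\varepsilon^{-1}$. One small cleanup: in the case $m\le k$, the needed bound $\tau\gtrsim k^{-1/4}\ge k^{-1/2}$ follows from $k\ge m\gtrsim\sqrt{k}/\tau^2$, not from $\tau\le 2$.
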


\begin{proof}
The proof is in \Cref{app:proof:prop:private-closeness-rb}.
\end{proof}

When $N_X=N_Y$, the sample-size rate in part~{\rm (i)} matches, up to constants, the lower bound for unrestricted central-DP closeness testing in \citet[][Theorem~7]{Zhang2021}, and is therefore minimax optimal over $\Delta_k$. Beyond this rate optimality, the test is directly implementable: its calibration involves no unspecified constants, and its permutation $p$-value provides finite-sample level control. To our knowledge, this is the first central-DP test for discrete $L_1$ two-sample closeness to combine these features with minimax-optimal sample complexity.

Having established part~{\rm (ii)} of \Cref{prop:private-closeness-rb}, the binning and bandwidth-optimization arguments of \Cref{thm:holder-l1-private-upper-d,cor:holder-l1-private-explicit-rate} apply with $N=N_X\wedge N_Y$. Thus the Rao--Blackwellized procedure attains the same minimax upper rate while using all observations and avoiding the auxiliary split. Furthermore, Rao--Blackwellization reduces variance and can improve finite-sample power, as illustrated in \Cref{sec:simulation}.

\section{Lower Bounds}
\label{sec:lb}

The lower bound is proved by reducing to a one-sample goodness-of-fit problem against the uniform density.  This GOF problem already contains the four lower-bound mechanisms underlying \eqref{eq:rate-combined}. This section gives a high-level explanation of these mechanisms; the detailed constructions and scale calculations are deferred to the appendix.

\subsection{Reduction to Uniform Goodness-of-Fit}
\label{sec:lb-overview}

We now formalize the reduction from two-sample testing to uniform-null GOF testing.  Throughout the lower-bound proof, we assume $N\varepsilon\ge1$. As discussed before, when $N\varepsilon<1$, the linear privacy term is already of constant order.

Let $f_{\mathrm{unif}}\equiv 1$ denote the uniform density on $[0,1]^d$, and let $P_0$ denote its law.  For $m\ge1$, write $\E_0^m$ for expectation with respect to $P_0^m$; when the sample size is clear, we abbreviate this expectation by $\E_0$.  The GOF null and alternative are
\begin{align*}
H_0:f\equiv f_{\mathrm{unif}}
\qquad\text{versus}\qquad
H_1:\|f-f_{\mathrm{unif}}\|_1\ge r,\quad f\in\cD_s^d(L),
\end{align*}
and $\Phi_{\gamma,\varepsilon}^{\mathrm{gof}}$ is the class of $\varepsilon$-DP tests $\mathcal M:([0,1]^d)^N\to\{0,1\}$ with type~I error at most $\gamma$ under the uniform null. This is the one-sample analogue of $\Phi_{\gamma,\varepsilon}^{\mathrm{cDP}}$ from \Cref{sec:setup}, with the uniform density as the null.  Define
\begin{align*}
r^*_{\mathrm{gof}}(N,\varepsilon)
:=
\inf\Biggl\{
 r>0:
 \exists\,\mathcal M\in\Phi_{\gamma,\varepsilon}^{\mathrm{gof}}
 \text{ s.t. }
 \sup_{\substack{f\in\cD_s^d(L)\\ \|f-1\|_1\ge r}}
 \Bigl(1-\E_f^N[p_{\mathcal M}]\Bigr)\le\beta
\Biggr\}.
\end{align*}
The following lemma is a central-DP analogue of \citet[][Lemma 1]{AriasCastroPelletierSaligrama2018}. Since the reduction preserves differential privacy, any lower bound for the one-sample GOF problem immediately transfers to the two-sample problem.

\begin{lemma}[GOF-to-two-sample reduction under central DP]
\label{lem:gof-to-twosamp-cdp}
For $N=N_X\wedge N_Y$,
\begin{equation}
\label{eq:twosamp-gof-reduction}
r^*_{\mathrm{2samp}}(N_X,N_Y,\varepsilon)
\ge
r^*_{\mathrm{gof}}(N,\varepsilon).
\end{equation}
\end{lemma}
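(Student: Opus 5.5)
We prove the reduction by converting any two-sample test into a GOF test: the GOF tester generates the second sample itself from the uniform null. Fix $r$ admissible for the two-sample problem, i.e.\ there is $\mathcal M\in\Phi^{\mathrm{cDP}}_{\gamma,\varepsilon}$ with type~II error at most $\beta$ over all $(f,g)\in\cD_s^d(L)^2$ with $\|f-g\|_1\ge r$. We show that the same $r$ is admissible for the GOF problem with $N$ observations. The inequality \eqref{eq:twosamp-gof-reduction} then follows by taking infima.

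\textbf{Construction.} Assume without loss of generality that $N=N_X\le N_Y$; the case $N=N_Y$ is symmetric, with the roles of $X$ and $Y$ exchanged. Given a GOF sample $Z_1,\dots,Z_N$, define
\[
\widetilde{\mathcal M}(Z_{1:N}) := \mathcal M(Z_{1:N},U_{1:N_Y}),
\qquad U_1,\dots,U_{N_Y}\stackrel{\iid}{\sim} P_0,
\]
where the $U_j$ are drawn independently of the data as internal randomness. Its rejection probability is $p_{\widetilde{\mathcal M}}(z)=\E_U[p_{\mathcal M}(z,U)]$.

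\textbf{Step 1: privacy.} If $z,z'$ differ in one record, then $(z,u)$ and $(z',u)$ differ in one $X$-record for every fixed $u$. The $\varepsilon$-DP inequality for $\mathcal M$ therefore holds pointwise in $u$, and integrating over $u$ preserves it. Since the output is binary, checking the event $\{1\}$ and its complement suffices.

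\textbf{Step 2: type~I error.} Under the GOF null $Z_i\sim f_{\mathrm{unif}}$, the pooled input has law $P^{N_X,N_Y}_{f_{\mathrm{unif}},f_{\mathrm{unif}}}$. Because $L>1$, we have $f_{\mathrm{unif}}\in\cD_s^d(L)$, so this law lies in the composite null. Hence the rejection probability is at most $\gamma$.

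\textbf{Step 3: type~II error.} Take $f\in\cD_s^d(L)$ with $\|f-1\|_1\ge r$. The pooled input has law $P^{N_X,N_Y}_{f,f_{\mathrm{unif}}}$, and the pair $(f,f_{\mathrm{unif}})$ is a two-sample alternative at separation $r$. So the type~II error is at most $\beta$.

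Together, Steps 1–3 give $\widetilde{\mathcal M}\in\Phi^{\mathrm{gof}}_{\gamma,\varepsilon}$ with the required power, so $r^*_{\mathrm{gof}}(N,\varepsilon)\le r$.

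\textbf{Main obstacle.} The step needing most care is bookkeeping rather than mathematics. The GOF class uses sample size $N$, while the two-sample test needs $N_X$ and $N_Y$ points. The asymmetric case is resolved by using exactly $N=\min$ data points for the smaller sample and simulating the full other sample. The only remaining check is that internal randomization is permitted: tests are randomized mechanisms, so it is.
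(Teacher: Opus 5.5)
Your proposal is correct and follows essentially the same route as the paper's proof: you place the $N$ GOF observations in the smaller sample, simulate the larger sample from the uniform density as internal randomness, verify $\varepsilon$-DP pointwise in the auxiliary sample before integrating, and transfer the type~I and type~II guarantees using $(f,f_{\mathrm{unif}})$ (or $(f_{\mathrm{unif}},f)$) as the two-sample alternative. Taking infima over feasible $r$ then gives the inequality exactly as in the paper, and your explicit remark that $L>1$ puts $f_{\mathrm{unif}}$ in $\cD_s^d(L)$ is a check the paper leaves implicit.
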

\begin{proof}
The proof is in \Cref{app:proof:lem:gof-to-twosamp-cdp}.
\end{proof}

We now establish the minimax separation rate for the GOF problem. Combined with \Cref{lem:gof-to-twosamp-cdp}, this immediately gives the lower bound in \Cref{thm:main-rate}.

\begin{theorem}[GOF lower bound]
\label{thm:gof-combined-lower}
Fix $d\ge 1$, $s>0$, $L>1$, and $\gamma,\beta\in(0,1)$ with $\gamma+\beta<1$. There exists $c=c(d,s,L,\gamma,\beta)>0$ such that for all $N\ge 1$ and $\varepsilon>0$ satisfying $N\varepsilon\ge 1$,
\begin{align*}
r^*_{\mathrm{gof}}(N,\varepsilon)
\;\ge\;
c\left[
N^{-2s/(4s+d)}
\vee
(N\sqrt\varepsilon)^{-2s/(2s+d)}
\vee
(N^{3/2}\varepsilon)^{-2s/(4s+d)}
\vee
(N\varepsilon)^{-1}
\right].
\end{align*}
\end{theorem}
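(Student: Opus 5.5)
Since a maximum of four terms is bounded below by each term separately, I will prove four separate lower bounds, each of the form $r^*_{\mathrm{gof}}(N,\varepsilon)\ge c\,T_j$ for one term $T_j$ in the display. All four use the same smooth perturbation family. Fix a bump $\psi\in C^\infty_c((0,1)^d)$ with $\int\psi=0$ and $\|\psi\|_\infty\le1$. Partition $[0,1]^d$ into $m^d$ subcubes $Q_j$, let $\psi_j$ be $\psi$ rescaled to $Q_j$, and set
\[
f_\xi=1+\delta\sum_j\xi_j\psi_j,\qquad \xi\in\{\pm1\}^{m^d}.
\]
With $\delta\asymp m^{-s}$ (times a small constant depending on $L$), each $f_\xi$ lies in $\cD_s^d(L)$ because $L>1$. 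Moreover $\|f_\xi-1\|_1\asymp\delta$. In each step below, the separation produced is $r\asymp m^{-s}$, and the task is to choose $m$ as large as possible while the mixture $\E_\xi P_{f_\xi}^{\otimes N}$ stays indistinguishable from $P_0^{\otimes N}$ by any $\varepsilon$-DP test.

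\emph{Term 1 (nonprivate).} I would run the standard Ingster $\chi^2$ computation for the uniform mixture over $\xi$. This gives
\[
\chi^2\le \exp\bigl(CN^2\delta^4m^{-d}\bigr)-1,
\]
so the mixture is indistinguishable when $N^2m^{-4s-d}\lesssim1$. Taking $m\asymp N^{2/(4s+d)}$ yields $r\asymp N^{-2s/(4s+d)}$. No privacy is needed here, since the private test class is a subset of all tests.

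\emph{Term 4 (linear).} Here I would use a single fixed smooth alternative $f=1+c\,r\psi$ with $m=1$. Under the maximal coupling of $P_0^{\otimes N}$ and $P_f^{\otimes N}$, the two datasets differ in about $\mathrm{Bin}(N,\asymp r)$ coordinates. Group privacy then shows that the rejection probabilities differ by a factor of at most $e^{O(\varepsilon Nr)}$ with constant probability. This contradicts the requirement of errors $\gamma,\beta$ with $\gamma+\beta<1$ unless $Nr\varepsilon\gtrsim1$, which gives $r\gtrsim(N\varepsilon)^{-1}$.

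\emph{Terms 2 and 3 (intermediate privacy).} For these I would adapt the coupling lemma of \citet{AcharyaSunZhang2018}. If there is a coupling of the null law and the mixture alternative under which the expected Hamming distance between the two datasets is $D$, then any $\varepsilon$-DP test needs $\varepsilon D\gtrsim1$. A mixture is needed here, not a single alternative, because a single alternative only recovers Term 4.

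\emph{Term 3.} I would couple, cell by cell, a Poissonized count under the uniform law with the mixture count. The cell-wise mean difference is $\asymp N\delta m^{-d}$ and the count has fluctuation of order $\sqrt{N m^{-d}}$. Taking the expected $L_1$ distance between these count laws and summing over the $m^d$ cells gives a Hamming cost
\[
D\asymp m^d\min\bigl\{N\delta m^{-d},\;N^2\delta^2m^{-d}/\sqrt{Nm^{-d}}\bigr\}.
\]
In the second branch this is $D\asymp N^{3/2}\delta^2m^{-d/2}$. Setting $\varepsilon D\asymp1$ with $\delta=m^{-s}$ gives $m^{2s+d/2}\asymp N^{3/2}\varepsilon$, which is exactly $(N^{3/2}\varepsilon)^{-2s/(4s+d)}$.

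\emph{Term 2.} This term arises in the sparse regime $Nm^{-d}\lesssim1$, where the transport inequality \Cref{lem:transport} would be used. In that regime, the combined quantity that must be controlled is $\varepsilon\cdot(\text{transport cost})\cdot(\text{likelihood-ratio factor})\asymp\varepsilon N^2\delta^2m^{-d}$. Requiring this to be $\lesssim1$ with $\delta=m^{-s}$ gives $m^{2s+d}\asymp N^2\varepsilon$, hence $r\asymp(N\sqrt\varepsilon)^{-2s/(2s+d)}$.

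\emph{Main obstacle.} The hardest step is making the transport argument for Terms 2 and 3 rigorous. A naive coupling controls only first moments of the likelihood ratio, which recovers only the linear term. The needed second-order improvement has to come from the random signs $\xi$, which cancel first-order effects. I would therefore first prove \Cref{lem:transport}: an $\varepsilon$-DP test's rejection probability is $(e^\varepsilon-1)$-Lipschitz with respect to Hamming transport. I would then apply it to a coupling that preserves all coordinates outside the randomly perturbed cells. Finally, I would check in each regime that the chosen $m$ satisfies $m\ge1$ and that $\delta\le1$, using $N\varepsilon\ge1$.
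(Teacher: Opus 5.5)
\textbf{Terms 1 and 4.} These are sound and match the paper's Regimes~I and~IV: Ingster's $\chi^2$ bound on the sign hypercube, and a fixed perturbation handled with the product maximal coupling and \Cref{lem:dp-coupling}.

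\textbf{Term 2.} The gap is in the two intermediate terms. There the mechanisms are swapped, and the coupling that does the real work is never built. The transport inequality of \Cref{lem:transport} states $\E_Q[p_{\mathcal M}]-\E_0[p_{\mathcal M}]\le\varepsilon\sqrt{(N/2)\KL(Q\|P_0^N)}$. Apply it to your hypercube mixture, whose KL is $\lesssim N^2\delta^4m^{-d}$ as in your Term~1. You get $\varepsilon N^{3/2}\delta^2m^{-d/2}$, which is Term~3. It cannot produce $\varepsilon N^2\delta^2m^{-d}$, and your ``likelihood-ratio factor'' has no definite meaning.

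In the paper (\Cref{prop:regime-ii-lb}), Term~2 comes from a coupling, built in three steps:
\begin{enumerate}
\item Because $\int\psi_j=0$, every cell has mass exactly $m^{-d}$ under every $f_\xi$, so the null and the mixture can share the cell indices.
\item Within a cell, each point carries a latent sign that is fair under the null and has bias $(1+\delta\xi_j)/2$ under the mixture (\Cref{lem:lb-cellwise-signs}).
\item The Bernoulli-mixture coupling (\Cref{lem:bern-mixture}) couples $t$ such signs at Hamming cost $\le\delta^2t(t-1)$. A lone point in a cell costs nothing; only collisions count.
\end{enumerate}
Summing over cells gives $\E d_H\le\delta^2N^2m^{-d}$, and \Cref{lem:dp-coupling} then forces $\varepsilon\delta^2N^2m^{-d}\gtrsim1$.

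Your closing paragraph is right that the sign randomness must cancel first-order effects. But your remedy does not supply that cancellation. Hamming-Lipschitzness of $p_{\mathcal M}$ is just group privacy. Applying it to ``a coupling that preserves all coordinates outside the randomly perturbed cells'' does not help, because every cell is perturbed. Any coupling assembled vertex by vertex has first-order cost $\ge N\,\mathrm{TV}(P_0,P_{f_\xi})\asymp N\delta$.

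\textbf{Term 3.} Your description does not fit your construction. With $\int\psi_j=0$, the cell counts have identical laws under the null and every alternative, so there is no cellwise mean shift of order $N\delta m^{-d}$. What shifts is the within-cell sign count. Your route would therefore need two things that are not in the proposal:
\begin{enumerate}
\item a per-cell coupling of cost $\lesssim t^{3/2}\delta^2$ when $t\delta^2\lesssim1$, which is sharper than \Cref{lem:bern-mixture} for large $t$;
\item a check that $Nm^{-d}\delta^2\lesssim1$ throughout the range where Term~3 is the maximum.
\end{enumerate}
This route can be completed. A Hamming coupling of exchangeable sign sequences reduces to $W_1$ of the sign counts, and $W_1\le\sqrt{\Var}\,\sqrt{\chi^2}\lesssim\sqrt t\cdot t\delta^2$. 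The range check also works out: $Nm^{-d}\delta^2\lesssim1$ holds exactly when Term~3 $\gtrsim$ Term~4.

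The paper sidesteps all of this (\Cref{prop:cdp-lb}). It proves \Cref{lem:transport} by concentration rather than coupling. The function $p_{\mathcal M}$ has bounded differences $1-e^{-\varepsilon}$, so McDiarmid gives a sub-Gaussian bound under $P_0^N$. The Donsker--Varadhan formula turns this into $\varepsilon\sqrt{(N/2)\KL}$. Plugging in the Term~1 KL bound then yields Term~3 for all $(N,\varepsilon)$, with no regime split.
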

\begin{proof}
The proof is in \Cref{app:proof:thm:gof-combined-lower}.
\end{proof}

The remainder of this section explains the four mechanisms responsible for the four terms in the lower bound. The full proofs are deferred to the appendix.

\subsection{Four Lower-Bound Mechanisms}
\label{sec:lb-regimes}

The four terms in \eqref{eq:rate-combined} arise from four distinct lower-bound mechanisms.  The first is the classical nonprivate smooth-testing rate. The other three are privacy effects, consisting of two intermediate losses from within-cell information and private hypercube transport, together with the final linear privacy barrier.

\paragraph{Classical smooth testing.}
The term $N^{-2s/(4s+d)}$ is the classical Ingster-type barrier for smooth nonparametric testing.  The construction uses a random-sign hypercube of localized H\"older bumps, and the classical $\chi^2$ second-moment method controls the resulting mixture \citep{Ingster1987,IngsterSuslina2003,AriasCastroPelletierSaligrama2018}. Thus the mixture remains close to the uniform null, although each hypercube vertex is separated from the null in $L_1$.  This argument is nonprivate and therefore lower bounds arbitrary tests.

\paragraph{Within-cell privacy.}
The term $(N\sqrt\varepsilon)^{-2s/(2s+d)}$ is caused by alternatives that are invisible to any sufficiently coarse histogram.  The construction partitions $[0,1]^d$ into coarse cells and, inside each cell, places sign-symmetric positive and negative H\"older bumps whose integral over that cell is zero. Thus every alternative has exactly the same coarse cell probabilities as the uniform null, so the binned experiment carries no signal; all information is contained in the conditional locations of observations within occupied cells. We then use the DP-coupling method of \citet{AcharyaSunZhang2018}. It couples the null experiment to the random mixture over within-cell perturbations while modifying only a small expected number of sample points. Differential privacy prevents a reliable distinction unless this expected Hamming cost is of order $1/\varepsilon$.  Balancing this privacy constraint with the largest H\"older amplitude allowed inside cells yields the within-cell privacy scale $(N\sqrt\varepsilon)^{-2s/(2s+d)}$.

\paragraph{Private hypercube transport.}
The term $(N^{3/2}\varepsilon)^{-2s/(4s+d)}$ uses the same random-sign hypercube as the classical lower bound, but with a privacy-sensitive testing geometry.  Under central DP, the rejection probability of any test can change only in a controlled way under Hamming perturbations of the dataset.  This stability leads to the following transport-type consequence of \Cref{lem:transport}: if $\mathcal M$ is an $\varepsilon$-DP test with rejection probability $p_{\mathcal M}$, and if $Q$ is any mixture distribution over datasets that is absolutely continuous with respect to the null law $P_0^N$, then
\begin{align*}
\E_Q[p_{\mathcal M}]-\E_0[p_{\mathcal M}]
\;\lesssim\;
\varepsilon\sqrt{N\,\KL(Q\|P_0^N)} .
\end{align*}
Equivalently, an $\varepsilon$-DP test cannot substantially increase its rejection probability on a mixture $Q$ unless the mixture-to-null KL divergence is at least of order $1/(N\varepsilon^2)$.  Applying this inequality to the random-sign hypercube mixture, and then optimizing the bump resolution and amplitude, yields the privacy-transport barrier $(N^{3/2}\varepsilon)^{-2s/(4s+d)}$.

\paragraph{Linear privacy.}
The final term $(N\varepsilon)^{-1}$ is a direct privacy barrier. It already appears when the alternative is obtained by taking a fixed smooth perturbation of the uniform density and varying only its amplitude. A single draw from the null distribution and a single draw from such an alternative can be coupled to differ with probability of order $\|f-1\|_1$. Therefore, under the product coupling for $N$ observations, the expected Hamming distance is of order $N\|f-1\|_1$. The DP-coupling argument of \citet{AcharyaSunZhang2018} then prevents any $\varepsilon$-DP test from substantially changing its rejection probability between the null and the alternative unless $N\varepsilon\|f-1\|_1$ is bounded away from zero. This yields the linear barrier $\|f-1\|_1\gtrsim (N\varepsilon)^{-1}$.

\bigskip
\noindent \Cref{app:lower-bounds} makes these four mechanisms precise and proves the corresponding lower bounds separately.  Together they match the four terms in the upper bound.  We next return to the constructive side and remove prior knowledge of the smoothness index.

\section{Adaptivity to Unknown Smoothness}
\label{sec:ub-adaptive-smoothness}

The fixed-smoothness procedure chooses the resolution $\kappa$ using the smoothness index $s$, which is typically unknown in practice. We remove this dependence without estimating $s$. Instead, we evaluate the same binned Rao--Blackwellized statistic over a dyadic collection of resolutions and aggregate the resulting evidence across scales.

The adaptive test has two Monte Carlo stages. In the first, or centering, stage, each resolution $\kappa$ is assigned its own high conditional permutation quantile, estimated from random relabelings of the pooled sample. This quantile estimates the null scale of the statistic at resolution $\kappa$, and is subtracted from the corresponding score. In the second, or rank-calibration, stage, we maximize the centered scores over all resolutions and apply a single private permutation calibration to this multiscale maximum. The two sets of relabelings play different roles. The centering relabelings are used only to normalize the resolution-specific statistics and are never released. The rank relabelings are used only in the final private permutation comparison of the aggregated score. Thus the procedure releases only one binary decision, while adapting over the entire multiscale grid.

For simplicity, we present the adaptive construction for the balanced even case $N_X=N_Y=N=2n$. Let $\ell^{(0)}\in\mathfrak L_{2n,2n}$ denote the observed labeling of the $4n$ pooled records, with $\ell^{(0)}_i=X$ for $1\le i\le 2n$ and $\ell^{(0)}_i=Y$ for $2n<i\le 4n$. For each resolution $\kappa$, define the binned pooled vector
\begin{align*}
W_\kappa
:=
\bigl(
\widetilde X_1^{(\kappa)},\ldots,\widetilde X_{2n}^{(\kappa)},
\widetilde Y_1^{(\kappa)},\ldots,\widetilde Y_{2n}^{(\kappa)}
\bigr)
\in[\kappa^d]^{4n},
\end{align*}
where $\widetilde X_j^{(\kappa)}=\mathrm{cell}_\kappa(X_j)$ and $\widetilde Y_j^{(\kappa)}=\mathrm{cell}_\kappa(Y_j)$.  For any $\ell\in\mathfrak L_{2n,2n}$, the resolution-specific statistic is $\overline Z_n(W_\kappa,\ell)$, where $\overline Z_n$ is the Rao--Blackwellized statistic from \Cref{sec:rb-implementation}.

To adapt without knowing the oracle resolution, we search over the dyadic grid
\begin{align*}
\mathcal K_N=\{2^0,2^1,\ldots,2^{J_N}\},
\qquad
J_N=\lceil3\log_2N\rceil,
\end{align*}
and write $G_N=|\mathcal K_N|$. The upper endpoint $2^{J_N}\asymp N^3$ is a convenient polynomial envelope large enough to contain, up to constant factors, the optimizing resolutions over the compact smoothness range considered below. Fix an integer $B_{\mathrm{rank}}$ such that $(B_{\mathrm{rank}}+1)^{-1}\le\gamma$, and set
\begin{equation}\label{eq:adaptive-main-tuning}
\begin{aligned}
u_N=\frac{\beta}{256(B_{\mathrm{rank}}+1)G_N},
\qquad
a_N=\log(8/u_N),
\qquad
B_{\mathrm{cen},N}=\left\lceil 16\,\frac{a_N}{u_N}\right\rceil.
\end{aligned}
\end{equation}
The tuning parameters in \eqref{eq:adaptive-main-tuning} control the Monte Carlo error in the centering stage uniformly over the $G_N$ resolutions. The numerical constants are chosen only to simplify the proof. Since $B_{\mathrm{rank}}$ is fixed and $G_N\asymp\log N$, this uniform control requires $a_N\asymp \log\log N$ and $B_{\mathrm{cen},N}\asymp \log N\,\log\log N$ centering relabelings per resolution.

The resulting procedure is summarized in \Cref{alg:adaptive-rb-mc-main}. Operationally, the test first computes the binned pooled data at every resolution in $\mathcal K_N$.  A centering batch then produces offsets $\widehat q_\kappa$, one for each resolution, which are held fixed in the final calibration step.  The rank stage assigns each labeling a single multiscale score by maximizing the centered statistics over $\kappa$, and then applies the private permutation comparison only once to this aggregated score. This two-stage construction is related in spirit to two-batch exchangeability-based aggregation in \citet{SchrabShahGrettonKim2026}, although here the split is used inside a differentially private multiscale test rather than as a general nonprivate aggregation device.

\begin{algorithm}[!t]
\caption{Adaptive Rao--Blackwellized private permutation test}
\label{alg:adaptive-rb-mc-main}
\KwIn{Samples $X_1,\ldots,X_{2n}\stackrel{\iid}{\sim} f$,
$Y_1,\ldots,Y_{2n}\stackrel{\iid}{\sim} g$;
privacy budget $\varepsilon$; level $\gamma$; type~II tolerance $\beta$;
number of rank relabelings $B_{\mathrm{rank}}$.}

Let $N=2n$, and let
$\ell^{(0)}\in\mathfrak L_{2n,2n}$ be the observed assignment with
$\ell_i^{(0)}=X$ for $1\le i\le2n$ and
$\ell_i^{(0)}=Y$ for $2n<i\le4n$\;

Use the grid $\mathcal K_N$, the rank budget $B_{\mathrm{rank}}$, and the
tuning parameters $u_N,a_N,B_{\mathrm{cen},N}$ specified in \eqref{eq:adaptive-main-tuning}\;

\textbf{Centering stage.}\;
Let $\mathcal E$ denote all centering-stage relabelings drawn below; these
relabelings are independent of the rank-stage relabelings\;
\For{$\kappa\in\mathcal K_N$}{
  Form the binned pooled vector $W_\kappa$\;
  Draw independent assignments
  $\ell_{\kappa,1}^{\mathrm{cen}},\ldots,
  \ell_{\kappa,B_{\mathrm{cen},N}}^{\mathrm{cen}}
  \stackrel{\iid}{\sim}\Unif(\mathfrak L_{2n,2n})$\;
  Compute
  $T_{\kappa,r}:=\overline Z_n(W_\kappa,\ell_{\kappa,r}^{\mathrm{cen}})$,
  $r=1,\ldots,B_{\mathrm{cen},N}$\;
  Set $\widehat q_\kappa(X,Y;\mathcal E)$ equal to the empirical
  $(1-u_N)$-quantile of
  $T_{\kappa,1},\ldots,T_{\kappa,B_{\mathrm{cen},N}}$, i.e.,
  \begin{align*}
  \widehat q_\kappa(X,Y;\mathcal E)
  :=
  T_{\kappa,(\lceil(1-u_N)B_{\mathrm{cen},N}\rceil)},
  \end{align*}
  where $T_{\kappa,(1)}\le\cdots\le T_{\kappa,(B_{\mathrm{cen},N})}$ denote the order statistics.
}

\textbf{Private rank stage.}\;
For any $\ell\in\mathfrak L_{2n,2n}$, define
\begin{align*}
\widehat A_{\mathcal E}(X,Y;\ell)
:=
\max_{\kappa\in\mathcal K_N}
\left\{
\overline Z_n(W_\kappa,\ell)-\widehat q_\kappa(X,Y;\mathcal E)
\right\}.
\end{align*}
Set $\ell_0^{\mathrm{rank}}:=\ell^{(0)}$. Draw
$
\ell_1^{\mathrm{rank}},\ldots,\ell_{B_{\mathrm{rank}}}^{\mathrm{rank}}
\stackrel{\iid}{\sim}\Unif(\mathfrak L_{2n,2n})
$, $\zeta_0,\ldots,\zeta_{B_{\mathrm{rank}}}\stackrel{\iid}{\sim}\Lap(1)$
and compute
$\widehat M_b
:=\widehat A_{\mathcal E}(X,Y;\ell_b^{\mathrm{rank}})+(16/\varepsilon)\zeta_b$,
$b=0,\ldots,B_{\mathrm{rank}}$\;
Set
$\widehat p_{\mathrm{rank}}=
\{1+\sum_{b=1}^{B_{\mathrm{rank}}}\mathbf{1}\{\widehat M_b\ge \widehat M_0\}\}/
(B_{\mathrm{rank}}+1)$\;
Reject $H_0$ if  $\widehat p_{\mathrm{rank}}\le\gamma$\;
\end{algorithm}

The centering relabelings in \Cref{alg:adaptive-rb-mc-main} are auxiliary randomness used only to form the offsets $\widehat q_\kappa$.  Conditional on the unlabeled pooled sample and on this centering randomness, these offsets are fixed; under $H_0:f=g$, the observed labeling and the rank-stage relabelings are exchangeable.  Hence the private permutation theorem applies to the aggregated statistic $\widehat A_{\mathcal E}$, giving finite-sample type~I control.  For privacy, each $\overline Z_n(W_\kappa,\ell)$ has sensitivity at most four, and each empirical centering quantile has the same sensitivity.  Thus $\widehat A_{\mathcal E}$ has sensitivity at most eight, so the Laplace scale $16/\varepsilon$ yields $\varepsilon$-differential privacy by \Cref{lem:private-permutation-cited}.  The remaining cost is the uniform estimation of the centering levels over $G_N\asymp\log N$ resolutions, which produces the iterated-logarithmic factor in the separation rate.

Importantly, the procedure requires neither the unknown smoothness parameter $s$ nor the bounds $s_-$ and $s_+$ as input. The latter appear in the theorem below only to specify the range over which the guarantee holds uniformly.

\begin{theorem}[Adaptive upper bound]
\label{thm:adaptive-upper-loglog-rb-mc}
Fix $0<s_-<s_+<\infty$, $d\ge1$, $L>0$, $M\ge1$, and $\gamma,\beta\in(0,1)$ with $\gamma+\beta<1$.  Suppose $s\in[s_-,s_+]$ is unknown to the statistician.  There exists a constant $C=C(d,s_-,s_+,L,M,\gamma,\beta)>0$ such that, for every even $N=2n\ge2$ and $\varepsilon\in(0,1]$, \Cref{alg:adaptive-rb-mc-main}, with the tuning parameters above and with only the final rejection decision released, is $\varepsilon$-DP, has type~I error at most $\gamma$ over $H_0:f=g$, and has type~II error at most $\beta$ over $f,g\in\cF_s^d(L,M)$ whenever
\begin{equation}
\label{eq:adaptive-rb-mc-inf-bound}
\|f-g\|_1
\ge
C
\inf_{\kappa\in\mathcal K_N}
\left\{
\kappa^{-s}
+
\sqrt{\frac{a_N}{N}}
+
a_N^{1/4}\frac{\kappa^{d/4}}{\sqrt N}
+
\frac{\kappa^{d/2}}{N\sqrt{\varepsilon}}
+
\frac{\kappa^{d/4}}{N^{3/4}\sqrt{\varepsilon}}
+
\frac{1}{N\varepsilon}
\right\}.
\end{equation}
Consequently, since $G_N\lesssim\log N$ and $a_N\asymp\log\log N$, the same conclusion holds whenever
\begin{equation}
\label{eq:adaptive-rb-mc-explicit-loglog-rate}
\|f-g\|_1
\ge
C
\left[
a_N^{s/(4s+d)}N^{-2s/(4s+d)}
\vee
(N\sqrt{\varepsilon})^{-2s/(2s+d)}
\vee
(N^{3/2}\varepsilon)^{-2s/(4s+d)}
\vee
(N\varepsilon)^{-1}
\right].
\end{equation}
\end{theorem}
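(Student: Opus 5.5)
Privacy and type~I error go first, and they are essentially bookkeeping. For privacy, I would use that each $\overline Z_n(W_\kappa,\ell)$ has sensitivity at most four (\Cref{lem:rb-basic-properties}). Changing one record moves every centering draw $T_{\kappa,r}$ by at most four, so the empirical order statistic $\widehat q_\kappa$ also moves by at most four; the order-statistic map is $1$-Lipschitz in $\ell_\infty$. Each centered score therefore moves by at most eight, and the maximum over $\kappa$ inherits this bound. With Laplace scale $16/\varepsilon$, \Cref{lem:private-permutation-cited} gives $\varepsilon$-DP, after conditioning on the centering randomness $\mathcal E$, which is independent of the data. For type~I error, I would condition on the unlabeled pooled multiset and on $\mathcal E$. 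The offsets $\widehat q_\kappa$ depend on the data only through the pooled sample and relabelings drawn uniformly from $\mathfrak L_{2n,2n}$, so they are invariant to the observed labeling. Under $f=g$, the labelings $\ell^{(0)},\ell^{\mathrm{rank}}_1,\dots$ are exchangeable, and the private permutation theorem gives level $\gamma$.

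For power, I would fix the oracle $\kappa^\star\in\mathcal K_N$ that nearly attains the infimum in \eqref{eq:adaptive-rb-mc-inf-bound}. Since $\widehat A_{\mathcal E}(\ell^{(0)})\ge \overline Z^{(0)}_{\kappa^\star}-\widehat q_{\kappa^\star}$, it suffices to control three things. First, \Cref{lem:binning-approx} together with the mean bound for $\E\overline Z^{(0)}$ gives a signal lower bound $\E\overline Z^{(0)}_{\kappa^\star}\gtrsim\min\{n\tau,n^2\tau^2/k,n^{3/2}\tau^2/\sqrt k\}$ with $\tau\asymp\|f-g\|_1$ and $k=(\kappa^\star)^d$. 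Second, I would bound $\widehat q_{\kappa^\star}$ from above. Third, I would bound $\widehat A_{\mathcal E}(\ell_b^{\mathrm{rank}})$ for $b\ge1$ from above uniformly over $\kappa$, so that the observed score exceeds all permuted scores plus the Laplace noise $O(\varepsilon^{-1}\log(B_{\mathrm{rank}}/\beta))$ with probability $1-\beta$.

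For a permuted labeling, $\overline Z^\pi_\kappa-\widehat q_\kappa\le 0$ except on an event of conditional probability about $u_N$ for each $\kappa$. This holds provided $\widehat q_\kappa$ really dominates the true conditional $(1-u_N/2)$ permutation quantile. I would check that via a binomial (DKW-type) bound on the $B_{\mathrm{cen},N}\asymp a_N/u_N$ draws, with failure probability $e^{-a_N}\lesssim u_N$. A union bound over the $G_N$ resolutions and $B_{\mathrm{rank}}$ rank draws costs $G_NB_{\mathrm{rank}}u_N\lesssim\beta$, which is exactly why $u_N$ is chosen as in \eqref{eq:adaptive-main-tuning}. Consequently every permuted score is $\le0$ with high probability. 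For the oracle offset, I would bound the true permutation quantile at level $u_N$ using the conditional centering $\E[\overline Z^\pi\mid W]=0$, the sub-Gaussian/Bernstein tail implied by bounded differences for $\overline Z^\pi$ under a random relabeling, and the variance bound $\Var\lesssim C_M\min\{n^2/k,n\}$. This gives $\widehat q_{\kappa^\star}\lesssim \sqrt{a_N}\,\min\{n/\sqrt k,\sqrt n\}+a_N$ with high probability, by Markov applied to the data-dependent variance. Requiring the signal to exceed this offset, together with $\sqrt{\Var\overline Z^{(0)}}$ via Chebyshev and the noise $1/\varepsilon$, and solving for $\tau$ reproduces the terms of \eqref{eq:adaptive-rb-mc-inf-bound}. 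Specifically, $n^{3/2}\tau^2/\sqrt k\gtrsim\sqrt{a_N}\,n/\sqrt k$ gives $a_N^{1/4}\kappa^{d/4}/\sqrt N$, the $n\tau\gtrsim a_N$ regime gives $\sqrt{a_N/N}$-type terms, and the privacy terms follow exactly as in \Cref{prop:private-closeness-rb}(ii).

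The explicit rate \eqref{eq:adaptive-rb-mc-explicit-loglog-rate} then follows by the same balancing as in \Cref{cor:holder-l1-private-explicit-rate}, with $N$ replaced by $N/\sqrt{a_N}$ in the first term. Dyadic rounding loses only constants, and $2^{J_N}\asymp N^3$ contains all optimizers for $s\ge s_-$ when $N\varepsilon\ge1$; the $\sqrt{a_N/N}$ term is dominated. The main obstacle is the sub-Gaussian concentration of the permuted Rao--Blackwellized statistic at level $u_N\asymp1/\log N$ with the sharp $\min\{n/\sqrt k,\sqrt n\}$ scale. If that fails, I would fall back on a bounded-differences (McDiarmid for random permutations) argument, accepting scale $\sqrt n$, and recover the flat-histogram scale by conditioning on typical bin counts under $\mathcal P_{k,M}$.
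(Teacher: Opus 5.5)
Your architecture is the paper's: sensitivity $8$ for $\widehat A_{\mathcal E}$ via the $1$-Lipschitz order statistic, Laplace scale $16/\varepsilon$, exchangeability after conditioning on the centering batch, and $\widehat A_0\ge \overline Z_n(W_\kappa,\ell^{(0)})-\widehat q_\kappa$ at a dyadic near-oracle $\kappa$. It also matches in showing that all rank competitors are $\le0$ by a union bound over $B_{\mathrm{rank}}G_N$ events of size $\asymp u_N$, in using Chebyshev for the observed statistic, and in the final balancing.

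The gap is the step you flag yourself, and it is the step that carries the $\log\log N$ claim: the bound $\widehat q_\kappa\lesssim\sqrt{a_N\,(n\wedge n^2/k)}$. This needs an exponential tail for $\overline Z_n(W_\kappa,\ell)$, $\ell\sim\Unif(\mathfrak L_{2n,2n})$, that is conditional on the data and lives on the collision scale. Chebyshev at level $u_N$ would give $\sqrt{\log N}$ in place of $\sqrt{a_N}$. Neither of your routes supplies this tail:
\begin{itemize}
\item Bounded differences over all $4n$ relabeled positions only gives a sub-Gaussian proxy of order $n$.
\item A variance-driven Bernstein tail is not available off the shelf for this non-additive function of a random balanced relabeling. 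Moreover, the bound $\Var\lesssim C_M\min\{n^2/k,n\}$ you quote is unconditional, not a proxy given $W$.
\item Your fallback, accepting scale $\sqrt n$, gives an offset of order $\sqrt{na_N}$. In the sparse regime $n\le k$, which is where the oracle resolution lies when $s<d/4$, the requirement $n^2\tau^2/k\gtrsim\sqrt{na_N}$ becomes $\tau\gtrsim a_N^{1/4}k^{1/2}n^{-3/4}$. This is the heavy-element term, and after binning it produces a nonprivate rate of order $N^{-3s/(4s+2d)}$ (up to powers of $a_N$), strictly worse than $N^{-2s/(4s+d)}$.
\end{itemize}
``Conditioning on typical bin counts'' does not explain how the proxy drops from $n$ to the collision scale.

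The missing idea is that the labels of observations in singleton cells are irrelevant. A cell holding exactly one pooled observation has count vector $e_a$, and $g(e_a)=0$ for every $a$. The argument then runs as follows.
\begin{itemize}
\item $T_\kappa(W,L)$ depends only on the labels in the active set $I_\kappa(W)=\{i:M_{c_i}\ge2\}$ and is $4$-Lipschitz there.
\item \Cref{lem:balanced-multislice-conc} with $I=I_\kappa(W)$ then gives a conditional sub-Gaussian bound with proxy $\mathcal A_\kappa(W)=|I_\kappa(W)|$ (\Cref{lem:balanced-relabeling-exp-conc}).
\item Conditional Jensen transfers this bound to the Rao--Blackwellized average (\Cref{lem:rb-preserves-subgaussian,lem:rb-balanced-relabeling-exp-conc}).
\item \Cref{lem:mc-empirical-quantile}(i) with $u_NB_{\mathrm{cen},N}\ge16a_N$ yields $\widehat q_\kappa\le C\sqrt{\mathcal A_\kappa(W_\kappa)\,a_N}$ simultaneously over $\mathcal K_N$.
\item Since $\mathcal A_\kappa\le\min\{4n,\sum_jM_j(M_j-1)\}$, for $f,g\in\cF_s^d(L,M)$ you get $\E\mathcal A_\kappa\lesssim N\wedge N^2/k$, and Markov's inequality finishes.
\end{itemize}

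There are also two smaller errors.
\begin{itemize}
\item Your Monte Carlo step has the quantile level backwards. The empirical $(1-u_N)$-quantile sits near the true $(1-u_N)$-quantile, so it typically falls below the true $(1-u_N/2)$-quantile. You should dominate the $(1-2u_N)$-quantile instead, via a binomial lower tail of order $e^{-c\,u_NB_{\mathrm{cen},N}}$. More simply, given the data, $\overline Z_n(W_\kappa,\ell_b^{\mathrm{rank}})$ and the centering draws are i.i.d., so \Cref{lem:mc-empirical-quantile}(ii) gives exceedance probability at most $u_N+(B_{\mathrm{cen},N}+1)^{-1}$ directly.
\item Your term-by-term algebra mismatches signal and fluctuation scales. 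The term $a_N^{1/4}\kappa^{d/4}/\sqrt N$ comes from $n^2\tau^2/k\gtrsim\sqrt{a_N}\,n/\sqrt k$, or equivalently $n^{3/2}\tau^2/\sqrt k\gtrsim\sqrt{a_Nn}$. The term $\sqrt{a_N/N}$ comes from $n\tau\gtrsim\sqrt{a_Nn}$, not from $n\tau\gtrsim a_N$.
\end{itemize}
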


\begin{proof}
See \Cref{app:adaptive-upper}.
\end{proof}

The adaptive theorem shows that the fixed-smoothness minimax rate can still be attained when $s$ is unknown, with only an iterated-logarithmic loss in the classical term. The loss appears only in this term because it comes from searching over many resolutions. Controlling random fluctuations simultaneously at all resolutions requires a slightly higher threshold, which produces the $\log\log N$ factor. By contrast, the test does not release a separate private result for each resolution. It first takes the maximum of the resolution-specific centered scores and then releases only one private decision. Importantly, taking a maximum does not add up the sensitivities across resolutions: if every centered score changes by at most $\Delta$ when one observation is replaced, then their maximum also changes by at most $\Delta$. Thus, the privacy budget does not need to be divided across resolutions, and the three privacy terms remain unchanged. The iterated-logarithmic loss in the classical term is natural in light of the adaptive chi-square testing theory of \citet{Ingster2000AdaptiveChiSquare}, where adaptation is obtained by combining tests across resolutions.  The next theorem shows that this loss is unavoidable even for arbitrary nonprivate randomized tests and therefore for every centrally private test.

\begin{theorem}[Adaptive lower bound]
\label{thm:adaptive-lower-loglog}
Fix $0<s_-<s_+<\infty$, $d\ge1$, $L>1$, $M>1$, and $\gamma,\beta\in(0,1)$ with $\gamma+\beta<1$. There exist constants $c>0$ and $N_0$, depending only on $(d,s_-,s_+,L,M,\gamma,\beta)$, such that the following holds for all $N=N_X\wedge N_Y\ge N_0$. Define the class of adaptive level-$\gamma$ tests by
\begin{align*}
\Phi_{\gamma,N_X,N_Y}^{\mathrm{ad}}
:=
\left\{
\phi:
\sup_{s\in[s_-,s_+]}
\sup_{h\in\cF_s^d(L,M)}
\E_{h,h}[\phi]
\le \gamma
\right\}.
\end{align*}
For $s\in[s_-,s_+]$, set
\begin{align*}
\rho_{N,s}^{\mathrm{ad}}
:=
c\,(\log\log N)^{s/(4s+d)}N^{-2s/(4s+d)}.
\end{align*}
Then
\begin{align*}
\inf_{\phi\in\Phi_{\gamma,N_X,N_Y}^{\mathrm{ad}}}
\;
\sup_{s\in[s_-,s_+]}
\;
\sup_{\substack{
f,g\in\cF_s^d(L,M)\\
\|f-g\|_1\ge \rho_{N,s}^{\mathrm{ad}}
}}
\Bigl(1-\E_{f,g}[\phi]\Bigr)
>
\beta .
\end{align*}
Equivalently, no randomized two-sample test with adaptive type~I error at most $\gamma$ can have worst-case type~II error at most $\beta$, uniformly over $s\in[s_-,s_+]$, at separation level $\rho_{N,s}^{\mathrm{ad}}$.
\end{theorem}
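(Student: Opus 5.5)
We prove the adaptive lower bound by reducing to one-sample goodness-of-fit against the uniform density and running an Ingster-type adaptive argument over a logarithmic grid of smoothness levels.

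\textbf{Reduction to goodness-of-fit.} Mimicking \Cref{lem:gof-to-twosamp-cdp} but without privacy, take $g\equiv f_{\mathrm{unif}}$, which lies in every $\cF_s^d(L,M)$ because $L,M>1$. Any adaptive two-sample test then induces an adaptive one-sample test of $f\equiv 1$. The induced test uses $N_X\ge N$ observations from $f$ and simulates the $Y$-sample from the uniform law. Its type~I error is controlled because $(1,1)$ is a null pair. It therefore suffices to show that no test of $P_0$ based on $N$ i.i.d.\ draws can have level $\gamma$ and power $1-\beta$ uniformly over all alternatives $f\in\cF_s^d(L,M)$ with $\|f-1\|_1\ge\rho^{\mathrm{ad}}_{N,s}$ and $s\in[s_-,s_+]$. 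A Bayesian relaxation then suffices. We construct a prior $\pi$ that is a mixture over $K\asymp\log N$ smoothness levels $s_1<\dots<s_K$ in $[s_-,s_+]$, and show that $\chi^2(P_\pi^N\|P_0^N)=o(1)$, or at least is bounded by a small constant. Since a test with power at least $1-\beta$ against every alternative has power at least $1-\beta$ against $P_\pi^N$, the bound $\TV\le \tfrac12\sqrt{\chi^2}<1-\gamma-\beta$ then gives a contradiction.

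\textbf{Construction.} Choose the grid of smoothness levels so that the associated resolutions $\kappa_j\asymp (N/\sqrt{\log\log N})^{2/(4s_j+d)}$ are geometrically separated, say $\kappa_{j+1}\ge 2\kappa_j$. Taking $s_j$ equally spaced with gap $\asymp 1/\log N$ gives $\kappa_{j+1}/\kappa_j= N^{\Theta(1/\log N)}$, which is a constant ratio; choosing the constant large enough yields $K\asymp\log N$ levels. For each $j$, let $f_{j,\xi}=1+\delta_j\sum_{m\le \kappa_j^d}\xi_m\psi_{j,m}$, where $\psi_{j,m}$ are rescaled disjoint-support zero-mean bumps $\kappa_j^{-s_j}\psi(\kappa_j x-m)$ and $\xi\in\{\pm1\}^{\kappa_j^d}$ are Rademacher signs. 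For a small constant amplitude, $f_{j,\xi}\in\cF_{s_j}^d(L,M)$, and $\|f_{j,\xi}-1\|_1\asymp\kappa_j^{-s_j}\asymp\rho^{\mathrm{ad}}_{N,s_j}$ after calibrating constants. The prior $\pi$ picks $j$ uniformly from $[K]$ and then $\xi$ uniformly.

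\textbf{Second-moment calculation.} Expanding the mixture,
\begin{align*}
1+\chi^2
=\frac1{K^2}\sum_{j,j'}\E_{\xi,\xi'}\Bigl(1+\langle f_{j,\xi}-1,f_{j',\xi'}-1\rangle\Bigr)^N .
\end{align*}
For $j=j'$ the standard computation gives $\E\exp\bigl(N\sum_m\xi_m\xi'_m\|\psi_{j,m}\|_2^2\bigr)\le\exp\bigl(\tfrac12N^2\kappa_j^{-4s_j-d}c\bigr)$. With the chosen $\kappa_j$, we have $N^2\kappa_j^{-4s_j-d}\asymp c'\log\log N$, so each diagonal term is at most $(\log N)^{c''}$ with $c''<1$ once the constant $c$ in $\rho^{\mathrm{ad}}_{N,s}$ is small. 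The diagonal contribution is therefore at most $K^{-1}(\log N)^{c''}=o(1)$. This is exactly where the $\log\log N$ factor enters: the diagonal can be as large as $K^{1-\eta}$ precisely because we are averaging over $K$ levels. For $j\ne j'$, orthogonality must make the cross terms equal $1+o(1/K)$ on average. The plan is to choose the bumps at different scales to be exactly orthogonal, for example by using Haar-like bumps $\psi$ with vanishing moments nested on dyadic grids. Each coarse bump is then either orthogonal to, or constant on the support of, each fine bump, and since fine bumps have mean zero the inner product vanishes identically.

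\textbf{Main obstacle.} The hard step is reconciling exact cross-scale orthogonality with Hölder smoothness. Haar bumps are not smooth, so the plan is to use a smooth wavelet-type bump $\psi$ with compact support and vanishing moments up to order $\lceil s_+\rceil$. A Taylor expansion of the coarse bump on each fine cell then bounds the cross inner products by $\kappa_j^{-s_j}\kappa_{j'}^{-s_{j'}}(\kappa_j/\kappa_{j'})^{\lceil s_+\rceil+d}$, which is summably small under geometric separation. These terms contribute $(1+x)^N\le e^{Nx}$ factors that are $1+o(1)$, which should close the argument for $N\ge N_0$.
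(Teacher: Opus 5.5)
\emph{The proposal has a genuine gap in the cross-scale (off-diagonal) step; the rest is sound.}

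Your reduction, the choice of $K\asymp\log N$ geometrically separated resolutions calibrated by $\kappa_j^{-4s_j-d}\asymp\log\log N/N^2$, the vanishing-moment bumps, and the diagonal estimate $\exp\{c\delta^4\log\log N\}=(\log N)^{c\delta^4}$ all agree with the paper's construction. The paper fixes dyadic $m_\ell=2^\ell$ and solves for $s_\ell$, which avoids integrality issues.

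One small repair to the reduction: the unknown density must go into the \emph{smaller} sample, so use the pair $(1,f)$ when $N_Y<N_X$. As written, when $N_X>N_Y$ your induced one-sample test uses $N_X>N$ draws, and an $N$-sample lower bound says nothing about it.

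The failing step is this. You bound $|\langle f_{j,\xi}-1,f_{j',\xi'}-1\rangle|$ uniformly in the signs by $x\lesssim\kappa_j^{-s_j}\kappa_{j'}^{-s_{j'}}(\kappa_j/\kappa_{j'})^{\lceil s_+\rceil+d}$ and then use $(1+x)^N\le e^{Nx}$. Your calibration gives $N\kappa_j^{-s_j}\kappa_{j'}^{-s_{j'}}\asymp(\log\log N)^{1/2}(\kappa_j\kappa_{j'})^{d/4}$. Hence your bound makes $Nx$ of order $(\log\log N)^{1/2}\kappa_{j'}^{d/2}(\kappa_j/\kappa_{j'})^{\lceil s_+\rceil+5d/4}$. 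For adjacent levels this is a positive power of $N$, and it is small only when the two resolutions differ by a polynomial factor in $N$. A worst-case bound on the inner product discards all cancellation among the random signs, and the factor $N$ then makes $e^{Nx}$ enormous rather than $1+o(1)$. Geometric separation does not rescue this.

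The missing idea is to average over the signs before exponentiating. Write $\langle f_{j,\xi}-1,f_{j',\xi'}-1\rangle=\delta^2\kappa_j^{-s_j}\kappa_{j'}^{-s_{j'}}\,\xi^\top B\xi'$, with $B$ the cross-scale Gram matrix of the unnormalized bumps. Then use:
\begin{itemize}
\item the decoupled chaos bound $\E\exp(t\xi^\top B\xi')\le\exp(C_0t^2\|B\|_F^2)$, valid for $t\|B\|_{\mathrm{op}}\le c_0$ (\Cref{lem:adaptive-lower-chaos-mgf});
\item the Gram estimates $\|B\|_F^2\lesssim\kappa_{j'}^{-d}q^{2R+2}$ and $\|B\|_{\mathrm{op}}\lesssim(\kappa_j\kappa_{j'})^{-d/2}q^{R+1}$, where $q=\kappa_j/\kappa_{j'}$ (\Cref{lem:adaptive-lower-gram-bounds}).
\end{itemize}
Together these bound the $(j,j')$ term by $\exp\{C\delta^4\log\log N\,q^{d/2+2R+2}\}$.

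Even this is of order $(\log N)^{c}$ for neighboring levels. So your target that each off-diagonal term be $1+o(1/K)$ is also out of reach with this construction. What closes the argument is counting by gap $h=|j-j'|$:
\begin{itemize}
\item there are at most $2K$ pairs per gap;
\item the $O(\log\log\log N)$ smallest gaps contribute at most $(\log N)^{\rho}$ each, with $\rho<1$;
\item the remaining gaps linearize into a geometric series.
\end{itemize}
With $L_j$ the likelihood ratio of the level-$j$ mixture, this gives $K^{-2}\sum_{j,j'}(\E_0[L_jL_{j'}]-1)=O\bigl(K^{-1}\{(\log\log\log N)(\log N)^{\rho}+1\}\bigr)=o(1)$, exactly as in \Cref{lem:adaptive-lower-second-moment}.

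Alternatively, your original exact-orthogonality idea does go through, because the obstacle you identified is not real. Use compactly supported orthonormal (Daubechies) wavelets of regularity exceeding $s_+$, restricted to interior translates. These are smooth and exactly orthogonal across dyadic scales, so every off-diagonal term equals one and only the diagonal $K^{-1}(\log N)^{\rho}$ remains. The price is that same-scale supports overlap, so the $L_1$ separation must be lower-bounded separately, e.g.\ via $\|H\|_1\ge\|H\|_2^2/\|H\|_\infty$.
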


\begin{proof}
The proof is given in \Cref{app:proof:thm:adaptive-lower-loglog}.
\end{proof}

Consequently, since the tuning parameter in \Cref{alg:adaptive-rb-mc-main} satisfies $a_N\asymp\log\log N$, the factor
\begin{align*}
a_N^{s/(4s+d)}
\asymp
(\log\log N)^{s/(4s+d)}
\end{align*}
in \eqref{eq:adaptive-rb-mc-explicit-loglog-rate} cannot be replaced, uniformly over $s\in[s_-,s_+]$, by any smaller-order factor. The remaining privacy-dependent obstructions in \eqref{eq:adaptive-rb-mc-explicit-loglog-rate} are the fixed-smoothness lower bounds already proved in \Cref{sec:lb}.  Since any adaptive procedure is, in particular, a valid procedure at each fixed smoothness level, those lower bounds continue to apply without change.  Thus \Cref{thm:adaptive-lower-loglog} supplies precisely the additional obstruction caused by not knowing $s$.

We close with a numerical illustration of the constructive tests.

\section{Numerical Illustration}
\label{sec:simulation}

This section illustrates the finite-sample behavior of the proposed procedures. The simulations are not intended to estimate exponents in the minimax rates, but rather to illustrate finite-sample calibration, privacy-power tradeoffs, and the effect of Rao--Blackwellization.

For clarity, all experiments are one-dimensional, with sample space $[0,1]$. Unless stated otherwise, we use equal sample sizes $N_X=N_Y=1000$, nominal level $\gamma=0.05$, histogram resolution $\kappa=16$, and $B_{\mathrm{perm}}=199$ Monte Carlo relabelings.  Each reported rejection probability is computed from 5000 independent repetitions.  The nonprivate benchmark uses the same permutation statistic and calibration, but omits the Laplace perturbations; in the plots, this benchmark is denoted by $\varepsilon=\infty$.

Under the null, both samples are drawn from the uniform density $f=g\equiv 1$ on $[0,1]$.  For the power experiments, we consider the smooth one-parameter alternatives
\begin{align*}
 f(x)=1,
 \qquad
 g_r(x)=1+\frac{\pi r}{2}\sin(4\pi x),
 \qquad x\in[0,1].
\end{align*}
Since $\int_0^1 |\sin(4\pi x)|\,dx=2/\pi$, this parametrization gives $\|f-g_r\|_1=r$.  The displayed values of $r$ lie in $[0,0.30]$, so the alternative densities remain uniformly bounded away from zero.

We compare three tests.  The first is the private split-statistic permutation test, based on the four-block balanced relabeling described in \Cref{sec:ub-discrete}.  The second is the private Rao--Blackwellized permutation test, which averages over the artificial split and evaluates the closed-form statistic $\overline Z$ from \Cref{sec:rb-implementation}.  The third is the corresponding nonprivate Rao--Blackwellized benchmark, obtained by removing the Laplace noise.

\begin{figure}[t]
\centering
\includegraphics[width=\textwidth]{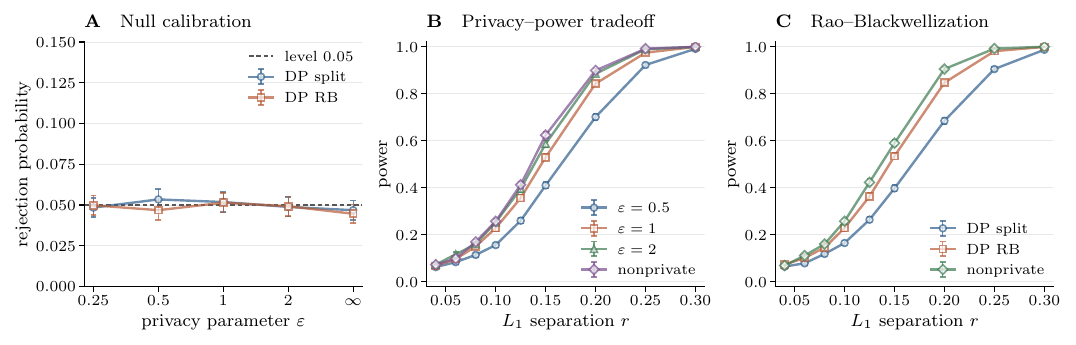}
\caption{Finite-sample behavior of the private permutation tests. Panel A reports empirical type~I error under the uniform null as a function of the privacy parameter; the dashed line marks the nominal level $\gamma=0.05$.  Panel B reports empirical power of the Rao--Blackwellized test against the alternatives $g_r(x)=1+(\pi r/2)\sin(4\pi x)$ for several privacy levels.  Panel C compares the private split-statistic test, the private Rao--Blackwellized test, and the nonprivate Rao--Blackwellized benchmark at $\varepsilon=1$.  Error bars show pointwise 95\% Monte Carlo intervals.}
\label{fig:simulation}
\end{figure}

The results in \Cref{fig:simulation} are consistent with the preceding theoretical guarantees and intuition. The permutation calibration keeps the empirical null rejection probability close to the nominal level, within Monte Carlo fluctuation. Along the one-parameter family $g_r$, power increases with the $L_1$ separation, while privacy has its largest visible effect in the transition region where rejection probabilities are bounded away from both zero and one. Finally, at the same privacy budget, the Rao--Blackwellized statistic has higher power than the raw split statistic, consistent with the variance-reduction property of Rao--Blackwellization discussed in \Cref{sec:rb-implementation}.

We next illustrate the adaptive multiscale procedure from \Cref{sec:ub-adaptive-smoothness}.  The purpose of this experiment is different from that of \Cref{fig:simulation}: rather than varying only the signal strength along a fixed scale, we construct alternatives for which the relevant resolution is unknown.  Let $I=[1/4,3/4]$.  For an even integer $M$, partition $I$ into $M$ equal subintervals and define $\phi_M$ to alternate between $+1$ and $-1$ on consecutive subintervals, with $\phi_M=0$ outside $I$. Since $\int \phi_M=0$ and $\int|\phi_M|=|I|=1/2$, the alternatives
\begin{align*}
 f(x)=1,\qquad
 g_{M,r}(x)=1+2r\,\phi_M(x)
\end{align*}
satisfy $\|f-g_{M,r}\|_1=r$.  Coarse histograms can average out the alternating signal when $M$ is large, while overly fine histograms pay additional variance and privacy-noise costs when the signal is coarse.  We compare fixed Rao--Blackwellized private tests at $\kappa\in\{4,32,128\}$ with the adaptive Rao--Blackwellized test over the simulation grid $\mathcal K_{\mathrm{sim}}=\{4,8,16,32,64,128\}$, with $G_{\mathrm{sim}}:=|\mathcal K_{\mathrm{sim}}|=6$.  Unless otherwise stated, we use $N_X=N_Y=1000$, $\varepsilon=1$, $B_{\mathrm{perm}}=B_{\mathrm{rank}}=B_{\mathrm{cen}}=199$, and 5000 repetitions.  In this finite-sample illustration, the adaptive centering quantile at each of the $G_{\mathrm{sim}}$ resolutions is taken at level $1-\gamma/G_{\mathrm{sim}}$, rather than using the proof-tuned $u_N$ from \Cref{alg:adaptive-rb-mc-main}.

\begin{figure}[t]
\centering
\includegraphics[width=\textwidth]{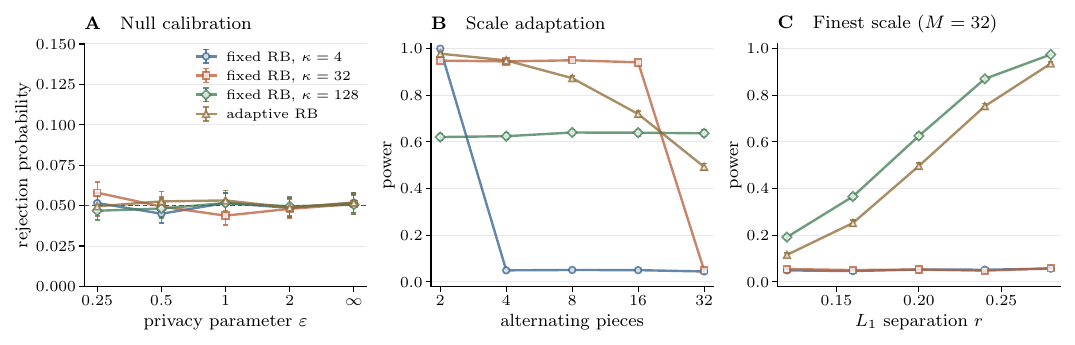}
\caption{Finite-sample behavior of the adaptive multiscale test. Panel A reports empirical type~I error under the uniform null for the fixed resolution tests and the adaptive test; the dashed line marks the nominal level $\gamma=0.05$.  Panel B reports power against the local alternating alternatives $g_{M,r}$ at $r=0.20$, as the number $M$ of alternating pieces varies.  Panel C fixes the finest displayed scale $M=32$ and varies the $L_1$ separation $r$.  Error bars show pointwise 95\% Monte Carlo intervals.}
\label{fig:adaptive-simulation}
\end{figure}

The results in \Cref{fig:adaptive-simulation} show the intended scale-adaptive behavior.  The null rejection probabilities remain close to the nominal level for both fixed and adaptive procedures.  In the power experiment, a fixed coarse resolution performs well for the coarsest alternating signal but loses power once the signs cancel within bins, while the intermediate resolution is strong only at the intermediate scale.  The finest fixed resolution detects the finest alternatives but pays a visible price at coarser scales.  The adaptive test avoids these failures: it tracks the favorable fixed resolution across scales, with the expected moderate loss from combining several resolutions and privately calibrating the resulting maximum.

\section{Discussion}
\label{sec:discussion}

We have characterized the minimax $L_1$ separation radius for two-sample testing over H\"older density classes under central differential privacy in the regime $N\varepsilon\ge1$.  The rate is given by the maximum of the classical nonprivate smooth-testing term and three privacy terms.  This yields a phase diagram in which the dominant term depends on the smoothness, dimension, and privacy level; in particular, one intermediate phase disappears when $s\ge d/4$.  The upper bound is achieved by binning the samples, applying a private closeness test to the resulting histograms, and optimizing the bin resolution. The boundedness of the H\"older class places the binned distributions in a bounded-histogram subclass, which removes the heavy-element term from the worst-case discrete private closeness rate.  The lower bounds match the four terms in the upper bound, showing that each term corresponds to a genuine barrier.  We also obtain an adaptive procedure over compact smoothness ranges and prove that its iterated-logarithmic loss in the nonprivate part of the rate is unavoidable.

Several questions remain open.  A natural first direction is to study the same problem under shuffle privacy~\citep{CheuSmithUllmanZeber2019,ErlingssonFeldmanMironovRaghunathanTalwarThakurta2019}. Since shuffle privacy lies between the local and central models, it is not clear whether the central-DP phase diagram persists, collapses to fewer regimes, or acquires additional terms from the anonymization step.  The histogram-based construction gives a possible starting point, but matching lower bounds would likely require tools that capture both communication through shuffled messages and the smoothness structure of the alternatives. It would also be useful to extend the adaptive theory beyond isotropic H\"older balls to other smoothness scales, such as Sobolev or Besov classes. For anisotropic or spatially inhomogeneous models, this may require choosing different resolutions across directions or regions, with privacy interacting nonuniformly with that choice. More broadly, the techniques developed here may apply to other nonparametric testing problems in which a smooth model is reduced to a structured discrete testing problem. Finally, it would be interesting to extend the theory to unbounded or heavy-tailed classes, where both the tests and the lower-bound arguments would need to account explicitly for tail behavior.

\section*{Acknowledgements}

During this work, the author used ChatGPT 5.5 Pro for language editing, proof brainstorming, and mathematical checks. The author verified all AI-assisted material and assumes full responsibility for the correctness and content of the work.

\bibliographystyle{plainnat}
\bibliography{reference}

\appendix
\setcounter{theorem}{0}
\renewcommand{\thetheorem}{S.\arabic{theorem}}
\renewcommand{\theHtheorem}{supp.\arabic{theorem}}

\section*{Appendix Roadmap}

The appendix is organized into two parts. \Cref{app:auxiliary-lemmas} collects the general tools used throughout the paper, while \Cref{app:proofs} contains the proofs of the main results.

\section{Auxiliary Lemmas}
\label{app:auxiliary-lemmas}

This section collects auxiliary lemmas used repeatedly in the proofs.

\subsection{Probabilistic Tools}
\label{app:sec:probabilistic-tools}

\begin{lemma}[Efron--Stein inequality
{\citealt[Theorem~3.1]{BoucheronLugosiMassart2013}}]
\label{lem:efron-stein}
Let $X_1,\ldots,X_m$ be independent random variables, and let $H=H(X_1,\ldots,X_m)$ be square integrable.  For each $i$, let $X_i'$ be an independent copy of $X_i$, independent of all other variables, and let
\begin{align*}
H^{(i)}
:=
H(X_1,\ldots,X_{i-1},X_i',X_{i+1},\ldots,X_m).
\end{align*}
Then
\begin{align*}
\Var(H)
\le
\frac12\sum_{i=1}^m \E\bigl[(H-H^{(i)})^2\bigr].
\end{align*}
In particular, if replacing the $i$-th coordinate can change $H$ by at most $c_i$, then
\begin{align*}
\Var(H)\le \frac12\sum_{i=1}^m c_i^2.
\end{align*}
\end{lemma}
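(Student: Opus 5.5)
The plan is the standard martingale (Doob) decomposition proof of Efron--Stein. Write $\E_i[\cdot]:=\E[\cdot\mid X_1,\ldots,X_i]$, with $\E_0=\E$, and set $\Delta_i:=\E_i[H]-\E_{i-1}[H]$ for $i=1,\ldots,m$. Then $H-\E H=\sum_{i=1}^m\Delta_i$. The increments are orthogonal in $L^2$: for $i<j$, $\Delta_i$ is $\sigma(X_1,\ldots,X_i)$-measurable and $\E_{j-1}[\Delta_j]=0$, so $\E[\Delta_i\Delta_j]=0$. Hence
\begin{align*}
\Var(H)=\sum_{i=1}^m\E[\Delta_i^2].
\end{align*}

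The key step bounds each $\E[\Delta_i^2]$ by the replace-one quantity. Since $X_i'$ is independent of everything else and has the law of $X_i$, independence gives
\begin{align*}
\E_{i-1}[H]=\E\bigl[H^{(i)}\mid X_1,\ldots,X_i\bigr]=\E_i[H^{(i)}].
\end{align*}
Therefore $\Delta_i=\E_i[H-H^{(i)}]$, and conditional Jensen yields $\E[\Delta_i^2]\le\E[(H-H^{(i)})^2]$. This gives the inequality without the factor $\tfrac12$.

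To recover the $\tfrac12$, I would instead use $\Delta_i=\E_i\bigl[H-\E^{(i)}H\bigr]$, where $\E^{(i)}$ denotes integration over $X_i$ alone. Jensen then gives $\E[\Delta_i^2]\le\E\bigl[\Var^{(i)}(H)\bigr]$, with $\Var^{(i)}$ the conditional variance in $X_i$ given the other coordinates. The identity $\Var^{(i)}(H)=\tfrac12\E^{(i)}\bigl[(H-H^{(i)})^2\bigr]$ holds because $H$ and $H^{(i)}$ are conditionally i.i.d. given $(X_j)_{j\ne i}$. Summing over $i$ gives the stated bound.

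For the bounded-differences corollary, $|H-H^{(i)}|\le c_i$ almost surely, so each summand is at most $c_i^2$ and $\Var(H)\le\tfrac12\sum_i c_i^2$. No step is a real obstacle. The only care needed is the measurability and independence check that justifies the identity $\E_{i-1}[H]=\E_i[H^{(i)}]$ (equivalently, that integrating out $X_i$ commutes with conditioning on $X_1,\ldots,X_{i-1}$), which follows from Fubini together with the product structure.
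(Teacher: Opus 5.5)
Your proof is correct and is the standard Doob-martingale argument behind the result the paper cites without proof (Boucheron--Lugosi--Massart, Theorem~3.1). It uses orthogonal increments, the representation $\Delta_i=\E_i[H-\E^{(i)}H]$ with conditional Jensen, and the identity $\Var^{(i)}(H)=\tfrac12\E[(H-H^{(i)})^2\mid (X_j)_{j\ne i}]$ coming from conditional i.i.d.-ness. Two small points: in that last identity the expectation must integrate over both $X_i$ and $X_i'$, and your first Jensen step, which loses the factor $\tfrac12$, is an unnecessary detour.
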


The next two lemmas are sampling-without-replacement and multislice analogues of bounded differences.

\begin{lemma}[Bounded differences for sampling without replacement]
\label{lem:sampling-without-replacement-bdd-diff}
Let $V=(V_1,\ldots,V_m)$ be the sequence of marks obtained by drawing $m$ tokens without replacement from a finite population of marked tokens, and let $F$ be a real-valued function of $V$. Suppose that $F$ is $c$-Lipschitz in Hamming distance, namely, for any two feasible ordered samples $v,v'$,
\begin{align*}
|F(v)-F(v')|
\le
c\sum_{r=1}^m \mathbf 1\{v_r\neq v'_r\}.
\end{align*}
Then, for every $\lambda\in\mathbb R$,
\begin{align*}
\E\exp\{\lambda(F(V)-\E F(V))\}
\le
\exp\{2c^2m\lambda^2\}.
\end{align*}
Consequently, for every $x>0$,
\begin{align*}
\Pbb\{F(V)-\E F(V)>x\}
\le
\exp\left(
-\frac{x^2}{8c^2m}
\right).
\end{align*}
\end{lemma}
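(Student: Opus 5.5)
The plan is to derive the sub-Gaussian bound from a Doob martingale decomposition along the sequential draws. Let $\mathcal G_r:=\sigma(V_1,\ldots,V_r)$ for $r=0,\ldots,m$, and write
\begin{align*}
F(V)-\E F(V)=\sum_{r=1}^m D_r,
\qquad
D_r:=\E[F\mid\mathcal G_r]-\E[F\mid\mathcal G_{r-1}].
\end{align*}
The key claim is that each increment $D_r$ lies, conditionally on $\mathcal G_{r-1}$, in an interval of length at most $4c$, or at least satisfies $|D_r|\le 2c$. Once this is shown, the conditional Hoeffding lemma gives $\E[e^{\lambda D_r}\mid\mathcal G_{r-1}]\le e^{\lambda^2(4c)^2/8}=e^{2c^2\lambda^2}$. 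Iterating by the tower property then yields $\E\exp\{\lambda(F-\E F)\}\le\exp\{2c^2m\lambda^2\}$, which is the stated moment generating function bound.

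To bound the increments, I would use a coupling of the remaining draws. Fix $(v_1,\ldots,v_{r-1})$ and two possible values $a,b$ for the $r$-th draw, both feasible given the past. Compare the conditional law of $(V_{r+1},\ldots,V_m)$ given $V_r=a$ with its law given $V_r=b$. The remaining population in the two cases differs only by exchanging one token marked $a$ for one marked $b$. Couple the two cases by drawing the same sequence of token positions. Then the two continuations differ in at most one coordinate: the draw, if any, that picks the swapped token.

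Under this coupling, the two completed sequences differ in coordinate $r$ and in at most one later coordinate. The Lipschitz assumption therefore gives
\begin{align*}
\bigl|\E[F\mid\mathcal G_{r-1},V_r=a]-\E[F\mid\mathcal G_{r-1},V_r=b]\bigr|\le 2c.
\end{align*}
Hence, conditionally on $\mathcal G_{r-1}$, $D_r$ ranges over an interval of length at most $2c$, and in particular within $4c$. Even the cruder bound $|D_r|\le 2c$ gives the constant $2c^2$ via Hoeffding's lemma applied to an interval of length $4c$.

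The tail bound follows from Chernoff's method. For $x>0$,
\begin{align*}
\Pbb\{F-\E F>x\}\le\exp\{2c^2m\lambda^2-\lambda x\},
\end{align*}
and taking $\lambda=x/(4c^2m)$ gives $\exp\{-x^2/(8c^2m)\}$.

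The main obstacle is making the coupling step rigorous when tokens share marks. I would handle this by labeling tokens individually and treating $F$ as a function of the marks of the labeled tokens. The swap coupling then acts on token identities, and the Hamming comparison is carried out on the marks.
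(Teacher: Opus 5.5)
Your proposal is correct and follows the paper's proof essentially step for step: the Doob martingale along the sequential draws, the token-swap coupling showing that the conditional expectations given $V_r=a$ and $V_r=b$ differ by at most $2c$, Hoeffding's lemma with $|D_r|\le 2c$ to obtain $\exp\{2c^2m\lambda^2\}$, and Chernoff's bound with $\lambda=x/(4c^2m)$. As you observe, $D_r$ actually ranges over an interval of length at most $2c$, which would give a better constant, but neither your argument nor the paper's needs it.
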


\begin{proof}
Let $\mathcal F_r:=\sigma(V_1,\ldots,V_r)$ and define the Doob martingale
\begin{align*}
M_r:=\E[F(V)\mid \mathcal F_r],
\qquad r=0,1,\ldots,m.
\end{align*}
We claim that $|M_r-M_{r-1}|\le2c$ almost surely. Fix a prefix $v_{1:r-1}$ with positive probability, and consider two possible values $a$ and $b$ for $V_r$. Let $\mathcal U_a$ and $\mathcal U_b$ be the remaining token populations after drawing $a$ or $b$, respectively. These populations differ only by exchanging the token $a$ with the token $b$. Couple the remaining draws by drawing $R$ uniformly without replacement from $\mathcal U_a$, and obtaining $R'$ by replacing the token $b$ by the token $a$ if $b$ appears in $R$. This is a bijective coupling with the conditional law of the remaining draws from $\mathcal U_b$. The resulting full sequences differ at coordinate $r$ and possibly at one future coordinate, so their Hamming distance is at most $2$. Hence the corresponding conditional expectations of $F$ differ by at most $2c$, and the possible values of $M_r$ given $\mathcal F_{r-1}$ lie in an interval of length at most $2c$. This proves $|M_r-M_{r-1}|\le2c$.

Hoeffding's lemma applied to the martingale differences gives
\begin{align*}
\E\exp\{\lambda(F(V)-\E F(V))\}
\le
\exp\left\{
\frac{\lambda^2}{2}\sum_{r=1}^m(2c)^2
\right\}
=
\exp\{2c^2m\lambda^2\}.
\end{align*}
The stated tail bound follows by Chernoff's inequality.
\end{proof}

\begin{lemma}[Concentration on a balanced multislice]
\label{lem:balanced-multislice-conc}
Let $L$ be uniformly distributed on
\begin{align*}
\mathfrak G(n_1,\ldots,n_q)
:=
\left\{
\ell\in[q]^m:
\#\{i:\ell_i=a\}=n_a,\ a=1,\ldots,q
\right\},
\end{align*}
where $\sum_{a=1}^q n_a=m$. Let $I\subseteq[m]$ with $|I|=m_I$, and let $F:\mathfrak G(n_1,\ldots,n_q)\to\mathbb R$ depend only on $(L_i)_{i\in I}$. Suppose that $F$ is $c$-Lipschitz in Hamming distance on the active coordinates:
\begin{align*}
|F(\ell)-F(\ell')|
\le
c\sum_{i\in I}\mathbf 1\{\ell_i\ne \ell_i'\}.
\end{align*}
Then, for every $\lambda\in\mathbb R$,
\begin{align*}
\E\exp\{\lambda(F(L)-\E F(L))\}
\le
\exp\{C c^2m_I\lambda^2\}.
\end{align*}
Consequently, for every $t\ge0$,
\begin{align*}
\Pbb\left\{
F(L)-\E F(L)>C c\sqrt{m_I t}
\right\}
\le e^{-t}.
\end{align*}
\end{lemma}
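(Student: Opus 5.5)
We prove \Cref{lem:balanced-multislice-conc} by reducing to \Cref{lem:sampling-without-replacement-bdd-diff}. A uniform element $L$ of the multislice $\mathfrak G(n_1,\ldots,n_q)$ can be generated by sequential draws without replacement. Take a population of $m$ tokens containing $n_a$ tokens marked $a$ for each $a\in[q]$, and draw all $m$ tokens without replacement in a uniformly random order. The resulting sequence of marks $V=(V_1,\ldots,V_m)$ is then uniform on $\mathfrak G(n_1,\ldots,n_q)$. Since $F$ depends only on the coordinates in $I$, we reorder the positions so that those in $I$ are drawn first; by exchangeability of uniform draws, this does not change the law. Then $F(L)=\widetilde F(V_1,\ldots,V_{m_I})$, where $(V_1,\ldots,V_{m_I})$ is an ordered sample of $m_I$ tokens drawn without replacement from the same population. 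This is exactly the setting of \Cref{lem:sampling-without-replacement-bdd-diff}, with sample length $m_I$ in place of $m$.

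The step needing care is the Lipschitz hypothesis. \Cref{lem:sampling-without-replacement-bdd-diff} requires $|\widetilde F(v)-\widetilde F(v')|\le c\,d_H(v,v')$ for any two feasible ordered samples $v,v'$ of length $m_I$. Such a prefix need not determine a full multislice element uniquely, so we define $\widetilde F(v):=F(\ell)$ for any completion $\ell$ of $v$. This is well defined because $F$ depends only on $(\ell_i)_{i\in I}$. Given two feasible prefixes $v,v'$, fix completions $\ell,\ell'$; they agree with $v,v'$ on $I$. The hypothesis of \Cref{lem:balanced-multislice-conc} bounds $|F(\ell)-F(\ell')|$ by $c$ times the number of disagreements on $I$, which is $c\,d_H(v,v')$. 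Completions may differ off $I$, but this is irrelevant because the Lipschitz hypothesis counts only active coordinates.

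Applying \Cref{lem:sampling-without-replacement-bdd-diff} then gives
\begin{align*}
\E\exp\{\lambda(F(L)-\E F(L))\}\le\exp\{2c^2m_I\lambda^2\},
\end{align*}
so the moment-generating bound holds with $C=2$. For the tail statement, Chernoff's inequality yields
\begin{align*}
\Pbb\{F-\E F>x\}\le\exp\bigl(-x^2/(8c^2m_I)\bigr).
\end{align*}
Setting $x=\sqrt8\,c\sqrt{m_It}$ makes the right-hand side $e^{-t}$. Hence $C=\sqrt8$ works in the tail bound, and taking $C=3$ covers both displays.

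The only real obstacle is the measure-theoretic bookkeeping. We must check that restricting to the $I$-coordinates of a uniform multislice element gives exactly the without-replacement law on prefixes, which follows from exchangeability. We must also check that the martingale in \Cref{lem:sampling-without-replacement-bdd-diff} is taken only over the $m_I$ active draws. This second point is what yields the factor $m_I$ rather than $m$.
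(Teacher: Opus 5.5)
Your proposal is correct and follows essentially the same route as the paper. Both identify the law of the active labels $(L_i)_{i\in I}$ under the uniform multislice with an ordered sample of size $m_I$ drawn without replacement, apply \Cref{lem:sampling-without-replacement-bdd-diff} to the induced function, and finish with Chernoff's inequality. Your explicit check that the induced function on prefixes is well defined and $c$-Lipschitz, and your explicit constants ($2$ for the moment bound, $\sqrt8$ for the tail), fill in details the paper leaves implicit. The one omission is the trivial case $m_I=0$, where $F$ is constant and which the paper treats separately; it deserves a one-line mention.
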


\begin{proof}
If $m_I=0$, then $F(L)$ is constant and the claim is immediate. Otherwise, order $I=\{i_1,\ldots,i_{m_I}\}$. Under the uniform law on the balanced multislice, $(L_{i_1},\ldots,L_{i_{m_I}})$ has the same distribution as an ordered sample drawn without replacement from a finite population containing $n_a$ tokens marked $a$, $a=1,\ldots,q$. Applying \Cref{lem:sampling-without-replacement-bdd-diff} to the induced function of these active labels gives the exponential-moment bound. The tail bound follows by Chernoff's inequality after increasing the universal constant $C$.
\end{proof}

\begin{lemma}[Rao--Blackwellization preserves conditional sub-Gaussianity]
\label{lem:rb-preserves-subgaussian}
Let $\mathcal G\subseteq\mathcal H$ be sigma-fields, and let $X$ be a real-valued random variable such that
\begin{align*}
\E[X\mid\mathcal G]=0,
\qquad
\E[\exp(\lambda X)\mid\mathcal G]\le \exp(\sigma^2\lambda^2)
\end{align*}
for every $\lambda\in\mathbb R$. Let $Y:=\E[X\mid\mathcal H]$. Then
\begin{align*}
\E[Y\mid\mathcal G]=0,
\qquad
\E[\exp(\lambda Y)\mid\mathcal G]\le \exp(\sigma^2\lambda^2)
\end{align*}
for every $\lambda\in\mathbb R$. Consequently, $Y$ satisfies the same one-sided sub-Gaussian tail bound as $X$, up to universal constants.
\end{lemma}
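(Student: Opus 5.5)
The approach has two parts: the mean-zero claim follows from the tower property, and the exponential-moment bound follows from conditional Jensen's inequality. Since $\mathcal G\subseteq\mathcal H$, we have
\begin{align*}
\E[Y\mid\mathcal G]=\E\bigl[\E[X\mid\mathcal H]\mid\mathcal G\bigr]=\E[X\mid\mathcal G]=0 .
\end{align*}
This disposes of the centering statement immediately, given that $X$ is integrable. Integrability is implied by the conditional exponential-moment hypothesis, because $e^{|x|}\le e^{x}+e^{-x}$.

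For the moment generating function, fix $\lambda\in\R$. The map $x\mapsto e^{\lambda x}$ is convex, so conditional Jensen gives $\exp(\lambda Y)=\exp(\lambda\E[X\mid\mathcal H])\le \E[\exp(\lambda X)\mid\mathcal H]$ almost surely. We then take $\E[\cdot\mid\mathcal G]$ on both sides, using monotonicity of conditional expectation and the tower property again:
\begin{align*}
\E[\exp(\lambda Y)\mid\mathcal G]\le \E\bigl[\E[\exp(\lambda X)\mid\mathcal H]\mid\mathcal G\bigr]=\E[\exp(\lambda X)\mid\mathcal G]\le\exp(\sigma^2\lambda^2).
\end{align*}
The right-hand conditional expectation is finite by assumption, so conditional Jensen is applicable; its version for nonnegative variables also covers the case where integrability would otherwise be in question. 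The only point needing care is the order of quantifiers in "almost surely for every $\lambda$." I will state the conclusion for each fixed $\lambda$ almost surely, which is exactly the form of the hypothesis. If a simultaneous version is needed, it follows by restricting to rational $\lambda$ and using continuity of the conditional MGF, which is justified by dominated convergence.

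Finally, the tail consequence follows from the conditional Chernoff bound. For $x>0$ and $\lambda>0$,
\begin{align*}
\Pbb(Y>x\mid\mathcal G)\le e^{-\lambda x}\E[e^{\lambda Y}\mid\mathcal G]\le \exp(\sigma^2\lambda^2-\lambda x).
\end{align*}
Optimizing at $\lambda=x/(2\sigma^2)$ gives $\exp(-x^2/(4\sigma^2))$, which is the same bound one derives for $X$ from the hypothesis. Taking expectations yields the unconditional version. There is no real obstacle in this argument; the main thing to check is that Jensen is applied conditionally on the larger $\sigma$-field $\mathcal H$ before projecting down to $\mathcal G$.
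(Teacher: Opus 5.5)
Your proposal is correct and follows the paper's proof essentially line for line: the tower property gives the mean, conditional Jensen on $\mathcal H$ followed by the tower property down to $\mathcal G$ gives the exponential-moment bound, and Chernoff gives the tail. Your added remarks on integrability and the explicit optimization at $\lambda=x/(2\sigma^2)$ fill in details the paper leaves implicit but do not change the argument.
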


\begin{proof}
The mean identity follows from the tower property. For the moment bound, Jensen's inequality conditional on $\mathcal H$ gives
\begin{align*}
\exp(\lambda Y)
=
\exp\{\lambda\E[X\mid\mathcal H]\}
\le
\E[\exp(\lambda X)\mid\mathcal H].
\end{align*}
Taking conditional expectation given $\mathcal G$ and using the tower property yields the claim. The tail statement follows by Chernoff's inequality.
\end{proof}

\begin{lemma}[Monte Carlo quantile calibration]
\label{lem:mc-empirical-quantile}
Let $V_0,V_1,\ldots,V_R$ be conditionally i.i.d. real-valued random variables given a sigma-field $\mathcal G$. For $u\in(0,1)$, set $m_u:=\lceil(1-u)R\rceil$ and $\widehat q_u:=V_{(m_u)}$, where $V_{(1)}\le\cdots\le V_{(R)}$ are the order statistics of $V_1,\ldots,V_R$. Then, almost surely, the following assertions hold.
\begin{enumerate}[(i)]
\item If $x$ is $\mathcal G$-measurable and $\Pbb\{V_0>x\mid\mathcal G\}\le u/8$, then
\begin{align*}
\Pbb\{\widehat q_u>x\mid\mathcal G\}\le \exp(-uR).
\end{align*}

\item
\begin{align*}
\Pbb\{V_0>\widehat q_u\mid\mathcal G\}
\le
\frac{R+1-m_u}{R+1}
\le
u+\frac{1}{R+1}.
\end{align*}
\end{enumerate}
\end{lemma}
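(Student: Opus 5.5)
The plan is to condition on $\mathcal G$ throughout, so that $V_0,V_1,\ldots,V_R$ are i.i.d.\ with a common conditional distribution function. Both parts then reduce to elementary statements about i.i.d.\ samples: a binomial tail bound for (i) and an exchangeability rank argument for (ii).

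\textbf{Part (i).} Since $x$ is $\mathcal G$-measurable, set $\pi:=\Pbb\{V_1>x\mid\mathcal G\}\le u/8$. The event $\{\widehat q_u>x\}$ says that the $m_u$-th order statistic exceeds $x$. This means at least $R-m_u+1$ of the $V_1,\ldots,V_R$ exceed $x$. Let $B:=\sum_{r=1}^R\mathbf 1\{V_r>x\}$, which is conditionally $\mathrm{Binomial}(R,\pi)$. Since $m_u=\lceil(1-u)R\rceil<(1-u)R+1$, the threshold satisfies $R-m_u+1>uR$, so it suffices to bound $\Pbb\{B\ge uR\}$ when $\pi\le u/8$.

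I would use the Chernoff bound $\Pbb\{B\ge a\}\le (eR\pi/a)^a$ with $a=uR$. This gives $\Pbb\{B\ge uR\}\le (e/8)^{uR}$, and since $e/8<e^{-1}$ the bound is at most $\exp(-uR)$. A degenerate case needs separate handling: if $uR<1$, the threshold $R-m_u+1$ is still at least $1$. The same Chernoff bound with $a=R-m_u+1\ge uR$ still works, because $(eR\pi/a)^a$ is decreasing in $a$ once $a\ge eR\pi$, and here $eR\pi\le euR/8\le a$. I expect the only delicate point to be this constant bookkeeping, including the case where the threshold is non-integer.

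\textbf{Part (ii).} This is the standard exchangeability rank argument. Given $\mathcal G$, the vector $(V_0,\ldots,V_R)$ is exchangeable. The event $V_0>V_{(m_u)}$ requires $V_0$ to be strictly larger than at least $m_u$ of the others. Equivalently, among the $R+1$ values, $V_0$ must be strictly greater than at least $m_u$ other entries.

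Break ties by an independent uniform random ordering. The rank of $V_0$ is then uniform on $\{1,\ldots,R+1\}$, and strict exceedance implies rank at least $m_u+1$. Hence the probability is at most $(R+1-m_u)/(R+1)$. Finally, $m_u\ge(1-u)R$ gives
\begin{align*}
R+1-m_u\le uR+1\le u(R+1)+1,
\end{align*}
and dividing by $R+1$ yields $u+1/(R+1)$.
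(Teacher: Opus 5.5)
Your proof is correct and follows the paper's argument. For (i) you apply a binomial Chernoff bound $(e/8)^{uR}\le e^{-uR}$ to the number of $V_r$ exceeding $x$, and for (ii) you use an exchangeability rank count; the paper's version of (ii) counts how many indices can strictly exceed $m_u$ others in each realization rather than breaking ties randomly, which is equivalent. Your separate treatment of $uR<1$ is unnecessary, because the Chernoff bound holds for any real threshold $a>R\pi$ and $\Pbb\{B\ge R-m_u+1\}\le\Pbb\{B\ge uR\}$ by monotonicity.
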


\begin{proof}
For (i), let $Y:=\sum_{r=1}^R \mathbf 1\{V_r>x\}$. Conditional on $\mathcal G$, $Y$ has a binomial distribution with parameters $(R,p)$, where $p=\Pbb\{V_0>x\mid\mathcal G\}\le u/8$. Since $\widehat q_u>x$ implies $Y\ge R-m_u+1\ge uR$, the binomial Chernoff bound therefore gives
\begin{align*}
\Pbb\{\widehat q_u>x\mid\mathcal G\}
\le
\Pbb\{Y\ge uR\mid\mathcal G\}
\le
\left(\frac e8\right)^{uR}
\le
\exp(-uR).
\end{align*}

For (ii), condition on $\mathcal G$. The variables $V_0,V_1,\ldots,V_R$ are exchangeable, and the event $\{V_0>\widehat q_u\}$ requires at least $m_u$ of the other values to lie strictly below $V_0$. In any deterministic realization of $V_0,\ldots,V_R$, at most $R+1-m_u$ indices can have this property. Thus
\begin{align*}
\Pbb\{V_0>\widehat q_u\mid\mathcal G\}
\le
\frac{R+1-m_u}{R+1}
\le
u+\frac{1}{R+1}.
\end{align*}
\end{proof}

\subsection{Deterministic Approximation}
\label{app:sec:binning-technical}

\begin{lemma}[Uniform binning approximation for H\"{o}lder functions]
\label{lem:binning-approx}
Fix $d\in\mathbb N$, $L>0$, and $0<s_-\le s_+<\infty$. There exists a constant
\begin{align*}
C_{\mathrm{bin}}=C_{\mathrm{bin}}(d,s_-,s_+,L)>0
\end{align*}
such that, for every $s\in[s_-,s_+]$, every $h\in\cH_s^d(2L)$, and every integer $\kappa\ge1$,
\begin{align*}
\|W_{\kappa,d}(h)\|_1
\ge \frac12\|h\|_1-C_{\mathrm{bin}}\kappa^{-s}.
\end{align*}
Here $W_{\kappa,d}$ is the cell-average operator associated with the regular $\kappa^d$-cell partition of $[0,1]^d$:
\begin{align*}
W_{\kappa,d}(h)
:=
\sum_{j\in[\kappa]^d}
\kappa^d
\left(\int_{C_j}h\right)\mathbf 1_{C_j}.
\end{align*}
\end{lemma}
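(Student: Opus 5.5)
The plan is to compare $h$ with its cell average $W_{\kappa,d}(h)$ and use the triangle inequality:
\begin{align*}
\|W_{\kappa,d}(h)\|_1\ge \|h\|_1-\|h-W_{\kappa,d}(h)\|_1 .
\end{align*}
It therefore suffices to bound $\|h-W_{\kappa,d}(h)\|_1\le \tfrac12\|h\|_1+C\kappa^{-s}$. The difficulty is that a naive bound on $\|h-W_{\kappa,d}(h)\|_1$ is only of order $\kappa^{-(s\wedge1)}$. Piecewise-constant averaging reproduces only constants, so for $s>1$ the local polynomial part of $h$ is not killed. This is why the statement carries the factor $\tfrac12$ instead of $1$, and the proof must exploit it.

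\textbf{Step 1 (case $s\le 1$).} For $s\le1$, $h$ is $\eta$-Hölder with $\eta=s$ and constant $2L$. On each cell $C_j$ of diameter $\sqrt d/\kappa$, we have $|h(x)-\kappa^d\int_{C_j}h|\le 2L(\sqrt d/\kappa)^s$. Integrating over $[0,1]^d$ gives $\|h-W_{\kappa,d}(h)\|_1\le 2Ld^{s/2}\kappa^{-s}$, and the claim follows even with factor $1$.

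\textbf{Step 2 (case $s>1$, local Taylor decomposition).} On each cell, write $h=P_j+R_j$, where $P_j$ is the degree-$q$ Taylor polynomial at the cell center $c_j$. Hölder control of the order-$q$ derivatives gives $|R_j|\lesssim L\kappa^{-s}$ uniformly on $C_j$, so the remainders contribute $O(\kappa^{-s})$ to both $\|h\|_1$ and $\|W_{\kappa,d}(h)\|_1$. It thus suffices to prove a cellwise inequality for polynomials:
\begin{align*}
\kappa^{-d}\Bigl|\kappa^d\!\int_{C_j}P_j\Bigr| = \Bigl|\int_{C_j}P_j\Bigr|\ \ge\ \tfrac12\int_{C_j}|P_j| - (\text{error}).
\end{align*}
This is false for a general polynomial; for example, the linear $P_j(x)=a\cdot(x-c_j)$ has zero mean. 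Such sign-changing polynomials, however, have size controlled by derivatives of $h$. The pure-linear part has $L_1$ mass about $|\nabla h(c_j)|\kappa^{-1}$ per unit volume, which is not $O(\kappa^{-s})$ pointwise, so Step 2 alone cannot close the argument.

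\textbf{Step 3 (main obstacle: global summation).} Cell-by-cell comparison fails, and I would instead argue globally with a two-scale scheme. Split the cells into two classes: \emph{large} cells, where $|h(c_j)|\ge A\kappa^{-1}$ for a large constant $A$ (scaled by the derivative bounds $\lesssim L$), and \emph{small} cells.
\begin{itemize}
\item On large cells, $h$ keeps a constant sign on $C_j$. Then $|\int_{C_j}h|=\int_{C_j}|h|$ exactly, with no loss.
\item On small cells, $|h|\lesssim (A+L)\kappa^{-1}$.
\end{itemize}
The difficulty is that the total mass of the small cells is only $O(\kappa^{-1})$, not $O(\kappa^{-s})$. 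To handle this, I would iterate at a coarser, $h$-adapted level. Alternatively, and this is what I would try first, replace the first-order scale by a smoothness-based threshold. If $\tfrac12\|h\|_1\le C\kappa^{-s}$, the claim is trivial since the left side is nonnegative. Otherwise $\|h\|_1$ is large relative to $\kappa^{-s}$. Then use the Hölder-ball interpolation inequality $\|h\|_\infty$, $\|\nabla h\|_\infty\lesssim \|h\|_1^{\theta}$ (a Gagliardo--Nirenberg bound valid in $\cH_s^d(2L)$). This shows the sign-changing region $\{|h|\le \kappa^{-1}\|\nabla h\|_\infty\}$ carries at most half of $\|h\|_1$ up to an $O(\kappa^{-s})$ error. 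On the remaining sign-definite cells we have equality, which yields the factor $\tfrac12$.

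\textbf{Uniformity.} The constants will be tracked through the Taylor remainder and interpolation constants. These are uniform for $s\in[s_-,s_+]$, so $C_{\mathrm{bin}}$ depends only on $(d,s_-,s_+,L)$.

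I expect Step 3, making the sign-changing mass small enough, to be the genuine difficulty.
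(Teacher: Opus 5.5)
Your Steps~1 and~2 are sound, and your diagnosis is right: on a single cell the averaging annihilates the linear part of the local Taylor polynomial, so no cellwise polynomial inequality can hold. Step~3, however, contains no valid argument. The route you say you would try first fails because its intermediate claim is false for every $s>1$. Take a nonnegative bump $h_{\mathrm{b}}$ of height $\ell^s$ and width $\ell$, with $\kappa^{-1}\ll\ell\ll\kappa^{-1/(d+1)}$. On a disjoint region of volume of order one, add a flat nonnegative plateau whose height is just below $\kappa^{-1}\|\nabla h_{\mathrm{b}}\|_\infty\asymp\kappa^{-1}\ell^{s-1}$; small fixed constants keep the sum in $\cH_s^d(2L)$. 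The bump fixes $\|\nabla h\|_\infty\asymp\ell^{s-1}$, so the whole plateau lies in $\{|h|\le\kappa^{-1}\|\nabla h\|_\infty\}$. The plateau also carries a fraction $1-O(\kappa\ell^{d+1})$ of $\|h\|_1\asymp\kappa^{-1}\ell^{s-1}\gg\kappa^{-s}$. So your threshold set carries far more than $\tfrac12\|h\|_1+C\kappa^{-s}$, yet $h\ge0$, so $\|W_{\kappa,d}h\|_1=\|h\|_1$ and nothing is lost. Quantitatively, combining $\int_{\{|h|\le t\}}|h|\le t$ with the global bound $\|\nabla h\|_\infty\lesssim\|h\|_1^{(s-1)/(s+d)}$ controls the loss only by $\sup_{x>0}\{\kappa^{-1}x^{(s-1)/(s+d)}-x/2\}\asymp\kappa^{-(s+d)/(d+1)}$. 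This exceeds $\kappa^{-s}$ precisely when $s>1$. A global gradient bound mixes locations. What you need is that, near every point, the gradient is controlled by the \emph{local} $L_1$ mass of $h$, at a scale only a bounded factor above the mesh.

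That local control is the missing idea, and the paper gets it from norm equivalence on a finite-dimensional polynomial space rather than from interpolation. Let $Q=\lceil s_+\rceil-1$. Compactness gives $A_*:=\max_{q\le Q}\sup\{\|\nabla P\|_\infty:P\in\mathcal P_q^d,\ \|P\|_1=1\}<\infty$. Choose an integer $r$ with $\sqrt d\,A_*/r\le\tfrac12$. Then cell averaging $\Pi$ on $[0,1]^d$ over any grid with at least $r$ equal cells per coordinate satisfies $\|P-\Pi P\|_1\le\tfrac12\|P\|_1$, hence $\|\Pi P\|_1\ge\tfrac12\|P\|_1$. Now group the $\kappa$ fine intervals in each coordinate into runs of length between $r$ and $2r-1$, forming coarse blocks. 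By affine rescaling, $\|W_{\kappa,d}u\|_1\ge\tfrac12\|u\|_1$ for every $u$ that is a polynomial of degree at most $Q$ on each block. Taylor-expanding $h$ at each block center gives such a $u$ with $\|h-u\|_\infty\le C\kappa^{-s}$, where $C=C(d,s_-,s_+,L)$. The $L_1$-contraction of $W_{\kappa,d}$ then yields $\|W_{\kappa,d}h\|_1\ge\tfrac12\|u\|_1-\|h-u\|_1\ge\tfrac12\|h\|_1-\tfrac32\|h-u\|_1$. The range $\kappa<r$ is trivial since $\|h\|_1\le2L$. So the coarser level you gestured at is not $h$-adapted: it is a fixed multiple $r=r(d,s_+)$ of the mesh. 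This one argument also makes your separate case $s\le1$ and the large/small cell dichotomy unnecessary.
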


\begin{proof}
We adapt the blockwise-polynomial argument of \citet[Lemma~3]{AriasCastroPelletierSaligrama2018}, which proves the arbitrary-dimensional $L_2$ analogue, but give all details in $L_1$. The proof has three steps.

\paragraph{Step 1: uniform stability of cell averaging on polynomials.}
For an integer $q\ge0$, let $\mathcal P_q^d$ be the finite-dimensional space of real polynomials on $[0,1]^d$ of total degree at most $q$. The set $\{P\in\mathcal P_q^d:\|P\|_1=1\}$ is compact in coefficient space, and the map $P\mapsto\|\nabla P\|_\infty$ is continuous. Therefore
\begin{align*}
A_{d,q}
:=
\sup\bigl\{\|\nabla P\|_\infty:
P\in\mathcal P_q^d,\ \|P\|_1=1\bigr\}
<\infty,
\end{align*}
with $A_{d,0}=0$. Let
\begin{align*}
Q:=\lceil s_+\rceil-1,
\qquad
A_*:=\max_{0\le q\le Q}A_{d,q},
\end{align*}
and choose an integer $r\ge1$ such that
\begin{align*}
\frac{\sqrt d}{r}A_*\le\frac12.
\end{align*}

Consider any rectangular grid partition $\mathcal Q$ of $[0,1]^d$ into $n_j\ge r$ equal subintervals in coordinate $j$, and let $\Pi_{\mathcal Q}$ denote cell averaging over this grid. Every cell has Euclidean diameter at most $\sqrt d/r$. Hence, for $P\in\mathcal P_q^d$ with $q\le Q$,
\begin{align*}
\begin{aligned}
\|P-\Pi_{\mathcal Q}P\|_1
&\le \frac{\sqrt d}{r}\|\nabla P\|_\infty \\
&\le \frac{\sqrt d}{r}A_{d,q}\|P\|_1
\le \frac12\|P\|_1.
\end{aligned}
\end{align*}
The triangle inequality therefore gives the uniform polynomial stability bound
\begin{align*}
\|\Pi_{\mathcal Q}P\|_1\ge\frac12\|P\|_1,
\qquad
P\in\mathcal P_q^d,\quad 0\le q\le Q.
\end{align*}

\paragraph{Step 2: piecewise Taylor approximation on coarse blocks.}
Fix $s\in[s_-,s_+]$ and put
\begin{align*}
q:=\lceil s\rceil-1,
\qquad
\eta:=s-q\in(0,1].
\end{align*}
We first record a Taylor remainder bound with a constant that is uniform in $s\in[s_-,s_+]$. For $x_0,x\in[0,1]^d$, let
\begin{align*}
T_{x_0,q}h(x)
:=
\sum_{|\alpha|\le q}
\frac{D^\alpha h(x_0)}{\alpha!}(x-x_0)^\alpha.
\end{align*}
When $q=0$, the H\"older condition directly gives
\begin{align*}
|h(x)-T_{x_0,0}h(x)|\le 2L\|x-x_0\|_2^s.
\end{align*}
When $q\ge1$, set $v=x-x_0$ and $\varphi(t)=h(x_0+tv)$. Taylor's formula along the line segment from $x_0$ to $x$ gives
\begin{align*}
h(x)-T_{x_0,q}h(x)
=
\frac1{(q-1)!}\int_0^1(1-t)^{q-1}
\{\varphi^{(q)}(t)-\varphi^{(q)}(0)\}\,dt.
\end{align*}
Moreover,
\begin{align*}
\varphi^{(q)}(t)
=
\sum_{|\alpha|=q}\frac{q!}{\alpha!}
v^\alpha D^\alpha h(x_0+tv).
\end{align*}
Since $h\in\cH_s^d(2L)$,
\begin{align*}
\begin{aligned}
|\varphi^{(q)}(t)-\varphi^{(q)}(0)|
&\le 2L\,t^\eta\|v\|_2^\eta
   \sum_{|\alpha|=q}\frac{q!}{\alpha!}|v^\alpha| \\
&=2L\,t^\eta\|v\|_2^\eta\|v\|_1^q \\
&\le 2L\,d^{q/2}t^\eta\|v\|_2^s.
\end{aligned}
\end{align*}
Consequently,
\begin{align*}
|h(x)-T_{x_0,q}h(x)|
\le 2L\frac{d^{q/2}}{q!}\|x-x_0\|_2^s.
\end{align*}
Thus, with
\begin{align*}
c_*:=\max_{0\le j\le Q}\frac{d^{j/2}}{j!},
\end{align*}
where the $j=0$ term equals one, the preceding bound and the $q=0$ case imply
\begin{align*}
|h(x)-T_{x_0,q}h(x)|
\le 2Lc_*\|x-x_0\|_2^s
\end{align*}
uniformly over $s\in[s_-,s_+]$.

Assume first that $\kappa\ge r$. In each coordinate, partition the $\kappa$ fine-grid intervals into consecutive groups, each containing between $r$ and $2r-1$ fine intervals. Such a grouping exists: write $\kappa=mr+t$ with $m=\lfloor\kappa/r\rfloor\ge1$ and $0\le t<r$, and take one group of length $r+t$ and the remaining $m-1$ groups of length $r$. Taking Cartesian products of these one-dimensional groups yields a partition $\mathcal B$ of $[0,1]^d$ into rectangular coarse blocks, each of which is a union of fine cells.

For each $B\in\mathcal B$, let $x_B$ be its center and let $P_B:=T_{x_B,q}h$. Define the piecewise polynomial
\begin{align*}
u(x):=\sum_{B\in\mathcal B}P_B(x)\mathbf 1_B(x).
\end{align*}
Every side length of $B$ is less than $2r/\kappa$, and hence
\begin{align*}
\sup_{x\in B}\|x-x_B\|_2\le \frac{r\sqrt d}{\kappa}.
\end{align*}
By the uniform Taylor bound above, and since $r\sqrt d\ge1$ and $s\le s_+$,
\begin{align*}
\|h-u\|_\infty
\le 2Lc_*(r\sqrt d)^{s_+}\kappa^{-s}
=:E_*\kappa^{-s}.
\end{align*}
The domain has unit volume, so the same bound holds in $L_1$.

Because every coarse block is a union of fine cells, the fine-grid averaging operator acts separately on the blocks. Fix $B\in\mathcal B$ and rescale $B$ affinely onto $[0,1]^d$. If the block contains $n_j(B)$ fine intervals in coordinate $j$, then $n_j(B)\ge r$. Under this rescaling, $P_B$ remains a polynomial of total degree at most $q$, and the fine cells become a rectangular grid with at least $r$ equal cells in every coordinate. Applying the polynomial stability bound above, and then rescaling back, gives
\begin{align*}
\|W_{\kappa,d}(P_B\mathbf 1_B)\|_{L_1(B)}
\ge \frac12\|P_B\|_{L_1(B)}.
\end{align*}
Summing over the disjoint coarse blocks,
\begin{align*}
\|W_{\kappa,d}u\|_1\ge\frac12\|u\|_1.
\end{align*}

\paragraph{Step 3: conclusion.}
The operator $W_{\kappa,d}$ is an $L_1$ contraction, since
\begin{align*}
\|W_{\kappa,d}v\|_1
=
\sum_C\left|\int_Cv\right|
\le
\sum_C\int_C|v|
=
\|v\|_1.
\end{align*}
Combining the approximation, polynomial-stability, and contraction bounds, for $\kappa\ge r$ we obtain
\begin{align*}
\begin{aligned}
\|W_{\kappa,d}h\|_1
&\ge \|W_{\kappa,d}u\|_1
     -\|W_{\kappa,d}(h-u)\|_1 \\
&\ge \frac12\|u\|_1-\|h-u\|_1 \\
&\ge \frac12\|h\|_1-\frac32\|h-u\|_1 \\
&\ge \frac12\|h\|_1-\frac32E_*\kappa^{-s}.
\end{aligned}
\end{align*}

It remains to cover $1\le\kappa<r$. Since $h\in\cH_s^d(2L)$, $\|h\|_1\le2L$. Also, for $\kappa<r$ and $s\le s_+$, $\kappa^{-s}\ge r^{-s_+}$. Therefore
\begin{align*}
0\ge\frac12\|h\|_1-Lr^{s_+}\kappa^{-s}.
\end{align*}
Since $\|W_{\kappa,d}h\|_1\ge0$, the desired result holds for all $\kappa\ge1$ with
\begin{align*}
C_{\mathrm{bin}}
:=
\max\left\{\frac32E_*,\ Lr^{s_+}\right\}.
\end{align*}
All quantities used to define $r$, $E_*$, and $C_{\mathrm{bin}}$ depend only on $(d,s_-,s_+,L)$, proving both the stated inequality and its uniformity over the smoothness interval.
\end{proof}

\subsection{Differential Privacy Tools}
For a statistic $T$, write the global sensitivity of $T$ as
\begin{align*}
\GS(T):=\sup_{D\sim D'}\sup_{\ell}
|T(D,\ell)-T(D',\ell)|,
\end{align*}
where $D\sim D'$ denotes neighboring datasets that differ in one record. Let $T(D,\ell)$ be a generic statistic with global sensitivity $\GS(T)$, uniformly over all admissible relabelings $\ell$. Let $\ell^{(0)}$ denote the observed labeling and let $\ell^{(1)},\ldots,\ell^{(B)}$ denote permutation labelings. For $b=0,1,\ldots,B$, compute $ T_b:=T(D,\ell^{(b)}), $ and draw independent noises $ \eta_0,\ldots,\eta_B \stackrel{\iid}{\sim} \Lap(2\GS(T)/\varepsilon). $ Define the privatized statistics $\widetilde T_b:=T_b+\eta_b$ for $b=0,1,\ldots,B$, and the noisy permutation $p$-value
\begin{align*}
\widetilde p
:=
\frac{
1+\sum_{b=1}^B \mathbf 1\{\widetilde T_b\ge \widetilde T_0\}
}{B+1}.
\end{align*}
The test rejects whenever $\widetilde p\le\gamma$. The following lemma, corresponding to \citet[Theorems~1--2]{KimSchrab2026}, records the privacy and validity guarantees of this procedure.

\begin{lemma}[Private permutation theorem {\citealp[Theorems~1--2]{KimSchrab2026}}]
\label{lem:private-permutation-cited}
Consider the setting above. Then the released decision $\mathbf 1 \{\widetilde p\le\gamma\}$ is $\varepsilon$-differentially private. Moreover, if, under $H_0$, the observed labeling and the permutation labelings are exchangeable conditional on the unlabeled pooled data, then the test has finite-sample type~I error at most $\gamma$.
\end{lemma}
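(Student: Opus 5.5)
The plan is to prove privacy and validity separately, with the noisy permutation $p$-value written as a function of the dataset $D$ and independent auxiliary randomness.

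\emph{Privacy.} Condition on the permutation labelings $\ell^{(1)},\ldots,\ell^{(B)}$; they are drawn independently of $D$, so their law does not change between neighboring datasets. The vector $(T_0,\ldots,T_B)$ then changes coordinatewise by at most $\GS(T)$ when $D$ is replaced by a neighbor $D'$. A naive Laplace-mechanism bound on the whole vector would cost $(B+1)\varepsilon/2$, so I would avoid composing over coordinates.

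Instead, I would use a coupling on the noise. Given the noises $\eta$ under $D$, define shifted noises $\eta'_b:=\eta_b+\Delta_b$ for $D'$, where
\begin{align*}
\Delta_b:=(T_b(D)-T_b(D'))+c,\qquad c\in\{-\GS(T),+\GS(T)\}.
\end{align*}
The sign of $c$ is chosen according to whether we bound $\Pbb(\text{reject})$ or $\Pbb(\text{accept})$. The key point is that the differences $\widetilde T_b-\widetilde T_0$ for $b\ge1$ change only through $\eta_0$. So it is enough to perturb $\eta_0$ by at most $2\GS(T)$ and keep $\eta_b-\eta_0$ aligned.

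A cleaner route uses monotonicity of the rejection event in each noise. Rejection is monotone increasing in $\eta_0$ and decreasing in each $\eta_b$, $b\ge1$. Replacing $D$ by $D'$ is dominated by shifting $\eta_0$ by $-2\GS(T)$ while leaving the others fixed. That is, the event for $D'$ at noise $\eta$ contains the event for $D$ at noise $(\eta_0+2\GS(T),\eta_{1:B})$, up to the direction of the shift.

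Since $\eta_0\sim\Lap(2\GS(T)/\varepsilon)$, a shift of $2\GS(T)$ changes its density by a factor of at most $e^{\varepsilon}$. Integrating over $\eta_{1:B}$ and the labelings gives
\begin{align*}
\Pbb(\text{reject}\mid D')\le e^{\varepsilon}\,\Pbb(\text{reject}\mid D),
\end{align*}
and symmetrically for acceptance. This yields $\varepsilon$-DP of the released bit.

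\emph{Validity.} Under $H_0$, conditional on the unlabeled pooled data, the labelings $\ell^{(0)},\ldots,\ell^{(B)}$ are exchangeable by assumption. The noises are i.i.d. and independent of everything else. Hence $(\widetilde T_0,\ldots,\widetilde T_B)$ is exchangeable. The standard rank argument then gives $\Pbb(\widetilde p\le\gamma)\le\lfloor\gamma(B+1)\rfloor/(B+1)\le\gamma$.

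Ties are handled conservatively by the $\ge$ in the $p$-value, and they occur with probability zero anyway because the noise is continuous.

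The main obstacle is making the privacy step rigorous. Each $T_b$ moves by up to $\GS(T)$ in an arbitrary direction, so the monotone comparison must absorb the worst case: $T_0$ moves by up to $\GS(T)$ in one direction while every $T_b$ moves by up to $\GS(T)$ in the other. The resulting one-sided shift is $2\GS(T)$ applied only to $\eta_0$, which is exactly what the Laplace scale $2\GS(T)/\varepsilon$ is calibrated for.
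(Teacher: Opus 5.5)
Your proof is correct and self-contained, whereas the paper gives no proof of this lemma: it imports the statement directly from \citet[Theorems~1--2]{KimSchrab2026}. Your argument makes explicit two facts the paper relies on without comment.
\begin{enumerate}
\item Privacy uses neither $H_0$ nor exchangeability. It needs only that the sensitivity bound holds uniformly over labelings, which is what lets you condition on the labelings.
\item The scale $2\GS(T)/\varepsilon$ does not grow with $B$, because the entire data dependence can be absorbed into $\eta_0$.
\end{enumerate}
A slightly cleaner way to phrase your step uses the same Lipschitz-quantile device the paper applies to $\widehat q_\kappa$ in the adaptive test. With $k=\lfloor\gamma(B+1)\rfloor\in\{1,\ldots,B\}$ (other values give constant decisions), rejection occurs iff $\widetilde T_0$ strictly exceeds the $k$-th largest of $\widetilde T_1,\ldots,\widetilde T_B$. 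Conditional on the labelings and $\eta_{1:B}$, this order statistic is $1$-Lipschitz in the sup norm, so it has sensitivity $\GS(T)$. Hence $T_0$ minus it has sensitivity $2\GS(T)$, and the released bit is post-processing of a Laplace mechanism.

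One slip needs fixing. The sentence pairing the shift $+2\GS(T)$ with ``contains'' is backwards. Since $T_b(D')-T_0(D')\ge T_b(D)-T_0(D)-2\GS(T)$ for every $b\ge1$, the count $\#\{b\ge1:\widetilde T_b\ge\widetilde T_0\}$ under $D'$ at noise $\eta$ is at least the count under $D$ at noise $(\eta_0+2\GS(T),\eta_{1:B})$. So the rejection event for $D'$ at $\eta$ is \emph{contained in} the rejection event for $D$ at the shifted noise. This inclusion is what gives your displayed bound, after applying $\Pbb(\eta_0+2\GS(T)\in A)\le e^{\varepsilon}\Pbb(\eta_0\in A)$ and Fubini over $\eta_{1:B}$ and the labelings. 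With ``contains'' and the shift $+2\GS(T)$, you would only obtain a lower bound on $\Pbb(\text{reject}\mid D')$. The reverse inclusion does hold for the shift $-2\GS(T)$, and it yields the same inequality with $D$ and $D'$ swapped. Relatedly, the differences $\widetilde T_b-\widetilde T_0$ do not change only through $\eta_0$; they also change through $T_b-T_0$. What is true is that their change can be dominated by a shift of $\eta_0$ alone, as your final paragraph correctly states. The validity part via the rank argument is fine as written.
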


\subsubsection{DP Coupling Lemma}
\label{app:proof:lem:dp-coupling}

The following standard DP-coupling lower bound is used, for example, in \citet{AcharyaSunZhang2018}. We include the short proof to keep constants and notation self-contained. For $x,y\in\mathcal X^n$, write
\begin{align*}
d_H(x,y):=\sum_{i=1}^n \mathbf 1\{x_i\ne y_i\}
\end{align*}
for the Hamming distance between datasets.

\begin{lemma}[DP lower bound via couplings]\label{lem:dp-coupling}
Let $P$ and $Q$ be probability measures on $\mathcal X^n$. If there exists a coupling $(X_1^n,Y_1^n)$ of $(P,Q)$ with $\E[d_H(X_1^n,Y_1^n)]\le D$, then any $\varepsilon$-DP test $\Psi:\mathcal X^n\to\{0,1\}$ with $P(\Psi=1)\le\gamma$ and $Q(\Psi=0)\le\beta$ must satisfy
\begin{align*}
D\ge \frac{1}{\varepsilon}\log\frac{1}{\gamma+\beta}
\ge \frac{1-\gamma-\beta}{\varepsilon}.
\end{align*}
\end{lemma}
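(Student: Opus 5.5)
The proof combines a group-privacy bound for tests with an averaging argument over the coupling. Write $p_\Psi(x):=\Pbb(\Psi(x)=1)$ for the rejection probability on dataset $x$. Applying the $\varepsilon$-DP inequality along a chain of $d_H(x,y)$ single-record changes from $y$ to $x$ gives, for all $x,y\in\mathcal X^n$,
\begin{align*}
p_\Psi(y)\le e^{\varepsilon d_H(x,y)}\,p_\Psi(x).
\end{align*}
The same bound holds for the acceptance probability $1-p_\Psi$. Let $(X,Y)=(X_1^n,Y_1^n)$ be the coupling from the hypothesis. Then
\begin{align*}
Q(\Psi=1)=\E[p_\Psi(Y)]\le \E\bigl[e^{\varepsilon d_H(X,Y)}p_\Psi(X)\bigr].
\end{align*}
The right-hand side is not immediately controlled by the first moment $\E[d_H]\le D$, since it involves an exponential of $d_H$. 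The plan is to use both directions of the inequality together with convexity.

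The key step is to use the bound in the form $p_\Psi(Y)\,e^{-\varepsilon d_H}\le p_\Psi(X)$ and its counterpart $(1-p_\Psi(X))\,e^{-\varepsilon d_H}\le 1-p_\Psi(Y)$. Adding these two inequalities pointwise gives
\begin{align*}
e^{-\varepsilon d_H(X,Y)}\bigl(p_\Psi(Y)+1-p_\Psi(X)\bigr)\le p_\Psi(X)+1-p_\Psi(Y).
\end{align*}
This is still not in a usable form, so I would instead apply the two inequalities in the direction that isolates the errors. The pointwise bound
\begin{align*}
e^{-\varepsilon d_H}\le e^{-\varepsilon d_H}\bigl[p_\Psi(Y)+(1-p_\Psi(X))\bigr]+\ldots
\end{align*}
does not lead anywhere either. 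The cleaner route is the following. Pointwise we have $e^{-\varepsilon d_H}p_\Psi(Y)\le p_\Psi(X)$ and $e^{-\varepsilon d_H}(1-p_\Psi(Y))\le 1-p_\Psi(Y)$. Adding the two gives
\begin{align*}
e^{-\varepsilon d_H}\le p_\Psi(X)+1-p_\Psi(Y).
\end{align*}
Taking expectations and using $\E[p_\Psi(X)]=P(\Psi=1)\le\gamma$ and $\E[1-p_\Psi(Y)]=Q(\Psi=0)\le\beta$ yields
\begin{align*}
\E\bigl[e^{-\varepsilon d_H(X,Y)}\bigr]\le\gamma+\beta.
\end{align*}

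Jensen's inequality for the convex map $t\mapsto e^{-\varepsilon t}$ gives
\begin{align*}
e^{-\varepsilon D}\le e^{-\varepsilon\E[d_H]}\le \E\bigl[e^{-\varepsilon d_H}\bigr]\le\gamma+\beta,
\end{align*}
where the first inequality uses $\E[d_H]\le D$. Rearranging gives $D\ge\varepsilon^{-1}\log\bigl(1/(\gamma+\beta)\bigr)$. The second inequality in the lemma follows from $\log(1/x)\ge 1-x$ for $x\in(0,1]$, applied with $x=\gamma+\beta$. If $\gamma+\beta\ge1$, the right-hand side is nonpositive and the claim is trivial.

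The only delicate point is finding the right pointwise combination. The DP inequality must be used once to compare rejection at $Y$ with rejection at $X$, and the second term $1-p_\Psi(Y)$ must be bounded trivially, without any DP factor. This produces a bound on an exponential moment of $d_H$ with a negative exponent, which is exactly what Jensen's inequality converts into the first-moment bound. Measurability of $p_\Psi$ and of the coupling is routine.
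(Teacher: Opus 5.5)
Your final argument is correct and is essentially the paper's proof. Both reduce to the pointwise bound $e^{-\varepsilon d_H(X,Y)}\le p_\Psi(X)+1-p_\Psi(Y)$ and then take expectations and apply Jensen; the only cosmetic difference is that you invoke group privacy on the rejection event and bound the acceptance term trivially, while the paper invokes it on the non-rejection event and then uses $a(1-b)\ge a-b$ (delete the abandoned intermediate attempts, including the display ending in an ellipsis, in a final write-up).
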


\begin{proof}
Let
\begin{align*}
p_\Psi(x):=\Pbb(\Psi(x)=1),\qquad x\in\mathcal X^n.
\end{align*}
Group privacy applied to the non-rejection event gives, for all $x,y\in\mathcal X^n$,
\begin{align*}
1-p_\Psi(y)
\ge
e^{-\varepsilon d_H(x,y)}\{1-p_\Psi(x)\}.
\end{align*}
Evaluating this inequality under the coupling $(X_1^n,Y_1^n)$ and using $Q(\Psi=0)\le\beta$, we obtain
\begin{align*}
\beta
\ge
\E\!\left[
e^{-\varepsilon d_H(X_1^n,Y_1^n)}
\{1-p_\Psi(X_1^n)\}
\right].
\end{align*}
Since $a(1-b)\ge a-b$ for $a,b\in[0,1]$,
\begin{align*}
\beta
\ge
\E\!\left[e^{-\varepsilon d_H(X_1^n,Y_1^n)}\right]
-
\E[p_\Psi(X_1^n)].
\end{align*}
By Jensen's inequality and the coupling assumption,
\begin{align*}
\E\!\left[e^{-\varepsilon d_H(X_1^n,Y_1^n)}\right]
\ge
e^{-\varepsilon \E[d_H(X_1^n,Y_1^n)]}
\ge
e^{-\varepsilon D}.
\end{align*}
Together with $\E[p_\Psi(X_1^n)]=P(\Psi=1)\le\gamma$, this gives
\begin{align*}
\beta\ge e^{-\varepsilon D}-\gamma.
\end{align*}
Thus $e^{-\varepsilon D}\le\gamma+\beta$, and hence
\begin{align*}
D\ge \frac{1}{\varepsilon}\log\frac{1}{\gamma+\beta}.
\end{align*}
If $\gamma+\beta\ge1$, the second displayed lower bound is trivial. If $\gamma+\beta<1$, it follows from $-\log t\ge1-t$ with $t=\gamma+\beta$. This completes the proof.
\end{proof}

\subsubsection{Transport Lemma}
\label{app:proof:lem:transport}

The next lemma provides transport-type control on rejection probabilities for central-DP tests in terms of the mixture-to-null KL divergence and the sample size. To the best of our knowledge, this inequality is new.

\begin{lemma}[Transport inequality for central-DP tests]
\label{lem:transport}
Let $\mathcal M$ be an $\varepsilon$-DP binary mechanism, $p_{\mathcal M}$ its rejection probability function, and let $Q$ be a probability measure absolutely continuous with respect to $P_0^N$, written $Q\ll P_0^N$. Then
\begin{align*}
\E_Q[p_{\mathcal M}]-\E_0[p_{\mathcal M}]
\le
(1-e^{-\varepsilon})\sqrt{\frac{N}{2}\KL(Q\|P_0^N)}
\le
\varepsilon\sqrt{\frac{N}{2}\KL(Q\|P_0^N)}.
\end{align*}
\end{lemma}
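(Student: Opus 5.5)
The plan is to combine two facts: a Lipschitz property of the rejection-probability function in Hamming distance, which is a direct consequence of pure DP, and a transportation (Marton-type) inequality for the product measure $P_0^N$, which I would prove through its dual Bobkov--Götze form, i.e.\ a bounded-differences moment generating function bound together with the Donsker--Varadhan variational formula.

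\emph{Step 1 (Hamming-Lipschitz property).} Let $x,x'\in([0,1]^d)^N$ differ in one coordinate. Applying the $\varepsilon$-DP inequality to the event $\{1\}$ gives $p_{\mathcal M}(x')\ge e^{-\varepsilon}p_{\mathcal M}(x)$. Assuming without loss of generality that $p_{\mathcal M}(x)\ge p_{\mathcal M}(x')$, we get
\begin{align*}
p_{\mathcal M}(x)-p_{\mathcal M}(x')\le (1-e^{-\varepsilon})\,p_{\mathcal M}(x)\le 1-e^{-\varepsilon}.
\end{align*}
Hence $p_{\mathcal M}$ has bounded differences $c:=1-e^{-\varepsilon}$ in each coordinate. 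By telescoping, it is $c$-Lipschitz with respect to $d_H$.

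\emph{Step 2 (sub-Gaussian MGF under the null).} Under $P_0^N$ the coordinates are independent. By the bounded-differences (McDiarmid/Hoeffding martingale) inequality, for every $\lambda\in\R$,
\begin{align*}
\log \E_0\exp\{\lambda(p_{\mathcal M}-\E_0 p_{\mathcal M})\}\le \frac{\lambda^2 N c^2}{8}.
\end{align*}
Here each Doob martingale increment lies in an interval of length at most $c$, so Hoeffding's lemma yields the constant $1/8$.

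\emph{Step 3 (Donsker--Varadhan).} Since $Q\ll P_0^N$ and $p_{\mathcal M}$ is bounded, the variational formula gives, for every $\lambda>0$,
\begin{align*}
\lambda\bigl(\E_Q p_{\mathcal M}-\E_0p_{\mathcal M}\bigr)\le \KL(Q\|P_0^N)+\log\E_0 e^{\lambda(p_{\mathcal M}-\E_0p_{\mathcal M})}\le \KL(Q\|P_0^N)+\frac{\lambda^2Nc^2}{8}.
\end{align*}
Dividing by $\lambda$ and optimizing at $\lambda=\sqrt{8\KL/(Nc^2)}$ gives
\begin{align*}
\E_Q p_{\mathcal M}-\E_0 p_{\mathcal M}\le c\sqrt{N\KL(Q\|P_0^N)/2}.
\end{align*}
This is the first inequality. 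If $\KL=\infty$ the claim is trivial, and if $\KL=0$ we let $\lambda\to\infty$. The second inequality follows from $1-e^{-\varepsilon}\le\varepsilon$.

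\emph{Main obstacle.} The only delicate point is obtaining the sharp constant $1/8$ in Step 2, i.e.\ Marton's constant $\sqrt{N/2}$. This requires checking that each martingale increment, conditional on the past, ranges in an interval of length $c$ rather than $2c$. That holds because the conditional expectations over two values of the $i$-th coordinate differ by at most $c$ under independence, since the remaining coordinates can be coupled identically. Measurability of $p_{\mathcal M}$ is inherited from measurability of the mechanism $\mathcal M$.
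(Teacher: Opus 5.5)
Your proposal is correct and follows the paper's proof essentially step for step: pure DP gives the one-coordinate bounded-difference constant $1-e^{-\varepsilon}$ (you use the rejection event, the paper uses the non-rejection event; either works), McDiarmid's inequality gives the sub-Gaussian moment generating function bound under $P_0^N$, and the entropy variational formula, optimized over $\lambda$, gives the bound. One small slip: in the case $\KL(Q\|P_0^N)=0$ the bound reads $\lambda N c^2/8$, so you must let $\lambda\downarrow 0$ rather than $\lambda\to\infty$; alternatively, simply note that $Q=P_0^N$ in that case.
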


\begin{proof}
We first record how differential privacy controls changes in the rejection probability. For a dataset $D$, write $p(D):=p_{\mathcal M}(D)=\Pbb(\mathcal M(D)=1)$.  If $D\sim D'$, then applying differential privacy to the rejection event and to its complement gives
\begin{align*}
p(D)\le e^\varepsilon p(D'),
\qquad
1-p(D)\le e^\varepsilon\{1-p(D')\},
\end{align*}
and the same inequalities also hold with $D$ and $D'$ interchanged.  Suppose first that $p(D)\ge p(D')$.  Using the complement inequality in the direction $D'\to D$,
\begin{align*}
1-p(D')\le e^\varepsilon\{1-p(D)\},
\end{align*}
we get
\begin{align*}
1-p(D)
\ge e^{-\varepsilon}\{1-p(D')\},
\qquad\text{hence}\qquad
p(D)\le 1-e^{-\varepsilon}\{1-p(D')\}.
\end{align*}
Therefore
\begin{align*}
\begin{aligned}
p(D)-p(D')
&\le 1-e^{-\varepsilon}\{1-p(D')\}-p(D') \\
&=(1-e^{-\varepsilon})\{1-p(D')\}
\le 1-e^{-\varepsilon}.
\end{aligned}
\end{align*}
If $p(D')>p(D)$, the same argument with the two datasets interchanged yields $p(D')-p(D)\le 1-e^{-\varepsilon}$.  Thus
\begin{align*}
|p(D)-p(D')|
\le 1-e^{-\varepsilon}
=:c_\varepsilon
\qquad\text{whenever }D\sim D'.
\end{align*}
In particular, under $P_0^N$, the map $(X_1,\ldots,X_N)\mapsto p(X_1, \ldots,X_N)$ satisfies the bounded-differences condition with constants $c_\varepsilon,\ldots,c_\varepsilon$.  For brevity, write $p=p(X_1,\ldots,X_N)$.  McDiarmid's exponential inequality \citep[Theorem~6.2]{BoucheronLugosiMassart2013} therefore implies that, for all $\lambda>0$,
\begin{align*}
\log \E_0
\exp\!\left\{\lambda\bigl(p-\E_0[p]\bigr)\right\}
\le
\frac{\lambda^2}{8}\sum_{i=1}^N c_\varepsilon^2
=
\frac{\lambda^2 N c_\varepsilon^2}{8}.
\end{align*}

We next convert this null concentration estimate into a bound on the change of expectation under $Q$.  The entropy variational formula states that, for probability measures $Q\ll P$ and any measurable $F$ with $\E_P e^F<\infty$, \citep[Corollary~4.15]{BoucheronLugosiMassart2013}
\begin{align*}
\E_Q[F]
\le
\KL(Q\|P)+\log \E_P e^F .
\end{align*}
Apply this with $P=P_0^N$ and
\begin{align*}
F=\lambda\bigl(p-\E_0[p]\bigr),
\qquad \lambda>0.
\end{align*}
Since $0\le p\le1$, the exponential moment is finite.  Moreover,
\begin{align*}
\E_Q[F]
=
\lambda\bigl\{\E_Q[p]-\E_0[p]\bigr\}.
\end{align*}
Consequently,
\begin{align*}
\E_Q[p]-\E_0[p]
\le
\frac{\KL(Q\|P_0^N)}{\lambda}
+ 
\frac{1}{\lambda}
\log \E_0
\exp\!\left\{\lambda\bigl(p-\E_0[p]\bigr)\right\}.
\end{align*}
Combining this display with the McDiarmid bound gives, for every $\lambda>0$,
\begin{align*}
\E_Q[p]-\E_0[p]
\le
\frac{\KL(Q\|P_0^N)}{\lambda}
+ 
\frac{\lambda N c_\varepsilon^2}{8}.
\end{align*}
If $\KL(Q\|P_0^N)=0$, letting $\lambda\downarrow0$ gives $\E_Q[p]\le\E_0[p]$, and the desired bound is immediate.  Otherwise, optimizing the right-hand side yields
\begin{align*}
\E_Q[p]-\E_0[p]
\le
c_\varepsilon\sqrt{\frac{N}{2}\KL(Q\|P_0^N)}.
\end{align*}
Finally, $c_\varepsilon=1-e^{-\varepsilon}\le\varepsilon$, which gives the second inequality.
\end{proof}

\subsection{Discrete Moment and Comparison Tools}
\label{sec:expectation_gap}
This subsection proves the binomial comparison inequality used in the bounded-histogram analysis. Compared with the comparison lemma of \citet{CanonneSun2022}, the formulation below does not require the restriction $p,q\le 1/4$ and applies for all $n\ge2$.

\begin{lemma}[Binomial comparison without a small-mass restriction]
\label{lem:binomial-comparison-no-heavy}
Let $n\ge 2$ and let $p,q\in[0,1]$. Suppose
\begin{align*}
X,X'\stackrel{\iid}{\sim} \mathrm{Bin}(n,p),
\qquad
Y,Y'\stackrel{\iid}{\sim} \mathrm{Bin}(n,q)
\end{align*}
are mutually independent, and define
\begin{align*}
\mathfrak D_n(p,q):=\E\bigl[|X-Y|+|X'-Y'|-|X-X'|-|Y-Y'|\bigr].
\end{align*}
Write $\delta:=|p-q|$, $\mu:=np$, and $\lambda:=nq$. Then
\begin{align*}
\mathfrak D_n(p,q)\ge \frac{1}{16\pi}
\min\left\{n^2\delta^2,\; n\delta,\; \frac{n^2\delta^2}{\sqrt{n(p+q)}}\right\}
= \frac{1}{16\pi}
\min\left\{(\mu-\lambda)^2,\; |\mu-\lambda|,\; \frac{(\mu-\lambda)^2}{\sqrt{\mu+\lambda}}\right\}.
\end{align*}
\end{lemma}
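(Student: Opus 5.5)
The plan is to pass to characteristic functions, which turns $\mathfrak D_n(p,q)$ into a manifestly nonnegative integral, and then bound that integral from below on a small frequency window. For an integer-valued random variable $U$ with characteristic function $\varphi_U$, the identity $|u|=\frac1{2\pi}\int_{-\pi}^{\pi}\frac{1-\cos(tu)}{1-\cos t}\,dt$ for $u\in\mathbb Z$ gives
\begin{align*}
\E|U|=\frac{1}{2\pi}\int_{-\pi}^{\pi}\frac{1-\Re\varphi_U(t)}{1-\cos t}\,dt .
\end{align*}
Write $\varphi_p(t):=(1-p+pe^{it})^n$. By independence, $\varphi_{X-Y}=\varphi_p\overline{\varphi_q}$, $\varphi_{X-X'}=|\varphi_p|^2$ and $\varphi_{Y-Y'}=|\varphi_q|^2$. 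The constant terms cancel across the four expectations, and we obtain the exact representation
\begin{align*}
\mathfrak D_n(p,q)=\frac{1}{2\pi}\int_{-\pi}^{\pi}\frac{|\varphi_p(t)-\varphi_q(t)|^2}{1-\cos t}\,dt\ \ge 0 .
\end{align*}
It then suffices to lower bound the integrand on a window $|t|\le T$ and to use $1-\cos t\le t^2/2$.

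\textbf{Polar decomposition.} Write $1-p+pe^{it}=\rho_p(t)e^{i\theta_p(t)}$, so that $\varphi_p=\rho_p^ne^{in\theta_p}$. For complex $a,b$ with phase difference $\Delta$,
\begin{align*}
|a-b|^2=(|a|-|b|)^2+4|a||b|\sin^2(\Delta/2)\ \ge\ 4|a||b|\sin^2(\Delta/2).
\end{align*}
We apply this with $\Delta=n(\theta_p-\theta_q)$, after which two estimates are needed.

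\textbf{Moduli.} We use $\rho_p(t)^2=1-2p(1-p)(1-\cos t)$. For $|t|\le\pi/2$ this quantity is at least $1/2$, and it satisfies $\rho_p^2\ge\exp(-c\,p\,t^2)$. Hence
\begin{align*}
\rho_p^n\rho_q^n\ge\exp\{-c\,n(p+q)t^2\}\ge e^{-c}
\qquad\text{whenever } |t|\le (n(p+q))^{-1/2}.
\end{align*}

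\textbf{Phases.} We have $\theta_p(t)=\arg(1-p+pe^{it})$ and
\begin{align*}
\partial_p\theta_p(t)=\frac{\sin t}{\rho_p(t)^2}.
\end{align*}
Since $1/2\le\rho_p^2\le1$ on $|t|\le\pi/2$, this derivative lies between $\sin t$ and $2\sin t$, for every $p\in[0,1]$. Integrating in $p$ gives $\frac{2}{\pi}\delta|t|\le|\theta_p-\theta_q|\le2\delta|t|$. No restriction $p,q\le1/4$ is needed, because $\rho_p^2\ge1/2$ holds uniformly in $p$. This is exactly the place where the argument improves on \citet{CanonneSun2022}.

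\textbf{Choice of window.} Take
\begin{align*}
T:=\min\Bigl\{c_0,\ \bigl(n(p+q)\bigr)^{-1/2},\ \frac{\pi}{2n\delta}\Bigr\}.
\end{align*}
On $|t|\le T$ we have $|\Delta|\le\pi$, so $\sin^2(\Delta/2)\ge\Delta^2/\pi^2\gtrsim n^2\delta^2t^2$. The integrand is therefore at least a constant times $n^2\delta^2t^2/t^2=n^2\delta^2$, and integrating over $[-T,T]$ gives
\begin{align*}
\mathfrak D_n(p,q)\gtrsim n^2\delta^2\,T=\min\Bigl\{c_0n^2\delta^2,\ \frac{n^2\delta^2}{\sqrt{n(p+q)}},\ \frac{\pi}{2}\,n\delta\Bigr\}.
\end{align*}
These are the three terms in the statement.

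\textbf{Main obstacle.} The conceptual steps are the Fourier identity and the phase monotonicity; the delicate part is constant tracking. One has to choose $c_0$ and the exponential modulus bound uniformly for $p$ near $1$ on $|t|\le\pi/2$, and then assemble the factors $4e^{-c}$, $(2/\pi)^2/\pi^2$, $2/t^2$ and $2T$ so that the final constant is at least $1/(16\pi)$. If the modulus bound is too lossy near $p=1$, the fix I would try first is to use the symmetric bound $p(1-p)\le\min\{p,1-p\}$ together with the observation that only $p+q$ enters the window. The condition $n\ge2$ should only be needed to keep $T$ inside $[-\pi/2,\pi/2]$ in borderline cases.
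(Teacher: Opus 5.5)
Your proposal is correct and is essentially the paper's proof. After $1-\cos t\le t^2/2$, your lattice (Fej\'er-kernel) identity on $[-\pi,\pi]$ gives exactly the paper's starting bound $\mathfrak D_n(p,q)\ge\frac{2}{\pi}\int_0^T|\varphi_p(t)-\varphi_q(t)|^2t^{-2}\,dt$; the paper instead cites Canonne and Sun's continuous-frequency formula. The polar bound, the phase derivative $\sin t/\rho_p(t)^2$ controlled uniformly in $p\in[0,1]$ via $\rho_p^2\ge 1/2$, and the three-scale window also match the paper, and your constants (e.g.\ $c_0=1$) yield $16/\pi^5>1/(16\pi)$.

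The only real difference is the modulus step. The paper uses Bernoulli's inequality $(1-pt^2)^{n/2}\ge 1-\tfrac n2 pt^2$, which is the sole place it uses $n\ge2$. Your bound $\rho_p^2\ge e^{-2\log 2\,pt^2}$ on $|t|\le\pi/2$ works for every $n\ge1$. Your argument therefore does not need $n\ge2$ at all, contrary to your closing remark, since $T\le c_0\le\pi/2$ holds by construction.
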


\begin{proof}
If $p=q$, then $\mathfrak D_n(p,q)=0$ and the claim is trivial. Hence assume throughout that $\delta:=|p-q|>0$.

For $s\in[0,1]$ and $t\ge 0$, define
\begin{align*}
u_s(t):=\E[e^{it\mathrm{Bin}(n,s)}]=(1-s+se^{it})^n.
\end{align*}
From \citet[][Equation (8)]{CanonneSun2022}, we obtain
\begin{align*}
\mathfrak D_n(p,q)
&=\frac{2}{\pi}\int_0^{\infty}
\frac{\bigl(|u_p(t)|^2+|u_q(t)|^2-2\Re(u_p(t)\overline{u_q(t)})\bigr)}{t^2}\,dt \\
&=\frac{2}{\pi}\int_0^{\infty}\frac{|u_p(t)-u_q(t)|^2}{t^2}\,dt.
\end{align*}
In particular, the integrand is nonnegative. Therefore, for every $T>0$,
\begin{align*}
\mathfrak D_n(p,q)\ge \frac{2}{\pi}\int_0^T\frac{|u_p(t)-u_q(t)|^2}{t^2}\,dt.
\end{align*}
We work on the interval $0\le t\le 1$. For such $t$, set $z_s(t):=1-s+se^{it}$. Since $z_s(t)$ is a convex combination of $1$ and $e^{it}$, it lies on the line segment joining these two points. For $t\in[0,1]$, we have $\cos t\ge \cos 1>0$, and hence
\begin{align*}
\Re z_s(t)=1-s+s\cos t = 1-s(1-\cos t)\ge \cos t\ge \cos 1>0
\end{align*}
for every $s\in[0,1]$. Thus $z_s(t)$ never vanishes on $[0,1]\times[0,1]$. Consequently the principal argument $\Arg(z_s(t))$ is well defined there, and we may write
\begin{align*}
u_s(t)=\rho_s(t)e^{i\varphi_s(t)},
\end{align*}
where
\begin{align*}
\rho_s(t):=|z_s(t)|^n,\qquad \varphi_s(t):=n\Arg(z_s(t)),
\end{align*}
and $\Arg$ is the principal argument. Then
\begin{equation}\label{eq:uv-lower}
\begin{aligned}
|u_p(t)-u_q(t)|^2
&=|\rho_p(t)e^{i\varphi_p(t)}-\rho_q(t)e^{i\varphi_q(t)}|^2\\
&=\rho_p(t)^2+\rho_q(t)^2-2\rho_p(t)\rho_q(t)\cos(\varphi_p(t)-\varphi_q(t))\\
&=(\rho_p(t)-\rho_q(t))^2+2\rho_p(t)\rho_q(t)\bigl(1-\cos(\varphi_p(t)-\varphi_q(t))\bigr)\\
&\ge 2\rho_p(t)\rho_q(t)\bigl(1-\cos(\varphi_p(t)-\varphi_q(t))\bigr).
\end{aligned}
\end{equation}

We need two estimates.

\medskip\noindent\textbf{Step 1: phase estimate.}
For $0\le t\le 1$ and $s\in[0,1]$,
\begin{align*}
\partial_s\varphi_s(t)
= n\,\Im\!\left(\frac{e^{it}-1}{1-s+se^{it}}\right)
= n\,\frac{\sin t}{|1-s+se^{it}|^2}.
\end{align*}
Moreover,
\begin{align*}
|1-s+se^{it}|^2
=1-2s(1-s)(1-\cos t).
\end{align*}
Since $2s(1-s)\le \frac12$, we obtain
\begin{align*}
|1-s+se^{it}|^2\ge 1-\frac12(1-\cos t)=\frac{1+\cos t}{2}=\cos^2(t/2).
\end{align*}
Hence, for $0\le t\le 1$,
\begin{align*}
n\sin t\le \partial_s\varphi_s(t)
\le \frac{n\sin t}{\cos^2(t/2)}
\le 2nt,
\end{align*}
because $\cos^2(t/2)\ge \cos^2(1/2)>3/4$ and $\sin t\le t$. Also, $\sin t\ge t/2$ on $[0,1]$. Therefore,
\begin{align*}
\frac12 nt\le \partial_s\varphi_s(t)\le 2nt
\qquad (0\le t\le 1,\; 0\le s\le 1).
\end{align*}
Integrating from $s=\min\{p,q\}$ to $s=\max\{p,q\}$ yields
\begin{equation}\label{eq:phase-bound}
\frac12 n\delta t\le |\varphi_p(t)-\varphi_q(t)|\le 2n\delta t
\qquad (0\le t\le 1).
\end{equation}

\medskip\noindent\textbf{Step 2: modulus estimate.}
For $0\le s,t\le 1$,
\begin{align*}
|z_s(t)|^2
=1-2s(1-s)(1-\cos t)
\ge 1-s t^2,
\end{align*}
since $1-\cos t\le t^2/2$ and $1-s\le 1$. As $st^2\in[0,1]$ and $n/2\ge 1$, Bernoulli's inequality gives
\begin{align*}
\rho_s(t)=|z_s(t)|^n\ge (1-st^2)^{n/2}\ge 1-\frac n2 st^2.
\end{align*}
Choose
\begin{align*}
T:=\min\left\{1,\; \frac{1}{2n\delta},\; \frac{1}{\sqrt{n(p+q)}}\right\}.
\end{align*}
Since $\delta>0$, this is well-defined. For every $t\in[0,T]$, we have
\begin{align*}
\frac n2 pt^2 \le \frac{p}{2(p+q)} \le \frac12,
\qquad
\frac n2 qt^2 \le \frac{q}{2(p+q)} \le \frac12,
\end{align*}
which makes the two factors below nonnegative. Therefore,
\begin{equation}\label{eq:modulus-bound}
\rho_p(t)\rho_q(t)
\ge \left(1-\frac n2 pt^2\right)\left(1-\frac n2 qt^2\right)
\ge 1-\frac n2 (p+q)t^2
\ge \frac12
\qquad (0\le t\le T).
\end{equation}
Also, by \eqref{eq:phase-bound},
\begin{align*}
|\varphi_p(t)-\varphi_q(t)|\le 2n\delta t\le 1
\qquad (0\le t\le T).
\end{align*}
For $0\le x\le 1$, we have
\begin{align*}
1-\cos x = \int_0^x \sin y\,dy \ge \int_0^x \frac{y}{2}\,dy = \frac{x^2}{4},
\end{align*}
and therefore, using the lower bound in \eqref{eq:phase-bound},
\begin{align*}
1-\cos(\varphi_p(t)-\varphi_q(t))
\ge \frac14 |\varphi_p(t)-\varphi_q(t)|^2
\ge \frac{1}{16}n^2\delta^2 t^2.
\end{align*}
Combining this with \eqref{eq:uv-lower} and \eqref{eq:modulus-bound}, for every $t\in[0,T]$ we obtain
\begin{align*}
|u_p(t)-u_q(t)|^2
\ge 2\cdot \frac12\cdot \frac{1}{16}n^2\delta^2 t^2
=\frac{1}{16}n^2\delta^2 t^2.
\end{align*}
Substituting into the integral representation of $\mathfrak D_n(p,q)$ yields
\begin{align*}
\mathfrak D_n(p,q)
\ge \frac{2}{\pi}\int_0^T \frac{(1/16)n^2\delta^2 t^2}{t^2}\,dt
=\frac{1}{8\pi}n^2\delta^2 T.
\end{align*}
Hence
\begin{align*}
\mathfrak D_n(p,q)
\ge \frac{1}{8\pi}
\min\left\{n^2\delta^2,\; \frac12 n\delta,\; \frac{n^2\delta^2}{\sqrt{n(p+q)}}\right\}
\ge \frac{1}{16\pi}
\min\left\{n^2\delta^2,\; n\delta,\; \frac{n^2\delta^2}{\sqrt{n(p+q)}}\right\},
\end{align*}
as claimed.
\end{proof}

\section{Proofs of Main Results}
\label{app:proofs}

This section proves the main results stated in the main text.

\subsection{Proof of \CrefInTitle{Theorem}{thm:main-rate}}
\label{app:proof:thm:main-rate}

\begin{proof}
The proof combines the continuous upper bound with the goodness-of-fit lower bound through the two-sample reduction. Let $N:=N_X\wedge N_Y$, and set
\begin{align*}
R_N(\varepsilon)
:=
N^{-2s/(4s+d)}
\vee
(N\sqrt\varepsilon)^{-2s/(2s+d)}
\vee
(N^{3/2}\varepsilon)^{-2s/(4s+d)}
\vee
(N\varepsilon)^{-1}.
\end{align*}

\medskip\noindent\textbf{Upper bound.}
Since $\cD_s^d(L)\subset\cF_s^d(L,L)$ with envelope constant $M=L$, \Cref{cor:holder-l1-private-explicit-rate} implies that, for every $\gamma,\beta\in(0,1)$ with $\gamma+\beta<1$, there exists an $\varepsilon$-DP test $\phi\in\Phi_{\gamma,\varepsilon}^{\mathrm{cDP}}$ whose type~II error is at most $\beta$ whenever
\begin{align*}
\|f-g\|_1\ge C R_N(\varepsilon),
\end{align*}
for a constant $C=C(d,s,L,\gamma,\beta)>0$. Hence
\begin{align*}
r^*_{\mathrm{2samp}}(N_X,N_Y,\varepsilon)\le C R_N(\varepsilon).
\end{align*}

\medskip\noindent\textbf{Lower bound.}
By \Cref{lem:gof-to-twosamp-cdp},
\begin{align*}
r^*_{\mathrm{2samp}}(N_X,N_Y,\varepsilon)
\ge
r^*_{\mathrm{gof}}(N,\varepsilon).
\end{align*}
By \Cref{thm:gof-combined-lower}, whenever $N\varepsilon\ge1$, there exists $c=c(d,s,L,\gamma,\beta)>0$ such that
\begin{align*}
r^*_{\mathrm{gof}}(N,\varepsilon)\ge c R_N(\varepsilon).
\end{align*}
Hence
\begin{align*}
r^*_{\mathrm{2samp}}(N_X,N_Y,\varepsilon)\ge c R_N(\varepsilon).
\end{align*}
Putting everything together, when $N\varepsilon\ge1$,
\begin{align*}
r^*_{\mathrm{2samp}}(N_X,N_Y,\varepsilon)\asymp R_N(\varepsilon).
\end{align*}
\end{proof}

\subsection{Proof of \CrefInTitle{Corollary}{cor:sample-complexity}}
\label{app:proof:cor:sample-complexity}

\begin{proof}
By \Cref{thm:main-rate}, the sample complexity is obtained by solving the four inequalities in the maximum rate term by term. This gives
\begin{align*}
N
\gtrsim
r^{-(4s+d)/(2s)},\qquad
N
\gtrsim
\varepsilon^{-1/2}r^{-(2s+d)/(2s)},
\end{align*}
\begin{align*}
N
\gtrsim
\varepsilon^{-2/3}r^{-(4s+d)/(3s)},\qquad
N
\gtrsim
(\varepsilon r)^{-1}.
\end{align*}
Taking the maximum of these four requirements is therefore sufficient by the upper bound and necessary by the lower bound. When $N\varepsilon<1$, the DP-coupling argument underlying the linear privacy barrier gives $r\gtrsim (N\varepsilon)^{-1}$. Since $r\le 2$ for densities, this regime already corresponds to a constant-order separation requirement. Hence restricting \Cref{thm:main-rate} to $N\varepsilon\ge1$ causes no loss for the converse.
\end{proof}

\subsection{Proof of \CrefInTitle{Corollary}{cor:phase-transition}}
\label{app:proof:cor:phase-transition}

\begin{proof}

Substituting $\varepsilon_N=N^{-\alpha}$ into \eqref{eq:rate-combined} gives $r^*_{\mathrm{2samp}}(N,N^{-\alpha})\asymp N^{-\rho(\alpha)}$, where
\begin{align*}
\rho(\alpha)
=
\min\{\rho_1,\rho_2(\alpha),\rho_3(\alpha),\rho_4(\alpha)\}.
\end{align*}
The constants in $\asymp$ do not depend on $N$ or $\varepsilon$, so this substitution is uniform over $\alpha\in[0,1]$.

The adjacent crossings occur at
\begin{align*}
\begin{array}{c|c}
\text{crossing} & \text{location}\\ \hline
\rho_1=\rho_2 & \alpha_{12}=4s/(4s+d)\\
\rho_2=\rho_3 & \alpha_{23}=(d-2s)/d\\
\rho_3=\rho_4 & \alpha_{34}=(s+d)/(2s+d).
\end{array}
\end{align*}

If $s<d/4$, then $\alpha_{12}<\alpha_{23}<\alpha_{34}$. A direct comparison shows that, as $\alpha$ increases, the minimum is attained successively by $\rho_1,\rho_2,\rho_3,\rho_4$. This yields the four regimes in part~(a).

If $s\ge d/4$, then $\alpha_{23}\le \tfrac12\le \alpha_{12}$. Consequently $\rho_2$ never becomes the minimum: for $\alpha\le\alpha_{12}$ one has $\rho_2\ge\rho_1$, while for $\alpha\ge\alpha_{23}$ one has $\rho_2\ge\rho_3$. Hence the lower envelope is formed by $\rho_1,\rho_3,\rho_4$, with transitions at $\alpha=\tfrac12$ and $\alpha_{34}$, giving part~(b).
\end{proof}

\subsection{Proof of \CrefInTitle{Proposition}{prop:dpperm-validity-privacy}}
\label{app:proof:prop:dpperm-validity-privacy}
We first record the only statistic-specific ingredient needed for the privacy proof, namely the global sensitivity of the split statistic $Z$. For $a,b,c,d\in\mathbb Z_{\ge0}$, write
\begin{align*}
g(a,b,c,d):=|a-c|+|b-d|-|a-b|-|c-d|.
\end{align*}

\begin{lemma}[Sensitivity of $Z$]
\label{lem:mult-bdhist-sensitivity}
Changing any one argument by $\pm1$, whenever the resulting argument remains nonnegative, changes $g$ by at most $2$. Moreover,
\begin{align*}
|g(a\pm1,b,c,d)-g(a,b,c,d)|
\le
2\mathbf 1\{b\neq c\},
\end{align*}
and analogous bounds hold for changes in the other coordinates. Consequently, if
\begin{align*}
Z=\sum_{i=1}^k g(X_i,\widetilde X_i,Y_i,\widetilde Y_i),
\end{align*}
then changing one raw sample in any of the four groups changes $Z$ by at most $4$. Hence $\GS(Z)\le4$.
\end{lemma}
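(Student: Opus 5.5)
The plan is to verify the coordinatewise claim for $g$ by direct case analysis on the absolute values, and then sum over bins. Let $a\mapsto a+1$ (the case $a\mapsto a-1$ is symmetric). The argument $a$ enters $g$ only through $|a-c|-|a-b|$. Each absolute value is $1$-Lipschitz in $a$, so the increment is at most $2$ in absolute value. This gives the first claim, and the same reasoning applies to $b$ (through $|b-d|-|a-b|$), to $c$ (through $|a-c|-|c-d|$) and to $d$ (through $|b-d|-|c-d|$).

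For the refined bound, consider the function $h(a):=|a-c|-|a-b|$ on the integers. If $b=c$, then $h\equiv 0$, and the increment vanishes. Otherwise the increment is trivially at most $2=2\mathbf 1\{b\ne c\}$. The analogous statements follow by the symmetric roles of the coordinates: for $b$ the relevant pair is $(a,d)$, for $c$ it is $(a,d)$, and for $d$ it is $(b,c)$. The refined indicator is not actually needed for the sensitivity claim; it is recorded for later variance arguments.

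For $\GS(Z)\le 4$, take a neighboring dataset obtained by replacing one raw record in, say, group $(X,1)$, so that its bin label changes from $i$ to $i'$. If $i=i'$, nothing changes. Otherwise, $X_i$ decreases by one and $X_{i'}$ increases by one, while all other counts are unchanged. Only two summands of $Z$ change, each by at most $2$ by the first claim, so $|\Delta Z|\le 4$. Since the replaced record stays in its block under a fixed split $\lambda$, this bound holds uniformly over $\lambda$ and over labelings $\ell$, which is what the definition of $\GS$ requires.

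There is no real obstacle here. The only care point is that a single record swap moves mass between two bins rather than changing a single count. That is precisely why the constant is $4=2\times 2$ rather than $2$.
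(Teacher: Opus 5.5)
Your proposal is correct and follows essentially the same route as the paper. Each coordinate enters $g$ through exactly two absolute-value terms, and those terms cancel identically when the complementary pair coincides ($b=c$ for changes in $a$ or $d$, and $a=d$ for changes in $b$ or $c$). A single record replacement moves one unit of mass between two bins of one count vector, so only two summands change, each by at most $2$, giving $\GS(Z)\le 4$ uniformly over the split.
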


\begin{proof}

Since only two of the four absolute-value terms in $g$ involve any fixed coordinate, and each such term changes by at most $1$ when that coordinate is changed by $\pm 1$, the first bound follows immediately. If $b=c$, then
\begin{align*}
g(a,b,b,d)=|a-b|+|b-d|-|a-b|-|b-d|=0
\end{align*}
for every $a,d$.  Thus changing $a$ or $d$ can matter only when $b\neq c$. Similarly, if $a=d$, then
\begin{align*}
g(a,b,c,a)=|a-c|+|b-a|-|a-b|-|c-a|=0
\end{align*}
for every $b,c$.  Thus changing $b$ or $c$ can matter only when $a\neq d$. Combining these identities with the first bound yields the refined inequalities. For the final claim, changing one raw sample moves one unit mass from one bin to another in exactly one of the count vectors $X,\widetilde X,Y,\widetilde Y$. Thus only two coordinates of the sum defining $Z$ can change, and each corresponding summand changes by at most $2$. Therefore $\GS(Z)\le 4$.
\end{proof}

\begin{proof}[Proof of \Cref{prop:dpperm-validity-privacy}]
By \Cref{lem:mult-bdhist-sensitivity}, the split statistic $Z$ has global sensitivity $\GS(Z)\le4$ under the fixed four-block neighboring relation. Thus the Laplace scale $8/\varepsilon=2\GS(Z)/\varepsilon$ used in \Cref{alg:mcperm} is the scale required by the private permutation theorem \Cref{lem:private-permutation-cited}.

Under $H_0:p=q$, introduce an auxiliary uniform permutation of the pooled record indices and apply it simultaneously to the pooled discrete observations, the observed balanced four-block assignment, and all Monte Carlo balanced reassignments. The joint law is unchanged. Conditional on the resulting unlabeled pooled sample, the symmetrized observed assignment and the Monte Carlo assignments are exchangeable. Hence \Cref{lem:private-permutation-cited} applies, yielding an $\varepsilon$-differentially private test with finite-sample type~I error at most $\gamma$. When $\varepsilon=\infty$, the Laplace noise vanishes and the same exchangeability argument gives the level bound.
\end{proof}

\subsection{Proof of \CrefInTitle{Proposition}{prop:private-bounded-hist-perm}}
\label{app:proofs-private-bounded-hist-perm}
The proof combines moment bounds for the observed split statistic with corresponding bounds for the statistics generated by balanced permutation relabeling. We first collect the split-statistic bounds, then prove the permutation moment bounds, and finally verify the power condition for the private Monte Carlo permutation test.

\subsubsection{Split-statistic moment bounds}
\label{app:aux:split-statistic-moments}
The next lemmas collect the variance, deviation, and mean inputs for the split histogram statistic.

\begin{lemma}[Variance bound in the multinomial/binomial sampling model]
\label{lem:bounded-hist-var-binomial}
Fix $M\ge 1$, $p,q\in\mathcal P_{k,M}:=\{v\in\Delta_k:\|v\|_\infty\le M/k\}$. Let $X,\widetilde X\stackrel{\iid}{\sim} \mathrm{Mult}(n,p)$ and $Y,\widetilde Y\stackrel{\iid}{\sim} \mathrm{Mult}(n,q)$ be mutually independent, with coordinates denoting bin counts. Define
\begin{align*}
Z = \sum_{i=1}^k\bigl\{
|X_i-Y_i|+|\widetilde X_i-\widetilde Y_i|
-|X_i-\widetilde X_i|-|Y_i-\widetilde Y_i|
\bigr\}.
\end{align*}
Then, for all $n\ge 1$, $\operatorname{Var}(Z)\le 64M\,n^2/k$.
\end{lemma}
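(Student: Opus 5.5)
The plan is to apply the Efron--Stein inequality (\Cref{lem:efron-stein}) to $Z$, viewed as a function of the $4n$ independent underlying labels. The labels are $n$ labels for each of $X,\widetilde X,Y,\widetilde Y$, drawn from $p,p,q,q$. Write $Z=\sum_i g(X_i,\widetilde X_i,Y_i,\widetilde Y_i)$ with $g(a,b,c,d)=|a-c|+|b-d|-|a-b|-|c-d|$, as in \Cref{lem:mult-bdhist-sensitivity}. Then resample one label, say a label of the $X$-block, moving a unit of mass from bin $j$ to bin $j'$. Only the summands $i=j$ and $i=j'$ change.

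By the refined bound in \Cref{lem:mult-bdhist-sensitivity}, perturbing the first argument changes $g$ by at most $2\mathbf 1\{\widetilde X_i\neq Y_i\}$. Hence
\begin{align*}
|Z-Z^{(r)}|\le 2\mathbf 1\{\widetilde X_j\ne Y_j\}+2\mathbf 1\{\widetilde X_{j'}\ne Y_{j'}\}.
\end{align*}
The analogous bounds for the other blocks involve the indicators $\{X_i\ne\widetilde Y_i\}$ for perturbations of $\widetilde X$ or $Y$, and $\{\widetilde X_i\ne Y_i\}$ for perturbations of $\widetilde Y$. Also note $\mathbf 1\{b\ne c\}\le \mathbf 1\{b\ge1\}+\mathbf 1\{c\ge1\}$, since $b\ne c$ forces one of them to be positive.

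The key step is to bound the expected square. Using $(u+v)^2\le 2u^2+2v^2$, we get $\E(Z-Z^{(r)})^2\le 8\,\E[\mathbf 1\{\widetilde X_j\ne Y_j\}]+8\,\E[\mathbf 1\{\widetilde X_{j'}\ne Y_{j'}\}]$. Here $j$ is the original bin of the resampled label, distributed as $p$, and $j'$ is the new bin, also distributed as $p$. Both are independent of $(\widetilde X,Y)$, since those counts do not involve the resampled $X$-label. So
\begin{align*}
\E\,\mathbf 1\{\widetilde X_j\ne Y_j\}\le \sum_i p_i\bigl(\Pbb(\widetilde X_i\ge1)+\Pbb(Y_i\ge1)\bigr)\le \sum_i p_i\, n(p_i+q_i)\le \frac{M}{k}\cdot 2n,
\end{align*}
using $\Pbb(\mathrm{Bin}(n,v)\ge1)\le nv$, $\sum_i p_i=1$, and $p_i,q_i\le M/k$ on $\mathcal P_{k,M}$. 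This gives $\E(Z-Z^{(r)})^2\le 32Mn/k$ per label.

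Summing over the $4n$ labels and multiplying by the factor $1/2$ in Efron--Stein gives
\begin{align*}
\Var(Z)\le \tfrac12\cdot 4n\cdot 32Mn/k=64Mn^2/k.
\end{align*}

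The main obstacle is the independence claim needed for the expectation step. The relevant counts must exclude the resampled label itself. This holds because the counts entering the bound come from a different block than the one being resampled. The bins $j$ and $j'$ must also be handled separately, which is done via the $(u+v)^2$ split. The constant then has to match $64$, and the bookkeeping above does give exactly that.
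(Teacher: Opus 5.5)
Your proposal is correct and follows the paper's proof essentially step for step. The shared route is Efron--Stein over the $4n$ raw labels, the refined indicator bound from \Cref{lem:mult-bdhist-sensitivity} at the two affected bins, independence of the resampled bins from the other blocks' counts together with $\Pbb(\widetilde X_i\ne Y_i)\le n(p_i+q_i)$, which gives $32Mn/k$ per label and hence $64Mn^2/k$. The only difference is cosmetic: you use $(u+v)^2\le 2u^2+2v^2$ where the paper uses $(2u+2v)^2\le 8(u+v)$ for indicators, and these give the same bound. You also spell out the relevant indicator for each of the other three blocks, which the paper leaves as ``the same argument.''
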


\begin{proof}

We represent the multinomial counts by underlying samples. Let
\begin{align*}
A_1,\dots,A_n,\ \widetilde A_1,\dots,\widetilde A_n \stackrel{\iid}{\sim} p,
\qquad
B_1,\dots,B_n,\ \widetilde B_1,\dots,\widetilde B_n \stackrel{\iid}{\sim} q,
\end{align*}
all mutually independent, with values in $\{1,\ldots,k\}$, and define
\begin{align*}
X_i=\sum_{m=1}^n \mathbf{1}\{A_m=i\},
\quad
\widetilde X_i=\sum_{m=1}^n \mathbf{1}\{\widetilde A_m=i\},
\quad
Y_i=\sum_{m=1}^n \mathbf{1}\{B_m=i\},
\quad
\widetilde Y_i=\sum_{m=1}^n \mathbf{1}\{\widetilde B_m=i\}.
\end{align*}
For the kernel $g$ defined above,
\begin{align*}
Z=\sum_{i=1}^k g(X_i,\widetilde X_i,Y_i,\widetilde Y_i).
\end{align*}
By \Cref{lem:mult-bdhist-sensitivity}, changing one argument of $g$ by $\pm1$ changes its value by at most $2$, and the refined indicator bounds stated there also hold.

We apply \Cref{lem:efron-stein} to the $4n$ independent raw samples.

\paragraph{Step 1: Replace one \texorpdfstring{$A_m$}{A_m}.}
Fix $m\in[n]$, let $A_m'\sim p$ be an independent copy of $A_m$, and let $Z^{(A,m)}$ be the statistic obtained after replacing $A_m$ by $A_m'$. Write
\begin{align*}
I:=A_m,\qquad J:=A_m'.
\end{align*}
Only the bins $I$ and $J$ can change, and only through the $X$-coordinates. Therefore, conditional on everything else,
\begin{align*}
|Z-Z^{(A,m)}|
\le
2\,\mathbf{1}\{\widetilde X_I\neq Y_I\}
+
2\,\mathbf{1}\{\widetilde X_J\neq Y_J\}.
\end{align*}
Hence
\begin{align*}
\bigl(Z-Z^{(A,m)}\bigr)^2
\le
8\Bigl(
\mathbf{1}\{\widetilde X_I\neq Y_I\}
+
\mathbf{1}\{\widetilde X_J\neq Y_J\}
\Bigr),
\end{align*}
because $(2u+2v)^2\le 8(u+v)$ for $u,v\in\{0,1\}$.

Taking expectations and using that $I,J$ are i.i.d.\ with law $p$, independent of $(\widetilde X,Y)$, we obtain
\begin{align*}
\mathbb{E}\bigl[(Z-Z^{(A,m)})^2\bigr]
\le
16\sum_{i=1}^k p_i\,\mathbb{P}(\widetilde X_i\neq Y_i).
\end{align*}
Moreover, the inclusion
\begin{align*}
\{\widetilde X_i\neq Y_i\}\subseteq \{\widetilde X_i\ge 1\}\cup \{Y_i\ge 1\}
\end{align*}
implies
\begin{align*}
\mathbb{P}(\widetilde X_i\neq Y_i)
\le
\mathbb{P}(\widetilde X_i\ge 1)+\mathbb{P}(Y_i\ge 1)
\le
np_i+nq_i.
\end{align*}
It follows that
\begin{align*}
\mathbb{E}\bigl[(Z-Z^{(A,m)})^2\bigr]
&\le
16n\sum_{i=1}^k p_i(p_i+q_i) \\
&\le
16n\left(\sum_{i=1}^k p_i^2+\sum_{i=1}^k p_iq_i\right) \\
&\le
16n\left(\|p\|_\infty\|p\|_1+\|q\|_\infty\|p\|_1\right) \\
&\le
16n\left(\frac{M}{k}+\frac{M}{k}\right) \\
&=
32M\,\frac{n}{k}.
\end{align*}
\paragraph{Step 2: The other three sample groups.}
Exactly the same argument applies when replacing one of the samples $\widetilde A_m$, $B_m$, or $\widetilde B_m$. Thus every one of the $4n$ independent raw samples satisfies
\begin{align*}
\mathbb{E}\bigl[(Z-Z^{(\ell)})^2\bigr]
\le
32M\,\frac{n}{k},
\end{align*}
where $Z^{(\ell)}$ denotes the statistic after replacing the $\ell$-th raw sample by an independent copy.

\paragraph{Step 3: Efron--Stein.}
\Cref{lem:efron-stein} yields
\begin{align*}
\operatorname{Var}(Z)
\le
\frac12\sum_{\ell=1}^{4n}\mathbb{E}\bigl[(Z-Z^{(\ell)})^2\bigr]
\le
\frac12\cdot 4n \cdot 32M\,\frac{n}{k}
=
64M\,\frac{n^2}{k}.
\end{align*}
This proves the claim.
\end{proof}

\begin{lemma}[Mean lower bound for the split statistic]
\label{lem:split-statistic-mean}
Fix $M\ge1$ and $n\ge2$, let $p,q\in\mathcal P_{k,M}$, and let $X,\widetilde X\stackrel{\iid}{\sim} \mathrm{Mult}(n,p)$ and $Y,\widetilde Y\stackrel{\iid}{\sim} \mathrm{Mult}(n,q)$ be mutually independent. Define
\begin{align*}
Z=
\sum_{i=1}^k
\bigl\{
|X_i-Y_i|+|\widetilde X_i-\widetilde Y_i|
-|X_i-\widetilde X_i|-|Y_i-\widetilde Y_i|
\bigr\}.
\end{align*}
Then there exists a constant $b_M>0$, depending only on $M$, such that
\begin{align}
\E[Z] &= 0,
\qquad \text{if } p=q,
\label{eq:mult-bdhist-null}\\
\E[Z]
&\ge
b_M
\min\left\{
n\tau,\,
\frac{n^2\tau^2}{k},\,
\frac{n^{3/2}\tau^2}{\sqrt{k}}
\right\},
\qquad
\text{if } \|p-q\|_1\ge \tau . \notag
\end{align}
\end{lemma}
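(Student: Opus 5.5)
The plan is to reduce the mean to a sum of one-dimensional binomial comparisons and then aggregate with \Cref{lem:binomial-comparison-no-heavy}. Marginally, each coordinate $X_i\sim\mathrm{Bin}(n,p_i)$, and $X_i,\widetilde X_i,Y_i,\widetilde Y_i$ are independent across the four groups. Linearity of expectation then gives
\begin{align*}
\E[Z]=\sum_{i=1}^k \mathfrak D_n(p_i,q_i).
\end{align*}
If $p=q$, each summand is zero by symmetry, since $|X_i-Y_i|$ and $|X_i-\widetilde X_i|$ have the same law. This proves \eqref{eq:mult-bdhist-null}.

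For the alternative, write $\delta_i:=|p_i-q_i|$, so that $\sum_i\delta_i\ge\tau$. Since $p,q\in\mathcal P_{k,M}$, we have $p_i+q_i\le 2M/k$. \Cref{lem:binomial-comparison-no-heavy} then yields
\begin{align*}
\mathfrak D_n(p_i,q_i)\ge \frac{1}{16\pi}\min\Bigl\{n^2\delta_i^2,\;n\delta_i,\;\frac{n^{3/2}\delta_i^2\sqrt k}{\sqrt{2M}}\Bigr\}
\ge c_M\, \phi(\delta_i),
\qquad \phi(x):=\min\{n x,\; n^{3/2}\sqrt k\,x^2\}.
\end{align*}
Here I use $n^2\delta_i^2\ge n^{3/2}\sqrt k\,\delta_i^2$ whenever $n\ge k$. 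In the opposite case $n<k$ the term $n^2\delta_i^2$ can be the minimum, so I would keep all three terms and define $\phi(x)=\min\{nx,n^2x^2,n^{3/2}\sqrt{k}x^2/\sqrt{2M}\}$. The task is then to lower bound $\sum_i\phi(\delta_i)$ subject to $\sum_i\delta_i\ge\tau$ and $\delta_i\le 2M/k$.

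The key step is this aggregation. Each piece of $\phi$ is either linear or quadratic in $x$. The plan is to split the coordinates into $S_1$, where the linear branch $n\delta_i$ is active, and $S_2$, its complement. At least one of the two sets carries mass $\tau/2$. If $S_1$ does, the sum is at least $n\tau/2$. If $S_2$ does, Cauchy--Schwarz gives $\sum_{S_2}\delta_i^2\ge(\tau/2)^2/k$. Multiplying by the smaller quadratic coefficient, $\min\{n^2,\,n^{3/2}\sqrt k/\sqrt{2M}\}$, gives
\begin{align*}
\sum_{i\in S_2}\phi(\delta_i)\gtrsim_M \min\Bigl\{\frac{n^2\tau^2}{k},\frac{n^{3/2}\tau^2}{\sqrt k}\Bigr\}.
\end{align*}
The problem is that the minimum of the three branches is taken coordinatewise, so the quadratic branch selected may differ across $i\in S_2$. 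To handle this, I would lower bound every quadratic branch by the smaller of the two coefficients before summing, so that Cauchy--Schwarz applies to a single quadratic. This shows $\E[Z]\ge b_M\min\{n\tau,n^2\tau^2/k,n^{3/2}\tau^2/\sqrt k\}$.

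The main obstacle is making this coordinatewise-minimum aggregation rigorous. The cleanest route uses the fact that $x\mapsto\min\{ax,bx^2\}$ is superadditive-friendly: on $S_2$ we have $bx^2\le ax$, so each $\delta_i\le a/b$. That bound, however, is not needed once the quadratic form is fixed, and Cauchy--Schwarz finishes. The bounded-histogram constraint $\delta_i\le 2M/k$ enters only through the replacement $\sqrt{n(p_i+q_i)}\le\sqrt{2Mn/k}$, which is what converts the third branch into the $n^{3/2}\tau^2/\sqrt k$ term.
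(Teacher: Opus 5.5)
Your proof is correct and follows the paper's argument. Linearity reduces $\E[Z]$ to $\sum_i\mathfrak D_n(p_i,q_i)$, \Cref{lem:binomial-comparison-no-heavy} together with $p_i+q_i\le 2M/k$ gives the coordinatewise bound, and the quadratic branches are collapsed to a single quadratic with coefficient of order $n^2\wedge n^{3/2}\sqrt{k}$. The only difference is the final aggregation: the paper restricts to $\{i:\delta_i\ge\tau/(2k)\}$ and uses $\min\{n\delta_i,B\delta_i^2\}\ge\delta_i\min\{n,B\tau/(2k)\}$, whereas you split by the active branch and apply Cauchy--Schwarz, which is the route the paper itself takes for the Rao--Blackwellized mean bound in \Cref{lem:rb-perm-moment-bounds}.
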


\begin{proof}
When $p=q$, the four multinomial count vectors are exchangeable, so the two cross terms and the two within-sample terms have the same expectations. This proves \eqref{eq:mult-bdhist-null}.

For the lower bound, apply \Cref{lem:binomial-comparison-no-heavy} coordinatewise to $(X_i,\widetilde X_i,Y_i,\widetilde Y_i)$. Writing $\delta_i:=|p_i-q_i|$, and ignoring coordinates with $\delta_i=0$, gives
\begin{align*}
\E[Z]\ge
c
\sum_{i=1}^k
\min\left\{
n\delta_i,\,
n^2\delta_i^2,\,
\frac{n^2\delta_i^2}{\sqrt{n(p_i+q_i)}}
\right\}.
\end{align*}
Since $p,q\in\mathcal P_{k,M}$, we have $p_i+q_i\le 2M/k$. Absorbing constants depending only on $M$,
\begin{align*}
\E[Z]\ge
c_M
\sum_{i=1}^k
\min\left\{
n\delta_i,\,
n^2\delta_i^2,\,
n^{3/2}\sqrt{k}\,\delta_i^2
\right\}.
\end{align*}
Let $B:=n^2\wedge n^{3/2}\sqrt{k}$. Then the last display is bounded below by
\begin{align*}
c_M
\sum_{i=1}^k
\min\{n\delta_i,B\delta_i^2\}.
\end{align*}
If $\sum_i\delta_i=\|p-q\|_1\ge\tau$, the set $I:=\{i:\delta_i\ge\tau/(2k)\}$ has $\sum_{i\in I}\delta_i\ge\tau/2$. Therefore
\begin{align*}
\sum_{i=1}^k
\min\{n\delta_i,B\delta_i^2\}
\ge
\sum_{i\in I}
\delta_i\min\left\{n,\frac{B\tau}{2k}\right\}
\ge
c\min\left\{n\tau,\frac{B\tau^2}{k}\right\}.
\end{align*}
Substituting the definition of $B$ gives the displayed lower bound.
\end{proof}

\subsubsection{Permutation moment bounds}
\label{app:sec:perm-technical}

The next lemmas provide the permutation moment bounds used in \Cref{prop:private-bounded-hist-perm}.

\begin{lemma}[Second moment under balanced relabeling]
\label{lem:perm-second-moment}
Let $w=(w_1,\ldots,w_{4n})\in[k]^{4n}$ be a fixed pooled label sequence. Let $\mathfrak B_n$ be the set of assignments $a:[4n]\to\{1,2,3,4\}$ with exactly $n$ indices assigned to each value. For $a\in\mathfrak B_n$, define
\begin{align*}
N_{ij}(a):=\sum_{\ell=1}^{4n}\mathbf 1\{w_\ell=i,\ a_\ell=j\},
\qquad
T_i:=\sum_{j=1}^4 N_{ij}(a)=\sum_{\ell=1}^{4n}\mathbf 1\{w_\ell=i\}.
\end{align*}
Let
\begin{align*}
Z_w(a)
=
\sum_{i=1}^k
\Bigl\{
|N_{i1}(a)-N_{i3}(a)|
+
|N_{i2}(a)-N_{i4}(a)|
-
|N_{i1}(a)-N_{i2}(a)|
-
|N_{i3}(a)-N_{i4}(a)|
\Bigr\}.
\end{align*}
If $A\sim\Unif(\mathfrak B_n)$, then $\E_A[Z_w(A)]=0$ and
\begin{align*}
\E_A[Z_w(A)^2]
\le
C
\left(
n
\wedge
\sum_{i=1}^k T_i(T_i-1)
\right)
\end{align*}
for a universal constant $C>0$.
\end{lemma}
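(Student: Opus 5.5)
The proof has three parts: the mean by symmetry, the bound $Cn$ by multislice concentration, and the bound $C\sum_i T_i(T_i-1)$ by Efron--Stein after disposing of singleton bins.

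\textbf{Mean.} For the centering I will use a symmetry of the uniform balanced assignment. The map swapping block labels $2\leftrightarrow3$ is a bijection of $\mathfrak B_n$ that preserves the uniform law. It sends the cross terms $|N_{i1}-N_{i3}|+|N_{i2}-N_{i4}|$ to the within terms $|N_{i1}-N_{i2}|+|N_{i3}-N_{i4}|$. Hence $Z_w(A)$ and $-Z_w(A)$ have the same law, so $\E_A[Z_w(A)]=0$, and the second moment equals the variance.

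\textbf{Bound $Cn$.} I will apply \Cref{lem:balanced-multislice-conc} with $q=4$, $m=4n$ and $I=[4n]$. Moving a single index between blocks changes one count in each of two blocks for a single bin $i$. By \Cref{lem:mult-bdhist-sensitivity} each such count change moves the summand $g$ by at most $2$, so $Z_w$ is $c$-Lipschitz in Hamming distance on assignments with a universal constant $c$. The sub-Gaussian moment bound then gives $\Var(Z_w(A))\le C c^2\cdot 4n$.

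\textbf{Bound $C\sum_i T_i(T_i-1)$.} Singleton bins contribute nothing. If $T_i\le1$, the vector $(N_{i1},\dots,N_{i4})$ has at most one nonzero entry, and the identities in the proof of \Cref{lem:mult-bdhist-sensitivity} give $g=0$ (for example, when the mass sits in block $1$ we have $b=c=0$). So only indices in the set $I=\{\ell: T_{w_\ell}\ge2\}$ are active, and $|I|=\sum_{i:T_i\ge2}T_i\le\sum_i T_i(T_i-1)$. Applying \Cref{lem:balanced-multislice-conc} to $F$, which depends only on the labels in $I$, gives a variance bound of order $|I|$ and hence $C\sum_iT_i(T_i-1)$. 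The previous paragraph already covers the full-index case, so the two bounds combine to $C(n\wedge\sum_iT_i(T_i-1))$.

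\textbf{Main obstacle.} The delicate step is checking that the relevant functions really depend only on the active coordinates. The statement of \Cref{lem:balanced-multislice-conc} requires $F$ to be a function of $(L_i)_{i\in I}$ alone, but the joint law of those labels is not product, since they are a sample without replacement. The lemma already handles this reduction, so the point to verify is only that the singleton summands vanish identically for every block they fall in, which the identities above give. The Lipschitz constant on the active coordinates is the same universal $c$ as before.
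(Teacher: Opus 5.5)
Your proposal is correct and follows the paper's proof. The paper also applies \Cref{lem:balanced-multislice-conc} twice to the $4$-Lipschitz statistic, once with $I=[4n]$ and once with $I$ the indices in non-singleton bins. It handles the mean by symmetry. Your $2\leftrightarrow3$ swap, which gives $Z_w(A)\stackrel{d}{=}-Z_w(A)$ via $g(a,c,b,d)=-g(a,b,c,d)$, is a slightly stronger variant of the paper's per-cell four-label exchangeability, and the paper itself uses it later for the Rao--Blackwellized statistic.

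The only slip is that your roadmap cites Efron--Stein for the collision bound. The balanced labels are not independent, so Efron--Stein does not apply directly, but the body of your argument correctly uses the multislice lemma instead.
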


\begin{proof}

The mean is zero by exchangeability of the four labels under the uniform balanced assignment. For each fixed cell $i$, the joint distribution of $(N_{i1}(A),N_{i2}(A),N_{i3}(A),N_{i4}(A))$ is invariant under permutations of the four coordinates. Hence the expectations of the four absolute differences appearing in the $i$-th summand are all equal, and the expectation of that summand is zero. Summing over $i$ gives $\E_A[Z_w(A)]=0$.

We now prove the second-moment bound using only the balanced multislice concentration lemma. Recall the four-count kernel
\begin{align*}
g(x_1,x_2,x_3,x_4)
:=
|x_1-x_3|+|x_2-x_4|-|x_1-x_2|-|x_3-x_4|.
\end{align*}
For any $x,y\in\mathbb Z_{\ge0}^4$,
\begin{align*}
|g(x)-g(y)|
\le
2\|x-y\|_1,
\end{align*}
since each coordinate appears in exactly two of the four absolute-value terms defining $g$. If two assignments $a,a'\in\mathfrak B_n$ differ on $H_i$ observations whose pooled label is $i$, then the corresponding count vectors differ in $\ell_1$-distance at most $2H_i$. Therefore
\begin{align*}
\bigl|
g(N_{i1}(a),\ldots,N_{i4}(a))
-
g(N_{i1}(a'),\ldots,N_{i4}(a'))
\bigr|
\le
4H_i.
\end{align*}
Summing over $i$ gives
\begin{align*}
|Z_w(a)-Z_w(a')|
\le
4\sum_{\ell=1}^{4n}\mathbf 1\{a_\ell\ne a'_\ell\}.
\end{align*}
Thus $Z_w$ is $4$-Lipschitz in Hamming distance on the balanced multislice. Applying \Cref{lem:balanced-multislice-conc} with $I=[4n]$, $m_I=4n$, and $c=4$, we obtain, for all $\lambda\in\mathbb R$,
\begin{align*}
\E_A\exp\{\lambda(Z_w(A)-\E_AZ_w(A))\}
\le
\exp\{C n\lambda^2\}.
\end{align*}
This sub-Gaussian moment bound implies the desired variance bound. Indeed, if $\E e^{\lambda X}\le e^{K\lambda^2}$ for all $\lambda\in\mathbb R$, then $\Var(X)\le 2K$. Therefore
\begin{align*}
\Var_A(Z_w(A))\le Cn.
\end{align*}

It remains to obtain the collision-sensitive bound. If $T_i=1$, then the $i$-th summand of $Z_w$ is identically zero, regardless of the label assigned to the unique observation in that cell. Indeed, a count vector with total mass one has one coordinate equal to $1$ and the other three equal to $0$, and substituting any of the four possibilities into $g$ gives zero. Therefore $Z_w(A)$ depends only on the labels attached to observations in
\begin{align*}
G:=\{\ell\in[4n]:T_{w_\ell}\ge2\}.
\end{align*}
Moreover, the same Lipschitz argument gives, for all feasible assignments $a,a'$,
\begin{align*}
|Z_w(a)-Z_w(a')|
\le
4\sum_{\ell\in G}\mathbf 1\{a_\ell\ne a'_\ell\}.
\end{align*}
Applying \Cref{lem:balanced-multislice-conc} again, now with $I=G$, yields
\begin{align*}
\E_A\exp\{\lambda(Z_w(A)-\E_AZ_w(A))\}
\le
\exp\{C |G|\lambda^2\},
\qquad \lambda\in\mathbb R.
\end{align*}
The same standard consequence of the moment-generating-function bound gives
\begin{align*}
\Var_A(Z_w(A))\le C|G|.
\end{align*}
Finally,
\begin{align*}
|G|
=
\sum_{i:T_i\ge2}T_i
\le
\sum_{i=1}^k T_i(T_i-1).
\end{align*}
Combining the two bounds gives
\begin{align*}
\Var_A(Z_w(A))
\le
C\left(
n\wedge \sum_{i=1}^k T_i(T_i-1)
\right).
\end{align*}
Since $\E_A[Z_w(A)]=0$, this proves
\begin{align*}
\E_A[Z_w(A)^2]
\le
C\left(
n\wedge \sum_{i=1}^k T_i(T_i-1)
\right).
\end{align*}
\end{proof}

\begin{corollary}[Permutation second moment over bounded histograms]
\label{cor:perm-second-moment-bounded-hist}
Suppose the pooled data consist of $2n$ samples from $p$ and $2n$ samples from $q$, where $p,q\in\mathcal P_{k,M}$. Let $\pi$ be an independent uniformly random balanced reassignment of the pooled labels into four blocks of size $n$. Writing $W$ for the pooled discrete labels and $\lambda^\pi$ for the induced four-block assignment, set $Z^\pi:=Z_n(W,\lambda^\pi)$. Then
\begin{align*}
\E[Z^\pi]=0,
\qquad
\E[(Z^\pi)^2]
\le
C_M
\left(
n\wedge\frac{n^2}{k}
\right).
\end{align*}
\end{corollary}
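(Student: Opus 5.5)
The plan is to condition on the pooled discrete sample $W$ and apply \Cref{lem:perm-second-moment} with the fixed sequence $w=W$. Because $\pi$ is independent of the data and uniform over balanced reassignments, the induced four-block assignment $\lambda^\pi$ is uniform on $\mathfrak B_n$ conditional on $W$. Hence $Z^\pi$ has the law of $Z_w(A)$ given $W=w$. The lemma then gives $\E[Z^\pi\mid W]=0$, and the tower property yields $\E[Z^\pi]=0$. It also gives the conditional bound
\begin{align*}
\E[(Z^\pi)^2\mid W]\le C\Bigl(n\wedge \sum_{i=1}^k T_i(T_i-1)\Bigr),
\end{align*}
where $T_i$ is the total pooled count in cell $i$.

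Next I take expectations and use $\E[a\wedge U]\le a\wedge \E[U]$, which holds by Jensen since $u\mapsto a\wedge u$ is concave. This gives $\E[(Z^\pi)^2]\le C(n\wedge \E\sum_i T_i(T_i-1))$, so it remains to bound the expected collision count. The count $T_i$ is a sum of $4n$ independent Bernoulli indicators: $2n$ with success probability $p_i$ and $2n$ with success probability $q_i$. Therefore $T_i(T_i-1)$ counts ordered pairs of distinct observations that both land in cell $i$. By independence,
\begin{align*}
\E[T_i(T_i-1)]=\sum_{\ell\ne\ell'}\pi_{\ell i}\pi_{\ell' i}\le (4n)^2\max(p_i,q_i)^2,
\end{align*}
where $\pi_{\ell i}\in\{p_i,q_i\}$ is the probability that observation $\ell$ lands in cell $i$.

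Summing over cells, I use $\max(p_i,q_i)^2\le p_i^2+q_i^2$ together with $\sum_i p_i^2\le\|p\|_\infty\|p\|_1\le M/k$, and the same for $q$. This gives $\E\sum_i T_i(T_i-1)\le 32Mn^2/k$, so $\E[(Z^\pi)^2]\le C_M(n\wedge n^2/k)$ with $C_M$ of order $CM$ (since $M\ge1$).

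There is no real obstacle here. The only point needing care is that conditioning is legitimate: the minimum bound must be taken pointwise in $W$ before averaging, and it must be handled via concavity rather than by bounding each branch separately in expectation. Since $n$ is deterministic, that step is immediate.
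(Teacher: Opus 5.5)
Your proof is correct and is essentially the paper's argument. You condition on $W$, apply \Cref{lem:perm-second-moment} to get the conditional mean zero and the bound $C(n\wedge\sum_i T_i(T_i-1))$, and then bound the expected collision count by a constant times $n^2\sum_i(p_i^2+q_i^2)\le 2Mn^2/k$. The paper computes $\E[T_i(T_i-1)]$ exactly rather than using your cruder bound $(4n)^2\max(p_i,q_i)^2$, which makes no difference. Your closing caveat is unnecessary, since $\E[n\wedge U]\le n$ and $\E[n\wedge U]\le \E U$ already give $n\wedge \E U$ by bounding each branch separately.
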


\begin{proof}
Let $T_i$ be the pooled count in cell $i$. By \Cref{lem:perm-second-moment},
\begin{align*}
\E_\pi[(Z^\pi)^2\mid W]
\le
C
\left(
n\wedge \sum_{i=1}^k T_i(T_i-1)
\right),
\qquad
\E_\pi[Z^\pi\mid W]=0.
\end{align*}
Taking expectation gives $\E[Z^\pi]=0$. It remains to control the expected collision count. Since the pooled data contain $2n$ observations from $p$ and $2n$ observations from $q$,
\begin{align*}
\E[T_i(T_i-1)]
=
(2n)(2n-1)(p_i^2+q_i^2)
+8n^2p_iq_i.
\end{align*}
Therefore
\begin{align*}
\sum_{i=1}^k \E[T_i(T_i-1)]
\le
C n^2\sum_{i=1}^k(p_i^2+q_i^2).
\end{align*}
Because $p,q\in\mathcal P_{k,M}$,
\begin{align*}
\sum_{i=1}^k p_i^2\le \frac{M}{k},
\qquad
\sum_{i=1}^k q_i^2\le \frac{M}{k}.
\end{align*}
Thus $\E\sum_iT_i(T_i-1)\le C_M n^2/k$, while the conditional bound is always at most $Cn$. This proves the second-moment bound.
\end{proof}

The next corollary collects, for easy reference, the moment bounds established above for the observed split statistic and its permutation analogues.

\begin{corollary}[Collected moment bounds for $Z$ and $Z^\pi$]
\label{cor:mcperm-moment-bounds}
Fix $M\ge 1$, $p,q\in\mathcal P_{k,M}$, $\pi$ a uniform balanced relabeling independent of the data. Writing $W$ for the pooled discrete labels and $\lambda^\pi$ for the induced four-block assignment, set $Z^\pi:=Z_n(W,\lambda^\pi)$. Then
\begin{align*}
\E[Z^\pi]=0,
\qquad
\Var(Z^\pi)\le C_M\left(n\wedge\frac{n^2}{k}\right).
\end{align*}
Moreover, $\E[Z]=0$ if $p=q$, and if $\|p-q\|_1\ge\tau$ then
\begin{align*}
\E[Z]
\ge
b_M\min\!\left\{
n\tau,\,\frac{n^2\tau^2}{k},\,\frac{n^{3/2}\tau^2}{\sqrt{k}}
\right\}.
\end{align*}
Finally,
\begin{align*}
\Var(Z)\le C_M\left(n\wedge\frac{n^2}{k}\right).
\end{align*}
\end{corollary}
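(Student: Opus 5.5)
The corollary only collects bounds already proved for the observed split statistic and its permutation analogue, so the plan is to cite those results and add one short Efron--Stein argument for the variance of $Z$.

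First I fix the law of the observed statistic. Conditional on the data, $\lambda^{(0)}$ is a uniform balanced four-block refinement of the observed labeling, and it is independent of the values. So the four blocks $A_{X,1},A_{X,2},A_{Y,1},A_{Y,2}$ contain $n$ i.i.d.\ draws each, from $p,p,q,q$ respectively, and the four blocks are mutually independent. The resulting histograms are therefore distributed exactly as $X,\widetilde X\stackrel{\iid}{\sim}\mathrm{Mult}(n,p)$ and $Y,\widetilde Y\stackrel{\iid}{\sim}\mathrm{Mult}(n,q)$, mutually independent. This is the model of \Cref{lem:bounded-hist-var-binomial,lem:split-statistic-mean}. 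Any $2n-2n$ leftovers from the original samples are discarded and do not enter $Z$.

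With this identification the four claims follow in turn.
\begin{enumerate}[(i)]
\item The statements $\E[Z^\pi]=0$ and $\E[(Z^\pi)^2]\le C_M(n\wedge n^2/k)$ are exactly \Cref{cor:perm-second-moment-bounded-hist}. Since $\Var(Z^\pi)\le\E[(Z^\pi)^2]$, the variance bound for $Z^\pi$ follows immediately.
\item The null identity $\E[Z]=0$ when $p=q$ is \eqref{eq:mult-bdhist-null}.
\item The mean lower bound under $\|p-q\|_1\ge\tau$ is the second display of \Cref{lem:split-statistic-mean}, with the same constant $b_M$. That lemma requires $n\ge2$. For $n=1$ I would either assume $n\ge2$ throughout or absorb this case into the constants, since the proposition only concerns large $n$.
\item For the variance of $Z$, \Cref{lem:bounded-hist-var-binomial} gives $\Var(Z)\le 64Mn^2/k$. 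It remains to show $\Var(Z)\lesssim n$, which I do below.
\end{enumerate}

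The only genuinely new ingredient is the bound $\Var(Z)\lesssim n$. I obtain it from \Cref{lem:efron-stein} applied to the $4n$ independent raw samples. By \Cref{lem:mult-bdhist-sensitivity}, replacing a single raw sample changes $Z$ by at most $4$. Taking $c_i=4$ for every $i$ gives
\begin{align*}
\Var(Z)\le\tfrac12\cdot 4n\cdot 16=32n.
\end{align*}
Combining this with the $64Mn^2/k$ bound yields $\Var(Z)\le C_M(n\wedge n^2/k)$ with $C_M=\max\{64M,32\}$.

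I expect the only delicate step to be the distributional identification in the first step. There one must check that conditioning on the random split does not create dependence between the blocks. It does not, because the split is chosen independently of the values and the original samples are i.i.d. Everything after that is a direct citation of the earlier lemmas.
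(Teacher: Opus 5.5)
Your proposal is correct and follows the paper's proof: the paper also takes the $Z^\pi$ claims from \Cref{cor:perm-second-moment-bounded-hist}, the mean and $n^2/k$-variance claims for $Z$ from \Cref{lem:split-statistic-mean,lem:bounded-hist-var-binomial}, and the crude bound $\Var(Z)\lesssim n$ from Efron--Stein with $\GS(Z)\le 4$; the only difference is that you spell out why the four split histograms are independent multinomials, which the paper leaves implicit. The $n=1$ case you flag needs no absorption argument, because a direct computation gives $\E[Z]=2\|p-q\|_2^2\ge 2\tau^2/k$, and this already dominates the stated minimum.
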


\begin{proof}

The claims for $Z$ follow from \Cref{lem:split-statistic-mean,lem:bounded-hist-var-binomial}; the crude $\Var(Z)\le Cn$ bound follows from \Cref{lem:efron-stein} and $\GS(Z)\le 4$, where changing one raw observation moves one count in one of the four histograms from one bin to another. The claims for $Z^\pi$ follow from \Cref{cor:perm-second-moment-bounded-hist}.
\end{proof}

\subsubsection{Completion of the Proof of \CrefInTitle{Proposition}{prop:private-bounded-hist-perm}}
\label{app:proof:prop:private-bounded-hist-perm}

\begin{proof}

Throughout this proof, $n$ is the block size appearing in \Cref{alg:mcperm}. We split the samples from each distribution into two blocks of equal size $n$. Thus the actual number of samples from each of $p$ and $q$ is $2n$. This changes the sample complexity only by an absolute constant factor, which is absorbed into the constant $C(M,\gamma,\beta)$.

By construction, $B_{\mathrm{perm}} = \bigl\lceil 6\gamma^{-1}\log(2\beta^{-1})\bigr\rceil \ge 6\gamma^{-1}\log(2\beta^{-1}),$ and hence the Monte Carlo sample-size requirement in \citet[Theorem~4]{KimSchrab2026} is satisfied. The level and privacy guarantees for $\Psi_{\mathrm{dpperm},\gamma}$ are established separately in \Cref{prop:dpperm-validity-privacy}; here it remains to prove the asserted type~II bound.

Let $T(W,\lambda):=Z_n(W,\lambda)$.  By \Cref{lem:mult-bdhist-sensitivity}, $\GS(T)=\GS(Z)\le 4$. Hence Theorem~4 of \citet{KimSchrab2026}, specialized to the pure-DP case $\delta=0$, shows that it is enough to verify
\begin{equation}
\E[Z]-\E[Z^\pi]
\ge
C_1
\sqrt{\Var(Z)+\Var(Z^\pi)}
+
C_2\,\varepsilon^{-1},
\label{eq:mcperm-power-cond}
\end{equation}
for constants $C_1,C_2>0$ depending only on $\gamma,\beta$ and $\GS(Z)=4$.

By \Cref{cor:mcperm-moment-bounds}, $\E[Z^\pi]=0$.  We verify \eqref{eq:mcperm-power-cond} separately according to the relative sizes of the block size and the alphabet.

\smallskip\noindent\emph{Sparse regime: $n\le k$.}
Since $\tau\le2$, the lower bound on $\E[Z]$ in \Cref{cor:mcperm-moment-bounds} may be written, after decreasing $b_M$ if necessary, as
\begin{align*}
\E[Z]\ge b_M\frac{n^2\tau^2}{k}.
\end{align*}
The same corollary gives
\begin{align*}
\Var(Z)+\Var(Z^\pi)\le C_M\frac{n^2}{k}.
\end{align*}
Thus the fluctuation term in \eqref{eq:mcperm-power-cond} is absorbed by $\E[Z]$ as soon as
\begin{align*}
n\ge C_M\frac{\sqrt{k}}{\tau^2},
\end{align*}
after enlarging $C_M$.  The privacy term is absorbed provided
\begin{align*}
\frac{n^2\tau^2}{k}\ge \frac{C_M}{\varepsilon},
\qquad\text{equivalently}\qquad
n\ge C_M\frac{\sqrt{k}}{\tau\sqrt{\varepsilon}}.
\end{align*}

\smallskip\noindent\emph{Dense regime: $n>k$.}
Here \Cref{cor:mcperm-moment-bounds} gives
\begin{align*}
\E[Z]\ge
b_M
\min\left\{
n\tau,\,
\frac{n^{3/2}\tau^2}{\sqrt{k}}
\right\},
\qquad
\Var(Z)+\Var(Z^\pi)\le C_M n.
\end{align*}
Under the condition $n\ge C_M\sqrt{k}/\tau^2$, both terms in the minimum are at least a sufficiently large multiple of $\sqrt n$: indeed,
\begin{align*}
n\tau\ge C_M^{1/2} k^{1/4}\sqrt n
\qquad\text{and}\qquad
\frac{n^{3/2}\tau^2}{\sqrt{k}}\ge C_M\sqrt n .
\end{align*}
Since $k\ge1$, increasing $C_M$ makes the fluctuation term in \eqref{eq:mcperm-power-cond} smaller than, say, one half of $\E[Z]$. Similarly, the privacy term is absorbed whenever
\begin{align*}
n\tau \ge \frac{C_M}{\varepsilon}
\qquad\text{and}\qquad
\frac{n^{3/2}\tau^2}{\sqrt{k}}\ge \frac{C_M}{\varepsilon},
\end{align*}
or equivalently
\begin{align*}
n\ge C_M\frac{1}{\tau\varepsilon}
\qquad\text{and}\qquad
n\ge C_M\frac{k^{1/3}}{\tau^{4/3}\varepsilon^{2/3}}.
\end{align*}
Combining the sparse and dense regimes, and increasing the constant in the sample-size condition if needed, proves \eqref{eq:mcperm-power-cond}. Therefore \citet[Theorem~4]{KimSchrab2026} yields
\begin{align*}
\Pbb_{H_1}\bigl(\Psi_{\mathrm{dpperm},\gamma}=0\bigr)\le \beta.
\end{align*}
This proves the proposition.
\end{proof}

\subsection{Proof of \CrefInTitle{Theorem}{thm:holder-l1-private-upper-d}}
\label{app:proofs-continuous-upper}
\begin{proof}
Fix a resolution $\kappa\in\mathbb N$, and write $[0,1]^d=\bigsqcup_{j\in[\kappa]^d}C_j$ for the regular partition into cells of volume $\kappa^{-d}$.  Let $k=\kappa^d$, $N=N_X\wedge N_Y$, and $m=\lfloor N/2\rfloor$, the block size available to the split-histogram statistic after sample splitting.

For $u\in\cF_s^d(L,M)$, define $p_u^{(\kappa)}\in\Delta_k$ by
\begin{align*}
p_u^{(\kappa)}(j)=\int_{C_j}u(x)\,dx,\qquad j\in[\kappa]^d .
\end{align*}
Writing $|C_j|$ for the Lebesgue volume of $C_j$, the uniform bound on $u$ gives
\begin{align*}
p_u^{(\kappa)}(j)\le M |C_j|=\frac{M}{k},
\end{align*}
and therefore $p_u^{(\kappa)}\in\mathcal P_{k,M}$.  For $h=f-g$, the cell-average operator satisfies
\begin{align*}
\|W_{\kappa,d}(h)\|_1
=
\sum_{j\in[\kappa]^d}\left|\int_{C_j}(f-g)\right|
=
\|p_f^{(\kappa)}-p_g^{(\kappa)}\|_1 .
\end{align*}
Since $h\in\cH_s^d(2L)$, \Cref{lem:binning-approx} yields
\begin{align*}
\|p_f^{(\kappa)}-p_g^{(\kappa)}\|_1
\ge
\frac12\|f-g\|_1-C_{\mathrm{bin},s}\kappa^{-s}.
\end{align*}
Consequently, on alternatives satisfying $\|f-g\|_1\ge r$,
\begin{equation}
\|p_f^{(\kappa)}-p_g^{(\kappa)}\|_1
\ge
\frac r2-C_{\mathrm{bin},s}\kappa^{-s}
=
\tau_\kappa(r).
\label{eq:discrete-gap-d}
\end{equation}

Assume $\bar\tau_\kappa(r)=\min\{1,\tau_\kappa(r)\}>0$.  Binning each observation by its cell label is a deterministic preprocessing map: neighboring continuous datasets are sent to either identical or neighboring discrete datasets.  Hence the composition of this binning map with the discrete $\varepsilon$-DP test remains $\varepsilon$-DP.  Applying \Cref{prop:private-bounded-hist-perm} with $k=\kappa^d$ and $\tau=\bar\tau_\kappa(r)$, the binned test has type~I error at most $\gamma$ and type~II error at most $\beta$, provided
\begin{align*}
m \ge C_0
\left(
\frac{\kappa^{d/2}}{\bar\tau_\kappa(r)^2}
+
\frac{\kappa^{d/2}}{\bar\tau_\kappa(r)\sqrt{\varepsilon}}
+
\frac{\kappa^{d/3}}{\bar\tau_\kappa(r)^{4/3}\varepsilon^{2/3}}
+
\frac{1}{\bar\tau_\kappa(r)\varepsilon}
\right),
\end{align*}
where $C_0$ depends only on $(d,s,L,M,\gamma,\beta)$.  Since $m\asymp N$, \eqref{eq:holder-private-sample-cond-d} implies this display after increasing the constant in the theorem.  Under $H_0$, the binned laws coincide; under $H_1$, \eqref{eq:discrete-gap-d} gives the required $\bar\tau_\kappa(r)$-separation.  This proves the fixed-resolution guarantee.

It remains to optimize over the resolution.  For fixed $\kappa\in\mathbb N$, set
\begin{align*}
R_\kappa(N,\varepsilon)
:=
\kappa^{-s}
+
\frac{\kappa^{d/4}}{\sqrt{N}}
+
\frac{\kappa^{d/2}}{N\sqrt{\varepsilon}}
+
\frac{\kappa^{d/4}}{N^{3/4}\sqrt{\varepsilon}}
+
\frac{1}{N\varepsilon}.
\end{align*}
We first verify that a sufficiently large multiple of $R_\kappa$ implies the sample-size condition for this fixed resolution.  Let
\begin{align*}
S_\kappa(r)
:=
\frac{\kappa^{d/2}}{\bar\tau_\kappa(r)^2}
+
\frac{\kappa^{d/2}}{\bar\tau_\kappa(r)\sqrt{\varepsilon}}
+
\frac{\kappa^{d/3}}{\bar\tau_\kappa(r)^{4/3}\varepsilon^{2/3}}
+
\frac{1}{\bar\tau_\kappa(r)\varepsilon}.
\end{align*}
This is the quantity appearing in the fixed-resolution requirement $m\ge C_0S_\kappa(r)$.  We claim that, for a sufficiently large constant $C_1$, this requirement follows from
\begin{align*}
0<r\le2,
\qquad
r\ge C_1R_\kappa(N,\varepsilon).
\end{align*}
If $C_1R_\kappa(N,\varepsilon)>2$, this sufficient condition is vacuous. Otherwise, take $C_1\ge 4C_{\mathrm{bin},s}$. Since $R_\kappa(N,\varepsilon)\ge \kappa^{-s}$, we have
\begin{align*}
C_{\mathrm{bin},s}\kappa^{-s}\le \frac r4,
\qquad
\tau_\kappa(r)=\frac r2-C_{\mathrm{bin},s}\kappa^{-s}\ge \frac r4.
\end{align*}
As $r\le2$, it follows that
\begin{align*}
\bar\tau_\kappa(r)^{-1}\le 4r^{-1}.
\end{align*}
Therefore
\begin{align*}
S_\kappa(r)
\le
16\frac{\kappa^{d/2}}{r^2}
+
4\frac{\kappa^{d/2}}{r\sqrt{\varepsilon}}
+
4^{4/3}\frac{\kappa^{d/3}}{r^{4/3}\varepsilon^{2/3}}
+
4\frac{1}{r\varepsilon}.
\end{align*}
The four terms in $r\ge C_1R_\kappa(N,\varepsilon)$ imply, respectively,
\begin{align*}
\frac{\kappa^{d/2}}{r^2}\le C_1^{-2}N,\qquad
\frac{\kappa^{d/2}}{r\sqrt{\varepsilon}}\le C_1^{-1}N,
\end{align*}
and
\begin{align*}
\frac{\kappa^{d/3}}{r^{4/3}\varepsilon^{2/3}}\le C_1^{-4/3}N,\qquad
\frac{1}{r\varepsilon}\le C_1^{-1}N.
\end{align*}
Combining the preceding displays gives
\begin{align*}
S_\kappa(r)
\le
\left\{
16C_1^{-2}
+8C_1^{-1}
+4^{4/3}C_1^{-4/3}
\right\}N .
\end{align*}
Taking $C_1$ larger if necessary makes the displayed constant at most $(3C_0)^{-1}$.  Hence
\begin{align*}
S_\kappa(r)\le \frac{N}{3C_0}.
\end{align*}
Since $m=\lfloor N/2\rfloor\ge N/3$ for $N\ge2$, this yields
\begin{align*}
m\ge C_0S_\kappa(r).
\end{align*}
The finitely many cases $N<2$ are absorbed by increasing the constant.

We have proved that, for every fixed $\kappa$, the binned test at resolution $\kappa$ succeeds whenever $r\ge C_1R_\kappa(N,\varepsilon)$.  Now let
\begin{align*}
R_*:=\inf_{\kappa\in\mathbb N}R_\kappa(N,\varepsilon),
\end{align*}
and choose $\kappa$ such that
\begin{align*}
R_\kappa(N,\varepsilon)\le 2R_*.
\end{align*}
Then $r\ge 2C_1R_*$ implies $r\ge C_1R_\kappa(N,\varepsilon)$, so the same test has the desired type~I and type~II guarantees.  Renaming $2C_1$ as $C'$ gives \eqref{eq:holder-private-separation-d}.
\end{proof}

\subsection{Proof of \CrefInTitle{Corollary}{cor:holder-l1-private-explicit-rate}}
\label{app:proof:cor-holder-private-explicit-rate}

\begin{proof}

By \Cref{thm:holder-l1-private-upper-d}, the separation radius is bounded above by
\begin{align*}
\inf_{\kappa\in\mathbb N}
\left\{
\kappa^{-s}
+
\frac{\kappa^{d/4}}{\sqrt N}
+
\frac{\kappa^{d/2}}{N\sqrt\varepsilon}
+
\frac{\kappa^{d/4}}{N^{3/4}\sqrt\varepsilon}
+
\frac1{N\varepsilon}
\right\}.
\end{align*}
Define
\begin{align*}
a_N:=N^{-1/2}+N^{-3/4}\varepsilon^{-1/2},
\qquad
b_N:=(N\sqrt\varepsilon)^{-1}.
\end{align*}
Grouping the middle three terms, it suffices to bound
\begin{align*}
\inf_{\kappa\in\mathbb N}
\left\{
\kappa^{-s}
+
a_N\kappa^{d/4}
+
b_N\kappa^{d/2}
\right\}
+
(N\varepsilon)^{-1}.
\end{align*}
\paragraph{Reduction to a one-dimensional problem.}
Since optimizing over $\kappa\in\mathbb{N}$ versus over $\kappa>0$ changes the infimum by at most a constant factor, substitute $x:=\kappa^{d/4}$ and set $\alpha:=4s/d$, reducing the problem to bounding
\begin{align*}
\inf_{x\ge 1} F(x),
\qquad
F(x):=x^{-\alpha}+a_Nx+b_Nx^2.
\end{align*}
\paragraph{Reduction to small parameters.}
If $a_N>1$ or $b_N>1$, then $a_N^{\alpha/(\alpha+1)}\vee b_N^{\alpha/(\alpha+2)}\gtrsim 1$, while $\|f-g\|_1\le 2$.  The claimed bound then holds after enlarging $C$. We therefore restrict to $a_N\le 1$ and $b_N\le 1$ for the remainder.

\paragraph{Evaluating the infimum.}
Let
\begin{align*}
x_1:=a_N^{-1/(\alpha+1)},
\qquad
x_2:=b_N^{-1/(\alpha+2)},
\qquad
x_\star:=x_1\wedge x_2.
\end{align*}
Since $a_N,b_N\le 1$, we have $x_\star\ge 1$. Moreover,
\begin{align*}
x_\star^{-\alpha}
=
x_1^{-\alpha}\vee x_2^{-\alpha}
=
a_N^{\alpha/(\alpha+1)}\vee b_N^{\alpha/(\alpha+2)}.
\end{align*}
The inequalities $x_\star\le x_1$ and $x_\star\le x_2$ give
\begin{align*}
a_Nx_\star\le a_Nx_1=a_N^{\alpha/(\alpha+1)},
\qquad
b_Nx_\star^2\le b_Nx_2^2=b_N^{\alpha/(\alpha+2)}.
\end{align*}
Summing all three contributions,
\begin{align*}
F(x_\star)
\le
3\Bigl(a_N^{\alpha/(\alpha+1)}\vee b_N^{\alpha/(\alpha+2)}\Bigr),
\end{align*}
which implies $\inf_{x\ge 1}F(x)\lesssim a_N^{\alpha/(\alpha+1)}\vee b_N^{\alpha/(\alpha+2)}$.

\paragraph{Translating back to the original parameters.}
The identities $\alpha/(\alpha+1)=4s/(4s+d)$ and $\alpha/(\alpha+2)=2s/(2s+d)$ yield
\begin{align*}
\inf_{\kappa\in\mathbb N}
\left\{
\kappa^{-s}
+
a_N\kappa^{d/4}
+
b_N\kappa^{d/2}
\right\}
\lesssim
a_N^{4s/(4s+d)}
\vee
b_N^{2s/(2s+d)}.
\end{align*}
Applying the subadditivity $(u+v)^t\lesssim u^t+v^t$ (valid for all $t>0$),
\begin{align*}
a_N^{4s/(4s+d)}
=
\bigl(N^{-1/2}+N^{-3/4}\varepsilon^{-1/2}\bigr)^{4s/(4s+d)}
\lesssim
N^{-2s/(4s+d)}
+
N^{-3s/(4s+d)}\varepsilon^{-2s/(4s+d)},
\end{align*}
while $b_N^{2s/(2s+d)}=(N\sqrt\varepsilon)^{-2s/(2s+d)}$. Substituting into \Cref{thm:holder-l1-private-upper-d} and taking the maximum of the four nonneg\-ative terms (their sum is within a factor of~$4$ of the maximum) yields \eqref{eq:explicit-holder-private-rate-max}.
\end{proof}

\subsection{Proof of \CrefInTitle{Proposition}{prop:rb-linear-time-main-text}}
\label{app:proof:prop:rb-linear-time-main-text}
This proof derives a closed form for the Rao--Blackwellized statistic and its linear-time evaluation. We first obtain a cellwise representation of $\overline Z$, and then prove \Cref{prop:rb-linear-time-main-text} by showing that the resulting expression can be evaluated in $O(N_X+N_Y)$ time once the complete bin counts are computed.

Let $B_r^X,B_r^Y\in[k]$ denote the binned observations from the two samples. For each cell $i\in[k]$, let
\begin{align*}
T_i^X:=\sum_{r=1}^{N_X}\mathbf 1\{B_r^X=i\},
\qquad
T_i^Y:=\sum_{r=1}^{N_Y}\mathbf 1\{B_r^Y=i\}
\end{align*}
be the complete bin counts in the two samples. Throughout this subsection, $h_{N,u,\ell}$ denotes the probability mass function of $\mathrm{Hypergeom}(N,u,\ell)$, where $N$ is the population size, $u$ is the number of successes, and $\ell$ is the number of draws:
\begin{align*}
h_{N,u,\ell}(a)
=
\frac{\binom{u}{a}\binom{N-u}{\ell-a}}{\binom{N}{\ell}},
\qquad
a=\max\{0,u+\ell-N\},\dots,\min\{u,\ell\}.
\end{align*}

\begin{lemma}[Closed-form Rao--Blackwellized statistic]
\label{lem:rb-closed-form-main}
Fix an integer $m$ such that $2m\le N_X\wedge N_Y$, and write
\begin{align*}
\overline Z:=\overline Z_m(W,\ell^{(0)}).
\end{align*}
Let $n_{u,v}:=\#\{i\in[k]:(T_i^X,T_i^Y)=(u,v)\}$, with marginal multiplicities $r_u:=\sum_v n_{u,v}$ and $c_v:=\sum_u n_{u,v}$. For each pair $(u,v)$, let
\begin{align*}
A_u\sim \mathrm{Hypergeom}(N_X,u,m),
\qquad
B_v\sim \mathrm{Hypergeom}(N_Y,v,m),
\end{align*}
independently, and define
\begin{align*}
\Gamma_{u,v}:=\E\bigl[|A_u-B_v|\bigr].
\end{align*}
For $a=0,\dots,2m$, define
\begin{align*}
d_m(a):=\E\,\bigl|2V-a\bigr|,
\qquad
V\sim \mathrm{Hypergeom}(2m,a,m).
\end{align*}
For $N\ge 2m$ and $u\in\{0,\dots,N\}$, set
\begin{align*}
D_{N,u}
:=
\sum_{a=0}^{2m} h_{N,u,2m}(a)\,d_m(a),
\qquad
H_{N,u}(t)
:=
\Pbb\{\mathrm{Hypergeom}(N,u,m)\ge t\}.
\end{align*}
The dependence of $D_{N,u}$ and $H_{N,u}$ on the fixed block size $m$ is suppressed. Then
\begin{align*}
\overline Z
=
\sum_{u,v} n_{u,v}\,\bar z(u,v),
\qquad\text{where}\quad
\bar z(u,v)
=
2\Gamma_{u,v}-D_{N_X,u}-D_{N_Y,v},
\end{align*}
and hence
\begin{align*}
\overline Z
=
2\sum_{u,v} n_{u,v}\,\Gamma_{u,v}
-\sum_u r_u\,D_{N_X,u}
-\sum_v c_v\,D_{N_Y,v}.
\end{align*}
Moreover,
\begin{align*}
\Gamma_{u,v}
=
\frac{mu}{N_X}+\frac{mv}{N_Y}
-2\sum_{t=1}^m H_{N_X,u}(t)\,H_{N_Y,v}(t).
\end{align*}
\end{lemma}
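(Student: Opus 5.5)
The plan is to compute $\overline Z$ cell by cell, using linearity of expectation over the uniform refinement $\lambda\sim\Unif(\Lambda_m(\ell^{(0)}))$. Under this law, the blocks $(A_{X,1},A_{X,2})$ form a uniformly random ordered pair of disjoint $m$-subsets of the $N_X$ observed $X$-indices. The blocks $(A_{Y,1},A_{Y,2})$ are the analogous pair from the $Y$-indices, drawn independently. Each summand of $Z_m(W,\lambda)$ involves only the four counts of a single cell $i$, so
\begin{align*}
\overline Z=\sum_{i=1}^k \E\bigl[|C_i^{X,1}-C_i^{Y,1}|+|C_i^{X,2}-C_i^{Y,2}|-|C_i^{X,1}-C_i^{X,2}|-|C_i^{Y,1}-C_i^{Y,2}|\bigr].
\end{align*}
Every expectation depends on the data only through $(T_i^X,T_i^Y)$. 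Grouping cells by this pair then yields the factor $n_{u,v}$ and the summary $\bar z(u,v)$. The form with $r_u,c_v$ follows by summing out $v$ or $u$.

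\textbf{Cross terms.} The count $C_i^{X,1}$ is the number of cell-$i$ items among $m$ draws without replacement from $N_X$ items, $u$ of which lie in cell $i$. Hence $C_i^{X,1}\sim\mathrm{Hypergeom}(N_X,u,m)$. Likewise $C_i^{Y,1}\sim\mathrm{Hypergeom}(N_Y,v,m)$, and the two are independent because the $X$- and $Y$-splits are independent. The same holds for block $2$ by symmetry. Thus the two cross terms together contribute $2\Gamma_{u,v}$.

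\textbf{Within terms.} To handle $|C_i^{X,1}-C_i^{X,2}|$, I will sample in two stages. First draw the union $A_{X,1}\cup A_{X,2}$, a uniform $2m$-subset. The number $a$ of its cell-$i$ items is $\mathrm{Hypergeom}(N_X,u,2m)$. Given this, $A_{X,1}$ is a uniform $m$-subset of the union, so $C_i^{X,1}=V\sim\mathrm{Hypergeom}(2m,a,m)$ and $C_i^{X,2}=a-V$. Therefore $|C^{X,1}_i-C^{X,2}_i|=|2V-a|$, and averaging over $a$ gives $D_{N_X,u}$. The $Y$ term gives $D_{N_Y,v}$ in the same way. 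This establishes the formula for $\bar z(u,v)$.

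\textbf{The identity for $\Gamma_{u,v}$.} For nonnegative integers, $|A-B|=A+B-2\min(A,B)$, and $\min(A,B)=\sum_{t\ge1}\mathbf 1\{A\ge t\}\mathbf 1\{B\ge t\}$. Taking expectations and using independence gives $\E\min(A_u,B_v)=\sum_{t=1}^m H_{N_X,u}(t)H_{N_Y,v}(t)$, where the sum is truncated at $m$ because both variables are at most $m$. The hypergeometric means are $mu/N_X$ and $mv/N_Y$, which completes the identity.

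The main obstacle is the bookkeeping needed to justify independence and the two-stage decomposition for the uniform law on $\Lambda_m(\ell)$. In particular, I must check that unused observations (when $N_X>2m$) are handled correctly by first sampling the union of the two blocks. I expect this to be straightforward once the uniform refinement is written as independent uniform ordered pairs of disjoint subsets within each class.
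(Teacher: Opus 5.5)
Your proposal is correct and follows the paper's proof essentially step for step. Both arguments use cellwise linearity with grouping by $(T_i^X,T_i^Y)$, hypergeometric marginals plus independence of the $X$- and $Y$-refinements for the cross terms, and the union-then-split conditioning (via the sum $C_i^{X,1}+C_i^{X,2}\sim\mathrm{Hypergeom}(N_X,u,2m)$) for the within terms. The $\Gamma_{u,v}$ identity is then obtained from $|A-B|=A+B-2\min\{A,B\}$ together with the tail-sum formula for the minimum.
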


\begin{proof}
By definition, for any refinement $\lambda\in\Lambda_m(\ell^{(0)})$, writing $C_i^{G,a}=C_i^{G,a}(W,\lambda)$ within this display,
\begin{align*}
Z_m(W,\lambda)
=
\sum_{i=1}^k
\Bigl(
|C_i^{X,1}-C_i^{Y,1}|
+
|C_i^{X,2}-C_i^{Y,2}|
-
|C_i^{X,1}-C_i^{X,2}|
-
|C_i^{Y,1}-C_i^{Y,2}|
\Bigr).
\end{align*}
Hence it suffices to evaluate the conditional expectation of the $i$-th summand.

Fix $i\in[k]$, and condition on $T_i^X=u$ and $T_i^Y=v$. By exchangeability of the samples within each population, the random split depends on the data only through the counts $u$ and $v$.

First, for each block $b=1,2$, conditional on $T_i^X=u$,
\begin{align*}
C_i^{X,b}(W,\lambda) \mid T_i^X=u \sim \mathrm{Hypergeom}(N_X,u,m),
\end{align*}
and similarly
\begin{align*}
C_i^{Y,b}(W,\lambda) \mid T_i^Y=v \sim \mathrm{Hypergeom}(N_Y,v,m).
\end{align*}
Although $C_i^{X,1}$ and $C_i^{X,2}$ are not independent, this is irrelevant for the expectation of the two cross terms. The $X$- and $Y$-refinements are independent, so each cross term has expectation $\Gamma_{u,v}$. Hence
\begin{align*}
\E\!\left[
|C_i^{X,1}(W,\lambda)-C_i^{Y,1}(W,\lambda)|
+
|C_i^{X,2}(W,\lambda)-C_i^{Y,2}(W,\lambda)|
\,\middle|\,
T_i^X=u,\ T_i^Y=v
\right]
=
2\Gamma_{u,v}.
\end{align*}
Next, for the within-$X$ term, let
\begin{align*}
S_i^{X,\lambda}
:=
C_i^{X,1}(W,\lambda)+C_i^{X,2}(W,\lambda).
\end{align*}
Then
\begin{align*}
S_i^{X,\lambda} \mid T_i^X=u \sim \mathrm{Hypergeom}(N_X,u,2m).
\end{align*}
Conditional on $S_i^{X,\lambda}=a$, the $a$ selected category-$i$ observations are allocated between the two blocks of size $m$. Thus $C_i^{X,1}(W,\lambda)\mid(S_i^{X,\lambda}=a,T_i^X=u) \sim \mathrm{Hypergeom}(2m,a,m)$ and $C_i^{X,2}(W,\lambda)=a-C_i^{X,1}(W,\lambda)$, so the inner conditional expectation is $d_m(a)$. Taking expectation with respect to $S_i^{X,\lambda}\mid T_i^X=u$ yields
\begin{align*}
\E\!\left[
|C_i^{X,1}(W,\lambda)-C_i^{X,2}(W,\lambda)|
\,\middle|\,
T_i^X=u
\right]
&=
\sum_{a=0}^{2m} h_{N_X,u,2m}(a)\,d_m(a) \\
&=
D_{N_X,u}.
\end{align*}
The same argument yields
\begin{align*}
\E\!\left[
|C_i^{Y,1}(W,\lambda)-C_i^{Y,2}(W,\lambda)|
\,\middle|\,
T_i^Y=v
\right]
=
D_{N_Y,v}.
\end{align*}
Combining the preceding identities gives
\begin{align*}
\E\!\left[
\begin{aligned}
&|C_i^{X,1}(W,\lambda)-C_i^{Y,1}(W,\lambda)| \\
&+
|C_i^{X,2}(W,\lambda)-C_i^{Y,2}(W,\lambda)| \\
&-
|C_i^{X,1}(W,\lambda)-C_i^{X,2}(W,\lambda)| \\
&-
|C_i^{Y,1}(W,\lambda)-C_i^{Y,2}(W,\lambda)|
\end{aligned}
\,\middle|\,
T_i^X=u,\ T_i^Y=v
\right]
=
2\Gamma_{u,v}-D_{N_X,u}-D_{N_Y,v}.
\end{align*}
Summing over $i=1,\dots,k$ proves the stated formula for $\overline Z$.

It remains to identify the alternative representation of $\Gamma_{u,v}$. Let $A_u$ and $B_v$ denote the independent hypergeometric random variables from the lemma statement, so that $\Gamma_{u,v}=\E[|A_u-B_v|]$. Using
\begin{align*}
|A_u-B_v|=A_u+B_v-2\min\{A_u,B_v\},
\end{align*}
we obtain
\begin{align*}
\Gamma_{u,v}
=
\E[A_u]+\E[B_v]-2\E[\min\{A_u,B_v\}].
\end{align*}
Since $A_u,B_v\in\{0,\dots,m\}$,
\begin{align*}
\E[\min\{A_u,B_v\}]
=
\sum_{t=1}^m \Pbb(A_u\ge t,\ B_v\ge t).
\end{align*}
By independence,
\begin{align*}
\Pbb(A_u\ge t,\ B_v\ge t)=H_{N_X,u}(t)H_{N_Y,v}(t),
\end{align*}
while
\begin{align*}
\E[A_u]=\frac{mu}{N_X},
\qquad
\E[B_v]=\frac{mv}{N_Y}.
\end{align*}
Substituting these identities yields the claimed expression for $\Gamma_{u,v}$.
\end{proof}

We next prove that the closed-form expression for $\overline Z$ can be evaluated in linear time once the complete bin counts are computed and the implementation details are given in \Cref{alg:rb-linear-time}.

\paragraph{Proof of \Cref{prop:rb-linear-time-main-text}.}
\begin{proof}
We write the complete bin counts as $(T_i^X,T_i^Y)$, and let
\begin{align*}
n_{u,v}:=\#\{i:(T_i^X,T_i^Y)=(u,v)\}.
\end{align*}
Only cells with $u+v>0$ need to be stored. Define
\begin{align*}
\mathcal P:=\{(u,v):n_{u,v}>0,\ u+v>0\},
\\
\mathcal A_X:=\{u:\exists\,v,\ (u,v)\in\mathcal P\},
\\
\mathcal A_Y:=\{v:\exists\,u,\ (u,v)\in\mathcal P\},
\end{align*}
and let
\begin{align*}
r_u:=\sum_v n_{u,v},
\qquad
c_v:=\sum_u n_{u,v}.
\end{align*}
Since $\bar z(0,0)=0$, the empty cells do not contribute. Hence, by \Cref{lem:rb-closed-form-main},
\begin{align*}
\overline Z
=
2\sum_{(u,v)\in\mathcal P} n_{u,v}\Gamma_{u,v}
-
\sum_{u\in\mathcal A_X} r_uD_{N_X,u}
-
\sum_{v\in\mathcal A_Y} c_vD_{N_Y,v}.
\end{align*}
The table $\mathcal P$, together with the multiplicities $n_{u,v},r_u,c_v$, is obtained in one pass over the nonempty cells. Since the number of nonempty cells is at most $N_X+N_Y$, this preprocessing is linear.

We next record the elementary cost bound used throughout. For $H\sim\mathrm{Hypergeom}(N,a,\ell)$, its support is the interval
\begin{align*}
\{\max(0,a+\ell-N),\dots,\min(a,\ell)\},
\end{align*}
whose length is
\begin{align*}
w_{N,\ell}(a):=\min\{a,\ell,N-a,N-\ell\}+1\le a+1.
\end{align*}
Using the standard one-step recursion for consecutive hypergeometric masses, the whole row $h_{N,a,\ell}$ can be generated and normalized in $O(w_{N,\ell}(a))$ arithmetic operations. Explicitly, for consecutive support points,
\begin{align*}
\frac{h_{N,a,\ell}(t+1)}{h_{N,a,\ell}(t)}
=
\frac{\binom{a}{t+1}\binom{N-a}{\ell-t-1}}
{\binom{a}{t}\binom{N-a}{\ell-t}}
=
\frac{(a-t)(\ell-t)}
{(t+1)(N-a-\ell+t+1)}.
\end{align*}
Thus one may start with an arbitrary positive unnormalized value at the left endpoint of the support, generate all consecutive unnormalized masses by this ratio, and finally divide by their sum. This takes time proportional to $w_{N,\ell}(a)$. In the same pass one obtains the prefix sums
\begin{align*}
F_a(x):=\sum_{r\le x} h_{N,a,m}(r),
\qquad
M_a(x):=\sum_{r\le x} r\,h_{N,a,m}(r),
\end{align*}
with the usual boundary conventions outside the support. The quantities
\begin{align*}
D_{N,a}=\sum_{j=0}^{2m} h_{N,a,2m}(j)d_m(j)
\end{align*}
are then computed in $O(w_{N,2m}(a))$ time once the array $d_m(0),\dots,d_m(2m)$ is available.

The array $d_m$ itself is computed in $O(m)$ time. Indeed, let $(\xi_1,\dots,\xi_{2m})$ be a uniformly random permutation of $m$ copies of $+1$ and $m$ copies of $-1$, and set $S_s=\xi_1+\cdots+\xi_s$. Then $d_m(s)=\E|S_s|$, and
\begin{align*}
d_m(0)=0,
\qquad
d_m(s+1)
=
\left(1-\frac{1}{2m-s}\right)d_m(s)+\rho_m(s),
\end{align*}
where $\rho_m(s)=\Pbb(S_s=0)$. Moreover $\rho_m(s)=0$ for odd $s$, while for $s=2j$,
\begin{align*}
\rho_m(2j)
=
\frac{\binom{m}{j}^2}{\binom{2m}{2j}},
\qquad
\rho_m(0)=1,
\qquad
\rho_m(2j+2)
=
\rho_m(2j)
\frac{(m-j)(2j+1)}{(j+1)(2m-2j-1)}.
\end{align*}
To justify the recurrence, condition on $S_s=z$. Given $S_s=z$, the numbers of $+1$'s and $-1$'s already used are $(s+z)/2$ and $(s-z)/2$, respectively. Hence, for $0\le s\le 2m-1$,
\begin{align*}
\Pbb(\xi_{s+1}=+1\mid S_s=z)
=
\frac{2m-s-z}{2(2m-s)},
\qquad
\Pbb(\xi_{s+1}=-1\mid S_s=z)
=
\frac{2m-s+z}{2(2m-s)}.
\end{align*}
A direct calculation gives, for $z\ne0$,
\begin{align*}
\E\bigl[|z+\xi_{s+1}|\,\bigm|\,S_s=z\bigr]
=
\left(1-\frac1{2m-s}\right)|z|,
\end{align*}
whereas for $z=0$ the conditional expectation equals $1$. Therefore
\begin{align*}
\E\bigl[|S_{s+1}|\,\bigm|\,S_s\bigr]
=
\left(1-\frac1{2m-s}\right)|S_s|
+\mathbf 1\{S_s=0\},
\end{align*}
and taking expectations gives the displayed update for $d_m$. The formula for $\rho_m$ follows because $S_s=0$ is impossible for odd $s$, while for $s=2j$ it requires exactly $j$ of the first $2j$ positions to be occupied by the $m$ copies of $+1$ and $j$ by the $m$ copies of $-1$. The ratio update is obtained from
\begin{align*}
\frac{\rho_m(2j+2)}{\rho_m(2j)}
=
\left(\frac{m-j}{j+1}\right)^2
\frac{(2j+2)(2j+1)}{(2m-2j)(2m-2j-1)}
=
\frac{(m-j)(2j+1)}{(j+1)(2m-2j-1)}.
\end{align*}
Hence all $d_m(s)$'s are obtained in linear time and memory.

For each active $u\in\mathcal A_X$, we tabulate the rows $h_{N_X,u,m}$, $h_{N_X,u,2m}$, their prefix arrays, and $D_{N_X,u}$. The total cost is
\begin{align*}
O\!\left(
\sum_{u\in\mathcal A_X}
\{w_{N_X,m}(u)+w_{N_X,2m}(u)\}
\right).
\end{align*}
The analogous construction is performed for all $v\in\mathcal A_Y$.

It remains to compute $\Gamma_{u,v}$ for $(u,v)\in\mathcal P$. Suppose we sum over the support of $h_{N_X,u,m}$. If $Y\sim h_{N_Y,v,m}$, then for any integer $x$,
\begin{align*}
\E|x-Y|
=
\frac{mv}{N_Y}
-2M_v^{(Y)}(x)
+x\bigl(2F_v^{(Y)}(x)-1\bigr).
\end{align*}
Therefore
\begin{align*}
\Gamma_{u,v}
=
\sum_x h_{N_X,u,m}(x)
\left[
\frac{mv}{N_Y}
-2M_v^{(Y)}(x)
+x\bigl(2F_v^{(Y)}(x)-1\bigr)
\right].
\end{align*}
The identity for $\E|x-Y|$ is simply the split of the absolute value at $Y=x$. Writing $\mu_Y=\E Y=mv/N_Y$,
\begin{align*}
\begin{aligned}
\E|x-Y|
&=
\E[(x-Y)\mathbf 1\{Y\le x\}]
+
\E[(Y-x)\mathbf 1\{Y>x\}]\\
&=
xF_v^{(Y)}(x)-M_v^{(Y)}(x)
+
\mu_Y-M_v^{(Y)}(x)-x\{1-F_v^{(Y)}(x)\}\\
&=
\frac{mv}{N_Y}
-2M_v^{(Y)}(x)
+x\bigl(2F_v^{(Y)}(x)-1\bigr).
\end{aligned}
\end{align*}
Conditioning on $X\sim h_{N_X,u,m}$ gives the displayed formula for $\Gamma_{u,v}=\E|X-Y|$. This costs $O(w_{N_X,m}(u))$. If $w_{N_Y,m}(v)<w_{N_X,m}(u)$, we use the symmetric expression instead. Hence the total cost of the pairwise pass is
\begin{align*}
O\!\left(
\sum_{(u,v)\in\mathcal P}
\min\{w_{N_X,m}(u),w_{N_Y,m}(v)\}
\right).
\end{align*}

We now bound these sums. Since $w_{N,\ell}(a)\le a+1$,
\begin{align*}
\sum_{u\in\mathcal A_X}
\{w_{N_X,m}(u)+w_{N_X,2m}(u)\}
\le
2\sum_{u\in\mathcal A_X}(u+1).
\end{align*}
Also,
\begin{align*}
\sum_{u\in\mathcal A_X} u
\le
\sum_{(u,v)\in\mathcal P} u\,n_{u,v}
=
N_X,
\qquad
|\mathcal A_X|\le N_X+1,
\end{align*}
so the total $X$-row cost is $O(N_X)$. Similarly, the total $Y$-row cost is $O(N_Y)$.

For the pairwise term,
\begin{align*}
\min\{w_{N_X,m}(u),w_{N_Y,m}(v)\}\le \min\{u,v\}+1.
\end{align*}
Consequently,
\begin{align*}
\sum_{(u,v)\in\mathcal P}\min\{u,v\}
\le
\sum_{(u,v)\in\mathcal P} u
\le
\sum_{(u,v)\in\mathcal P} u\,n_{u,v}
=
N_X.
\end{align*}
Moreover, since every $(u,v)\in\mathcal P$ satisfies $u+v\ge1$ and $n_{u,v}\ge1$,
\begin{align*}
|\mathcal P|
\le
\sum_{(u,v)\in\mathcal P}(u+v)n_{u,v}
=
N_X+N_Y.
\end{align*}
Hence the pairwise pass costs $O(N_X+N_Y)$.

Finally, computing $d_m$ costs $O(m)$, and $m\le (N_X\wedge N_Y)/2$. Combining the preprocessing, row tabulation, and pairwise evaluation costs, the total arithmetic complexity is $O(N_X+N_Y)$.

The memory bound follows from the same estimates. The table $d_m$ costs $O(m)$, the stored hypergeometric rows and prefix arrays cost
\begin{align*}
O\!\left(
\sum_{u\in\mathcal A_X}
\{w_{N_X,m}(u)+w_{N_X,2m}(u)\}
+
\sum_{v\in\mathcal A_Y}
\{w_{N_Y,m}(v)+w_{N_Y,2m}(v)\}
\right)
=
O(N_X+N_Y).
\end{align*}
The sparse pair table has size $|\mathcal P|=O(N_X+N_Y)$. Thus the total memory usage is also $O(N_X+N_Y)$.
\end{proof}

\subsection{Proof of \CrefInTitle{Lemma}{lem:rb-basic-properties}}
\label{app:proof:lem-rb-basic-properties}

\begin{lemma}[Basic Rao--Blackwellization properties]
\label{lem:rb-basic-properties}
Let
\begin{align*}
\lambda^{(0)}\mid (W,\ell^{(0)})
\sim
\Unif(\Lambda_m(\ell^{(0)})),
\end{align*}
independently of all other randomness conditional on $(W,\ell^{(0)})$. Then
\begin{align*}
\overline Z_m(W,\ell^{(0)})
=
\E\!\left[
Z_m(W,\lambda^{(0)})
\mid W,\ell^{(0)}
\right].
\end{align*}
Consequently,
\begin{align*}
\E[\overline Z_m(W,\ell^{(0)})]
=
\E[Z_m(W,\lambda^{(0)})],
\qquad
\Var(\overline Z_m(W,\ell^{(0)}))
\le
\Var(Z_m(W,\lambda^{(0)})).
\end{align*}
Moreover, for every labeling $\ell\in\mathfrak L_{N_X,N_Y}$ with $N_X$ labels $X$ and $N_Y$ labels $Y$,
\begin{align*}
\sup_{\substack{w,w'\in[k]^{N_X+N_Y}\\ d_H(w,w')\le1}}
\left|
\overline Z_m(w,\ell)-\overline Z_m(w',\ell)
\right|
\le 4 .
\end{align*}
\end{lemma}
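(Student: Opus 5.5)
The plan is to treat the three claims in order: first the conditional-expectation identity, then the mean and variance consequences, and finally the sensitivity bound.

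\emph{Conditional-expectation identity.} Conditional on $(W,\ell^{(0)})$, the only remaining randomness in $Z_m(W,\lambda^{(0)})$ is $\lambda^{(0)}$. Its conditional law is $\Unif(\Lambda_m(\ell^{(0)}))$, independent of everything else given $(W,\ell^{(0)})$. Therefore
\begin{align*}
\E\bigl[Z_m(W,\lambda^{(0)})\mid W,\ell^{(0)}\bigr]
=\frac{1}{|\Lambda_m(\ell^{(0)})|}\sum_{\lambda\in\Lambda_m(\ell^{(0)})}Z_m(W,\lambda),
\end{align*}
and the right-hand side is by definition $\overline Z_m(W,\ell^{(0)})$. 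Measurability is not an issue here: $\Lambda_m(\ell)$ is a finite set and $Z_m$ is a finite function of the counts.

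\emph{Mean and variance.} Both follow from standard Rao--Blackwell facts, using that $Z_m$ is bounded (by $4(N_X+N_Y)$, say), so all moments exist. The tower property gives equality of means. The law of total variance gives
\begin{align*}
\Var(Z_m)=\E[\Var(Z_m\mid W,\ell^{(0)})]+\Var(\overline Z_m),
\end{align*}
and since the first term is nonnegative, the variance inequality follows.

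\emph{Sensitivity.} Fix a labeling $\ell$ and neighboring pooled vectors $w,w'$ with $d_H(w,w')\le1$.
\begin{itemize}
\item Averaging over the same finite set, and using the triangle inequality, gives
\begin{align*}
|\overline Z_m(w,\ell)-\overline Z_m(w',\ell)|\le \max_{\lambda\in\Lambda_m(\ell)}|Z_m(w,\lambda)-Z_m(w',\lambda)|.
\end{align*}
\item For fixed $\lambda$, the changed record $j$ either lies in none of the four blocks or in exactly one block $A_{G,a}$.
\item If $j$ lies in no block, every count $C_i^{G,a}$ is unchanged and the difference is zero.
\item If $j$ lies in a block, changing $w_j$ moves one unit of mass from bin $w_j$ to bin $w'_j$ in exactly one of the four count vectors. \Cref{lem:mult-bdhist-sensitivity} then shows that only two summands of the kernel $g$ change, each by at most $2$, for a total change of at most $4$.
\end{itemize}
This bound is uniform in $\lambda$, so it passes through the average and gives the claim.

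The only mild subtlety is in the sensitivity step. Here neighboring datasets are defined with the labeling $\ell$ held fixed, and when $N_X\neq N_Y$ some records are unused in a given refinement. Both points are handled by the case split above, so I expect no real obstacle.
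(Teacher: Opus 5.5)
Your proposal is correct and follows essentially the same route as the paper. You identify $\overline Z_m$ as the conditional mean over $\Unif(\Lambda_m(\ell^{(0)}))$, use the tower property and the law of total variance, and get the sensitivity bound by applying \Cref{lem:mult-bdhist-sensitivity} to each fixed refinement and averaging over the common set $\Lambda_m(\ell)$. Your explicit case of a changed record lying in no block is a small clarification that the paper covers implicitly with ``affects at most one of the four count vectors.''
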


\begin{proof}

Let $Z^\lambda:=Z_m(W,\lambda^{(0)})$, where $\lambda^{(0)}\mid (W,\ell^{(0)})\sim\Unif(\Lambda_m(\ell^{(0)}))$. By definition,
\begin{align*}
\overline Z_m(W,\ell^{(0)})
=
\E[Z^\lambda\mid W,\ell^{(0)}].
\end{align*}
The identity $\E[\overline Z_m(W,\ell^{(0)})]=\E[Z^\lambda]$ is the tower property. Also, by the law of total variance,
\begin{align*}
\Var(Z^\lambda)
=
\E\!\left[\Var(Z^\lambda\mid W,\ell^{(0)})\right]
+ 
\Var\!\left(\E[Z^\lambda\mid W,\ell^{(0)}]\right),
\end{align*}
and therefore
\begin{equation*}
\Var(\overline Z_m(W,\ell^{(0)}))
=
\Var\!\left(\E[Z^\lambda\mid W,\ell^{(0)}]\right)
\le \Var(Z^\lambda).
\end{equation*}

Next fix $\ell\in\mathfrak L_{N_X,N_Y}$ and $w,w'\in[k]^{N_X+N_Y}$ with $d_H(w,w')\le1$. Couple both evaluations using the same random refinement $\lambda\sim\Unif(\Lambda_m(\ell))$. For each fixed refinement, changing one coordinate of the pooled binned label vector affects at most one of the four count vectors
\begin{align*}
X^\lambda,\ \widetilde X^\lambda,\ Y^\lambda,\ \widetilde Y^\lambda
\end{align*}
Only that count vector can change. If it does, one unit mass moves from one coordinate to another. Hence, by \Cref{lem:mult-bdhist-sensitivity}, for every fixed refinement,
\begin{align*}
|Z_m(w,\lambda)-Z_m(w',\lambda)|\le 4.
\end{align*}
Therefore,
\begin{align*}
\left|
\overline Z_m(w,\ell)-\overline Z_m(w',\ell)
\right|
\le
\E_{\lambda\sim\Unif(\Lambda_m(\ell))}
\!\left[
\left|Z_m(w,\lambda)-Z_m(w',\lambda)\right|
\right]
\le 4.
\end{align*}
Taking the supremum over $\ell,w,w'$ gives
\begin{equation}\label{eq:rb-sensitivity}
\sup_{\ell\in\mathfrak L_{N_X,N_Y}}
\sup_{\substack{w,w'\in[k]^{N_X+N_Y}\\ d_H(w,w')\le1}}
\left|
\overline Z_m(w,\ell)-\overline Z_m(w',\ell)
\right|
\le 4.
\end{equation}
\end{proof}

\subsection{Moment bounds for the Rao--Blackwellized statistic}
\label{app:proof:lem-rb-perm-moment-bounds}

\begin{lemma}[Permutation moment bounds for the Rao--Blackwellized statistic]
\label{lem:rb-perm-moment-bounds}
Set $m=\lfloor (N_X\wedge N_Y)/2\rfloor$. Let $\ell^\pi$ be a uniformly random relabeling of the pooled binned labels with class sizes $N_X,N_Y$, independent of the data, and define
\begin{align*}
\overline Z^\pi:=\overline Z_m(W,\ell^\pi),
\qquad
\overline Z^{(0)}:=\overline Z_m(W,\ell^{(0)}).
\end{align*}
There exists a universal constant $C>0$ such that, for arbitrary $p,q\in\Delta_k$,
\begin{align}
\E\!\left[\overline Z^\pi\mid W\right]&=0,
\label{eq:rb-perm-conditionally-centered}\\
\Var_{p,q}\!\left(\overline Z^{(0)}\right)
+\Var_{p,q}\!\left(\overline Z^\pi\right)
&\le Cm.
\label{eq:rb-unrestricted-variance}
\end{align}
There also exists a universal constant $c>0$ such that, whenever $p,q\in\Delta_k$ satisfy $\|p-q\|_1\ge\tau$,
\begin{align}
\E_{p,q}\!\left[\overline Z^{(0)}\right]
\ge c\min\!\left\{
m\tau,\frac{m^2\tau^2}{k},\frac{m^{3/2}\tau^2}{\sqrt{k}}
\right\}.
\label{eq:rb-unrestricted-mean}
\end{align}
For every $M\ge1$, there exists a constant $C_M>0$, depending only on $M$, such that, if $p,q\in\mathcal P_{k,M}$, then
\begin{align}
\Var_{p,q}\!\left(\overline Z^{(0)}\right)
+\Var_{p,q}\!\left(\overline Z^\pi\right)
\le C_M\min\!\left\{\frac{m^2}{k},m\right\}.
\label{eq:rb-flat-variance}
\end{align}
\end{lemma}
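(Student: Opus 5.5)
We handle the four claims in turn, reducing each to the single-split results already proved. The Rao--Blackwellized statistic is the average of $Z_m(W,\lambda)$ over refinements, so the strategy is to transfer single-split bounds through conditional expectation.

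\textbf{Centering.} Fix $W$. Let $\ell^\pi\sim\Unif(\mathfrak L_{N_X,N_Y})$ and then $\lambda\sim\Unif(\Lambda_m(\ell^\pi))$. The composite four-block assignment $(A_{X,1},A_{X,2},A_{Y,1},A_{Y,2})$ is then a uniformly random ordered choice of four disjoint $m$-subsets of $[N_X+N_Y]$. This law is invariant under permuting the four block labels. As in \Cref{lem:perm-second-moment}, each cellwise summand has mean zero, so $\E[Z_m(W,\lambda)\mid W]=0$. The tower property then gives \eqref{eq:rb-perm-conditionally-centered}.

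\textbf{Mean lower bound.} By \Cref{lem:rb-basic-properties}, $\E\overline Z^{(0)}=\E Z_m(W,\lambda^{(0)})$. Given the observed labeling, the four blocks are disjoint $m$-subsets of i.i.d.\ samples. Hence the four histograms are independent multinomials $\mathrm{Mult}(m,p),\mathrm{Mult}(m,p),\mathrm{Mult}(m,q),\mathrm{Mult}(m,q)$. We apply \Cref{lem:binomial-comparison-no-heavy} coordinatewise, with $\delta_i=|p_i-q_i|$, and keep all three branches. Since $\sum_i(p_i+q_i)\le 2$, restrict to the set $I$ of coordinates with $\delta_i\ge\tau/(4k)$ and $p_i+q_i\le 8/k\cdot(\ldots)$. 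More precisely, the set $\{i:\ p_i+q_i>4\delta_i k/\tau\}$ carries at most $\tau/2\cdot$ of the $\delta$-mass, since $\sum(p_i+q_i)\le2$ forces $\sum\delta_i$ over it to be at most $\tau/2$. On the remaining coordinates, those with $\delta_i\ge\tau/(8k)$ carry mass at least $\tau/4$. On these coordinates $\sqrt{m(p_i+q_i)}\lesssim\sqrt{m\delta_ik/\tau}$. Summing $\min\{m\delta_i, m^2\delta_i^2, m^{3/2}\delta_i^{3/2}\sqrt{\tau/k}\}$ over them and using $\delta_i\ge\tau/(8k)$ yields $\gtrsim\min\{m\tau,m^2\tau^2/k,m^{3/2}\tau^2/\sqrt k\}$. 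This gives \eqref{eq:rb-unrestricted-mean}.

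\textbf{Variance bounds.} For $\overline Z^{(0)}$, \Cref{lem:rb-basic-properties} gives $\Var\overline Z^{(0)}\le\Var Z_m(W,\lambda^{(0)})$. Only $4m$ raw points enter, and $\GS\le4$, so Efron--Stein (\Cref{lem:efron-stein}) gives $\le Cm$. In the flat case, \Cref{lem:bounded-hist-var-binomial} gives $\le 64Mm^2/k$.

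For $\overline Z^\pi$, use $\Var\overline Z^\pi=\E\Var(\overline Z^\pi\mid W)+\Var(\E[\overline Z^\pi\mid W])$; the second term vanishes by centering. Jensen gives $\E[(\overline Z^\pi)^2\mid W]\le\E[Z_m(W,\lambda)^2\mid W]$, where $\lambda$ is the composite uniform assignment. The composite assignment is a uniform multislice over five labels: the four blocks plus an ``unused'' class of size $N_X+N_Y-4m$. This is the step I expect to require the most care, namely adapting \Cref{lem:perm-second-moment} to the extra unused class. $Z_m$ is $4$-Lipschitz in Hamming distance on this multislice, so \Cref{lem:balanced-multislice-conc} with $I=[N_X+N_Y]$ would only give the constant $\lesssim N_X+N_Y$, which is too weak under imbalance. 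Instead, I would first condition on the set $U$ of $4m$ used indices. Given $U$, the four-block assignment on $U$ is uniform balanced, so \Cref{lem:perm-second-moment} applies directly with $w=W_U$. This yields $C(m\wedge\sum_iT_i^U(T_i^U-1))$, where $T_i^U$ are the pooled counts within $U$. This already proves \eqref{eq:rb-unrestricted-variance}.

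For the flat case, $U$ is a uniform $4m$-subset independent of the values. Hence $\E\, T_i^U(T_i^U-1)\lesssim m^2(p_i^2+q_i^2)$, since each pair of indices in $U$ collides with probability at most $\max(p_i,q_i)^2$-type terms. Summing over $i$ gives $\lesssim C_Mm^2/k$, exactly as in \Cref{cor:perm-second-moment-bounded-hist}. Combining these bounds gives \eqref{eq:rb-flat-variance}.
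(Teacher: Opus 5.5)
Your centering argument and both variance bounds are correct and follow the paper's reductions: Jensen and variance contraction back to a single split, Efron--Stein, \Cref{lem:bounded-hist-var-binomial}, and \Cref{lem:perm-second-moment}. Conditioning on the used index set $U$ makes explicit what the paper leaves implicit for unequal sample sizes. Full label exchangeability works as well as the paper's sign flip $g(a,c,b,d)=-g(a,b,c,d)$.

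The gap is in the mean bound \eqref{eq:rb-unrestricted-mean}. On your retained coordinates you only know $p_i+q_i\le 4\delta_i k/\tau$. So the third branch gives $m^{3/2}\delta_i^2/\sqrt{p_i+q_i}\ge\tfrac12 m^{3/2}\delta_i^{3/2}\sqrt{\tau/k}$. Inserting $\delta_i\ge\tau/(8k)$ then yields only $\gtrsim m^{3/2}\delta_i\tau/k$ per coordinate. Summing over $\delta$-mass of order $\tau$ gives $m^{3/2}\tau^2/k$, a factor $\sqrt k$ short of the claimed $m^{3/2}\tau^2/\sqrt k$.

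The loss comes from the threshold, not from how you sum. In the typical configuration $\delta_i\asymp\tau/k$, $p_i+q_i\asymp 1/k$, the condition reads $p_i+q_i\lesssim 1$ and carries no information. With $\delta_i\equiv\tau/k$ your per-coordinate bound really does total only $m^{3/2}\tau^2/k$.

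This is not cosmetic. The $m^{3/2}\tau^2/\sqrt k$ branch is what produces the $\sqrt k/\tau^2$ and $k^{1/3}/(\tau^{4/3}\varepsilon^{2/3})$ requirements in \Cref{prop:private-closeness-rb}. Your version would only give $k/\tau^2$ and $k^{2/3}/(\tau^{4/3}\varepsilon^{2/3})$.

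The repair is to drop the $k$ from the threshold:
\begin{itemize}
\item The set $\{i:p_i+q_i>4\delta_i/\tau\}$ has $\sum\delta_i<\tfrac{\tau}{4}\sum_i(p_i+q_i)\le\tau/2$. This is the $\tau/2$ you actually wrote; your threshold would have given $\tau/(2k)$.
\item Its complement intersected with $\{\delta_i\ge\tau/(8k)\}$ still carries $\delta$-mass at least $3\tau/8$.
\item On that set, $m^{3/2}\delta_i^2/\sqrt{p_i+q_i}\ge\tfrac12 m^{3/2}\delta_i^{3/2}\sqrt{\tau}\ge c\,m^{3/2}\delta_i\tau/\sqrt k$, which sums to the right order.
\item The other two branches go through as you had them.
\end{itemize}

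With that fix, your thresholding is a valid alternative to the paper's aggregation. The paper partitions $[k]$ by the branch attaining the minimum and picks a class with $\delta$-mass at least $\tau/3$. It treats the third class by weighted Cauchy--Schwarz, $\sum\delta_i^2/\sqrt{s_i}\ge(\sum\delta_i)^2/\sum\sqrt{s_i}$ with $\sum_i\sqrt{s_i}\le\sqrt{2k}$. Both routes exploit the same fact, $\sum_i(p_i+q_i)=2$. The paper's version avoids thresholds altogether, while yours gives a per-coordinate bound that is easy to sum.
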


\begin{proof}
Let $\lambda^\pi\mid\ell^\pi\sim\Unif(\Lambda_m(\ell^\pi))$, and write $Z^{\pi,\lambda}:=Z_m(W,\lambda^\pi)$. Then
\begin{align*}
\overline Z^\pi=\E_\lambda[Z^{\pi,\lambda}\mid W,\ell^\pi].
\end{align*}
For fixed $W$, averaging over $(\ell^\pi,\lambda^\pi)$ is equivalent to assigning selected observations to four labeled blocks of size $m$, with the remaining observations unused. Swapping the second and third selected blocks is a bijection of this assignment space and changes the sign of the statistic, since $g(a,c,b,d)=-g(a,b,c,d)$. Hence
\begin{align*}
\E_{\ell^\pi}[\overline Z^\pi\mid W]
=\E_{\ell^\pi,\lambda}[Z^{\pi,\lambda}\mid W]=0,
\end{align*}
which proves \eqref{eq:rb-perm-conditionally-centered}.

By Jensen's inequality and the distribution-free branch of \Cref{lem:perm-second-moment},
\begin{align*}
\E[(\overline Z^\pi)^2]
\le \E[(Z^{\pi,\lambda})^2]
\le Cm.
\end{align*}
For the observed statistic, let $\lambda^{(0)}\mid\ell^{(0)}\sim\Unif(\Lambda_m(\ell^{(0)}))$ and write $Z^{0,\lambda}:=Z_m(W,\lambda^{(0)})$. By \Cref{lem:rb-basic-properties},
\begin{align*}
\E[\overline Z^{(0)}]=\E[Z^{0,\lambda}],
\qquad
\Var(\overline Z^{(0)})\le\Var(Z^{0,\lambda}).
\end{align*}
Changing one of the $4m$ observations changes the sensitivity-four split statistic by at most four. The Efron--Stein inequality therefore gives $\Var(Z^{0,\lambda})\le Cm$ without any restriction on $p$ or $q$. Together with the preceding permuted bound, this proves \eqref{eq:rb-unrestricted-variance}.

For the mean bound, set $\delta_i:=|p_i-q_i|$ and $s_i:=p_i+q_i$, and let $D_i$ denote the expected contribution of coordinate $i$ to $Z^{0,\lambda}$. By \Cref{lem:binomial-comparison-no-heavy},
\begin{align*}
D_i
\ge c\min\!\left\{
m\delta_i,
m^2\delta_i^2,
\frac{m^{3/2}\delta_i^2}{\sqrt{s_i}}
\right\},
\end{align*}
where the last term is interpreted as zero when $s_i=0$. Partition $[k]$ into $I_1,I_2,I_3$ according to which term attains the minimum, breaking ties arbitrarily. If $\sum_i\delta_i\ge\tau$, then some $I_a$ satisfies $\sum_{i\in I_a}\delta_i\ge\tau/3$. For $a=1$, summing the first terms gives a lower bound $cm\tau/3$. For $a=2$, Cauchy--Schwarz gives
\begin{align*}
\sum_{i\in I_2}\delta_i^2
\ge \frac{\tau^2}{9k}.
\end{align*}
For $a=3$, weighted Cauchy--Schwarz and $\sum_i s_i=2$ give
\begin{align*}
\sum_{i\in I_3}\frac{\delta_i^2}{\sqrt{s_i}}
\ge
\frac{(\sum_{i\in I_3}\delta_i)^2}
{\sum_{i\in I_3}\sqrt{s_i}}
\ge \frac{\tau^2}{9\sqrt{2k}}.
\end{align*}
Combining the three cases with expectation preservation under Rao--Blackwellization proves \eqref{eq:rb-unrestricted-mean}.

Now suppose that $p,q\in\mathcal P_{k,M}$. Conditioning on the selected pooled sequence and using the collision-sensitive branch of \Cref{lem:perm-second-moment}, as in \Cref{cor:perm-second-moment-bounded-hist}, gives
\begin{align*}
\Var(\overline Z^\pi)\le C_M\frac{m^2}{k}.
\end{align*}
For every fixed observed refinement, the four count vectors have the joint law
\begin{equation}\label{eq:rb-split-count-law}
X,\widetilde X\stackrel{\iid}{\sim}\mathrm{Mult}(m,p),
\qquad
Y,\widetilde Y\stackrel{\iid}{\sim}\mathrm{Mult}(m,q),
\end{equation}
and are mutually independent. Thus \Cref{lem:bounded-hist-var-binomial} and variance contraction imply
\begin{align*}
\Var(\overline Z^{(0)})\le 64M\frac{m^2}{k}.
\end{align*}
Combining these bounds with the distribution-free $O(m)$ bounds proves \eqref{eq:rb-flat-variance}.
\end{proof}

\subsection{Proof of
\texorpdfstring{\Cref{prop:private-closeness-rb}} {Proposition~\getrefnumber{prop:private-closeness-rb}}}
\label{app:proof:prop:private-closeness-rb}
\label{app:proof:prop:private-bounded-hist-rb}

\begin{proof}
Let
\begin{align*}
N:=N_X\wedge N_Y,
\qquad
m:=\left\lfloor\frac{N}{2}\right\rfloor,
\end{align*}
and write
\begin{align*}
T_0:=\overline Z_m(W,\ell^{(0)}),
\qquad
T^\pi:=\overline Z_m(W,\ell^\pi),
\end{align*}
where $\ell^\pi$ is a uniformly random labeling with class sizes $N_X,N_Y$, independent of the data.

By \Cref{lem:rb-basic-properties}, the statistic $\overline Z_m(W,\ell)$ has global sensitivity at most four, uniformly over all admissible labelings. Hence \Cref{lem:private-permutation-cited}, applied with Laplace scale $8/\varepsilon$, shows that the released decision is $\varepsilon$-differentially private. Under $p=q$, conditional on the unlabeled pooled sample, the observed labeling and the random permutation labelings are exchangeable. The same lemma therefore gives
\begin{align*}
\sup_{p\in\Delta_k}
\Pbb_{p,p}\!\left(\Psi_{\mathrm{rb\text{-}dpperm},\gamma}=1\right)
\le\gamma.
\end{align*}
Neither argument uses a bound on the cell probabilities.

We next establish the power guarantee over the unrestricted simplex. By \Cref{lem:rb-perm-moment-bounds}, there is a universal constant $c_0>0$ such that, whenever $\|p-q\|_1\ge\tau$,
\begin{align}
\mu_m:=\E_{p,q}[T_0]
\ge c_0\min\!\left\{
m\tau,
\frac{m^2\tau^2}{k},
\frac{m^{3/2}\tau^2}{\sqrt{k}}
\right\}.
\label{eq:rb-unrestricted-global-gap}
\end{align}

By \Cref{lem:rb-perm-moment-bounds},
\begin{align}
\E[T^\pi\mid W]&=0,
\label{eq:rb-unrestricted-perm-centered}\\
\Var_{p,q}(T_0)+\Var_{p,q}(T^\pi)&\le C_1m.
\label{eq:rb-unrestricted-total-variance}
\end{align}
For the stated choice of $B_{\mathrm{perm}}$, Theorem~4 of \citet{KimSchrab2026}, specialized to pure DP, implies that the type~II error is at most $\beta$ whenever
\begin{align}
\mu_m
\ge C_{\gamma,\beta}
\left(
\sqrt{\Var_{p,q}(T_0)+\Var_{p,q}(T^\pi)}
+\frac{1}{\varepsilon}
\right).
\label{eq:rb-private-power-condition}
\end{align}
Thus it is enough that
\begin{align}
\min\!\left\{
m\tau,
\frac{m^2\tau^2}{k},
\frac{m^{3/2}\tau^2}{\sqrt{k}}
\right\}
\gtrsim_{\gamma,\beta}\sqrt m+\frac{1}{\varepsilon}.
\label{eq:rb-unrestricted-signal-condition}
\end{align}

Comparing the three signal terms with $\sqrt m$ gives, respectively,
\begin{align*}
m&\gtrsim\frac{1}{\tau^2},
&m&\gtrsim\frac{k^{2/3}}{\tau^{4/3}},
&m&\gtrsim\frac{\sqrt{k}}{\tau^2}.
\end{align*}
The first requirement is implied by the third. Comparing the same terms with $\varepsilon^{-1}$ gives, respectively,
\begin{align*}
m&\gtrsim\frac{1}{\tau\varepsilon},
&m&\gtrsim\frac{\sqrt{k}}{\tau\sqrt{\varepsilon}},
&m&\gtrsim\frac{k^{1/3}}{\tau^{4/3}\varepsilon^{2/3}}.
\end{align*}
Since $m\asymp N$, these inequalities prove part~{\rm (i)}.

Now suppose that $p,q\in\mathcal P_{k,M}$. The sharper variance estimate in \Cref{lem:rb-perm-moment-bounds} gives
\begin{align}
\Var_{p,q}(T_0)+\Var_{p,q}(T^\pi)
\le C_M\min\!\left\{\frac{m^2}{k},m\right\}.
\label{eq:rb-flat-total-variance-proof}
\end{align}
Thus the stochastic fluctuation scale is bounded by
\begin{align*}
v_m:=\frac{m}{\sqrt{k}}\wedge\sqrt m.
\end{align*}
A direct calculation in the two cases $m\le k$ and $m\ge k$ shows that
\begin{align}
m\gtrsim\frac{\sqrt{k}}{\tau^2}
\quad\Longrightarrow\quad
\min\!\left\{
m\tau,
\frac{m^2\tau^2}{k},
\frac{m^{3/2}\tau^2}{\sqrt{k}}
\right\}
\gtrsim v_m.
\label{eq:rb-flat-nonprivate-comparison}
\end{align}
The comparisons with $\varepsilon^{-1}$ are unchanged from the unrestricted case. Hence \eqref{eq:rb-private-power-condition} holds whenever
\begin{align*}
m\gtrsim_{M,\gamma,\beta}
\frac{\sqrt{k}}{\tau^2}
+\frac{\sqrt{k}}{\tau\sqrt{\varepsilon}}
+\frac{k^{1/3}}{\tau^{4/3}\varepsilon^{2/3}}
+\frac{1}{\tau\varepsilon}.
\end{align*}
Using again $m\asymp N$ proves part~{\rm (ii)}.

\end{proof}

\subsection{Proof of \CrefInTitle{Lemma}{lem:gof-to-twosamp-cdp}}
\label{app:proofs-gof-reduction}
\label{app:proof:lem:gof-to-twosamp-cdp}

\begin{proof}
We show that each feasible two-sample separation is also feasible for the GOF problem. Suppose first that $N_X=N\le N_Y$, and let
\begin{align*}
\mathcal M:([0,1]^d)^{N_X+N_Y}\to\{0,1\}
\end{align*}
be an $\varepsilon$-DP two-sample test with level at most $\gamma$ and uniform type~II error at most $\beta$ at separation $r$. Given a GOF sample $X_{1:N}$, draw
\begin{align*}
U_1,\ldots,U_{N_Y}\stackrel{\iid}{\sim} f_{\mathrm{unif}}
\end{align*}
independently of $X_{1:N}$, and define
\begin{align*}
T(X_{1:N})=\mathcal M(X_{1:N},U_{1:N_Y}).
\end{align*}
The auxiliary sample is internal randomness of $T$, not private data.

If $x,x'\in([0,1]^d)^N$ are neighboring GOF datasets, then for every fixed $u$, the two inputs $(x,u)$ and $(x',u)$ are neighboring two-sample datasets. Let $\nu=P_{f_{\mathrm{unif}}}^{\otimes N_Y}$ be the law of the auxiliary sample. For every $A\subseteq\{0,1\}$,
\begin{align*}
\begin{aligned}
\Pbb\{T(x)\in A\}
&=\int \Pbb_{\mathcal M}\{\mathcal M(x,u)\in A\}\,\nu(du) \\
&\le
e^\varepsilon
\int \Pbb_{\mathcal M}\{\mathcal M(x',u)\in A\}\,\nu(du) \\
&=
e^\varepsilon \Pbb\{T(x')\in A\}.
\end{aligned}
\end{align*}
Thus $T$ is $\varepsilon$-DP.

Under the GOF null $f=f_{\mathrm{unif}}$, the pair $(X_{1:N},U_{1:N_Y})$ has the two-sample null law with common density $f_{\mathrm{unif}}$, so $T$ has level at most $\gamma$. If $\|f-f_{\mathrm{unif}}\|_1\ge r$, then $(f,f_{\mathrm{unif}})$ is a valid two-sample alternative at separation at least $r$. Therefore the type~II error of $T$ is bounded by the uniform type~II error of $\mathcal M$.

When $N_Y=N\le N_X$, the same construction applies with the roles of the two samples interchanged, using the two-sample alternative $(f_{\mathrm{unif}},f)$. Hence every feasible two-sample separation is feasible for the GOF problem. Taking infima over feasible $r$'s gives
\begin{align*}
r^*_{\mathrm{gof}}(N,\varepsilon)
\le
r^*_{\mathrm{2samp}}(N_X,N_Y,\varepsilon),
\end{align*}
as claimed.
\end{proof}

\subsection{Proof of \CrefInTitle{Theorem}{thm:gof-combined-lower}}
\label{app:lower-bounds}
\label{app:proof:thm:gof-combined-lower}

The lower bound is proved by four separate mechanisms, one for each term in the claimed maximum.  Regime~I proves the classical nonprivate smooth term $N^{-2s/(4s+d)}$.  Its smooth hypercube construction is also used in Regime~III, which gives the $(N^{3/2}\varepsilon)^{-2s/(4s+d)}$ term through a transport bound.  Regime~II uses a separate cellwise construction to prove the $(N\sqrt\varepsilon)^{-2s/(2s+d)}$ term.  Regime~IV uses a fixed perturbation to prove the linear privacy barrier $(N\varepsilon)^{-1}$.  The final paragraph assembles these four proposition-level lower bounds.

\subsubsection{Regime~I: Nonprivate Lower Bound}
\label{app:proof:prop:nonprivate-lb}

This is the classical nonprivate smooth-testing lower bound.  We include the proof for completeness and to keep the constants and notation aligned with the private lower-bound arguments below.

\begin{proposition}[Nonprivate $L_1$ lower bound]
\label{prop:nonprivate-lb}
Fix $d\ge 1$, $s>0$, $L>1$, $\gamma\in(0,1)$, and $\beta\in(0,1-\gamma)$. There exists $c=c(d,s,L,\gamma,\beta)>0$ such that, with $r_N:=c\,N^{-2s/(4s+d)}$,
\begin{align*}
\inf_{\substack{\phi:([0,1]^d)^N\to[0,1]\\ \E_0[\phi]\le \gamma}}
\sup_{\substack{f\in \cD_s^d(L)\\ \|f-1\|_1\ge r_N}}
\bigl(1-\E_f^N[\phi]\bigr)
\ge
\beta.
\end{align*}
In particular, $r^*_{\mathrm{gof}}(N,\varepsilon)\gtrsim N^{-2s/(4s+d)}$ for every $\varepsilon>0$.
\end{proposition}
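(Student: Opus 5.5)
The plan is the classical Ingster random-sign hypercube with the $\chi^2$ second-moment method, as the text describes. Fix a smooth bump $\psi\in C^\infty(\R^d)$ supported in $[0,1]^d$, with $\int\psi=0$ and $\|\psi\|_1>0$. Set $h:=\lceil c_0 N^{2/(4s+d)}\rceil$ and partition $[0,1]^d$ into $h^d$ cubes $Q_j$ of side $1/h$, with lower corners $x_j$. Define $\psi_j(x):=\psi(h(x-x_j))$, and for $\sigma\in\{\pm1\}^{h^d}$ put
\begin{align*}
f_\sigma:=1+\rho\sum_{j}\sigma_j\psi_j,\qquad \rho:=c_1h^{-s}.
\end{align*}

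First I verify membership in the class. The $\psi_j$ have disjoint supports, and $D^\alpha\psi_j=h^{|\alpha|}(D^\alpha\psi)(h\,\cdot)$. Hence $\|D^\alpha(\rho\sum_j\sigma_j\psi_j)\|_\infty\lesssim c_1h^{|\alpha|-s}\le c_1$ for $|\alpha|\le q$. For the Hölder seminorm of order-$q$ derivatives, the standard two-case argument applies: if $\|x-y\|_2\le 1/h$, use the mean value theorem within neighbouring bumps; otherwise bound by $2\|D^\alpha\|_\infty\le 2c_1h^{q-s}\lesssim \|x-y\|_2^{\eta}\,c_1$. Both cases give a bound of order $c_1$. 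Since $L>1$, the uniform density has slack $L-1$ in every constraint, so a small $c_1$ keeps $f_\sigma\in\cH_s^d(L)$. Also $\int f_\sigma=1$ and $f_\sigma\ge 1-c_1\|\psi\|_\infty\ge0$. For the separation, $\|f_\sigma-1\|_1=\rho h^d\cdot h^{-d}\|\psi\|_1=c_1\|\psi\|_1h^{-s}$. This is $\asymp N^{-2s/(4s+d)}$, so choosing $c$ small gives $\|f_\sigma-1\|_1\ge r_N$.

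Second, I bound the mixture. Let $Q:=2^{-h^d}\sum_\sigma P_{f_\sigma}^{\otimes N}$. The standard computation gives
\begin{align*}
1+\chi^2(Q\|P_0^N)=\E_{\sigma,\sigma'}\Bigl(\int f_\sigma f_{\sigma'}\Bigr)^N=\E\Bigl(1+\rho^2h^{-d}\|\psi\|_2^2\sum_j\sigma_j\sigma'_j\Bigr)^N.
\end{align*}
This is at most $\prod_j\cosh(N\rho^2h^{-d}\|\psi\|_2^2)\le\exp\bigl(\tfrac12 h^dN^2\rho^4h^{-2d}\|\psi\|_2^4\bigr)$. The exponent is $\tfrac12 c_1^4\|\psi\|_2^4 N^2h^{-4s-d}\le\tfrac12 c_1^4\|\psi\|_2^4c_0^{-(4s+d)}$. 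Choosing the constants so that this is at most $\log(1+4(1-\gamma-\beta)^2)$ makes $\chi^2\le 4(1-\gamma-\beta)^2$. Then $\TV(Q,P_0^N)\le\tfrac12\sqrt{\chi^2}\le 1-\gamma-\beta$.

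Finally, I conclude. For any test $\phi$ with $\E_0\phi\le\gamma$,
\begin{align*}
\sup_\sigma(1-\E_{f_\sigma}\phi)\ge 1-\E_Q\phi\ge1-\gamma-\TV\ge\beta.
\end{align*}
This bound holds for arbitrary measurable $\phi$, in particular for every $\varepsilon$-DP test. Hence $r^*_{\mathrm{gof}}\gtrsim N^{-2s/(4s+d)}$. Strict inequality needs only a marginally smaller constant.

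The main obstacle is the Hölder-membership check, because the constants $c_1$ and $c_0$ must be tuned jointly. The $\chi^2$ exponent must stay below a constant depending only on $(\gamma,\beta)$, while the separation must remain at least $r_N$ with the same $c$. I handle this by fixing $c_1$ first from the class constraints, then choosing $c_0$ large, and finally setting $c:=c_1\|\psi\|_1(2c_0)^{-s}$, using $h\le 2c_0N^{2/(4s+d)}$.
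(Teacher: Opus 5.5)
Your proof is correct and follows essentially the same route as the paper: a random-sign hypercube of rescaled mean-zero bumps at resolution $h\asymp N^{2/(4s+d)}$ with amplitude $\asymp h^{-s}$, the $\chi^2$ second-moment computation via $\cosh u\le e^{u^2/2}$, and an averaging argument over the vertices. The differences are cosmetic. You go from $\chi^2$ to total variation directly by Cauchy--Schwarz, whereas the paper passes through KL and Pinsker. You check the H\"older seminorm using global smoothness and the mean value theorem at scale $1/h$, whereas the paper uses the separation of the bump supports from the cube boundaries. Your remark about replacing $\beta$ by a slightly larger $\beta'$ to obtain a strict inequality is a correct refinement, and it is what makes the ``in particular'' conclusion about $r^*_{\mathrm{gof}}$ fully rigorous.
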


The proof first develops the shared hypercube construction and then applies it to the nonprivate testing problem.

\paragraph{Smooth bump.}
We construct the smooth mean-zero bump used for the shared hypercube family. Here $C_c^\infty((0,1)^d)$ denotes the class of infinitely differentiable functions whose support is compactly contained in $(0,1)^d$. For $q=\lceil s\rceil-1$ and $\eta=s-q$, write
\begin{align*}
\|h\|_{\cH_s^d}
:=
\max\left\{
\max_{|\alpha|\le q}\|D^\alpha h\|_\infty,
\max_{|\alpha|=q}[D^\alpha h]_{C^\eta}
\right\}.
\end{align*}
Thus $\cH_s^d(L)=\{h:\|h\|_{\cH_s^d}\le L\}$.

\begin{lemma}[Smooth localized bumps]
\label{lem:bumps}
There exists a nonzero function $\psi\in C_c^\infty((0,1)^d)$ such that
\begin{align*}
\int_{[0,1]^d}\psi(x)\,dx = 0,
\qquad
\|\psi\|_2=1,
\end{align*}
and, for every fixed $s>0$,
\begin{align*}
\|\psi\|_\infty \le C_\psi,
\qquad
\|\psi\|_{\cH_s^d}\le C_{\psi,s}
\end{align*}
for some finite constants $C_\psi,C_{\psi,s}>0$.
\end{lemma}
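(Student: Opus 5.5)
The plan is to build $\psi$ explicitly from a standard mollifier-type bump and then normalize. I will fix a nonnegative, nonzero $\phi_0\in C_c^\infty(\R)$ supported in $(0,1/2)$, for instance $\phi_0(t)=\exp\{-1/(t(1/2-t))\}$ on $(0,1/2)$ and zero elsewhere. Then I set
\begin{align*}
\phi_1(t):=\phi_0(t)-\phi_0(t-1/2),
\end{align*}
which lies in $C_c^\infty((0,1))$ because the supports of the two translates are $(0,1/2)$ and $(1/2,1)$, closed within $(0,1)$ up to a compact set. It has $\int\phi_1=0$ and is not identically zero. In dimension $d$ I take the tensor product $\widetilde\psi(x):=\phi_1(x_1)\prod_{j=2}^d\phi_0(x_j)$, or more simply $\phi_1(x_1)\prod_{j\ge2}\phi_2(x_j)$ with $\phi_2\in C_c^\infty((0,1))$ nonnegative and nonzero. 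The support is a compact subset of $(0,1)^d$, and Fubini gives $\int\widetilde\psi=\int\phi_1\cdot\prod_{j\ge2}\int\phi_2=0$.

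Since $\widetilde\psi$ is continuous and nonzero, $\|\widetilde\psi\|_2>0$, so I define $\psi:=\widetilde\psi/\|\widetilde\psi\|_2$. This preserves smoothness, compact support and the zero mean, and it gives $\|\psi\|_2=1$.

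For the norm bounds, $\psi$ is continuous with compact support, so $C_\psi:=\|\psi\|_\infty<\infty$. For a fixed $s$, put $q=\lceil s\rceil-1$ and $\eta=s-q$. Every derivative $D^\alpha\psi$ with $|\alpha|\le q+1$ is continuous with compact support, hence bounded. For $|\alpha|=q$, the function $D^\alpha\psi$ is Lipschitz with constant $\sqrt d\max_{|\beta|=q+1}\|D^\beta\psi\|_\infty$. The Hölder seminorm is then bounded by splitting into two cases: if $\|x-y\|_2\le1$, use $|u(x)-u(y)|\le\mathrm{Lip}\,\|x-y\|_2\le\mathrm{Lip}\,\|x-y\|_2^\eta$; if $\|x-y\|_2>1$, use $|u(x)-u(y)|\le2\|u\|_\infty\le2\|u\|_\infty\|x-y\|_2^\eta$. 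This gives $[D^\alpha\psi]_{C^\eta}\le\mathrm{Lip}+2\|D^\alpha\psi\|_\infty$. Taking the maximum over the finitely many multi-indices yields $C_{\psi,s}<\infty$.

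No step here is a real obstacle. The only points needing care are that the construction does not depend on $s$, so that a single $\psi$ serves every $s$ with only the constant $C_{\psi,s}$ varying, and the elementary Lipschitz-to-Hölder comparison on the bounded domain.
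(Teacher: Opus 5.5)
Your argument is correct and is essentially the paper's. Both kill the mean by subtracting a disjointly supported bump: the paper does this directly in $\R^d$ as $\varphi_1-a_\varphi\varphi_2$ with $a_\varphi=\int\varphi_1/\int\varphi_2$, while you use equal-mass translates in the first coordinate and then tensorize. Both then normalize in $L_2$ and read the H\"older bounds off smoothness, which you verify more explicitly than the paper's one-line remark.

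One slip needs fixing. Your example $\phi_0(t)=\exp\{-1/(t(1/2-t))\}$ has closed support $[0,1/2]$, so $\phi_1$ is supported on all of $[0,1]$ and is not in $C_c^\infty((0,1))$. Take $\phi_0$ supported in, say, $[1/8,3/8]$ instead. This matters because \Cref{prop:hypercube-construction} uses the positive gap between $\operatorname{supp}\psi$ and $\partial(0,1)^d$.
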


\begin{proof}
Choose two nonzero functions $\varphi_1,\varphi_2\in C_c^\infty((0,1)^d)$ with disjoint supports and $\int \varphi_2 \neq 0$. Define
\begin{align*}
a_\varphi:=
\frac{\int_{[0,1]^d}\varphi_1(u)\,du}
     {\int_{[0,1]^d}\varphi_2(u)\,du},
\qquad
\widetilde\psi(x):=\varphi_1(x)-a_\varphi\varphi_2(x).
\end{align*}
Then $\widetilde\psi\in C_c^\infty((0,1)^d)$, $\int \widetilde\psi=0$, and $\widetilde\psi\not\equiv 0$. Set
\begin{align*}
\psi=\frac{\widetilde\psi}{\|\widetilde\psi\|_2}.
\end{align*}
The $L_\infty$ and H\"older bounds are immediate because every $C_c^\infty$ function has finite H\"older norm of any order.
\end{proof}

\paragraph{Properties of the hypercube family.}
The next proposition is the shared technical input for Regimes~I and~III.  It records the validity of the alternatives, their $L_1$ separation from the null, and the mixture KL bound.

Fix a function $\psi\in C_c^\infty((0,1)^d)$ such that $\int\psi=0$ and $\|\psi\|_2=1$; the existence of such a function is proved in \Cref{lem:bumps}.  We view $\psi$ as its zero extension to $\R^d$. For $\kappa\in\mathbb N$, partition $[0,1]^d$ into $K=\kappa^d$ axis-aligned cubes of side length $\kappa^{-1}$, assigning boundary points to one adjacent cube in any fixed deterministic way. Enumerate the cubes as $Q_1,\ldots,Q_K$, and let $a_j$ denote the lower-left corner of $Q_j$.  Define
\begin{align*}
\psi_j(x):=\kappa^{d/2}\psi(\kappa(x-a_j)).
\end{align*}
Since the support of $\psi$ is compactly contained in $(0,1)^d$, the supports of the functions $\psi_j$ are pairwise disjoint and each is contained in $Q_j$. Also $\|\psi_j\|_2=1$. For $\varrho>0$, define
\begin{align*}
\delta := \varrho\,\kappa^{-s-d/2},
\qquad
h_\theta(x):=\delta\sum_{j=1}^K \theta_j\psi_j(x),
\qquad
f_\theta(x):=1+h_\theta(x).
\end{align*}
Let $P_\theta$ denote the law with density $f_\theta$, $P_0$ the null (density $\equiv 1$), and
\begin{align*}
\bar P_\kappa
:=
2^{-K}\sum_{\theta\in\{-1,+1\}^K} P_\theta^N.
\end{align*}

\begin{proposition}[Hypercube bump family]
\label{prop:hypercube-construction}
There exists $c_0=c_0(d,s,\psi)>0$ such that if $0<\varrho\le c_0\min\{1,L-1\}$, then for every $\kappa\ge 1$:
\begin{enumerate}[(i)]
\item each $f_\theta$ is a probability density in $\cH_s^d(L)$ with $f_\theta\ge\tfrac12$.
\item $\|f_\theta-1\|_1 = \varrho\,\|\psi\|_1\,\kappa^{-s}$ for every $\theta\in\{-1,+1\}^K$.
\item there exists $C=C(d,s,\psi)>0$ such that $\KL(\bar P_\kappa\|P_0^N)\le C N^2\varrho^4\kappa^{-(4s+d)}$.
\end{enumerate}
\end{proposition}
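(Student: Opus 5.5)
We verify the three claims in order, using only scaling properties of the bumps $\psi_j$ and the standard second-moment computation for hypercube mixtures.

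\textbf{Part (i): validity.} Since $\int\psi=0$, each $\psi_j$ integrates to zero, so $\int f_\theta=1$. The supports of the $\psi_j$ are disjoint, hence $\|h_\theta\|_\infty=\delta\kappa^{d/2}\|\psi\|_\infty=\varrho\kappa^{-s}C_\psi$. Choosing $c_0\le 1/(2C_\psi)$ gives $f_\theta\ge\tfrac12$.

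For smoothness, the scaling $x\mapsto\kappa(x-a_j)$ gives $D^\alpha\psi_j=\kappa^{d/2+|\alpha|}(D^\alpha\psi)(\kappa(\cdot-a_j))$. Therefore $\delta\|D^\alpha\psi_j\|_\infty\le\varrho\kappa^{|\alpha|-s}C_{\psi,s}\le\varrho C_{\psi,s}$ for $|\alpha|\le q<s$. For $|\alpha|=q$, the Hölder seminorm of a single rescaled bump scales as $\delta\kappa^{d/2+q+\eta}[D^\alpha\psi]_{C^\eta}=\varrho[D^\alpha\psi]_{C^\eta}$.

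The sum over disjointly supported bumps with alternating signs needs one extra step. For $x,y$ in different cubes, write $|D^\alpha h_\theta(x)-D^\alpha h_\theta(y)|\le|D^\alpha h_\theta(x)-D^\alpha h_\theta(x')|+|D^\alpha h_\theta(y')-D^\alpha h_\theta(y)|$. Here $x'$ and $y'$ are the points where the segment from $x$ to $y$ leaves the support of the respective bump, so both vanish there. This costs a factor of $2$.

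Adding the constant $1$ contributes at most $1$ to the sup-norm and nothing to derivatives. Hence $\|f_\theta\|_{\cH_s^d}\le 1+2\varrho C'_{\psi,s}\le L$ once $c_0\le 1/(2C'_{\psi,s})$, using $\varrho\le c_0(L-1)$.

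\textbf{Part (ii): separation.} Disjointness of supports gives $\|h_\theta\|_1=\delta\sum_j\|\psi_j\|_1$. A change of variables gives $\|\psi_j\|_1=\kappa^{d/2}\kappa^{-d}\|\psi\|_1$. Multiplying by $K=\kappa^d$ and $\delta=\varrho\kappa^{-s-d/2}$ yields exactly $\varrho\|\psi\|_1\kappa^{-s}$.

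\textbf{Part (iii): KL bound.} We bound KL by $\chi^2$: $\KL(\bar P_\kappa\|P_0^N)\le\log(1+\chi^2)\le\chi^2(\bar P_\kappa\|P_0^N)$. The Ingster identity gives
\[
1+\chi^2=\E_{\theta,\theta'}\Bigl(\int f_\theta f_{\theta'}\Bigr)^N ,
\]
where $\theta,\theta'$ are independent Rademacher vectors. Orthonormality and disjointness of the $\psi_j$, together with $\int\psi_j=0$, give $\int f_\theta f_{\theta'}=1+\delta^2\sum_j\theta_j\theta'_j$. Using $1+x\le e^x$ and independence across coordinates,
\[
1+\chi^2\le\prod_j\cosh(N\delta^2)\le\exp\Bigl(\tfrac12 KN^2\delta^4\Bigr).
\]
Hence $\chi^2\le e^{t}-1$ with $t=\tfrac12N^2\varrho^4\kappa^{-(4s+d)}$.

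The main obstacle is that $e^t-1\lesssim t$ only when $t\lesssim1$. I would handle this by using $\KL\le\log(1+\chi^2)\le t$ directly, rather than bounding $\KL$ by $\chi^2$. This gives the claim with $C=1/2$ for every $\kappa$, with no restriction on $t$.
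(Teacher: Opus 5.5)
Your proposal is correct and follows essentially the same route as the paper. Parts (i) and (ii) use the scaling of the rescaled bumps and their disjoint supports, and part (iii) uses the Ingster identity, the $\cosh$ bound, and $\KL\le\log(1+\chi^2)$ rather than $\KL\le\chi^2$. The only local variation is your exit-point argument for the cross-cube H\"older estimate. The paper instead uses that each bump's support lies at distance at least $c_\psi\kappa^{-1}$ from $\partial Q_j$, and bounds the difference by $2\|D^\alpha h_\theta\|_\infty/(c_\psi\kappa^{-1})^\eta$.

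One step you leave implicit is that $(1+x)^N\le e^{Nx}$ requires $1+x=\int f_\theta f_{\theta'}\ge0$. This follows at once from $f_\theta\ge\frac12$ in part (i); the paper checks it via $\delta^2K\le\varrho^2\le\frac12$.
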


\begin{proof}
Let $q=\lceil s\rceil-1$ and $\eta=s-q\in(0,1]$. Since $\operatorname{supp}(\psi)\Subset(0,1)^d$, there is a constant $c_\psi>0$ such that the support of each rescaled bump $\psi_j$ is contained in $Q_j$ and has distance at least $c_\psi\kappa^{-1}$ from $\partial Q_j$. In particular, the supports of $\psi_1,\ldots,\psi_K$ are pairwise disjoint, independently of the convention used to assign boundary points to the cubes.

\paragraph{Step 1: Scaling.}
A change of variables gives, for every $j$,
\begin{align*}
\int \psi_j=0,\qquad
\|\psi_j\|_1=\kappa^{-d/2}\|\psi\|_1,\qquad
\|\psi_j\|_2=1,\qquad
\|\psi_j\|_\infty\le \kappa^{d/2}\|\psi\|_\infty .
\end{align*}
Moreover, for every multi-index $\alpha$ with $|\alpha|\le q$,
\begin{align*}
D^\alpha\psi_j(x)
=
\kappa^{d/2+|\alpha|}
(D^\alpha\psi)(\kappa(x-a_j)).
\end{align*}
Thus
\begin{align*}
\|D^\alpha\psi_j\|_\infty
\le C_\alpha\kappa^{d/2+|\alpha|},
\qquad
[D^\alpha\psi_j]_{C^\eta}
\le C_\alpha\kappa^{d/2+s}
\quad (|\alpha|=q),
\end{align*}
where the second bound follows by applying the preceding identity at two points and using $q+\eta=s$. Since $\psi\in C_c^\infty((0,1)^d)$, the constants depend only on $(d,s,\psi)$.

\paragraph{Step 2: Valid densities.}
At each point at most one bump is active. Hence, with $\delta=\varrho\kappa^{-s-d/2}$,
\begin{align*}
|h_\theta(x)|
\le
\delta\max_j\|\psi_j\|_\infty
\le C\varrho\kappa^{-s}
\le C\varrho.
\end{align*}
Also $\int f_\theta=1$, because $\int\psi_j=0$ for all $j$. It remains to control the H\"older norm of $h_\theta$. The derivative bounds from Step~1 and disjointness give, for every $|\alpha|\le q$,
\begin{align*}
\|D^\alpha h_\theta\|_\infty
\le
\delta\max_j\|D^\alpha\psi_j\|_\infty
\le
C_\alpha\varrho\,\kappa^{|\alpha|-s}
\le
C_\alpha\varrho .
\end{align*}
For the top-order seminorm, fix $|\alpha|=q$ and $x\ne y$. If $x$ and $y$ lie in the same cube $Q_j$, then only the $j$-th bump can contribute on that cube, so
\begin{align*}
\frac{|D^\alpha h_\theta(x)-D^\alpha h_\theta(y)|}{\|x-y\|_2^\eta}
\le
\delta [D^\alpha\psi_j]_{C^\eta}
\le C_\alpha\varrho .
\end{align*}
If $x$ and $y$ lie in different cubes and both values vanish, there is nothing to prove. Otherwise, assume by symmetry that $D^\alpha h_\theta(x)\ne0$. Then $x\in\operatorname{supp}(\psi_j)$ for some $j$, while $y\notin Q_j$; hence $\|x-y\|_2\ge c_\psi\kappa^{-1}$. Using the preceding derivative bound with $|\alpha|=q$,
\begin{align*}
\frac{|D^\alpha h_\theta(x)-D^\alpha h_\theta(y)|}{\|x-y\|_2^\eta}
\le
\frac{2\|D^\alpha h_\theta\|_\infty}{(c_\psi\kappa^{-1})^\eta}
\le
C_\alpha\varrho\,\kappa^{q-s+\eta}
=
C_\alpha\varrho .
\end{align*}
Consequently $[D^\alpha h_\theta]_{C^\eta}\le C_\alpha\varrho$ for $|\alpha|=q$, and therefore $\|h_\theta\|_{\cH_s^d}\le C\varrho$ after enlarging $C=C(d,s,\psi)$.

Choose $c_0>0$, depending only on $(d,s,\psi)$, so small that
\begin{align*}
C c_0\le \frac12
\qquad\text{and}\qquad
c_0^2\le\frac12 .
\end{align*}
If $0<\varrho\le c_0\min\{1,L-1\}$, then $f_\theta\ge1-C\varrho\ge1/2$. The same bound gives $1+C\varrho\le L$, and the positive-order H\"older components of $f_\theta=1+h_\theta$ are bounded by $C\varrho\le L$. Hence $f_\theta\in\cH_s^d(L)$.

\paragraph{Step 3: Separation.}
Using disjoint supports and the $L_1$ scaling from Step~1,
\begin{align*}
\|f_\theta-1\|_1
=
\delta\sum_{j=1}^K\|\psi_j\|_1
=
\varrho\,\|\psi\|_1\,\kappa^{-s}.
\end{align*}

\paragraph{Step 4: Mixture second moment.}
Let
\begin{align*}
L_\theta(x_{1:N})
:=
\frac{dP_\theta^N}{dP_0^N}(x_{1:N})
=
\prod_{i=1}^N(1+h_\theta(x_i)).
\end{align*}
Since $d\bar P_\kappa/dP_0^N=2^{-K}\sum_\theta L_\theta$,
\begin{align*}
1+\chiTwo(\bar P_\kappa\|P_0^N)
=
2^{-2K}\sum_{\theta,\theta'}
\E_0[L_\theta L_{\theta'}].
\end{align*}
Under $P_0$ the observations are i.i.d.; moreover $\int h_\theta=0$ and, by disjointness and $\|\psi_j\|_2=1$,
\begin{align*}
\int h_\theta h_{\theta'}
=
\delta^2\sum_{j=1}^K\theta_j\theta'_j .
\end{align*}
Therefore
\begin{align*}
\E_0[L_\theta L_{\theta'}]
=
\left(1+\delta^2\sum_{j=1}^K\theta_j\theta'_j\right)^N .
\end{align*}
If $\Xi_1,\ldots,\Xi_K$ are i.i.d. Rademacher variables, then the uniform average over $(\theta,\theta')$ has the same law as replacing $(\theta_j\theta'_j)_{j=1}^K$ by $(\Xi_j)_{j=1}^K$. Hence
\begin{align*}
1+\chiTwo(\bar P_\kappa\|P_0^N)
=
\E\left[\left(1+\delta^2\sum_{j=1}^K\Xi_j\right)^N\right].
\end{align*}
Since $\delta^2K=\varrho^2\kappa^{-2s}\le\varrho^2\le1/2$, the base in the last display is positive. Using $1+t\le e^t$ and $\cosh u\le e^{u^2/2}$,
\begin{align*}
1+\chiTwo(\bar P_\kappa\|P_0^N)
\le
\E\exp\left(N\delta^2\sum_{j=1}^K\Xi_j\right)
=
\{\cosh(N\delta^2)\}^K
\le
\exp\left(\frac12 KN^2\delta^4\right).
\end{align*}
Finally $K\delta^4=\varrho^4\kappa^{-(4s+d)}$, and therefore
\begin{align*}
\KL(\bar P_\kappa\|P_0^N)
\le
\log\{1+\chiTwo(\bar P_\kappa\|P_0^N)\}
\le
C N^2\varrho^4\kappa^{-(4s+d)} .
\end{align*}
\end{proof}

\begin{proof}[Proof of \Cref{prop:nonprivate-lb}]

Fix $\varrho>0$ small enough so that \Cref{prop:hypercube-construction} applies. Let $\kappa\in\mathbb N$ be chosen later, and consider the family $\{f_\theta:\theta\in\{-1,+1\}^K\}$ with $K=\kappa^d$. By \Cref{prop:hypercube-construction}(i), (ii),
\begin{align*}
f_\theta\in \cH_s^d(L),
\qquad
\|f_\theta-1\|_1=\varrho\,\|\psi\|_1\,\kappa^{-s}
\qquad\text{for all }\theta.
\end{align*}
Since $f_\theta$ is a probability density and $f_\theta\in\cH_s^d(L)$, we have $f_\theta\in\cD_s^d(L)$. Hence the whole hypercube family lies inside the alternative provided
\begin{align*}
r\le \varrho\,\|\psi\|_1\,\kappa^{-s}.
\end{align*}
Let $\phi:([0,1]^d)^N\to[0,1]$ be any measurable test with $\E_0[\phi]\le \gamma$. By the variational characterization of total variation,
\begin{align*}
\E_{\bar P_\kappa}[\phi]-\E_0[\phi]
\le
\TV(\bar P_\kappa,P_0^N).
\end{align*}
Therefore, by Pinsker's inequality and \Cref{prop:hypercube-construction}(iii),
\begin{align*}
\E_{\bar P_\kappa}[\phi]
\le
\gamma+\sqrt{\frac12 \KL(\bar P_\kappa\|P_0^N)}
\le
\gamma + C N\varrho^2 \kappa^{-(2s+d/2)}.
\end{align*}
Since
\begin{align*}
\E_{\bar P_\kappa}[\phi]
=
2^{-K}\sum_{\theta\in\{-1,+1\}^K}\E_{P_\theta^N}[\phi],
\end{align*}
there exists at least one $\theta^\star$ such that
\begin{align*}
\E_{P_{\theta^\star}^N}[\phi]
\le
\gamma + C N\varrho^2 \kappa^{-(2s+d/2)}.
\end{align*}
Hence
\begin{align*}
1-\E_{P_{\theta^\star}^N}[\phi]
\ge
1-\gamma - C N\varrho^2 \kappa^{-(2s+d/2)}.
\end{align*}
Choose
\begin{align*}
\kappa
=
\left\lceil
A N^{2/(4s+d)}
\right\rceil
\end{align*}
with $A>0$ large enough so that
\begin{align*}
C N\varrho^2 \kappa^{-(2s+d/2)}
\le
1-\gamma-\beta.
\end{align*}
This is possible because
\begin{align*}
N\kappa^{-(2s+d/2)}
\lesssim
A^{-(2s+d/2)}.
\end{align*}
Therefore
\begin{align*}
1-\E_{P_{\theta^\star}^N}[\phi]\ge \beta.
\end{align*}
Finally, for this choice of $\kappa$, since $N\ge1$, the ceiling operation gives $\kappa\le(A+1)N^{2/(4s+d)}$, and hence $\kappa^{-s}\ge(A+1)^{-s}N^{-2s/(4s+d)}$. Therefore
\begin{align*}
\|f_{\theta^\star}-1\|_1
=
\varrho\,\|\psi\|_1\,\kappa^{-s}
\ge
c\,N^{-2s/(4s+d)}
\end{align*}
for some constant $c=c(d,s,L,\gamma,\beta)>0$. Since $\phi$ was arbitrary,
\begin{align*}
\inf_{\substack{\phi:([0,1]^d)^N\to[0,1]\\ \E_0[\phi]\le \gamma}}
\sup_{\substack{f\in \cD_s^d(L)\\ \|f-1\|_1\ge c N^{-2s/(4s+d)}}}
\bigl(1-\E_f^N[\phi]\bigr)
\ge
\beta.
\end{align*}
Because every $\mathcal M\in\Phi_{\gamma,\varepsilon}^{\mathrm{gof}}$ satisfies $\E_0^N[p_{\mathcal M}]\le\gamma$, we have
\begin{align*}
\bigl\{p_{\mathcal M} : \mathcal M\in\Phi_{\gamma,\varepsilon}^{\mathrm{gof}}\bigr\}
\subseteq
\bigl\{
\phi:([0,1]^d)^N\to[0,1]:\ \E_0[\phi]\le \gamma
\bigr\},
\end{align*}
the same lower bound transfers immediately to the central-DP problem:
\begin{align*}
r^*_{\mathrm{gof}}(N,\varepsilon)\gtrsim N^{-2s/(4s+d)}.
\end{align*}
\end{proof}

\subsubsection{Regime~II: Intermediate-DP Lower Bound}
\label{app:sec:regime2-technical}

The Regime~II argument uses a different, cellwise construction: the bins have equal mass under every alternative, so the lower bound comes from the privacy cost of distinguishing biased signs within occupied cells.  The proof first establishes the resulting finite-sample necessary condition and then inverts it to obtain the separation rate used in the theorem.

\begin{proposition}[Regime~II lower bound]
\label{prop:regime-ii-lb}
Let $d\ge 1$, $s>0$, $L>1$, and $\gamma,\beta>0$ with $\gamma+\beta<1$. There exists $c>0$, depending only on $(d,s,L,\gamma,\beta)$, such that, with
\begin{align*}
r_N:=c\,(N\sqrt{\varepsilon})^{-2s/(2s+d)},
\end{align*}
for all $N\ge1$ and $\varepsilon\in(0,1]$ with $N\varepsilon\ge1$,
\begin{align*}
\inf_{\mathcal M\in\Phi_{\gamma,\varepsilon}^{\mathrm{gof}}}
  \sup_{\substack{f\in\cD_s^d(L)\\\|f-1\|_1\ge r_N}}
\bigl(1-\E_f^N[p_{\mathcal M}]\bigr)
\ge
\beta.
\end{align*}
Consequently, whenever $N\varepsilon\ge1$,
\begin{align*}
r^*_{\mathrm{gof}}(N,\varepsilon)
\gtrsim
(N\sqrt{\varepsilon})^{-2s/(2s+d)}.
\end{align*}
\end{proposition}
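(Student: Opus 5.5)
We follow the within-cell privacy mechanism from \Cref{sec:lb-regimes}: build a family of alternatives that all induce the uniform distribution on a coarse partition, then use the DP coupling lemma (\Cref{lem:dp-coupling}) against the uniform null.

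\textbf{Construction.} Fix $\kappa\in\mathbb N$ and partition $[0,1]^d$ into $K=\kappa^d$ cubes $Q_j$. Take $\psi$ from \Cref{lem:bumps} with $\int\psi=0$, and use the zero-mean bumps $\psi_j(x)=\psi(\kappa(x-a_j))$, normalized in sup norm rather than $L_2$. For signs $\theta\in\{-1,+1\}^K$, set
\begin{align*}
f_\theta=1+\varrho\kappa^{-s}\sum_j\theta_j\psi_j .
\end{align*}
As in \Cref{prop:hypercube-construction}, $f_\theta\in\cD_s^d(L)$ for small $\varrho$. We have $\|f_\theta-1\|_1\asymp\varrho\kappa^{-s}$, and every cube has mass exactly $\kappa^{-d}$ under every $f_\theta$.

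\textbf{Coupling.} Let $\bar P=2^{-K}\sum_\theta P_\theta^N$ be the mixture. We couple $\bar P$ with $P_0^N$ cell by cell.
\begin{itemize}
\item Under both laws the cell labels of the $N$ points are i.i.d.\ uniform on $[K]$, so we use the same labels and hence the same occupancy counts $T_j$.
\item Given the labels, the within-cell positions in cell $j$ are i.i.d.\ uniform on $Q_j$ under the null. Under the mixture they are i.i.d.\ with density $\kappa^d(1+\eta\theta_j\psi_j)$, where $\eta:=\varrho\kappa^{-s}$, mixed over the sign $\theta_j$, independently across cells.
\item Write $u=$Unif$(Q_j)^{\otimes T_j}$ and $v=\tfrac12(\mu_+^{\otimes T_j}+\mu_-^{\otimes T_j})$, with $\mu_\pm$ the two signed densities.
\item We need a coupling of $u$ and $v$ with small expected Hamming distance. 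A naive coupling changes each point with probability $\TV(\mu_\pm,\mathrm{Unif})\asymp\eta$, giving cost $T_j\eta$. That gives only the linear barrier.
\item For the intermediate rate we must exploit the sign mixture. Since $\int\psi=0$, the first-order terms cancel, and the mixture equals the uniform law plus a second-order correction of size roughly $\eta^2T_j^2$ when $T_j\eta\lesssim1$.
\item Following \citet{AcharyaSunZhang2018}, the plan is to build the coupling as a maximal coupling in TV of the whole block. With probability $1-\TV(u,v)$ the blocks coincide; otherwise we resample the block, at Hamming cost at most $T_j$.
\end{itemize}
This gives expected cost per cell at most $T_j\cdot\TV(u,v)\lesssim T_j\min\{1,\,T_j\eta,\,T_j^2\eta^2\}$. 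The $T_j^2\eta^2$ branch comes from Pinsker and the chi-square computation of Step~4 of \Cref{prop:hypercube-construction}, applied within a single cell.

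\textbf{Balancing.} Summing over cells gives
\begin{align*}
D\lesssim \sum_j\E\bigl[T_j^3\bigr]\,\eta^2 .
\end{align*}
In the regime $N\le K$, where $T_j$ is Poisson-like with mean $N/K$, we have $\E[T_j^3]\lesssim N/K$. Summing the contribution $T_j^3\eta^2$ over all $K$ cells would then give $D\lesssim N\eta^2$. That bound is too weak, because it only treats singly-occupied cells as if they carried signal.

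The fix is to note that singletons carry no information. A cell with $T_j=1$ has $v=u$, since the sign mixture of a single point has the uniform marginal. So only cells with $T_j\ge2$ contribute, which gives
\begin{align*}
D\lesssim \eta^2\sum_j\E\bigl[T_j^3\mathbf 1\{T_j\ge2\}\bigr]\lesssim \eta^2K\cdot(N/K)^2=\eta^2N^2/K .
\end{align*}
Requiring $D\le(1-\gamma-\beta)/\varepsilon$ means $\varrho^2\kappa^{-2s-d}N^2\lesssim1/\varepsilon$. Solving for the resolution gives $\kappa\asymp(N\sqrt\varepsilon)^{2/(2s+d)}$, and therefore
\begin{align*}
r\asymp\kappa^{-s}=(N\sqrt\varepsilon)^{-2s/(2s+d)} .
\end{align*}
We still need to check that $N\le K$ holds at this $\kappa$, i.e.\ $N\le(N\sqrt\varepsilon)^{2d/(2s+d)}$. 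When this fails, the term is dominated by another term of the rate, and the lower bound follows from the remaining regimes, so we can restrict to the valid range, possibly capping $\kappa$.

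Once $D\le(1-\gamma-\beta)/\varepsilon$, \Cref{lem:dp-coupling} applied to $P_0^N$ and $\bar P$ shows that no test can reach level $\gamma$ and power $1-\beta$ on the mixture. Averaging over $\theta$ then produces a single alternative $f_\theta$ with type~II error at least $\beta$.

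\textbf{Main obstacle.} The hard step is the per-cell coupling bound $\TV(u,v)\lesssim T_j^2\eta^2$ together with the exact vanishing for singletons. The chi-square route is where we would start. By the computation in \Cref{prop:hypercube-construction}, $\chi^2(v\|u)=\cosh$-type terms of the form
\begin{align*}
\E\Bigl[\bigl(1+\eta^2\|\psi\|_2^2\,\Xi\bigr)^{T_j}\Bigr]-1\lesssim T_j^2\eta^4 ,
\end{align*}
so $\TV(u,v)\lesssim T_j\eta^2$. That is even better than needed, giving $D\lesssim\eta^2\sum_j\E[T_j^2\mathbf 1\{T_j\ge2\}]\asymp\eta^2N^2/K$. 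So we would take the $\TV$ via chi-square and verify this computation carefully.
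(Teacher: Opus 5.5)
Your argument is correct once you adopt the refinement in your last paragraph, and it differs from the paper only in how the per-cell coupling is built. The paper uses the same construction: sup-normalized bumps at the maximal H\"older amplitude $\lambda\asymp\kappa^{-s}$, exact cell masses $\kappa^{-d}$, and shared cell labels. It then writes each within-cell density as $\tfrac{1+\lambda\theta_j}{2}r_j^++\tfrac{1-\lambda\theta_j}{2}r_j^-$ with fixed densities $r_j^\pm=K\mathbf 1_{Q_j}(1\pm\phi_j)$. This reduces a cell's data to a latent sign sequence, which is i.i.d.\ Rademacher under the null and a Bernoulli mixture under $\bar P$. Lemma~12 of \citet{AcharyaSunZhang2018} then gives an explicit coordinatewise coupling with expected Hamming cost $\lambda^2T_j(T_j-1)$, lifted to observations by reusing the same draw whenever the signs agree.

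You instead take a maximal coupling of the whole cell block and bound $\TV(u,v)\le\tfrac12\sqrt{\chiTwo(v\|u)}$ via the Ingster second-moment identity from Step~4 of \Cref{prop:hypercube-construction}. This yields $\TV(u,v)\le\min\{1,T_j\eta^2\}$, with $\chiTwo(v\|u)=0$ exactly when $T_j\le1$. Hence the cost $T_j\TV(u,v)\le T_j^2\eta^2\mathbf 1\{T_j\ge2\}\le2\eta^2T_j(T_j-1)$, which reproduces the paper's bound $D\lesssim\eta^2N^2/K$.

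What each route buys:
\begin{itemize}
\item Your route is self-contained: it needs only the existence of maximal couplings and a chi-square computation the paper already performs. It requires no latent-sign decomposition, and it transfers to more general per-cell mixing structures.
\item The paper's route avoids the case split on $T_j\eta^2$. It also produces an explicit coordinatewise coupling whose cost formula holds for every $T_j$.
\end{itemize}

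One clean-up: with the $T_j\eta^2$ bound you have $\sum_j\E[T_j^2\mathbf 1\{T_j\ge2\}]\le 2N(N-1)/K$ for all $N$ and $K$. So the earlier $T_j^3$ bound, the restriction $N\le K$, and the appeal to other regimes should be dropped. That fallback happens to be true: at $\kappa\asymp(N\sqrt\varepsilon)^{2/(2s+d)}$, the condition $N\ge K$ is, up to constants, equivalent to $(N\sqrt\varepsilon)^{-2s/(2s+d)}\le(N^{3/2}\varepsilon)^{-2s/(4s+d)}$. But relying on it would make the proposition depend on \Cref{prop:cdp-lb}, which is unnecessary.

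Also note that $N\sqrt\varepsilon\ge1$ follows from $N\varepsilon\ge1$ and $\varepsilon\le1$. This is what makes $\kappa=\lceil A(N\sqrt\varepsilon)^{2/(2s+d)}\rceil$ comparable to $(N\sqrt\varepsilon)^{2/(2s+d)}$, and hence gives $\kappa^{-s}\gtrsim(N\sqrt\varepsilon)^{-2s/(2s+d)}$.
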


The proof uses three ingredients: localized alternatives with equal cell probabilities, a latent-sign representation within each cell, and a Bernoulli-mixture coupling for those signs.  The next three lemmas record these ingredients.

\begin{lemma}[Localized bump alternatives for Regime II]
\label{lem:lb-local-bumps}
Let $d\ge 1$, $s>0$, $L>1$, and $\phi\in C_c^\infty((0,1)^d)$ with $\int_{[0,1]^d}\phi\,dx=0$ and $\|\phi\|_\infty\le 1$. Write $b_1:=\|\phi\|_1>0$. There exists $c_\star=c_\star(d,s,\phi)>0$ such that the following holds. For any $\kappa\in\mathbb N$, partition $[0,1]^d=\bigsqcup_{j=1}^K Q_j$ into $K=\kappa^d$ cubes, and let $a_j$ denote the lower-left corner of $Q_j$.  Define
\begin{align*}
\phi_j(x):=\phi(\kappa(x-a_j)).
\end{align*}
For $\theta\in\{-1,+1\}^K$, define
\begin{align*}
f_\theta(x):=
1+\lambda\sum_{j=1}^K \theta_j\phi_j(x).
\end{align*}
If $0<\lambda\le c_\star\min\{1,L-1\}$ and $\lambda\kappa^s\le c_\star L$, then the supports of $\phi_1,\dots,\phi_K$ are pairwise disjoint, and for every $\theta\in\{-1,+1\}^K$,
\begin{align*}
f_\theta\ge \frac12,\qquad
\int f_\theta=1,\qquad
f_\theta\in \cD_s^d(L),
\end{align*}
\begin{align*}
\|f_\theta-1\|_1=\lambda b_1,
\qquad
\int_{Q_j} f_\theta\,dx=\kappa^{-d}\quad\text{for every }j\in[K].
\end{align*}
\end{lemma}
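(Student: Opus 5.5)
The plan is to mirror the scaling argument of \Cref{prop:hypercube-construction}, now with an unnormalized bump, since $\phi_j$ is not multiplied by $\kappa^{d/2}$. The perturbation is $h_\theta:=\lambda\sum_j\theta_j\phi_j$.

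\textbf{Support and cell masses.} Because $\operatorname{supp}\phi\Subset(0,1)^d$, there is $c_\phi>0$ such that $\operatorname{supp}\phi_j\subset Q_j$ and this support lies at distance at least $c_\phi\kappa^{-1}$ from $\partial Q_j$. This gives pairwise disjointness of the supports, independently of how boundary points are assigned. A change of variables gives $\int_{Q_j}\phi_j=\kappa^{-d}\int\phi=0$, hence $\int_{Q_j}f_\theta=|Q_j|=\kappa^{-d}$ and, summing, $\int f_\theta=1$. Disjointness also gives
\[
\|f_\theta-1\|_1=\lambda\sum_j\|\phi_j\|_1=\lambda K\kappa^{-d}b_1=\lambda b_1.
\]
Since at most one bump is active at each point, $|h_\theta|\le\lambda\|\phi\|_\infty\le\lambda$. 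Taking $c_\star\le1/2$ yields $f_\theta\ge1/2$.

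\textbf{H\"older norm.} With $q=\lceil s\rceil-1$ and $\eta=s-q$, the chain rule gives $D^\alpha\phi_j(x)=\kappa^{|\alpha|}(D^\alpha\phi)(\kappa(x-a_j))$. Hence, for $|\alpha|\le q$,
\[
\|D^\alpha h_\theta\|_\infty\le C_\alpha\lambda\kappa^{|\alpha|}\le C_\alpha\lambda\max\{1,\kappa^{s}\}.
\]
For the top-order seminorm with $|\alpha|=q$, split into two cases. If $x,y$ lie in the same cube, rescaling gives the bound $C\lambda\kappa^{q+\eta}=C\lambda\kappa^s$. Otherwise, if $D^\alpha h_\theta(x)\neq0$, then $\|x-y\|_2\ge c_\phi\kappa^{-1}$, so the difference quotient is at most $2C\lambda\kappa^q/(c_\phi\kappa^{-1})^\eta=C'\lambda\kappa^s$. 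Therefore the positive-order and top-order components of $h_\theta$ are bounded by $C(\lambda+\lambda\kappa^s)$, where $C=C(d,s,\phi)$.

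\textbf{Conclusion.} Choose $c_\star$ so small that $Cc_\star\le1/2$ and $c_\star\le1/2$. Then $\lambda\le c_\star(L-1)$ gives $\|f_\theta\|_\infty\le1+\lambda\le L$. The derivative components are at most $C(\lambda+\lambda\kappa^s)\le C c_\star(1+L)\le L$, using $\lambda\le c_\star$, $\lambda\kappa^s\le c_\star L$, and $L>1$. So $f_\theta\in\cH_s^d(L)$; together with $f_\theta\ge0$ and $\int f_\theta=1$, this gives $f_\theta\in\cD_s^d(L)$.

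The only delicate point is the cross-cube seminorm estimate. The buffer distance $c_\phi\kappa^{-1}$ is what makes that case behave like the within-cube case. Note that the zeroth-order term $\|f_\theta\|_\infty$ is controlled by $\lambda\le c_\star(L-1)$, while all genuinely scaling terms are controlled by $\lambda\kappa^s\le c_\star L$.
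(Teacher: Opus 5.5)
Your proposal is correct and follows essentially the same route as the paper's proof: disjoint supports with a $c_\phi\kappa^{-1}$ buffer from $\partial Q_j$, change of variables for the cell-mass and $L_1$ identities, chain-rule scaling $\kappa^{|\alpha|}$ with the same-cube/different-cube split for the top-order seminorm, and a final choice of $c_\star$ so that $1+\lambda\le L$ and the scaling terms stay within radius $L$. The only difference is cosmetic. You carry the bound as $C(\lambda+\lambda\kappa^s)$, whereas the paper uses $\kappa^{|\alpha|}\le\kappa^s$ for $\kappa\ge1$ and works with $C\lambda\kappa^s$ alone; both versions give the conclusion.
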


\begin{proof}
We use the same localization estimates as in the proof of \Cref{prop:hypercube-construction}, now with the $L_2$-normalizing factor $\kappa^{d/2}$ removed.  We record the resulting bounds, since the scaling is slightly different in the present construction. Write
\begin{align*}
q_0:=\lceil s\rceil-1,
\qquad
\eta:=s-q_0\in(0,1].
\end{align*}

\paragraph{Step 1: Localization and scaling identities.}
View $\phi$ as its zero extension to $\R^d$.  Since $\phi$ is compactly supported in $(0,1)^d$, its support is separated from the boundary of $(0,1)^d$ by a positive distance; denote this distance by
\begin{align*}
c_\phi:=\operatorname{dist}\bigl(\operatorname{supp}(\phi),\partial(0,1)^d\bigr)>0.
\end{align*}
Then, after scaling and translation, $\operatorname{supp}(\phi_j)\subset Q_j$, and this support is separated from the boundary $\partial Q_j$ of the cube $Q_j$ by at least $c_\phi\kappa^{-1}$. Hence the supports of $\phi_1,\ldots,\phi_K$ are pairwise disjoint, independently of the convention used to assign boundary points to partition cells. By the change of variables $u=\kappa(x-a_j)$,
\begin{align*}
\int_{Q_j}\phi_j(x)\,dx
=
\kappa^{-d}\int_{[0,1]^d}\phi(u)\,du
=
0,
\qquad
\|\phi_j\|_1
=
\kappa^{-d}\|\phi\|_1
=
\kappa^{-d}b_1 .
\end{align*}

\paragraph{Step 2: Mass and separation identities.}
Consequently,
\begin{align*}
\int f_\theta
=
1+\lambda\sum_{j=1}^K\theta_j\int\phi_j
=
1,
\end{align*}
and, for every $j\in[K]$,
\begin{align*}
\int_{Q_j} f_\theta(x)\,dx
=
\kappa^{-d}
+
\lambda\theta_j\int_{Q_j}\phi_j(x)\,dx
=
\kappa^{-d}.
\end{align*}
The same disjointness gives
\begin{align*}
\|f_\theta-1\|_1
=
\lambda
\left\|\sum_{j=1}^K\theta_j\phi_j\right\|_1
=
\lambda\sum_{j=1}^K\|\phi_j\|_1
=
\lambda K\kappa^{-d}b_1
=
\lambda b_1.
\end{align*}

\paragraph{Step 3: Positivity and boundedness.}
Since $\|\phi\|_\infty\le1$ and at most one bump is active at any point,
\begin{align*}
\left|\sum_{j=1}^K\theta_j\phi_j(x)\right|\le1.
\end{align*}
Thus
\begin{align*}
f_\theta(x)\ge1-\lambda,
\qquad
\|f_\theta\|_\infty\le1+\lambda.
\end{align*}
In particular, if $c_\star\le1/2$ and $\lambda\le c_\star\min\{1,L-1\}$, then
\begin{align*}
f_\theta\ge \frac12,
\qquad
\|f_\theta\|_\infty\le L.
\end{align*}

\paragraph{Step 4: H\"older regularity.}
It remains to verify the H\"older regularity.  Let
\begin{align*}
S_\theta(x):=\sum_{j=1}^K\theta_j\phi_j(x).
\end{align*}
For every multi-index $\alpha$ with $|\alpha|\le q_0$, the chain rule gives
\begin{align*}
D^\alpha\phi_j(x)
=
\kappa^{|\alpha|}
(D^\alpha\phi)(\kappa(x-a_j)).
\end{align*}
Since the supports are disjoint, at most one term contributes at each point. Hence, for $\kappa\ge1$,
\begin{align*}
\|D^\alpha S_\theta\|_\infty
\le
\kappa^{|\alpha|}\|D^\alpha\phi\|_\infty
\le
C_\alpha\kappa^s .
\end{align*}
We next claim that, for every $|\alpha|=q_0$,
\begin{align*}
[D^\alpha S_\theta]_{C^\eta}
\le
C_\alpha\kappa^s .
\end{align*}
Indeed, if $x,y$ belong to the same cell $Q_j$, then only the $j$-th bump can contribute on that cell, and the chain rule gives
\begin{align*}
\frac{
|D^\alpha\phi_j(x)-D^\alpha\phi_j(y)|
}{
\|x-y\|_2^\eta
}
\le
\kappa^{q_0+\eta}[D^\alpha\phi]_{C^\eta}
=
\kappa^s[D^\alpha\phi]_{C^\eta}.
\end{align*}
If $x$ and $y$ belong to different cells, then either both values $D^\alpha S_\theta(x)$ and $D^\alpha S_\theta(y)$ vanish, or one of the two points lies in the support of some $\phi_j$.  In the latter case, say this point is $x\in\operatorname{supp}(\phi_j)$.  Since $y\notin Q_j$ and $\operatorname{supp}(\phi_j)$ is separated from $\partial Q_j$ by $c_\phi\kappa^{-1}$, we have
\begin{align*}
\|x-y\|_2\ge c_\phi\kappa^{-1}.
\end{align*}
Using the preceding derivative bound with $|\alpha|=q_0$,
\begin{align*}
\frac{
|D^\alpha S_\theta(x)-D^\alpha S_\theta(y)|
}{
\|x-y\|_2^\eta
}
\le
\frac{2\|D^\alpha S_\theta\|_\infty}
     {(c_\phi\kappa^{-1})^\eta}
\le
C_\alpha\kappa^{q_0+\eta}
=
C_\alpha\kappa^s .
\end{align*}
This proves the claimed H\"older seminorm bound. Combining the derivative and seminorm estimates, there exists $C=C(d,s,\phi)<\infty$ such that all positive-order H\"older components of $f_\theta-1=\lambda S_\theta$ are bounded by $C\lambda\kappa^s$.

\paragraph{Step 5: Choice of constants.}
Choose $c_\star=c_\star(d,s,\phi)>0$ sufficiently small so that $C c_\star\le1$, and also so that the positivity and $L_\infty$ bounds above hold.  If
\begin{align*}
\lambda\kappa^s\le c_\star L
\qquad\text{and}\qquad
\lambda\le c_\star\min\{1,L-1\},
\end{align*}
then the H\"older bounds are within radius $L$, the density is nonnegative and bounded, and $f_\theta\in\cD_s^d(L)$.  Together with the integral, cell-mass, and $L_1$-separation identities proved above, this gives all the stated properties.
\end{proof}

\begin{lemma}[Latent-sign representation within cells]
\label{lem:lb-cellwise-signs}
Under the setup of \Cref{lem:lb-local-bumps}, define $v_j(x):=K\,\mathbf{1}_{Q_j}(x)$, $r_j^\pm(x):=v_j(x)(1\pm \phi_j(x))$. For $\theta\in\{-1,+1\}^K$, let $P_\theta$ be the law of $N$ i.i.d.\ samples from $f_\theta$, and let $P_0$ be the law of $N$ i.i.d.\ samples from the uniform density $1$.  Define the mixture law
\begin{align*}
\bar P:=2^{-K}\sum_{\theta\in\{-1,+1\}^K}P_\theta .
\end{align*}
Then:
\begin{enumerate}[(i)]
\item Each $r_j^\pm$ is a density on $Q_j$, and $v_j=\tfrac12(r_j^++r_j^-)$.
\item Under each $P_\theta$, under $P_0$, and under $\bar P$, the cell indices $J_1,\dots,J_N$ are i.i.d.\ uniform on $\{1,\dots,K\}$. Conditional on $(J_1,\dots,J_N)$, observations from different cells are independent.
\item The within-cell laws admit the following latent-sign representation. Under $P_0$, each observation in cell $j$ has an independent latent sign $B_{j,m}$ with $\Pbb(B_{j,m}=\pm1)=\tfrac12$.  Under $\bar P$, cell $j$ first draws a uniform latent direction $\Theta_j\in\{-1,+1\}$, and then its conditional signs satisfy
        \begin{align*}
        \Pbb(C_{j,m}=+1\mid\Theta_j)=\frac{1+\lambda\Theta_j}{2}.
        \end{align*}
\end{enumerate}
\end{lemma}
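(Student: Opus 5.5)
Each of the three claims follows from the construction in \Cref{lem:lb-local-bumps}, in particular from the normalization $\|\phi\|_\infty\le1$, the vanishing integral $\int\phi_j=0$, and the disjoint supports with $\operatorname{supp}\phi_j\subset Q_j$.

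\emph{Part (i).} Since $|\phi_j|\le1$, both $r_j^\pm=K\mathbf 1_{Q_j}(1\pm\phi_j)$ are nonnegative and vanish outside $Q_j$. The change of variables used in Step~1 of \Cref{lem:lb-local-bumps} gives $\int_{Q_j}\phi_j=0$, so
\begin{align*}
\int r_j^\pm = K\kappa^{-d}=1 .
\end{align*}
The identity $v_j=\tfrac12(r_j^++r_j^-)$ is immediate because the $\pm\phi_j$ terms cancel.

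\emph{Part (ii).} By disjointness, on $Q_j$ we have $f_\theta=1+\lambda\theta_j\phi_j$. This can be rewritten as the cell mixture
\begin{align*}
f_\theta=\frac1K\sum_{j=1}^K\Bigl\{\tfrac{1+\lambda\theta_j}{2}r_j^++\tfrac{1-\lambda\theta_j}{2}r_j^-\Bigr\},
\end{align*}
which one checks by expanding $\tfrac{1+\lambda\theta_j}{2}(1+\phi_j)+\tfrac{1-\lambda\theta_j}{2}(1-\phi_j)=1+\lambda\theta_j\phi_j$. The same computation with $\lambda=0$ gives the uniform density.

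Under $P_\theta$ each observation can therefore be generated in three steps: draw $J\sim\Unif[K]$; given $J=j$, draw a sign $B$ with $\Pbb(B=+1)=(1+\lambda\theta_j)/2$; then draw $X\sim r_j^{B}$. Consequently the cell labels are i.i.d. uniform (this also follows directly from the cell-mass identity $\int_{Q_j}f_\theta=\kappa^{-d}$). Since the $N$ observations are i.i.d., conditioning on $(J_1,\dots,J_N)$ leaves the observations in different cells independent, each cell drawing from its own within-cell law.

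For the mixture $\bar P$, the labels $J_i$ do not depend on $\theta$, so they remain i.i.d. uniform, and the law of $(J_1,\dots,J_N)$ is the same under every $P_\theta$ and hence under the average. Conditional on the labels, the $\theta$-average factorizes across cells: under $P_\theta$ the conditional law is a product over cells, each factor depending only on $\theta_j$, and the $\theta_j$ are i.i.d. under the uniform prior. This gives independence across cells under $\bar P$.

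\emph{Part (iii).} Under $P_0$ ($\lambda=0$), the signs are i.i.d. fair coins and, given $B=\pm1$, $X\sim r_j^\pm$. Under $\bar P$, let $\Theta_j:=\theta_j$ be the uniform latent direction of cell $j$. Given $\Theta_j$, the observations in cell $j$ carry i.i.d. signs $C_{j,m}$ with $\Pbb(C_{j,m}=+1\mid\Theta_j)=(1+\lambda\Theta_j)/2$, followed by $X\sim r_j^{C_{j,m}}$.

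The only delicate point is that under $\bar P$ the signs within a cell are exchangeable but not independent, being coupled through $\Theta_j$. This is handled by performing the prior averaging cell by cell, which is legitimate by the product structure established in part (ii).
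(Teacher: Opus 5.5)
Your proposal is correct and follows essentially the same route as the paper. Part (i) uses $\|\phi_j\|_\infty\le1$ and $\int_{Q_j}\phi_j=0$; part (ii) gets uniform cell labels from equal cell masses and uses the within-cell identity $K\mathbf 1_{Q_j}(1+\lambda\theta_j\phi_j)=\tfrac{1+\lambda\theta_j}{2}r_j^++\tfrac{1-\lambda\theta_j}{2}r_j^-$; part (iii) draws $\Theta$ from the uniform prior. You also spell out a step the paper leaves implicit: conditional on the labels, the $\theta$-average factorizes across cells because the label law is free of $\theta$. Your argument also tacitly needs $\lambda\le1$ so that the weights $(1\pm\lambda\theta_j)/2$ are probabilities, and this is guaranteed by the constraints of \Cref{lem:lb-local-bumps}.
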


\begin{proof}

Since $\|\phi_j\|_\infty\le 1$ and $\int_{Q_j}\phi_j=0$,
\begin{align*}
r_j^\pm(x)\ge 0,
\qquad
\int_{Q_j} r_j^\pm(x)\,dx
=
K\int_{Q_j}(1\pm \phi_j(x))\,dx
=
K\kappa^{-d}
=
1.
\end{align*}
Thus each $r_j^\pm$ is a probability density on $Q_j$, and $\frac12(r_j^+ + r_j^-)=v_j$. By \Cref{lem:lb-local-bumps},
\begin{align*}
\int_{Q_j} f_\theta(x)\,dx=\kappa^{-d}=\frac1K
\qquad\text{for all }j,\theta.
\end{align*}
Hence under every $P_\theta$, and therefore also under $\bar P$, the cell indices $J_1,\dots,J_N$ are i.i.d.\ uniform on $\{1,\dots,K\}$. The same is true under $P_0$. Fix $\theta$ and $j$. For $x\in Q_j$,
\begin{align*}
f_\theta(x)
=
1+\lambda\theta_j\phi_j(x),
\end{align*}
and therefore the conditional density of a sample given $J_i=j$ is
\begin{align*}
K\,\mathbf{1}_{Q_j}(x)\bigl(1+\lambda\theta_j\phi_j(x)\bigr)
=
\frac{1+\lambda\theta_j}{2}\,r_j^+(x)
+
\frac{1-\lambda\theta_j}{2}\,r_j^-(x).
\end{align*}
Under $P_0$, the conditional density given $J_i=j$ is simply $v_j=\frac12(r_j^+ + r_j^-)$. Finally, under the mixture law $\bar P$, we may equivalently first draw $\Theta=(\Theta_1,\dots,\Theta_K)$ uniformly from $\{-1,+1\}^K$, and then sample according to $P_\Theta$. Conditional on $\Theta$ and on the full sequence $(J_1,\dots,J_N)$, the coordinates remain independent, and within a fixed cell $j$ only $\Theta_j$ matters. Grouping coordinates by cell gives the stated latent-sign representation.
\end{proof}

\begin{lemma}[Bernoulli-mixture coupling]\label{lem:bern-mixture}
Fix $t\in\mathbb N$ and $\mu\in[0,1]$. Let $P_t$ be the law of $B_1,\dots,B_t\in\{-1,+1\}$ i.i.d.\ Rademacher with $\mathbb P(B_i=+1)=\tfrac12$. Let $Q_t$ be the law obtained by drawing $\Theta\in\{-1,+1\}$ uniformly and then, conditionally on $\Theta$, drawing $C_1,\dots,C_t$ i.i.d.\ with $\mathbb P(C_i=+1\mid \Theta)=(1+\mu\Theta)/2$. Then there exists a coupling of $(B_1^t,C_1^t)$ with
\begin{align*}
\E\big[d_H(B_1^t,C_1^t)\big]
\le \mu^2 (t^2-t).
\end{align*}
\end{lemma}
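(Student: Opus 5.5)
The plan is to exploit exchangeability so that the whole problem reduces to coupling two integer-valued counts. Both $P_t$ and $Q_t$ are exchangeable laws on $\{-1,+1\}^t$. Conditional on the number of $+1$'s, each is uniform over arrangements. So I first draw the counts $K_B\sim\mathrm{Bin}(t,1/2)$ and $K_C$ (a balanced mixture of $\mathrm{Bin}(t,(1\pm\mu)/2)$) under some coupling. I then draw a uniform random permutation $\sigma$ and, given $K_B\le K_C$ say, place the $+1$'s of $B$ on positions $\sigma(1),\dots,\sigma(K_B)$ and those of $C$ on $\sigma(1),\dots,\sigma(K_C)$. Each marginal is then correct, and $d_H(B_1^t,C_1^t)=|K_B-K_C|$ pointwise. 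It therefore suffices to construct a coupling of the counts with $\E|K_B-K_C|\le\mu^2(t^2-t)$.

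\textbf{Step 1: a reduction.} Since $d_H\le t$ always, the claim is trivial whenever $\mu^2(t^2-t)\ge t$; in particular it is trivial for $t=1$ (where $Q_1=P_1$, so the bound $0$ is attained by the identity coupling) and whenever $\mu^2(t-1)\ge1$. Hence I may assume $t\ge2$ and $\mu^2(t-1)<1$, i.e.\ $\mu\sqrt t\lesssim1$.

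\textbf{Step 2: symmetric coupling of centered counts.} Write $D_B:=K_B-t/2$ and $D_C:=K_C-t/2$. Both laws are symmetric about $0$, and the sign of $D$ is independent of $|D|$ on $\{D\neq0\}$. I couple $|D_B|$ and $|D_C|$ monotonically (quantile coupling) and give them a common random sign. The key observation is that $|D_C|$ stochastically dominates $|D_B|$, since the mixture is a mean-preserving spread of the symmetric binomial. Under this coupling,
\begin{align*}
\E|K_B-K_C|=\E|D_C|-\E|D_B|.
\end{align*}
The lattice and zero-atom issues (parity of $t$, the atom of $D$ at $0$) will need a little bookkeeping at this point.

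\textbf{Step 3 (main obstacle): bounding the excess absolute deviation by the excess variance.} The second moments satisfy the exact identity
\begin{align*}
\E D_C^2-\E D_B^2=\frac{t(1-\mu^2)}4+\frac{t^2\mu^2}4-\frac t4=\frac{\mu^2(t^2-t)}4,
\end{align*}
which has exactly the target form. It remains to show that $\E|D_C|-\E|D_B|$ is at most a constant multiple of this excess variance. I would proceed as follows. Conditionally on $\Theta$, write $D_C=\Theta t\mu/2+\widetilde D$, where $\widetilde D$ is the centered $\mathrm{Bin}(t,(1+\mu\Theta)/2)$ fluctuation. I would then compare with $D_B$ in two stages. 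For the shift, I use $\E|X+x|-\E|X|=\int_0^{x}\Pbb(|X|\le y)\,dy$ together with the local bound $\Pbb(|D_B|\le y)\lesssim (y+1)/\sqrt t$; with $x=t\mu/2$, this contributes $\lesssim t^2\mu^2/\sqrt t+t\mu$ over a symmetric average over $\Theta$. For the change in fluctuation, I use the fact that the variance of $\widetilde D$ is smaller by $t\mu^2/4$, which can only decrease $\E|\cdot|$.

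The awkward point is a possible linear-in-$\mu$ term from the first-order shift. It should cancel after symmetrizing over $\Theta=\pm1$, since $x\mapsto\E|X+x|$ is even and smooth at $0$, so its first derivative vanishes there. Making this cancellation quantitative, and extracting the constant $1$ rather than merely $O(1)$, is what I expect to be the hardest part. If the constant in the stated bound proves delicate, I would fall back on a direct computation. The mixture density relative to uniform is
\begin{align*}
1+\sum_{S:\,|S|\ \text{even},\,|S|\ge2}\mu^{|S|}\prod_{i\in S}c_i,
\end{align*}
and I would bound the first Wasserstein distance between the two count laws, $W_1=\sum_k|F_B(k)-F_C(k)|$. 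Summing the even-order Krawtchouk terms, in the regime $\mu^2t\le1$ from Step 1, should give $W_1\le\mu^2\binom t2\cdot2=\mu^2(t^2-t)$.
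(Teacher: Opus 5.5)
Your reduction steps are sound. The common-permutation construction gives $d_H=|K_B-K_C|$ pointwise. A comonotone coupling of $|D_B|,|D_C|$ with a common independent sign then gives $\E|K_B-K_C|=\E|D_C|-\E|D_B|$, provided $|D_C|$ stochastically dominates $|D_B|$. That dominance is true, but it does not follow from ``mean-preserving spread'': convex order yields $\E|D_C|\ge\E|D_B|$, not an ordering of the laws of $|D_C|$ and $|D_B|$. Use instead $\partial_p\Pbb(a\le \mathrm{Bin}(t,p)\le t-a)=t\binom{t-1}{a-1}(p(1-p))^{a-1}\{(1-p)^{t-2a+1}-p^{t-2a+1}\}$, which has the sign of $\tfrac12-p$.

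The genuine gap is Step~3, which carries the whole content of the lemma, and the shift/fluctuation split cannot deliver it.
\begin{itemize}
\item Averaging over $\Theta$ cancels nothing. Since $\widetilde D_{-}\overset{d}{=}-\widetilde D_{+}$, both branches equal $\E|t\mu/2+\widetilde D_{+}|$.
\item Once the fluctuation is frozen at $D_B$, the map $x\mapsto\E|D_B+x|$ is even but not differentiable at $0$. For even $t$, $D_B$ is integer valued with an atom at $0$, so $\E|D_B+x|=\E|D_B|+|x|\,\Pbb(D_B=0)$ for $|x|\le1$. With $x=t\mu/2$ this is a genuine term of order $\sqrt t\,\mu$ (your $t\mu$ should read $\sqrt t\,\mu$). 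It exceeds $\mu^2(t^2-t)$ whenever $\mu\lesssim t^{-3/2}$.
\item The first-order terms cancel only through the skewness of $\widetilde D_{+}$. That is exactly what ``smaller variance can only decrease $\E|\cdot|$'' discards. The principle is also false here: centered $\mathrm{Bin}(t,(1+\mu)/2)$ puts mass at $-t(1+\mu)/2$, below the bottom $-t/2$ of the support of $D_B$, so it is not convex-dominated by $D_B$.
\end{itemize}
The Krawtchouk fallback is only asserted, not derived.

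The missing idea is to keep shift and fluctuation together and differentiate in the success probability. Put $h(p):=\E|\mathrm{Bin}(t,p)-t/2|$ and $g(\mu):=h((1+\mu)/2)$. Then $\E|D_B|=g(0)$ and $\E|D_C|=\tfrac12\{g(\mu)+g(-\mu)\}=g(\mu)$. Moreover $g$ is a polynomial and is even (by $K\mapsto t-K$), so $g'(0)=0$. For $t\ge2$, $h''(p)=t(t-1)\E[\Delta^2 f(\mathrm{Bin}(t-2,p))]$ with $f(k)=|k-t/2|$ and $\Delta^2f(j)=f(j+2)-2f(j+1)+f(j)\in[0,2]$. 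Hence $0\le g''\le t(t-1)/2$, and $g(\mu)-g(0)\le\mu^2t(t-1)/4$. Because $\Delta^2(k-t/2)^2\equiv2$, this is precisely your ``excess absolute deviation $\le$ excess variance'' with constant one. It needs no restriction from Step~1 and beats the stated bound by a factor $4$. For comparison, the paper gives no argument of its own: it imports \citet[Lemma~12]{AcharyaSunZhang2018} and rescales $\delta=\mu/2$.
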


\begin{proof}
Apply \citet[Lemma~12]{AcharyaSunZhang2018} after identifying $\{0,1\}$ with $\{-1,+1\}$ via $x\mapsto 2x-1$. In the notation of \citet{AcharyaSunZhang2018}, the Bernoulli bias shift is $\delta=\mu/2$, hence
\begin{align*}
4(t^2-t)\delta^2
=
4(t^2-t)\left(\frac{\mu}{2}\right)^2
=
\mu^2(t^2-t).
\end{align*}
\end{proof}

\begin{proof}[Proof of \Cref{prop:regime-ii-lb}]
We first record a finite-sample necessary condition.  There exist positive constants $c_{\rm sep}, C_{\rm nec}>0$, depending only on $(d,s,\gamma,\beta)$, with the following property: for every $0<r\le c_{\rm sep}\min\{1,L-1\}$, any $\varepsilon$-DP test $\Psi_N$ based on $N$ i.i.d.\ observations and satisfying
\begin{align*}
\Pbb_0(\Psi_N=1)\le \gamma,
\qquad
\sup_{\substack{f\in \cD_s^d(L)\\ \|f-1\|_1\ge r}}
\Pbb_f(\Psi_N=0)\le \beta
\end{align*}
must obey
\begin{align*}
N \ge
C_{\rm nec}\,
L^{d/(2s)}r^{-1-d/(2s)}\varepsilon^{-1/2}.
\end{align*}
Throughout the proof of this necessary condition, constants implicit in $\lesssim$ and $\asymp$ depend only on $(d,s,\gamma,\beta)$; the dependence on $L$ is displayed explicitly.

Let $\psi$ be the bump from \Cref{lem:bumps}, and set
\begin{align*}
\phi:=\frac{\psi}{\max\{1,\|\psi\|_\infty\}},
\qquad
b_1:=\|\phi\|_1 .
\end{align*}
Let $c_\star$ be the constant in \Cref{lem:lb-local-bumps}.  Fix $c_0\in(0,c_\star^{1/s}]$.  Decreasing $c_{\rm sep}$, if necessary, we may assume that for every $L>1$ and every $0<r\le c_{\rm sep}\min\{1,L-1\}$, with $\lambda:=r/b_1$,
\begin{equation}
\label{eq:regime-ii-lambda-small}
\lambda\le c_\star\min\{1,L-1\}
\qquad\text{and}\qquad
c_0\left(\frac{L}{\lambda}\right)^{1/s}\ge 2 .
\end{equation}
Indeed, it suffices to choose
\begin{align*}
c_{\rm sep}
\le
b_1\min\left\{
c_\star,\left(\frac{c_0}{2}\right)^s
\right\}.
\end{align*}

Fix such an $r$ and a test $\Psi_N$ satisfying the preceding error bounds. Choose an integer $\kappa$ such that
\begin{equation}
\label{eq:regime-ii-kappa-choice}
\frac{c_0}{2}\left(\frac{L}{\lambda}\right)^{1/s}
\le
\kappa
\le
c_0\left(\frac{L}{\lambda}\right)^{1/s},
\end{equation}
which is possible by the second inequality in \eqref{eq:regime-ii-lambda-small}.  Set $K:=\kappa^d$.

\paragraph{Step 1: Construction of alternatives.}
Partition $[0,1]^d$ into $K$ cubes $Q_j=a_j+\kappa^{-1}[0,1]^d$.  For $\theta\in\{-1,+1\}^K$, define
\begin{align*}
f_\theta(x)
:=
1+\lambda\sum_{j=1}^K
\theta_j\phi\bigl(\kappa(x-a_j)\bigr).
\end{align*}
By \eqref{eq:regime-ii-kappa-choice},
\begin{align*}
\lambda\kappa^s
\le
c_0^sL
\le
c_\star L .
\end{align*}
Together with \eqref{eq:regime-ii-lambda-small}, the hypotheses of \Cref{lem:lb-local-bumps} are satisfied.  Hence, for every $\theta\in\{-1,+1\}^K$,
\begin{align*}
f_\theta\in \cD_s^d(L),
\qquad
\|f_\theta-1\|_1=\lambda b_1=r,
\qquad
\int_{Q_j} f_\theta\,dx=\frac1K
\quad\text{for all }j\in[K].
\end{align*}
Thus every $f_\theta$ is an admissible alternative at separation $r$.

\paragraph{Step 2: Reduction to a simple-versus-mixture problem.}
Let $P_0$ denote the law of $N$ i.i.d.\ observations from the uniform density on $[0,1]^d$, and let $P_\theta$ denote the corresponding product law under $f_\theta$.  Define the mixture
\begin{align*}
\bar P:=2^{-K}\sum_{\theta\in\{-1,+1\}^K}P_\theta .
\end{align*}
Since the test has type~II error at most $\beta$ under every $P_\theta$, averaging over $\theta$ gives
\begin{align*}
\bar P(\Psi_N=0)\le \beta,
\qquad
P_0(\Psi_N=1)\le \gamma .
\end{align*}
It therefore suffices to compare $P_0$ and $\bar P$.

\paragraph{Step 3: A cellwise Hamming coupling.}
By \Cref{lem:lb-cellwise-signs}, under both $P_0$ and $\bar P$ the cell indices $J_1,\ldots,J_N$ are i.i.d.\ uniform on $[K]$.  Couple the two experiments so that they share the same realization of these indices, and let
\begin{align*}
T_j:=\#\{i:J_i=j\},\qquad j\in[K].
\end{align*}
Conditional on $(J_1,\ldots,J_N)$, the observations in distinct cells are independent.  Moreover, within a cell $j$, \Cref{lem:lb-cellwise-signs} identifies the conditional laws with the Bernoulli-mixture experiment of \Cref{lem:bern-mixture} with bias parameter $\mu=\lambda$.

Apply \Cref{lem:bern-mixture} independently in each cell.  Thus, for the $m$-th observation falling in cell $j$, we obtain coupled signs $(B_{j,m},C_{j,m})$ such that
\begin{align*}
\E\!\left[
d_H\bigl((B_{j,m})_{m=1}^{T_j},(C_{j,m})_{m=1}^{T_j}\bigr)
\,\middle|\,T_j
\right]
\le
\lambda^2 T_j(T_j-1).
\end{align*}
We lift this sign coupling to the observations as follows.  If $B_{j,m}=C_{j,m}$, the two coupled observations are taken to be the same draw from the density $r_j^{B_{j,m}}$.  If $B_{j,m}\ne C_{j,m}$, the two observations are drawn from their respective conditional densities $r_j^{B_{j,m}}$ and $r_j^{C_{j,m}}$.  This preserves the required marginals, and the observation-level mismatch is bounded by the sign mismatch:
\begin{align*}
\mathbf{1}\{X_i\neq Y_i\}
\le
\mathbf{1}\{B_{j,m}\neq C_{j,m}\}
\end{align*}
where $(j,m)$ is the cell--position index of observation $i$.  Consequently, after averaging over the multinomial cell counts,
\begin{align*}
\begin{aligned}
\E[d_H(X_1^N,Y_1^N)]
&\le
\lambda^2\sum_{j=1}^K \E[T_j(T_j-1)]  \\
&=
\lambda^2 K\cdot \frac{N(N-1)}{K^2}
\le
\lambda^2\frac{N^2}{K}.
\end{aligned}
\end{align*}
Thus $P_0$ and $\bar P$ admit a coupling whose expected Hamming distance is at most $\lambda^2N^2/K$.

\paragraph{Step 4: Differential privacy forces a large sample size.}
By \Cref{lem:dp-coupling}, the preceding coupling and the two error bounds $P_0(\Psi_N=1)\le\gamma$, $\bar P(\Psi_N=0)\le\beta$ imply
\begin{align*}
\lambda^2\frac{N^2}{K}
\ge
\frac1\varepsilon
\log\frac1{\gamma+\beta}.
\end{align*}
Since $\gamma+\beta<1$, the logarithm is a positive constant depending only on $(\gamma,\beta)$.  Moreover, by \eqref{eq:regime-ii-kappa-choice},
\begin{align*}
K=\kappa^d
\asymp
\left(\frac{L}{\lambda}\right)^{d/s}.
\end{align*}
Therefore
\begin{align*}
N
\gtrsim
\frac{\sqrt{K}}{\lambda\sqrt{\varepsilon}}
\asymp
L^{d/(2s)}
\lambda^{-1-\frac{d}{2s}}
\varepsilon^{-1/2}.
\end{align*}
Since $\lambda=r/b_1$, and $b_1$ depends only on $(d,s)$, this gives
\begin{align*}
N
\ge
C_{\rm nec}\,
L^{d/(2s)}
r^{-1-d/(2s)}
\varepsilon^{-1/2},
\end{align*}
after adjusting $C_{\rm nec}$.  This proves the necessary condition.

\medskip
We now invert this condition.  Put
\begin{align*}
a:=1+\frac{d}{2s}=\frac{2s+d}{2s}.
\end{align*}
For all sufficiently small separations $r$, the necessary condition says that any $\varepsilon$-DP test with type~I error at most $\gamma$ and type~II error at most $\beta$ over the GOF alternatives must satisfy
\begin{align*}
N
\ge
C_{\rm nec}\,
L^{d/(2s)}r^{-a}\varepsilon^{-1/2}.
\end{align*}
Let
\begin{align*}
r_N
:=
c\,(N\sqrt\varepsilon)^{-1/a}
=c\,(N\sqrt\varepsilon)^{-2s/(2s+d)}.
\end{align*}
Because $N\varepsilon\ge1$ and $\varepsilon\le1$, we have $N\sqrt\varepsilon\ge1$.  Hence, by choosing $c=c(d,s,L,\gamma,\beta)>0$ sufficiently small, we can ensure both
\begin{align*}
r_N\le c_{\rm sep}\min\{1,L-1\}
\end{align*}
and
\begin{align*}
C_{\rm nec}L^{d/(2s)}c^{-a}>1 .
\end{align*}
If an $\varepsilon$-DP test had type~I error at most $\gamma$ and type~II error at most $\beta$ uniformly over all alternatives with $\|f-1\|_1\ge r_N$, the necessary condition applied with $r=r_N$ would yield
\begin{align*}
N
\ge
C_{\rm nec}L^{d/(2s)} r_N^{-a}\varepsilon^{-1/2}
=
C_{\rm nec}L^{d/(2s)}c^{-a}N,
\end{align*}
contradicting $C_{\rm nec}L^{d/(2s)}c^{-a}>1$.  Thus no such test exists. Equivalently,
\begin{align*}
\inf_{\mathcal M\in\Phi_{\gamma,\varepsilon}^{\mathrm{gof}}}
  \sup_{\substack{f\in\cD_s^d(L)\\ \|f-1\|_1\ge r_N}}
\bigl(1-\E_f^N[p_{\mathcal M}]\bigr)
\ge
\beta,
\end{align*}
with
\begin{align*}
r_N=c\,(N\sqrt\varepsilon)^{-2s/(2s+d)}.
\end{align*}
This is the claimed Regime~II lower bound.
\end{proof}

\subsubsection{Regime~III: Transport-DP Hypercube Lower Bound}
\label{app:proof:prop:cdp-lb}

The proof uses the validity, $L_1$ separation, and mixture KL bounds from \Cref{prop:hypercube-construction}.  The additional privacy cost enters through the transport bound.

\begin{proposition}[Transport-DP $L_1$ lower bound]
\label{prop:cdp-lb}
Fix $d\ge 1$, $s>0$, $L>1$, $\gamma\in(0,1)$, $\beta\in(0,1-\gamma)$, and $\varepsilon\in(0,1]$.  There exists $c=c(d,s,L,\gamma,\beta)>0$ such that, with $r_N:=c\,(N^{3/2}\varepsilon)^{-2s/(4s+d)}$, for all $N\ge 1$ with $N\varepsilon\ge 1$,
\begin{align*}
\inf_{\mathcal M\in\Phi_{\gamma,\varepsilon}^{\mathrm{gof}}}
  \sup_{\substack{f\in\cD_s^d(L)\\\|f-1\|_1\ge r_N}}
\bigl(1-\E_f^N[p_{\mathcal M}]\bigr)
\ge
\beta.
\end{align*}
In particular, $r^*_{\mathrm{gof}}(N,\varepsilon)\gtrsim(N^{3/2}\varepsilon)^{-2s/(4s+d)}$.
\end{proposition}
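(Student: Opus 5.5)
We combine the hypercube family of \Cref{prop:hypercube-construction} with the transport inequality of \Cref{lem:transport}, in the same way that \Cref{prop:nonprivate-lb} combines it with Pinsker's inequality. Fix $\varrho>0$ small enough that \Cref{prop:hypercube-construction} applies, and let $\kappa\in\mathbb N$ be chosen later. By parts (i)--(ii), every $f_\theta$ lies in $\cD_s^d(L)$ and satisfies $\|f_\theta-1\|_1=\varrho\|\psi\|_1\kappa^{-s}$. Hence the whole family consists of admissible alternatives whenever $r\le\varrho\|\psi\|_1\kappa^{-s}$.

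Let $\mathcal M\in\Phi_{\gamma,\varepsilon}^{\mathrm{gof}}$. The mixture $Q:=\bar P_\kappa$ is absolutely continuous with respect to $P_0^N$, because $f_\theta\ge 1/2>0$. Applying \Cref{lem:transport} and then \Cref{prop:hypercube-construction}(iii) gives
\begin{align*}
\E_{\bar P_\kappa}[p_{\mathcal M}]
\le
\gamma+\varepsilon\sqrt{\tfrac N2\,\KL(\bar P_\kappa\|P_0^N)}
\le
\gamma+C\varepsilon N^{3/2}\varrho^2\kappa^{-(2s+d/2)}.
\end{align*}
Since $\E_{\bar P_\kappa}[p_{\mathcal M}]$ is the average of $\E_{P_\theta^N}[p_{\mathcal M}]$ over $\theta$, some vertex $\theta^\star$ has rejection probability at most this bound. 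Its type~II error is therefore at least $1-\gamma-C\varepsilon N^{3/2}\varrho^2\kappa^{-(2s+d/2)}$.

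Choose $\kappa=\lceil A(N^{3/2}\varepsilon)^{2/(4s+d)}\rceil$ with $A$ large enough that $C\varepsilon N^{3/2}\varrho^2\kappa^{-(4s+d)/2}\le 1-\gamma-\beta$. This is possible because $(N^{3/2}\varepsilon)\kappa^{-(4s+d)/2}\le A^{-(4s+d)/2}$. The type~II error at $\theta^\star$ is then at least $\beta$.

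It remains to convert the choice of $\kappa$ into a separation, and this ceiling step is the one point needing care. We need $N^{3/2}\varepsilon\ge1$ so that $\kappa\le(A+1)(N^{3/2}\varepsilon)^{2/(4s+d)}$. This follows from $N\varepsilon\ge1$ and $N\ge1$, since $N^{3/2}\varepsilon\ge N\varepsilon\ge1$. Consequently
\begin{align*}
\|f_{\theta^\star}-1\|_1=\varrho\|\psi\|_1\kappa^{-s}\ge c\,(N^{3/2}\varepsilon)^{-2s/(4s+d)}=r_N,
\end{align*}
with $c=\varrho\|\psi\|_1(A+1)^{-s}$. Since $\mathcal M$ was arbitrary, the infimum of the worst-case type~II error over $\Phi_{\gamma,\varepsilon}^{\mathrm{gof}}$ at separation $r_N$ is at least $\beta$, which proves the proposition. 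The essential input is the transport lemma, which replaces the Pinsker factor $\sqrt{\KL}$ by $\varepsilon\sqrt{N\,\KL}$; the remainder is the same bookkeeping as in Regime~I.
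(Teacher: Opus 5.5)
Your proposal is correct and follows the paper's proof essentially step for step. The ingredients match: the hypercube family of \Cref{prop:hypercube-construction}, the transport bound of \Cref{lem:transport} applied with $Q=\bar P_\kappa$, averaging over $\theta$ to find a vertex $\theta^\star$ with large type~II error, the choice $\kappa=\lceil A(N^{3/2}\varepsilon)^{2/(4s+d)}\rceil$, and the observation $N^{3/2}\varepsilon\ge N\varepsilon\ge1$ to absorb the ceiling into the constant $c$.
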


\begin{proof}

Fix $\varrho>0$ small enough so that \Cref{prop:hypercube-construction} applies. Let $\kappa\in\mathbb{N}$ be chosen later, and consider the family $\{f_\theta:\theta\in\{-1,+1\}^K\}$ with $K=\kappa^d$.

By \Cref{prop:hypercube-construction}(i), every $f_\theta$ is a probability density in $\cH_s^d(L)$, and hence $f_\theta\in\cD_s^d(L)$. By \Cref{prop:hypercube-construction}(ii),
\begin{align*}
\|f_\theta-1\|_1 = \varrho\,\|\psi\|_1\,\kappa^{-s}
\qquad\text{for all }\theta.
\end{align*}
Thus the whole hypercube family lies inside the alternative provided
\begin{align*}
r \le \varrho\,\|\psi\|_1\,\kappa^{-s}.
\end{align*}
Let $\mathcal M\in \Phi_{\gamma,\varepsilon}^{\mathrm{gof}}$ be arbitrary, with rejection probability function $p_{\mathcal M}$. Since $\E_0^N[p_{\mathcal M}]\le \gamma$, applying \Cref{lem:transport} with $Q=\bar P_\kappa$ yields
\begin{align*}
\E_{\bar P_\kappa}[p_{\mathcal M}]
\le
\gamma
+
\varepsilon \sqrt{\frac{N}{2}\KL(\bar P_\kappa\|P_0^N)}.
\end{align*}
Combining this with \Cref{prop:hypercube-construction}(iii),
\begin{align*}
\E_{\bar P_\kappa}[p_{\mathcal M}]
\le
\gamma
+
C \varepsilon \sqrt{N\cdot N^2 \varrho^4 \kappa^{-(4s+d)}}
=
\gamma + C \varepsilon N^{3/2}\varrho^2 \kappa^{-(2s+d/2)}.
\end{align*}
Since
\begin{align*}
\E_{\bar P_\kappa}[p_{\mathcal M}]
=
2^{-K}\sum_{\theta\in\{-1,+1\}^K} \E_{P_\theta^N}[p_{\mathcal M}],
\end{align*}
there exists at least one $\theta^\star$ such that
\begin{align*}
\E_{P_{\theta^\star}^N}[p_{\mathcal M}]
\le
\gamma + C \varepsilon N^{3/2}\varrho^2 \kappa^{-(2s+d/2)}.
\end{align*}
Hence the type~II error of $\mathcal M$ under $f_{\theta^\star}$ satisfies
\begin{align*}
1-\E_{P_{\theta^\star}^N}[p_{\mathcal M}]
\ge
1-\gamma - C \varepsilon N^{3/2}\varrho^2 \kappa^{-(2s+d/2)}.
\end{align*}
Choose
\begin{align*}
\kappa
=
\left\lceil
A (N^{3/2}\varepsilon)^{2/(4s+d)}
\right\rceil
\end{align*}
with $A>0$ large enough so that
\begin{align*}
C \varepsilon N^{3/2}\varrho^2 \kappa^{-(2s+d/2)}
\le
1-\gamma-\beta.
\end{align*}
Indeed, since $\kappa \ge A(N^{3/2}\varepsilon)^{2/(4s+d)}$ and $2s+\tfrac{d}{2}=\tfrac{4s+d}{2}$, we have
\begin{align*}
\kappa^{-(2s+d/2)}
\le
\Bigl[A(N^{3/2}\varepsilon)^{2/(4s+d)}\Bigr]^{-(4s+d)/2}
=
A^{-(2s+d/2)}\,(N^{3/2}\varepsilon)^{-1},
\end{align*}
and therefore the error term satisfies
\begin{align*}
C \varepsilon N^{3/2}\varrho^2 \kappa^{-(2s+d/2)}
\le
C\varrho^2\,A^{-(2s+d/2)}.
\end{align*}
Choosing $A$ large enough so that $C\varrho^2 A^{-(2s+d/2)}\le 1-\gamma-\beta$ (which depends only on $(d,s,\gamma,\beta)$) gives the required bound. Therefore,
\begin{align*}
1-\E_{P_{\theta^\star}^N}[p_{\mathcal M}]\ge \beta.
\end{align*}
Finally, for this choice of $\kappa$, because $N\varepsilon\ge1$, we have $N^{3/2}\varepsilon\ge1$. Thus the ceiling operation gives $\kappa\le(A+1)(N^{3/2}\varepsilon)^{2/(4s+d)}$, and hence $\kappa^{-s}\ge(A+1)^{-s}(N^{3/2}\varepsilon)^{-2s/(4s+d)}$. Therefore
\begin{align*}
\|f_{\theta^\star}-1\|_1
=
\varrho\,\|\psi\|_1\,\kappa^{-s}
\ge
c (N^{3/2}\varepsilon)^{-2s/(4s+d)}
\end{align*}
for some constant $c=c(d,s,L,\gamma,\beta)>0$ by adjusting $A$ and $\varrho$. Since $\mathcal M$ was arbitrary,
\begin{align*}
\inf_{\mathcal M\in \Phi_{\gamma,\varepsilon}^{\mathrm{gof}}}
\sup_{\substack{f\in \cD_s^d(L)\\ \|f-1\|_1\ge c (N^{3/2}\varepsilon)^{-2s/(4s+d)}}}
\bigl(1-\E_f^N[p_{\mathcal M}]\bigr)
\ge
\beta.
\end{align*}
This proves the proposition.
\end{proof}

\subsubsection{Regime~IV: Fixed-Perturbation Linear-DP Lower Bound}
\label{app:proof:prop:linear-lb}

This final regime uses a single fixed smooth perturbation and the DP-coupling lemma directly.

\begin{proposition}[Fixed-perturbation $L_1$ lower bound]
\label{prop:linear-lb}
Fix $d\ge 1$, $s>0$, $L>1$, $\gamma\in(0,1)$, and $\beta\in(0,1-\gamma)$. There exists $c=c(d,s,L,\gamma,\beta)>0$ such that for all $N\ge 1$ and $\varepsilon\in(0,1]$ with $N\varepsilon\ge 1$,
\begin{align*}
r \le c\,(N\varepsilon)^{-1}
\;\Longrightarrow\;
\inf_{\mathcal M \in \Phi_{\gamma,\varepsilon}^{\mathrm{gof}}}
  \sup_{\substack{f \in \cD_s^d(L)\\ \|f-1\|_1 \ge r}}
\bigl(1-\E_f^N[p_{\mathcal M}]\bigr)
\ge \beta.
\end{align*}
In particular, $r^*_{\mathrm{gof}}(N,\varepsilon)\gtrsim (N\varepsilon)^{-1}$.
\end{proposition}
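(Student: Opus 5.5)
The plan is to use a single fixed smooth perturbation of the uniform density whose amplitude is the only free parameter, and then to invoke the DP-coupling bound \Cref{lem:dp-coupling}. Take the bump $\psi$ from \Cref{lem:bumps}, with zero integral, normalize $\phi:=\psi/\max\{1,\|\psi\|_\infty\}$, and set
\begin{align*}
f_\lambda := 1+\lambda\phi, \qquad \lambda := r/\|\phi\|_1 .
\end{align*}
This is the $\kappa=1$ case of \Cref{lem:lb-local-bumps}. So whenever $\lambda\le c_\star\min\{1,L-1\}$ (the condition $\lambda\kappa^s\le c_\star L$ then also holds, because $L>1$), we get $f_\lambda\in\cD_s^d(L)$, $f_\lambda\ge1/2$, and $\|f_\lambda-1\|_1=r$. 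Since $N\varepsilon\ge1$, any $r\le c(N\varepsilon)^{-1}$ satisfies $r\le c$. Choosing $c$ small therefore guarantees that $f_\lambda$ is an admissible alternative at separation exactly $r$; for larger target separations, monotonicity in $r$ handles the rest.

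\emph{Coupling step.} For a single observation, let $U$ be drawn from the maximal coupling of $P_0$ (uniform) and $P_{f_\lambda}$. This gives $\Pbb(X\ne Y)=\TV(P_0,P_{f_\lambda})=\tfrac12\|f_\lambda-1\|_1=r/2$. Taking the product of $N$ independent copies yields a coupling of $P_0^N$ and $P_{f_\lambda}^N$ with
\begin{align*}
\E[d_H]=Nr/2 .
\end{align*}

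\emph{Privacy step.} Suppose some $\mathcal M\in\Phi^{\mathrm{gof}}_{\gamma,\varepsilon}$ had type~II error at most $\beta$ at $f_\lambda$. Then \Cref{lem:dp-coupling} would force $Nr/2\ge(1-\gamma-\beta)/\varepsilon$, that is, $r\ge 2(1-\gamma-\beta)(N\varepsilon)^{-1}$. Taking $c<2(1-\gamma-\beta)$, together with the admissibility constraint above, gives a contradiction. The supremum of type~II errors over $\{f\in\cD_s^d(L):\|f-1\|_1\ge r\}$ therefore exceeds $\beta$ for every test in the class, which yields the infimum statement and hence $r^*_{\mathrm{gof}}(N,\varepsilon)\gtrsim(N\varepsilon)^{-1}$.

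I do not expect a real obstacle. The only point needing care is bookkeeping of the constants: $c$ must be small enough both to keep $\lambda$ inside the H\"older/positivity regime of \Cref{lem:lb-local-bumps} and to beat the constant $1-\gamma-\beta$ from the coupling lemma. Both requirements depend only on $(d,s,L,\gamma,\beta)$, which is permitted.
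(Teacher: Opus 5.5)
Your proof is correct and follows essentially the same route as the paper: a single fixed smooth mean-zero perturbation $1+\lambda\psi$ placed at separation exactly $r$, a product of coordinatewise maximal couplings giving $\E[d_H]=Nr/2$, and then \Cref{lem:dp-coupling} to force $r\gtrsim (N\varepsilon)^{-1}$. The differences are only cosmetic. You certify admissibility by invoking \Cref{lem:lb-local-bumps} with $\kappa=1$, whereas the paper checks the H\"older norm of $1+\lambda_r\psi$ directly. You also use the weaker constant $1-\gamma-\beta$ from the coupling lemma, whereas the paper uses $\log\{1/(\gamma+\beta)\}$.
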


\begin{proof}

\noindent\textbf{Step 1: Fixed perturbation.} Fix a nonzero function $\psi\in C_c^\infty((0,1)^d)$ with $\int\psi=0$. For $r>0$, define
\begin{align*}
\lambda_r := \frac{r}{\|\psi\|_1},
\qquad
f_r := 1 + \lambda_r\psi.
\end{align*}
Since $\int\psi=0$, we have $\int f_r=1$. For $r$ sufficiently small, depending only on $(d,s,L,\psi)$, the following properties hold.

We first choose the perturbation amplitude small enough so that the path $r\mapsto f_r$ stays inside the null-centered H\"older class. Since $\psi$ is fixed, $\|\psi\|_1>0$ and
\begin{align*}
\|f_r-1\|_\infty
=
\lambda_r\|\psi\|_\infty
=
\frac{r\|\psi\|_\infty}{\|\psi\|_1}.
\end{align*}
Thus, if $r\le \|\psi\|_1/(2\|\psi\|_\infty)$, then $f_r(x)\ge 1/2$ for all $x\in[0,1]^d$. Moreover, $\int f_r=1$, because $\int\psi=0$.

Next, since $f_r=1+\lambda_r\psi$,
\begin{align*}
\|f_r\|_{\cH_s^d}
\le
\|1\|_{\cH_s^d}+\lambda_r\|\psi\|_{\cH_s^d}
=
1+\frac{r\|\psi\|_{\cH_s^d}}{\|\psi\|_1},
\end{align*}
where $\|1\|_{\cH_s^d}=1$ under our normalization of the H\"older norm. Hence $f_r\in\cH_s^d(L)$ whenever
\begin{align*}
r
\le
\frac{(L-1)\|\psi\|_1}{\|\psi\|_{\cH_s^d}}.
\end{align*}
Finally, by construction,
\begin{align*}
\|f_r-1\|_1
=
\lambda_r\|\psi\|_1
=
r.
\end{align*}
Consequently, after fixing $\psi$ once and for all, there exists $c_1=c_1(d,s,L)>0$ such that, for every $0<r\le c_1$, the function $f_r$ is a probability density in $\cD_s^d(L)$, satisfies $f_r\ge 1/2$, and has $\|f_r-1\|_1=r$. Since $N\varepsilon\ge1$, the condition $r\le c(N\varepsilon)^{-1}$ with $c\le c_1$ guarantees $r\le c_1$, and hence this fixed perturbation is a valid alternative at separation exactly $r$.

\medskip\noindent\textbf{Step 2: Maximal coupling and expected Hamming distance.}
Let $P_r$ be the law of one observation from $f_r$, and let $P_0$ denote the uniform law on $[0,1]^d$. For each coordinate $i\in\{1,\dots,N\}$, take a maximal coupling $(X_i,Y_i)$ of $(P_r,P_0)$, so that
\begin{align*}
\Pbb(X_i\neq Y_i)=\TV(P_r,P_0).
\end{align*}
Choosing these coordinate couplings independently gives a coupling of $P_r^{\otimes N}$ and $P_0^{\otimes N}$. Under this coupling,
\begin{align*}
\E\bigl[d_H(X_{1:N},Y_{1:N})\bigr]
=
\sum_{i=1}^N \Pbb(X_i\neq Y_i)
=
N\,\TV(P_r,P_0).
\end{align*}
Since
\begin{align*}
\TV(P_r,P_0)
=
\frac12\|f_r-1\|_1
=
\frac r2,
\end{align*}
we have constructed a coupling with
\begin{align*}
\E\bigl[d_H(X_{1:N},Y_{1:N})\bigr]
=
\frac{Nr}{2}.
\end{align*}

\medskip\noindent\textbf{Step 3: Apply the DP coupling lemma.}
Applying \Cref{lem:dp-coupling} with $P=P_0^{\otimes N}$, $Q=P_r^{\otimes N}$, and $D=Nr/2$, any $\varepsilon$-DP test $\Psi$ with type~I error at most $\gamma$ and type~II error at most $\beta$ must satisfy
\begin{align*}
\frac{Nr}{2}
\ge
\frac{1}{\varepsilon}\log\frac{1}{\gamma+\beta}.
\end{align*}
Equivalently,
\begin{align*}
r
\ge
\frac{2}{N\varepsilon}\log\frac{1}{\gamma+\beta}.
\end{align*}
Now set
\begin{align*}
c := \min\!\left\{c_1,\;\log\frac{1}{\gamma+\beta}\right\}.
\end{align*}
If $r\le c(N\varepsilon)^{-1}$, then $r\le c_1$, so Step~1 ensures that $f_r\in\cD_s^d(L)$ and $\|f_r-1\|_1=r$. On the other hand,
\begin{align*}
\frac{Nr}{2}
\le
\frac{1}{2\varepsilon}\log\frac{1}{\gamma+\beta},
\end{align*}
which violates the necessary condition above for any $\varepsilon$-DP test with type~I error at most $\gamma$ and type~II error at most $\beta$. Hence every admissible private test has type~II error at least $\beta$ against this valid alternative, completing the proof.
\end{proof}

\subsubsection{Completion of the Proof of \CrefInTitle{Theorem}{thm:gof-combined-lower}}

We now assemble the four proposition-level lower bounds.

\begin{proof}
Set
\begin{align*}
R_N(\varepsilon)
:=
N^{-2s/(4s+d)}
\vee
(N\sqrt\varepsilon)^{-2s/(2s+d)}
\vee
(N^{3/2}\varepsilon)^{-2s/(4s+d)}
\vee
(N\varepsilon)^{-1}.
\end{align*}
First suppose that $0<\varepsilon\le1$ and $N\varepsilon\ge1$.  The four lower bounds in \Cref{prop:nonprivate-lb,prop:regime-ii-lb,prop:cdp-lb,prop:linear-lb} give
\begin{align*}
r^*_{\mathrm{gof}}(N,\varepsilon)
\gtrsim R_N(\varepsilon).
\end{align*}
It remains to consider $\varepsilon>1$.  In this case the privacy-dependent terms in $R_N(\varepsilon)$ are all dominated by the nonprivate term.  Indeed, since $N\ge1$,
\begin{align*}
(N\sqrt\varepsilon)^{-2s/(2s+d)}
&\le N^{-2s/(2s+d)}
\le N^{-2s/(4s+d)},\\
(N^{3/2}\varepsilon)^{-2s/(4s+d)}
&\le N^{-3s/(4s+d)}
\le N^{-2s/(4s+d)},\\
(N\varepsilon)^{-1}
&\le N^{-1}
\le N^{-2s/(4s+d)}.
\end{align*}
Thus \Cref{prop:nonprivate-lb} alone yields the asserted maximum lower bound. The corresponding two-sample lower bound follows from \eqref{eq:twosamp-gof-reduction}.
\end{proof}

\subsection{Proof of \CrefInTitle{Theorem}{thm:adaptive-upper-loglog-rb-mc}}
\label{app:adaptive-upper}

\paragraph{Roadmap of the proof.}
The proof has three layers. First, conditional on the unlabeled pooled data, we control the Monte Carlo centering quantiles uniformly over the dyadic grid $\mathcal K_N$. Second, building on the fixed-resolution power argument from the non-adaptive test, we show that the same comparison remains valid after empirical centering and maximization over the adaptive grid: if one resolution $\kappa$ has enough binned $L_1$ signal, then the observed calibrated score dominates the final permutation scores and the Laplace noise with high probability. Third, for each unknown smoothness $s\in[s_-,s_+]$, we choose a dyadic resolution within a constant factor of the oracle resolution, converting the fixed-resolution criterion into the adaptive rate.

\subsubsection{Adaptive Test Proof Setup}
\label{app:sec:adaptive-technical}

We analyze the adaptive Rao--Blackwellized Monte Carlo permutation test in \Cref{alg:adaptive-rb-mc-main}. Let $n=\lfloor N/2\rfloor$, and use the first $2n$ observations from each sample. In the proof we write the two sample labels as $X$ and $Y$, and set
\begin{align*}
\ell_i^{(0)}=X\quad(1\le i\le 2n),
\qquad
\ell_i^{(0)}=Y\quad(2n<i\le4n).
\end{align*}
The procedure itself does not randomize the order of the pooled records. For the null-validity argument, one may introduce an auxiliary uniform permutation $\pi$ of the $4n$ record positions and apply it simultaneously to the records and to all two-label assignments used by the algorithm. Under $H_0:f=g$, conditional on the resulting unlabeled pooled sample, $\pi\ell^{(0)}$ is uniform on $\mathfrak L_{2n,2n}$ and is exchangeable with independent draws from $\Unif(\mathfrak L_{2n,2n})$. Since all statistics below are invariant under simultaneous reindexing of records and labels, this symmetrized view leaves the implemented procedure unchanged. Let
\begin{align*}
\mathfrak G_n
:=
\Bigl\{
L\in\{1,2,3,4\}^{4n}:\#\{i:L_i=a\}=n,\ a=1,2,3,4
\Bigr\}
\end{align*}
be the balanced four-slice of all equal four-labelings. For $\ell\in\mathfrak L_{2n,2n}$, let $\mathfrak S(\ell)\subset\mathfrak G_n$ be the set of four-label refinements of $\ell$, namely the $L\in\mathfrak G_n$ satisfying
\begin{align*}
\ell_i=X \Longleftrightarrow L_i\in\{1,2\},
\qquad
\ell_i=Y \Longleftrightarrow L_i\in\{3,4\}.
\end{align*}
For a resolution $\kappa$, recall that $\widetilde X_j^{(\kappa)}=\mathrm{cell}_\kappa(X_j)$ and $\widetilde Y_j^{(\kappa)}=\mathrm{cell}_\kappa(Y_j)$, and let
\begin{align*}
W_\kappa
:=
\bigl(
\widetilde X_1^{(\kappa)},\ldots,\widetilde X_{2n}^{(\kappa)},
\widetilde Y_1^{(\kappa)},\ldots,\widetilde Y_{2n}^{(\kappa)}
\bigr)
\in[\kappa^d]^{4n}.
\end{align*}
The Rao--Blackwellized split-histogram statistic from \Cref{sec:rb-implementation} is $\overline Z_n(W_\kappa,\ell)$. Equivalently, if $T_\kappa(W_\kappa,L):=Z(W_\kappa,L)$, then
\begin{align*}
\overline Z_n(W_\kappa,\ell)
=
\frac{1}{|\mathfrak S(\ell)|}
\sum_{L\in\mathfrak S(\ell)}
T_\kappa(W_\kappa,L).
\end{align*}

The considered dyadic grid is
\begin{align*}
\mathcal K_N=\{2^0,2^1,\ldots,2^{J_N}\},
\qquad
J_N=\lceil 3\log_2 N\rceil,
\qquad
G_N=|\mathcal K_N|.
\end{align*}
The tuning parameters are chosen as in \eqref{eq:adaptive-main-tuning}. In the proof, we use only the consequences
\begin{align*}
u_N=\frac{\beta}{256(B_{\mathrm{rank}}+1)G_N},
\qquad
a_N=\log\frac{8}{u_N},
\qquad
u_N B_{\mathrm{cen},N}\ge 16a_N,
\end{align*}
and hence $a_N\asymp\log\log N.$ The numerical constants in these tuning parameters are chosen for convenience and are used to simplify the union-bound calculations in the proof. Let
\begin{align*}
\mathcal E
:=
\left\{
\ell_{\kappa,r}^{\mathrm{cen}}:
\kappa\in\mathcal K_N,\
r=1,\ldots,B_{\mathrm{cen},N}
\right\}
\end{align*}
denote the centering-stage reassignments. Conditionally on the observed samples, the assignments in $\mathcal E$, the final rank reassignments, and the Laplace noises are mutually independent. Each reassignment is uniform on $\mathfrak L_{2n,2n}$, and the noises satisfy
\begin{align*}
\zeta_0,\zeta_1,\ldots,\zeta_{B_{\mathrm{rank}}}
\stackrel{\iid}{\sim}
\Lap(1).
\end{align*}
We retain the notation of \Cref{alg:adaptive-rb-mc-main}: $\widehat q_\kappa(X,Y;\mathcal E)$ is the empirical $(1-u_N)$-quantile of the centering batch at resolution $\kappa$, and
\begin{align*}
\widehat A_{\mathcal E}(X,Y;\ell)
:=
\max_{\kappa\in\mathcal K_N}
\left\{
\overline Z_n(W_\kappa,\ell)-\widehat q_\kappa(X,Y;\mathcal E)
\right\}
\end{align*}
is the centered multiscale score. With $\ell_0^{\mathrm{rank}}:=\ell^{(0)}$, we abbreviate $\widehat A_b:=\widehat A_{\mathcal E}(X,Y;\ell_b^{\mathrm{rank}})$ and $\widehat M_b:=\widehat A_b+(16/\varepsilon)\zeta_b$ for the privatized rank scores, $b=0,1,\ldots,B_{\mathrm{rank}}$, and recall that the test rejects according to the final rule of \Cref{alg:adaptive-rb-mc-main}.

\subsubsection{Auxiliary Concentration and Quantile Lemmas}
\label{app:sec:adaptive-auxiliary-lemmas}

Throughout this subsection we fix $n\ge1$, a deterministic pooled vector of binned labels $W=(W_1,\ldots,W_{4n})$, and a resolution $\kappa$; we write $k=\kappa^d$ and let $c_i\in[k]$ denote the bin containing $W_i$. For $j\in[k]$ set $M_j:=\sum_{i=1}^{4n}\mathbf 1\{c_i=j\}$, and define the active index set and its cardinality
\begin{align*}
I_\kappa(W):=\{i\in[4n]:M_{c_i}\ge2\},
\qquad
\mathcal A_\kappa(W):=|I_\kappa(W)|=\sum_{j=1}^k M_j\mathbf 1\{M_j\ge2\}.
\end{align*}
Recall the balanced four-slice $\mathfrak G_n$ from the setup above, and let $\ell=\ell(L)\in\mathfrak L_{2n,2n}$ denote the associated two-class coarsening, defined by
\begin{align*}
\ell_i=X \iff L_i\in\{1,2\},
\qquad
\ell_i=Y \iff L_i\in\{3,4\}.
\end{align*}
For $L\in\mathfrak G_n$, $j\in[k]$, and $a\in\{1,2,3,4\}$, set
\begin{align*}
N_{ja}(L):=\sum_{i=1}^{4n}\mathbf 1\{c_i=j,\ L_i=a\}.
\end{align*}
Using the same four-count kernel
\begin{align*}
g(x_1,x_2,x_3,x_4):=|x_1-x_3|+|x_2-x_4|-|x_1-x_2|-|x_3-x_4|,
\end{align*}
the four-way split statistic is
\begin{align*}
T_\kappa(W,L):=\sum_{j=1}^k g\bigl(N_{j1}(L),N_{j2}(L),N_{j3}(L),N_{j4}(L)\bigr),
\end{align*}
whose Rao--Blackwellization is $\overline Z_n(W,\ell)=\E[T_\kappa(W,L)\mid W,\ell]$.

\begin{lemma}[Conditional concentration under balanced four-way relabeling]
\label{lem:balanced-relabeling-exp-conc}
Let $L\sim\Unif(\mathfrak G_n)$. Then $T_\kappa(W,L)$ is conditionally mean-zero and sub-Gaussian with variance proxy $\mathcal A_\kappa(W)$: namely $\E_L[T_\kappa(W,L)\mid W]=0$, and there is a universal constant $C_0<\infty$ such that
\begin{align*}
\log
\E_L\!\left[
\exp\{\lambda T_\kappa(W,L)\}
\,\middle|\,W
\right]
\le
C_0\lambda^2\mathcal A_\kappa(W),
\qquad\lambda\in\mathbb R.
\end{align*}
Consequently, for a universal constant $c_0>0$ and every $x\ge0$,
\begin{align*}
\Pbb_L\!\left(
|T_\kappa(W,L)|\ge x
\,\middle|\,W
\right)
\le
2\exp\!\left\{
-\frac{c_0x^2}{\mathcal A_\kappa(W)}
\right\},
\end{align*}
with the right-hand side read as $0$ when $\mathcal A_\kappa(W)=0$ and $x>0$.
\end{lemma}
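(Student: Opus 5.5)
This lemma is essentially the collision-sensitive branch of \Cref{lem:perm-second-moment}, upgraded from a variance bound to a moment-generating-function bound. I would reprove it directly from \Cref{lem:balanced-multislice-conc}. The plan has four steps: show the conditional mean is zero, identify which labels the statistic actually depends on, establish a Hamming-Lipschitz bound on those labels, and then read off the sub-Gaussian and tail bounds.

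\textbf{Step 1 (mean zero).} For each fixed bin $j$, the vector $(N_{j1}(L),\ldots,N_{j4}(L))$ has a law under $L\sim\Unif(\mathfrak G_n)$ that is invariant under permutations of the four labels $\{1,2,3,4\}$. Relabeling is a bijection of $\mathfrak G_n$. Hence the four absolute-difference terms in $g$ have equal expectations, and each summand has mean zero. This gives $\E_L[T_\kappa(W,L)\mid W]=0$.

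\textbf{Step 2 (reduction to active coordinates).} A bin with $M_j=1$ contributes $g=0$ for every placement of its single point, as checked in the proof of \Cref{lem:perm-second-moment}. Bins with $M_j=0$ trivially contribute nothing. So $T_\kappa(W,\cdot)$ depends only on $(L_i)_{i\in I_\kappa(W)}$.

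\textbf{Step 3 (Lipschitz bound).} Since $|g(x)-g(y)|\le 2\|x-y\|_1$, changing the labels of $H_j$ points in bin $j$ alters that bin's count vector by at most $2H_j$ in $\ell_1$. Therefore
\begin{align*}
|T_\kappa(W,L)-T_\kappa(W,L')|\le 4\sum_{i\in I_\kappa(W)}\mathbf 1\{L_i\ne L'_i\}.
\end{align*}

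\textbf{Step 4 (sub-Gaussian and tail bounds).} Apply \Cref{lem:balanced-multislice-conc} with $q=4$, $n_a=n$, $m=4n$, $I=I_\kappa(W)$, $m_I=\mathcal A_\kappa(W)$ and $c=4$. Combined with the zero mean from Step 1, this gives the moment-generating-function bound $\exp\{16C\lambda^2\mathcal A_\kappa(W)\}$, so we may take $C_0=16C$. The two-sided tail bound then follows by Chernoff's inequality applied to $\pm T_\kappa$ with the optimal choice $\lambda=x/(2C_0\mathcal A_\kappa)$, which yields $c_0=1/(4C_0)$. The degenerate case $\mathcal A_\kappa(W)=0$ is immediate: then $T_\kappa\equiv0$ and the probability is zero for $x>0$.

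The main point requiring care is the application of \Cref{lem:balanced-multislice-conc} in Step 4. One must confirm that its Hamming-Lipschitz hypothesis is only needed on the active coordinates $I$. This holds because, under the uniform multislice law, the active labels are distributed as an ordered sample drawn without replacement. The inactive labels then enter only through the feasibility constraint, and the bound in \Cref{lem:sampling-without-replacement-bdd-diff} is stated for arbitrary feasible sequences, so this constraint causes no problem. Beyond that, the argument is a straightforward combination of earlier results, and no new estimate is needed.
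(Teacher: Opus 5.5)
Your proposal is correct and follows essentially the same route as the paper. The paper also argues mean zero by label exchangeability, notes that bins with $M_j\le 1$ contribute zero, proves a $4$-Lipschitz Hamming bound on the active labels, and concludes with \Cref{lem:balanced-multislice-conc} (with $q=4$, $I=I_\kappa(W)$) followed by Chernoff. The only cosmetic difference is that you obtain the Lipschitz constant from $|g(x)-g(y)|\le 2\|x-y\|_1$, as in \Cref{lem:perm-second-moment}, whereas the paper computes the single-label change $x\mapsto x-e_a+e_b$ directly.
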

\begin{proof}
Only active cells with $M_j\ge2$ can contribute: if $M_j=0$ the cell contribution is zero, and if $M_j=1$ the count vector is one of the four standard basis vectors, for which $g=0$. Hence $T_\kappa(W,L)$ depends only on $(L_i)_{i\in I_\kappa(W)}$.

Changing one active label affects only one cell. If the label in that cell changes from $a$ to $b$, the corresponding count vector $x=(x_1,x_2,x_3,x_4)$ is replaced by $x'=x-e_a+e_b$. For any pair of coordinates $r,s$,
\begin{align*}
\left|
|x_r-x_s|-|x'_r-x'_s|
\right|
\le
|x_r-x'_r|+|x_s-x'_s|.
\end{align*}
The four absolute-value terms in $g$ involve the pairs $(1,3),(2,4),(1,2),(3,4)$, so each coordinate appears exactly twice. Since $x$ and $x'$ differ by one unit in only two coordinates, this gives $|g(x)-g(x')|\le4$. Thus $T_\kappa(W,\cdot)$ is $4$-Lipschitz on the active labels. Applying \Cref{lem:balanced-multislice-conc} with $q=4$, $n_1=\cdots=n_4=n$, $I=I_\kappa(W)$, and $F=T_\kappa(W,\cdot)$ gives the stated exponential-moment bound after centering.

It remains to compute the conditional mean. For a fixed bin $j$, the vector
\begin{align*}
(N_{j1}(L),N_{j2}(L),N_{j3}(L),N_{j4}(L))
\end{align*}
is exchangeable under permutations of the four labels, because the uniform measure on $\mathfrak G_n$ is invariant under relabeling $1,2,3,4$. Hence all pairwise absolute differences have the same expectation:
\begin{align*}
\E_L|N_{j1}-N_{j3}|
=
\E_L|N_{j1}-N_{j2}|
=
\E_L|N_{j2}-N_{j4}|
=
\E_L|N_{j3}-N_{j4}|.
\end{align*}
Therefore $\E_L g(N_{j1},N_{j2},N_{j3},N_{j4})=0$, and summing over $j=1,\ldots,k$ yields $\E_L[T_\kappa(W,L)\mid W]=0$. The moment bound is therefore centered at zero. The tail bound follows by Chernoff's inequality.
\end{proof}

\begin{lemma}[Rao--Blackwellized split-statistic concentration]
\label{lem:rb-balanced-relabeling-exp-conc}
If $\ell\sim\Unif(\mathfrak L_{2n,2n})$, then
\begin{align*}
\E_\ell[\overline Z_n(W,\ell)\mid W]=0,
\end{align*}
and for every $t\ge0$,
\begin{align*}
\Pbb_\ell\!\left(
\overline Z_n(W,\ell)
>
C\sqrt{\mathcal A_\kappa(W)t}
\,\middle|\,W
\right)
\le
e^{-t},
\end{align*}
where $C>0$ is a universal constant.
\end{lemma}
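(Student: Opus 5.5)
The natural route is to view $\overline Z_n(W,\ell)$ as a conditional expectation of $T_\kappa(W,L)$ over a uniform four-way refinement, and then transfer the sub-Gaussian bound of \Cref{lem:balanced-relabeling-exp-conc} through \Cref{lem:rb-preserves-subgaussian}. First we check the coupling. If $\ell\sim\Unif(\mathfrak L_{2n,2n})$ and, conditionally on $\ell$, $L\sim\Unif(\mathfrak S(\ell))$, then $L$ is uniform on $\mathfrak G_n$. This holds because every $\ell$ has the same number of refinements, namely $|\mathfrak S(\ell)|=\binom{2n}{n}^2$, so the composite draw puts equal mass on every balanced four-labeling. Conversely, the coarsening $\ell(L)$ of a uniform $L\in\mathfrak G_n$ is uniform on $\mathfrak L_{2n,2n}$, and conditional on $\ell(L)=\ell$ the variable $L$ is uniform on $\mathfrak S(\ell)$. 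Hence, with $W$ fixed,
\begin{align*}
\overline Z_n(W,\ell(L))=\E\bigl[T_\kappa(W,L)\mid W,\ell(L)\bigr].
\end{align*}
This is exactly the representation displayed just before the lemma.

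Next we apply \Cref{lem:rb-preserves-subgaussian} with $\mathcal G=\sigma(W)$ (trivial here, since $W$ is deterministic), $\mathcal H=\sigma(W,\ell(L))$, $X=T_\kappa(W,L)$ and $\sigma^2=C_0\mathcal A_\kappa(W)$. The hypotheses, conditional mean zero and the exponential-moment bound, are supplied by \Cref{lem:balanced-relabeling-exp-conc}. The conclusion is that $\E_\ell[\overline Z_n(W,\ell)\mid W]=0$ and that
\begin{align*}
\log\E_\ell\exp\{\lambda\overline Z_n(W,\ell)\}\le C_0\lambda^2\mathcal A_\kappa(W)
\end{align*}
for all $\lambda\in\mathbb R$. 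The mean identity alternatively follows from the tower property.

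To get the tail bound we use Chernoff. For every $x>0$,
\begin{align*}
\Pbb(\overline Z_n>x)\le\exp\{-x^2/(4C_0\mathcal A_\kappa(W))\}.
\end{align*}
Setting $x=\sqrt{4C_0\mathcal A_\kappa(W)t}$ gives probability at most $e^{-t}$, so $C=2\sqrt{C_0}$ works. The degenerate case $\mathcal A_\kappa(W)=0$ needs a separate word. Then $T_\kappa(W,\cdot)\equiv0$, because only cells with $M_j\ge2$ contribute; hence $\overline Z_n\equiv0$, and the event $\{\overline Z_n>0\}$ is empty.

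The only step requiring care is the first: verifying that the two-stage draw $(\ell,L)$ reproduces the uniform law on $\mathfrak G_n$. This is what lets the four-slice concentration lemma be invoked, and it rests on the constant refinement count $|\mathfrak S(\ell)|$. Everything else is a direct citation of earlier lemmas.
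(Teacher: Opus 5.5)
Your proposal is correct and follows the paper's proof essentially step for step: write $\overline Z_n(W,\ell)=\E[T_\kappa(W,L)\mid W,\ell(L)]$ for $L\sim\Unif(\mathfrak G_n)$, transfer the sub-Gaussian bound of \Cref{lem:balanced-relabeling-exp-conc} through \Cref{lem:rb-preserves-subgaussian}, and finish with Chernoff. Your extra checks are details the paper leaves implicit: the constant refinement count $\binom{2n}{n}^2$ behind the coupling, the explicit constant $C=2\sqrt{C_0}$, and the degenerate case $\mathcal A_\kappa(W)=0$.
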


\begin{proof}
Let $L\sim\Unif(\mathfrak G_n)$ be a balanced four-labeling, and let $\ell=\ell(L)$ be its coarsening:
\begin{align*}
\ell_i=X \Longleftrightarrow L_i\in\{1,2\},
\qquad
\ell_i=Y \Longleftrightarrow L_i\in\{3,4\}.
\end{align*}
Then $\ell\sim\Unif(\mathfrak L_{2n,2n})$, and conditionally on $\ell$, the four-labeling $L$ is uniform on $\mathfrak S(\ell)$. Hence
\begin{align*}
\overline Z_n(W,\ell)
=
\E_L[T_\kappa(W,L)\mid W,\ell].
\end{align*}
By \Cref{lem:balanced-relabeling-exp-conc}, conditionally on $W$, $T_\kappa(W,L)$ is centered sub-Gaussian with variance proxy $C\mathcal A_\kappa(W)$. Applying \Cref{lem:rb-preserves-subgaussian} with $\mathcal G=\sigma(W)$, $\mathcal H=\sigma(W,\ell)$, and $X=T_\kappa(W,L)$ gives the same conditional mean-zero and sub-Gaussian moment bounds for $\overline Z_n(W,\ell)$. Chernoff's inequality gives the stated tail bound.
\end{proof}

\subsubsection{Completion of the Proof of \CrefInTitle{Theorem}{thm:adaptive-upper-loglog-rb-mc}}
\label{app:proof:thm:adaptive-upper-loglog-rb-mc}
\begin{proof}
Let $k=\kappa^d$, and let $C,c>0$ denote constants depending only on $(d,s_-,s_+,L,M,\gamma,\beta)$, whose values may change from line to line. Since $n=\lfloor N/2\rfloor$ and $n\asymp N$ for $N\ge2$, we state all fixed-resolution bounds in terms of $N$.

\paragraph{Privacy and type~I error.}
Condition on all centering reassignments. Under the fixed-design neighboring relation, neighboring raw datasets induce, for each $\kappa$, binned vectors $W_\kappa,W_\kappa'$ differing in at most one coordinate. Fix a four-labeling $L$. If the changed observation carries label $a\in\{1,2,3,4\}$ and moves from cell $j$ to cell $j'$, there is nothing to prove when $j=j'$. Otherwise, the count vector in cell $j$ changes from $x$ to $x-e_a$, while the count vector in cell $j'$ changes from $y$ to $y+e_a$. Changing a single coordinate of a cell count vector by one affects only the two absolute-value terms in $g$ in which that coordinate appears, and therefore
\begin{align*}
|g(x\pm e_a)-g(x)|\le2 .
\end{align*}
Thus the two affected cells together change $T_\kappa(W_\kappa,L)$ by at most $4$. Averaging over $L\in\mathfrak S(\ell)$ gives
\begin{align*}
\left|\overline Z_n(W_\kappa,\ell)-\overline Z_n(W_\kappa',\ell)\right|\le 4
\qquad
\text{for all }\kappa,\ell .
\end{align*}
Since the empirical quantile $\widehat q_\kappa(X,Y;\mathcal E)$ is one-Lipschitz in the sup-norm of the centering scores,
\begin{align*}
|\widehat q_\kappa(X,Y;\mathcal E)-\widehat q_\kappa(X',Y';\mathcal E)|\le4,
\qquad\text{and therefore}\qquad
|\widehat A_{\mathcal E}(X,Y;\ell)-\widehat A_{\mathcal E}(X',Y';\ell)|\le8 .
\end{align*}
Thus the adaptive score $\widehat A_{\mathcal E}$ has sensitivity $8$.

Moreover, under $H_0$, use the auxiliary symmetrized view described in the setup. Conditional on the symmetrized unlabeled pooled sample and the centering batch, the symmetrized observed assignment and the rank assignments are i.i.d.\ $\Unif(\mathfrak L_{2n,2n})$, and the same centering quantiles enter every rank score. By simultaneous-reindexing invariance, the symmetrized and implemented procedures have the same scores. The algorithm adds Laplace noise with scale $16/\varepsilon=2\cdot8/\varepsilon$. Therefore \Cref{lem:private-permutation-cited} applies to the centered multiscale statistic $\widehat A_{\mathcal E}$, yielding $\varepsilon$-differential privacy and finite-sample type~I error at most $\gamma$.

\paragraph{Monte Carlo centering bound.}
For the observed samples, write $\mathcal A_\kappa(X,Y):=\mathcal A_\kappa(W_\kappa)$. By \Cref{lem:rb-balanced-relabeling-exp-conc} and \Cref{lem:mc-empirical-quantile}(i), with $a_N=\log(8/u_N)$,
\begin{align*}
\Pbb\!\left(
\widehat q_\kappa(X,Y;\mathcal E)>
C\sqrt{\mathcal A_\kappa(X,Y)a_N}
\,\middle|\,X,Y
\right)
\le \exp(-u_NB_{\mathrm{cen},N}) .
\end{align*}
Since $u_NB_{\mathrm{cen},N}\ge16a_N$, a union bound over $\kappa\in\mathcal K_N$ yields, with probability at least $1-\beta/32$, simultaneously for all $\kappa$,
\begin{equation}
\label{eq:mc-qhat-active-bound}
\widehat q_\kappa(X,Y;\mathcal E)
\le C\sqrt{\mathcal A_\kappa(X,Y)a_N}.
\end{equation}
For the fixed resolution used below, boundedness of $f,g\in\cF_s^d(L,M)$ implies
\begin{align*}
\E\mathcal A_\kappa(X,Y)
\le
C\left(N\wedge\frac{N^2}{k}\right).
\end{align*}
Thus, by Markov's inequality and \eqref{eq:mc-qhat-active-bound}, with probability at least $1-\beta/16$,
\begin{equation}
\label{eq:mc-qhat-final-bound}
\widehat q_\kappa(X,Y;\mathcal E)
\le
C\sqrt{
\left(N\wedge\frac{N^2}{k}\right)a_N
}.
\end{equation}

\paragraph{Fixed-resolution power.}
Let $p^{(\kappa)},q^{(\kappa)}$ be the binned distributions. By \Cref{lem:binning-approx}, with $C_{\mathrm{bin}}^{\mathrm{ad}}:=C_{\mathrm{bin}}(d,s_-,s_+,L)$, uniformly for $s\in[s_-,s_+]$,
\begin{align*}
\|p^{(\kappa)}-q^{(\kappa)}\|_1
\ge
\tau_\kappa(r):=\frac r2-C_{\mathrm{bin}}^{\mathrm{ad}}\kappa^{-s}.
\end{align*}
Using the fixed-resolution moment bounds in \Cref{lem:rb-perm-moment-bounds} and Rao--Blackwellization,
\begin{align*}
\E[\overline Z_n(W_\kappa,\ell^{(0)})]
\ge c\,\mu_\kappa(\tau_\kappa(r)),
\qquad
\Var(\overline Z_n(W_\kappa,\ell^{(0)}))
\le
C\left(N\wedge\frac{N^2}{k}\right),
\end{align*}
where
\begin{align*}
\mu_\kappa(\tau)
:=
\min\left\{
N\tau,\,
\frac{N^2\tau^2}{k},\,
\frac{N^{3/2}\tau^2}{\sqrt k}
\right\}.
\end{align*}
Let
\begin{align*}
r_{\mathrm{rank}}:=\log\frac{4(B_{\mathrm{rank}}+1)}{\beta}.
\end{align*}
Suppose that
\begin{equation}
\label{eq:adaptive-mc-fixed-resolution-condition}
\tau_\kappa(r)
\ge
C
\left[
\sqrt{\frac{a_N}{N}}
+
a_N^{1/4}\frac{k^{1/4}}{\sqrt N}
+
\frac{k^{1/2}}{N\sqrt{\varepsilon}}
+
\frac{k^{1/4}}{N^{3/4}\sqrt{\varepsilon}}
+
\frac{1}{N\varepsilon}
\right].
\end{equation}
By elementary algebra, \eqref{eq:adaptive-mc-fixed-resolution-condition} implies
\begin{align*}
\mu_\kappa(\tau_\kappa(r))
\ge
C
\left[
\sqrt{
\left(N\wedge\frac{N^2}{k}\right)a_N
}
+ 
\frac{r_{\mathrm{rank}}}{\varepsilon}
\right],
\end{align*}
since $B_{\mathrm{rank}}$, $\gamma$, and $\beta$ are fixed constants.  For example,
\begin{align*}
\tau_\kappa(r)
\ge
C\sqrt{\frac{a_N}{N}}
\quad\Longrightarrow\quad
N\tau_\kappa(r)
\ge
C\sqrt{Na_N},
\end{align*}
and
\begin{align*}
\tau_\kappa(r)
\ge
C\,a_N^{1/4}\frac{k^{1/4}}{\sqrt N}
\quad\Longrightarrow\quad
\frac{N^2\tau_\kappa(r)^2}{k}
\ge
C\sqrt{\frac{N^2a_N}{k}}.
\end{align*}
These two implications explain the fluctuation term $\sqrt{(N\wedge N^2/k)a_N}$. The remaining conditions in \eqref{eq:adaptive-mc-fixed-resolution-condition} similarly ensure that each of the three components in the minimum defining $\mu_\kappa(\tau)$ is at least a sufficiently large constant multiple of the required fluctuation and privacy scales. By the preceding lower bound on $\mu_\kappa(\tau_\kappa(r))$, choosing the constant $C$ in \eqref{eq:adaptive-mc-fixed-resolution-condition} sufficiently large yields
\begin{align*}
\E[\overline Z_n(W_\kappa,\ell^{(0)})]
\ge
C_1
\left[
\sqrt{
\left(N\wedge\frac{N^2}{k}\right)a_N
}
+
\frac{r_{\mathrm{rank}}}{\varepsilon}
\right].
\end{align*}
Since
\begin{align*}
\Var(\overline Z_n(W_\kappa,\ell^{(0)}))
\le
C_2\left(N\wedge\frac{N^2}{k}\right),
\end{align*}
Chebyshev's inequality implies that, with probability at least $1-\beta/16$,
\begin{align*}
\overline Z_n(W_\kappa,\ell^{(0)})
\ge
2C
\sqrt{
\left(N\wedge\frac{N^2}{k}\right)a_N
}
+
64\,\frac{r_{\mathrm{rank}}}{\varepsilon},
\end{align*}
after increasing $C$ in \eqref{eq:adaptive-mc-fixed-resolution-condition} if necessary.\footnote{The numerical constants appearing here and below are not intrinsic to the argument. They are chosen only for convenience and may be replaced by any sufficiently large fixed constants.} Combining this with \eqref{eq:mc-qhat-final-bound}, which holds with probability at least $1-\beta/16$, and applying a union bound yields an event $\Omega_0$ of probability at least $1-\beta/8$ on which
\begin{align*}
\widehat A_0
\ge
\overline Z_n(W_\kappa,\ell^{(0)})-\widehat q_\kappa(X,Y;\mathcal E)
\ge
48\,\frac{r_{\mathrm{rank}}}{\varepsilon}.
\end{align*}

\paragraph{Final rank comparison.}
For each competitor $b\ge1$, $\ell_b^{\mathrm{rank}}$ is independent of the centering batch. Hence \Cref{lem:mc-empirical-quantile}(ii), followed by a union bound over $b$ and $\kappa$, gives
\begin{align*}
\Pbb\!\left(
\max_{1\le b\le B_{\mathrm{rank}}}\widehat A_b>0
\right)
\le
B_{\mathrm{rank}}G_N
\left(
u_N+\frac{1}{B_{\mathrm{cen},N}+1}\right),
\end{align*}
which is at most $\beta/32$ by the definitions of $u_N$ and $B_{\mathrm{cen},N}$. Let $\Omega_1$ be the event $\max_{1\le b\le B_{\mathrm{rank}}}\widehat A_b\le0$. Also, for $\zeta_b\stackrel{\iid}{\sim}\Lap(1)$, the union bound and definition of $r_{\mathrm{rank}}$ give
\begin{align*}
\Pbb\!\left(
\max_{0\le b\le B_{\mathrm{rank}}}|\zeta_b|>r_{\mathrm{rank}}
\right)
\le
(B_{\mathrm{rank}}+1)e^{-r_{\mathrm{rank}}}
=
\frac{\beta}{4}.
\end{align*}
Let $\Omega_2$ be the complementary event. On $\Omega_0\cap\Omega_1\cap\Omega_2$,
\begin{align*}
\widehat M_0
\ge
\frac{48r_{\mathrm{rank}}}{\varepsilon}
-
\frac{16r_{\mathrm{rank}}}{\varepsilon}
>
\frac{16r_{\mathrm{rank}}}{\varepsilon}
\ge
\max_{1\le b\le B_{\mathrm{rank}}}\widehat M_b .
\end{align*}
Therefore the rank of the observed noisy score is one, and the test rejects because $(B_{\mathrm{rank}}+1)^{-1}\le\gamma$. Since
\begin{align*}
\Pbb(\Omega_0^c\cup\Omega_1^c\cup\Omega_2^c)
\le
\frac{\beta}{8}+\frac{\beta}{32}+\frac{\beta}{4}
<\beta,
\end{align*}
the type~II error is at most $\beta$ whenever \eqref{eq:adaptive-mc-fixed-resolution-condition} holds for some $\kappa\in\mathcal K_N$.

Because $\tau_\kappa(r)=r/2-C_{\mathrm{bin}}^{\mathrm{ad}}\kappa^{-s}$, this fixed-resolution condition is implied by
\begin{align*}
r
\ge
C
\left[
\kappa^{-s}
+
\sqrt{\frac{a_N}{N}}
+
a_N^{1/4}\frac{\kappa^{d/4}}{\sqrt N}
+
\frac{\kappa^{d/2}}{N\sqrt{\varepsilon}}
+
\frac{\kappa^{d/4}}{N^{3/4}\sqrt{\varepsilon}}
+
\frac{1}{N\varepsilon}
\right].
\end{align*}
Taking the infimum over $\kappa\in\mathcal K_N$ proves \eqref{eq:adaptive-rb-mc-inf-bound}.

\paragraph{Explicit rate.}
Since $G_N\lesssim\log N$, the tuning gives $a_N=\log(8/u_N)\asymp\log\log N$. Define
\begin{align*}
\begin{aligned}
\rho_1&:=a_N^{s/(4s+d)}N^{-2s/(4s+d)},&
\rho_2&:=(N\sqrt{\varepsilon})^{-2s/(2s+d)},\\
\rho_3&:=(N^{3/2}\varepsilon)^{-2s/(4s+d)},&
\rho_4&:=(N\varepsilon)^{-1},
\end{aligned}
\end{align*}
and set
\begin{align*}
\rho_N(s,\varepsilon)
:=\rho_1\vee\rho_2\vee\rho_3\vee\rho_4 .
\end{align*}
In what follows, write $\rho_N=\rho_N(s,\varepsilon)$. It remains to upper-bound the preceding infimum by $C\rho_N(s,\varepsilon)$. If $\rho_N(s,\varepsilon)$ is bounded below by a fixed constant, take $\kappa=1$. Otherwise define the oracle resolution
\begin{align*}
\kappa_\star:=\rho_N(s,\varepsilon)^{-1/s}.
\end{align*}
Since $\rho_N(s,\varepsilon)\ge a_N^{s/(4s+d)}N^{-2s/(4s+d)}$, we have $\kappa_\star\lesssim (N^2/a_N)^{1/(4s+d)} \le (N^2/a_N)^{1/(4s_-+d)}=o(N^3)$, uniformly over $s\in[s_-,s_+]$. Thus the dyadic grid, whose endpoint is of order $N^3$, contains $\kappa\in\mathcal K_N$ such that $\kappa_\star\le\kappa\le2\kappa_\star$. For this choice of $\kappa$, the bias term satisfies $\kappa^{-s}\le\rho_N$. The first resolution-dependent stochastic term is bounded by
\begin{align*}
a_N^{1/4}\frac{\kappa^{d/4}}{\sqrt N}
\le
C a_N^{1/4}\rho_N^{-d/(4s)}N^{-1/2}
\le
C\rho_N,
\end{align*}
where the last step follows from $\rho_N^{1+d/(4s)}\ge\rho_1^{1+d/(4s)} =a_N^{1/4}N^{-1/2}$. The remaining two resolution-dependent terms are handled in the same way: since $\rho_N^{1+d/(2s)}\ge\rho_2^{1+d/(2s)}=(N\sqrt{\varepsilon})^{-1}$ and $\rho_N^{1+d/(4s)}\ge\rho_3^{1+d/(4s)} =(N^{3/4}\sqrt{\varepsilon})^{-1}$, we have
\begin{align*}
\frac{\kappa^{d/2}}{N\sqrt{\varepsilon}}
\le
C\rho_N,
\qquad
\frac{\kappa^{d/4}}{N^{3/4}\sqrt{\varepsilon}}
\le
C\rho_N .
\end{align*}
The privacy floor is immediate from $(N\varepsilon)^{-1}=\rho_4\le\rho_N$. Finally,
\begin{align*}
\sqrt{\frac{a_N}{N}}
\le
C a_N^{s/(4s+d)}N^{-2s/(4s+d)}
\end{align*}
uniformly for $s\in[s_-,s_+]$ and all sufficiently large $N$, because
\begin{align*}
\frac{\sqrt{a_N/N}}{\rho_1}
=
a_N^{1/2-s/(4s+d)}
N^{-d/(2(4s+d))}
\to0
\end{align*}
uniformly over this range of $s$. Hence
\begin{align*}
\inf_{\kappa\in\mathcal K_N}
\left\{
\kappa^{-s}
+
\sqrt{\frac{a_N}{N}}
+
a_N^{1/4}\frac{\kappa^{d/4}}{\sqrt N}
+
\frac{\kappa^{d/2}}{N\sqrt{\varepsilon}}
+
\frac{\kappa^{d/4}}{N^{3/4}\sqrt{\varepsilon}}
+
\frac{1}{N\varepsilon}
\right\}
\le
C\rho_N(s,\varepsilon),
\end{align*}
which proves \eqref{eq:adaptive-rb-mc-explicit-loglog-rate}.
\end{proof}

\subsection{Proof of \CrefInTitle{Theorem}{thm:adaptive-lower-loglog}}
\label{app:proof:thm:adaptive-lower-loglog}

We prove the adaptive lower bound through one-sample testing against the uniform density.  The argument is nonprivate, so the conclusion automatically holds for central-DP tests.  Let $P_0$ denote the uniform distribution on $[0,1]^d$, and write $\E_0$ for expectation under $P_0^{\otimes N}$.  For $N$ sufficiently large, set
\begin{align*}
B_N:=\log\log N,
\qquad
\bar\rho_N^{\mathrm{ad}}(s)
:=B_N^{s/(4s+d)}N^{-2s/(4s+d)}.
\end{align*}

\begin{proposition}[Adaptive one-sample lower bound]
\label{prop:adaptive-gof-lower-loglog}
Under the assumptions of \Cref{thm:adaptive-lower-loglog}, there exist constants $c>0$ and a sufficiently large $N_0$, depending only on $(d,s_-,s_+,L,M,\gamma,\beta)$, such that the following holds for every $N\ge N_0$.  Define the class of level-$\gamma$ one-sample tests by
\begin{align*}
\Phi_{\gamma,N}^{\mathrm{gof,ad}}
:=
\left\{
\psi:([0,1]^d)^N\to[0,1]:
\E_0\psi\le\gamma
\right\}.
\end{align*}
Then
\begin{align*}
\inf_{\psi\in\Phi_{\gamma,N}^{\mathrm{gof,ad}}}
\;
\sup_{s\in[s_-,s_+]}
\;
\sup_{\substack{
f\in\cF_s^d(L,M)\\
\|f-1\|_1\ge c\,\bar\rho_N^{\mathrm{ad}}(s)
}}
\bigl(1-\E_f\psi\bigr)
>
\beta .
\end{align*}
Equivalently, no level-$\gamma$ one-sample goodness-of-fit test can have worst-case type~II error at most $\beta$, uniformly over $s\in[s_-,s_+]$, at separation $c\,\bar\rho_N^{\mathrm{ad}}(s)$.
\end{proposition}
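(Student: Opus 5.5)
The plan is to run the classical Spokoiny/Ingster adaptive lower-bound argument. We reduce to testing the uniform null against a mixture over many smoothness levels of hypercube priors, and the $\log\log N$ gain comes from the near-orthogonality of likelihood ratios at well-separated resolutions.

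\textbf{Smoothness grid and priors.} Choose a grid $s_1<\dots<s_J$ in $[s_-,s_+]$ with $J\asymp\log N$. For each $s_j$, set a resolution $\kappa_j\asymp(N^2/B_N)^{1/(4s_j+d)}$. Space the $s_j$ so that the $\kappa_j$ are geometrically separated, say $\kappa_{j+1}/\kappa_j\ge N^{c'}$ or at least a large constant. Since the exponent $1/(4s+d)$ varies over a range of fixed length, we can fit $J\asymp\log N$ such scales with ratio $\kappa_{j+1}/\kappa_j\ge 2^{c''}$; polynomial separation is not needed.

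For each $j$, take the hypercube family of \Cref{prop:hypercube-construction}, with smoothness $s_j$, resolution $\kappa_j$ and amplitude $\delta_j=\varrho\,\kappa_j^{-s_j-d/2}$. Replace $\varrho$ by $\varrho\sqrt{\log J}$ scaled so that $N^2K_j\delta_j^4\asymp c\log J$. This gives separation $\|f_\theta-1\|_1\asymp\kappa_j^{-s_j}\asymp c\,\bar\rho_N^{\mathrm{ad}}(s_j)$, because $\log J\asymp\log\log N=B_N$. Validity in $\cF_{s_j}^d(L,M)$ follows from \Cref{prop:hypercube-construction}(i): the amplitude factor $\kappa_j^{-s_j}\sqrt{\log J}$ still tends to zero, so $f_\theta\ge 1/2$ and $\|f_\theta\|_\infty\le M$ hold once $N\ge N_0$. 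Let $\bar P_j$ be the hypercube mixture at level $j$, and set $\bar P:=J^{-1}\sum_j\bar P_j$.

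\textbf{Reduction and second moment.} Any $\psi$ with $\E_0\psi\le\gamma$ and worst-case type~II error at most $\beta$ over all $s$ satisfies $\E_{\bar P}\psi\ge1-\beta$. Hence $1-\gamma-\beta\le\TV(\bar P,P_0^N)\le\tfrac12\sqrt{\chiTwo(\bar P\|P_0^N)}$, and it suffices to show that $\chiTwo(\bar P\|P_0^N)$ is small. We expand
\[
1+\chiTwo=J^{-2}\sum_{j,j'}\E_0\bigl[L_jL_{j'}\bigr],
\]
where $L_j$ denotes the likelihood ratio of $\bar P_j$. Each diagonal term is bounded by $\exp(\tfrac12 K_jN^2\delta_j^4)\le J^{1/2}$ via the computation in Step 4 of \Cref{prop:hypercube-construction}, once the constant $c$ is small. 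The diagonal therefore contributes at most $J^{-2}\cdot J\cdot J^{1/2}=J^{-1/2}\to0$.

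\textbf{Off-diagonal terms (main obstacle).} For $j\ne j'$, we have $\E_0[L_\theta L_{\theta'}]=(1+\int h_\theta h_{\theta'})^N$, and after averaging over the signs this is bounded by $\exp\bigl(N^2\delta_j^2\delta_{j'}^2\sum_{a,b}\langle\psi_a^{(j)},\psi_b^{(j')}\rangle^2/2\bigr)$, using a cosh bound. The bumps at the coarser scale are smooth on the fine cells, while the fine bumps have mean zero (and vanishing moments if $\psi$ is chosen with vanishing moments). Using this, $|\langle\psi_a^{(j)},\psi_b^{(j')}\rangle|$ is small, and $\sum_{a,b}\langle\cdot\rangle^2\lesssim\|\psi^{(j)}\|$-terms $\times(\kappa_j/\kappa_{j'})^{2r}$ for some $r>0$. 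The exponent is then at most $C\sqrt{\log J\cdot\log J}\,(\kappa_j/\kappa_{j'})^{r}$, which should be $o(1)$ once the scale ratios grow faster than a power of $\log J$. This is the delicate point. The separation ratio has to be balanced against the requirement $J\asymp\log N$; I would take $\kappa_{j+1}/\kappa_j\ge(\log N)^{C}$, which still allows $J\asymp\log N/\log\log N$, and $\log J\asymp\log\log N$ still holds. Under this choice, the off-diagonal average is $1+o(1)$, so $\chiTwo\to0$, and this contradicts $1-\gamma-\beta>0$ for $N\ge N_0$.
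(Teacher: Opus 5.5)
Your strategy is the paper's: a uniform mixture of hypercube priors at many resolutions (equivalently, many smoothness levels in $[s_-,s_+]$), each calibrated so that $N^2K_j\delta_j^4\asymp\log\log N$. A second-moment bound then finishes: the $1/J$ averaging kills the diagonal, and near-orthogonality of mean-zero bumps at different scales kills the cross terms.

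The one real difference is the grid. The paper uses every dyadic level $m_\ell=2^\ell$, so $J_N\asymp\log N$. Cross terms between nearby levels are then only bounded by powers of $\log N$, not by $1+o(1)$. The paper absorbs them by counting: only $O(J_N\log B_N)$ pairs have $|\ell-r|$ below the threshold $h_0\asymp\log B_N$. You instead thin the grid to ratio $(\log N)^C$. This costs only a $\log\log N$ factor in $J$, which is harmless since $\log J\asymp\log\log N$ still, and then every cross term is $1+o(1)$ uniformly, so no band counting is needed. Both work. The dyadic grid also gives nested cells, which simplifies the Gram computation; taking your $\kappa_j$ to be a sparse subset of powers of two keeps that benefit.

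Two steps fail as written and need repair.

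\textbf{Amplitude.} Do not multiply $\varrho$ by $\sqrt{\log J}$. Take $\kappa_j\asymp(N^2/B_N)^{1/(4s_j+d)}$ and $\delta_j=\varrho\kappa_j^{-s_j-d/2}$ with $\varrho$ a fixed small constant. You then already have $N^2K_j\delta_j^4=\varrho^4N^2\kappa_j^{-(4s_j+d)}\asymp\varrho^4B_N$ and $\|f_\theta-1\|_1\asymp\kappa_j^{-s_j}\asymp\bar\rho_N^{\mathrm{ad}}(s_j)$. Your validity check looks only at the sup norm. The binding constraint is the top-order H\"older seminorm of $h_\theta$, which is of order $\varrho$ at every scale (Step~2 of \Cref{prop:hypercube-construction}). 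Inflating $\varrho$ by $\sqrt{\log J}$ would therefore push $f_\theta$ out of $\cH_{s_j}^d(L)$. You also need the constant $c_0$ there to be uniform over $s\in[s_-,s_+]$; this holds for a fixed $C_c^\infty$ bump but should be stated.

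\textbf{Cross terms.} Write $t=N\delta_j\delta_{j'}$ and let $B$ be the cross Gram matrix. Conditioning on $\theta'$ and using $\cosh u\le e^{u^2/2}$ leaves $\E_{\theta'}\exp\{\tfrac{t^2}{2}\|B\theta'\|_2^2\}$. This is not bounded by $\exp\{\tfrac{t^2}{2}\|B\|_F^2\}$, because Jensen goes the other way. You need two further ingredients:
\begin{itemize}
\item a decoupled Rademacher-chaos (Hanson--Wright type) bound $\E\exp\{t\,\theta^\top B\theta'\}\le\exp\{C_0t^2\|B\|_F^2\}$, valid only when $t\|B\|_{\mathrm{op}}\le c_0$;
\item an operator-norm estimate for $B$.
\end{itemize}
These are exactly what \Cref{lem:adaptive-lower-chaos-mgf,lem:adaptive-lower-gram-bounds} supply. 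In your normalization, with $\kappa_j\le\kappa_{j'}$, you have $t\asymp\varrho^2\sqrt{B_N}(\kappa_j\kappa_{j'})^{-d/4}$ and $\|B\|_{\mathrm{op}}\lesssim(\kappa_j/\kappa_{j'})^{R+1}$. So the condition holds for large $N$, and $t^2\|B\|_F^2\lesssim\varrho^4B_N(\kappa_j/\kappa_{j'})^{d/2+2R+2}$, which is the decay you wanted. With these two repairs the argument is complete.
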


The proof uses a multiscale Ingster mixture \citep{Ingster2000AdaptiveChiSquare}.  Choose an integer $R\ge0$, fixed throughout.  Let $\omega\in C_c^\infty(0,1)$ be nonzero and set
\begin{align*}
\varphi_1(t):=\frac{d^{R+1}}{dt^{R+1}}\omega(t),
\qquad
\varphi(x):=\varphi_1(x_1)\prod_{r=2}^d \omega(x_r),
\qquad x\in[0,1]^d .
\end{align*}
Then $\varphi\in C_c^\infty((0,1)^d)$, $\varphi\not\equiv0$, and integration by parts gives the vanishing moment identities
\begin{equation}
\label{eq:adaptive-lower-vanishing-moments}
\int_{[0,1]^d}x^a\varphi(x)\,dx=0,
\qquad |a|\le R .
\end{equation}
In particular, $\int\varphi=0$.  Write
\begin{align*}
A_1:=\int_{[0,1]^d}|\varphi(x)|\,dx>0,
\qquad
A_2:=\int_{[0,1]^d}\varphi(x)^2\,dx>0.
\end{align*}
For a dyadic integer $m=2^\ell$, let $\mathcal I_m=\{0,1,\ldots,m-1\}^d$.  For $i\in\mathcal I_m$, define
\begin{align*}
\varphi_{m,i}(x):=\varphi(mx-i),
\end{align*}
with the convention that this function is zero outside $m^{-1}(i+[0,1]^d)$.  For $\theta=(\theta_i)_{i\in\mathcal I_m}\in\{-1,1\}^{\mathcal I_m}$, set
\begin{align*}
H_{m,\theta}(x):=\sum_{i\in\mathcal I_m}\theta_i\varphi_{m,i}(x).
\end{align*}
The supports of the $\varphi_{m,i}$'s are disjoint, and therefore
\begin{equation}
\label{eq:adaptive-lower-H-basic}
\int H_{m,\theta}=0,
\qquad
\|H_{m,\theta}\|_1=A_1,
\qquad
\|H_{m,\theta}\|_\infty\le\|\varphi\|_\infty .
\end{equation}
Moreover, uniformly for $s\in[s_-,s_+]$,
\begin{equation}
\label{eq:adaptive-lower-holder-scale}
\|H_{m,\theta}\|_{C^s}\le C_\varphi m^s,
\end{equation}
where $C_\varphi$ depends only on $(d,s_-,s_+,\varphi)$.  This follows by differentiating $\varphi(mx-i)$; derivatives of order $q$ scale as $m^q$, and the corresponding H\"older seminorm scales as $m^s$.

We next record the cross-scale inner-product bounds.  For dyadic $m\le m'$, put
\begin{align*}
B_{m,m'}(i,j):=
\langle \varphi_{m,i},\varphi_{m',j}\rangle_{L_2([0,1]^d)},
\qquad i\in\mathcal I_m,\ j\in\mathcal I_{m'} .
\end{align*}

\begin{lemma}[Cross-scale Gram bounds]
\label{lem:adaptive-lower-gram-bounds}
There is a constant $C_B<\infty$, depending only on $(d,R,\varphi)$, such that for all dyadic $m\le m'$,
\begin{align}
\label{eq:adaptive-lower-gram-frob}
\|B_{m,m'}\|_F^2
&\le C_B\,m'^{-d}\left(\frac m{m'}\right)^{2R+2},
\\
\label{eq:adaptive-lower-gram-op}
\|B_{m,m'}\|_{\mathrm{op}}
&\le C_B\,(mm')^{-d/2}\left(\frac m{m'}\right)^{R+1}.
\end{align}
For $m=m'$, in fact $B_{m,m}=m^{-d}A_2 I$.
\end{lemma}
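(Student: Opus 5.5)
The plan is to compute the inner products $\langle\varphi_{m,i},\varphi_{m',j}\rangle$ directly from the rescaled form and the vanishing moments \eqref{eq:adaptive-lower-vanishing-moments}. The diagonal case $m=m'$ is immediate. For $i\ne j$ the supports of $\varphi_{m,i}$ and $\varphi_{m,j}$ lie in disjoint cubes, so $B_{m,m}(i,j)=0$. For $i=j$, the change of variables $u=mx-i$ gives $\int\varphi(mx-i)^2\,dx=m^{-d}A_2$. Hence $B_{m,m}=m^{-d}A_2I$, and both bounds hold with $C_B\ge A_2$.

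For $m<m'$, write $\rho:=m/m'\le 1/2$. Each fine cube $m'^{-1}(j+[0,1]^d)$ is contained in exactly one coarse cube $m^{-1}(i+[0,1]^d)$, since both grids are dyadic. Consequently $B_{m,m'}(i,j)\neq0$ only when $j$ lies in the block $\mathcal J(i)$ of $(m'/m)^d$ fine indices under $i$. Each column $j$ thus has at most one nonzero entry, and each row has at most $(m'/m)^d$ nonzero entries.

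The key step is to bound a single entry. Substitute $u=m'x-j$, so that
\begin{align*}
B_{m,m'}(i,j)=m'^{-d}\int_{[0,1]^d}\varphi\bigl(\rho(u+j)-i\bigr)\varphi(u)\,du.
\end{align*}
Expand $v\mapsto\varphi(\rho(u+j)-i)$ in a Taylor series around $u=0$ to order $R$. The polynomial part integrates to zero against $\varphi$ by the vanishing moments. The remainder is bounded by $C\rho^{R+1}\|D^{R+1}\varphi\|_\infty|u|^{R+1}$, which gives
\begin{align*}
|B_{m,m'}(i,j)|\le C m'^{-d}\rho^{R+1}.
\end{align*}

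Both norm bounds follow from this entry bound. For the Frobenius norm, there are $m'^d$ nonzero entries in total, one per column, so
\begin{align*}
\|B\|_F^2\le m'^d\,C^2 m'^{-2d}\rho^{2R+2}=C m'^{-d}\rho^{2R+2},
\end{align*}
which is \eqref{eq:adaptive-lower-gram-frob}.

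For the operator norm, use the block structure. $B$ is block diagonal in the sense that row $i$ is supported on $\mathcal J(i)$ and the sets $\mathcal J(i)$ are disjoint. Hence $\|B\|_{\mathrm{op}}=\max_i\|B(i,\cdot)\|_2$, and each row norm satisfies
\begin{align*}
\|B(i,\cdot)\|_2\le (m'/m)^{d/2}\,C m'^{-d}\rho^{R+1}=C (mm')^{-d/2}\rho^{R+1},
\end{align*}
which is \eqref{eq:adaptive-lower-gram-op}.

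The main care point is the Taylor step. The expansion must be valid on the whole support of $\varphi(u)$, which is not an issue because $\varphi$ is smooth on $\mathbb R^d$ after zero extension, so the remainder bound holds globally. It also needs $R+1$ derivatives of $\varphi$, which holds because $\varphi\in C_c^\infty$. Every resulting constant depends only on $(d,R,\varphi)$, as the lemma requires.
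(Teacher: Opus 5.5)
Your proof is correct and follows the paper's argument: the same Taylor-plus-vanishing-moments entry bound $|B_{m,m'}(i,j)|\le C m'^{-d}(m/m')^{R+1}$ and the same nonzero count for the Frobenius norm. For the operator norm you use that the rows have disjoint supports, so $BB^\top$ is diagonal; this replaces the paper's Schur test $\|B\|_{\mathrm{op}}\le\sqrt{\|B\|_1\|B\|_\infty}$ and gives the same bound. One trivial slip: in the diagonal case $\|B_{m,m}\|_F^2=m^{-d}A_2^2$, so you need $C_B\ge\max\{A_2,A_2^2\}$ rather than $C_B\ge A_2$.
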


\begin{proof}
The identity for $m=m'$ follows from disjoint supports and a change of variables.  Suppose $m<m'$, and write $q=m/m'$.  Since $m,m'$ are dyadic, each fine cell of side length $1/m'$ is contained in a unique coarse cell of side length $1/m$.  If the fine cell indexed by $j$ lies inside the coarse cell indexed by $i$, then, after the change of variables $y=m'x-j$,
\begin{align*}
B_{m,m'}(i,j)
=m'^{-d}\int_{[0,1]^d}\varphi(z_{i,j}+qy)\varphi(y)\,dy
\end{align*}
for some $z_{i,j}\in[0,1]^d$.  Taylor expanding $y\mapsto\varphi(z_{i,j}+qy)$ to order $R$, all polynomial terms vanish after integration against $\varphi(y)dy$ by \eqref{eq:adaptive-lower-vanishing-moments}.  The remainder is bounded by $Cq^{R+1}$, uniformly in $(i,j)$.  Hence
\begin{equation*}
|B_{m,m'}(i,j)|\le C m'^{-d}q^{R+1}.
\end{equation*}
Only $O(m'^d)$ pairs $(i,j)$ can overlap, so \eqref{eq:adaptive-lower-gram-frob} follows. Also, each row has at most $Cq^{-d}$ nonzero entries and each column has at most one nonzero entry. Thus the maximum row sum is bounded by $C m^{-d}q^{R+1}$, while the maximum column sum is bounded by $C m'^{-d}q^{R+1}$. Therefore,
\begin{align*}
\|B_{m,m'}\|_{\mathrm{op}}
\le
\sqrt{\|B_{m,m'}\|_{1}\,\|B_{m,m'}\|_{\infty}},
\end{align*}
where
\begin{align*}
\|A\|_{1}:=\max_j\sum_i |A_{ij}|,
\qquad
\|A\|_{\infty}:=\max_i\sum_j |A_{ij}|.
\end{align*}
Hence
\begin{align*}
\|B_{m,m'}\|_{\mathrm{op}}
\le
C(mm')^{-d/2}\left(\frac{m}{m'}\right)^{R+1},
\end{align*}
which is \eqref{eq:adaptive-lower-gram-op}.
\end{proof}

We shall also use a standard exponential bound for decoupled Rademacher chaos.

\begin{lemma}[Rademacher chaos exponential bound]
\label{lem:adaptive-lower-chaos-mgf}
There exist universal constants $c_0,C_0>0$ such that the following holds. Let $\xi\in\{-1,1\}^K$ and $\eta\in\{-1,1\}^L$ have independent i.i.d. Rademacher coordinates, and let $B\in\mathbb R^{K\times L}$.  If $|t|\|B\|_{\mathrm{op}}\le c_0$, then
\begin{align*}
\E\exp\{t\xi^\top B\eta\}
\le
\exp\{C_0t^2\|B\|_F^2\}.
\end{align*}
\end{lemma}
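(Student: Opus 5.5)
The plan is to reduce to the one-dimensional case by conditioning, and then handle the scalar case by a series expansion that exploits Rademacher symmetry. Condition on $\eta$. Then $\xi^\top B\eta=\sum_{k}\xi_k (B\eta)_k$ is a Rademacher sum. Using $\cosh u\le e^{u^2/2}$, this gives
\begin{align*}
\E\exp\{t\xi^\top B\eta\}\le \E_\eta\exp\Bigl\{\tfrac{t^2}{2}\,\eta^\top B^\top B\,\eta\Bigr\}.
\end{align*}
So it suffices to bound the exponential moment of the Rademacher quadratic form $Q(\eta):=\eta^\top A\eta$, where $A:=B^\top B\succeq0$. Its parameters are $\operatorname{tr}(A)=\|B\|_F^2$, $\|A\|_{\mathrm{op}}=\|B\|_{\mathrm{op}}^2$, and $\|A\|_F\le\|B\|_{\mathrm{op}}\|B\|_F$.

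Write $Q=\operatorname{tr}(A)+\sum_{j\ne l}A_{jl}\eta_j\eta_l$. The diagonal part contributes exactly $\exp\{\tfrac{t^2}{2}\|B\|_F^2\}$. For the off-diagonal chaos $S:=\sum_{j\ne l}A_{jl}\eta_j\eta_l$, I would use a standard Hanson--Wright-type moment generating function bound. By decoupling (de la Peña--Montgomery-Smith), $\E e^{\lambda S}\le \E e^{C\lambda \eta^\top A\eta'}$ with $\eta'$ an independent copy, valid for $\lambda\ge0$ since $x\mapsto e^{\lambda x}$ is convex. Conditioning again on $\eta'$ and applying $\cosh u\le e^{u^2/2}$ gives
\begin{align*}
\E e^{C\lambda\eta^\top A\eta'}\le \E\exp\{C^2\lambda^2\eta'^\top A^2\eta'\}.
\end{align*}
This is a quadratic form in $A^2$, whose trace is $\|A\|_F^2$ and whose operator norm is $\|A\|_{\mathrm{op}}^2$. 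Iterating once more would create an infinite regress. Instead I would invoke the Gaussian comparison for positive semidefinite forms. Rademacher vectors are sub-Gaussian, and for $D\succeq0$ the standard bound $\E\exp\{\mu\,\eta^\top D\eta\}\le \exp\{C\mu\operatorname{tr}D\}$ holds whenever $\mu\|D\|_{\mathrm{op}}\le c$. This follows by writing $\exp\{\mu\|D^{1/2}\eta\|^2\}=\E_g\exp\{\sqrt{2\mu}\,g^\top D^{1/2}\eta\}$ with $g$ standard Gaussian, then averaging over $\eta$ first via Hoeffding to get $\E_g\exp\{\mu g^\top Dg\}=\det(I-2\mu D)^{-1/2}\le e^{C\mu\operatorname{tr}D}$ for $2\mu\|D\|_{\mathrm{op}}\le 1/2$.

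That same Gaussian trick applies directly to the original target, and it simplifies the whole argument. With $D=A=B^\top B$ and $\mu=t^2/2$, the required condition is $t^2\|B\|_{\mathrm{op}}^2\le 1/2$, which is exactly of the form $|t|\|B\|_{\mathrm{op}}\le c_0$. The conclusion is $\exp\{Ct^2\operatorname{tr}(B^\top B)\}=\exp\{Ct^2\|B\|_F^2\}$. So the decoupled structure makes the off-diagonal detour unnecessary. The full proof is therefore short:
\begin{itemize}
\item condition on $\eta$ and apply Hoeffding to $\xi$;
\item linearize $\exp\{\tfrac{t^2}{2}\|B\eta\|_2^2\}$ with an auxiliary Gaussian $g\in\R^K$;
\item apply Hoeffding to $\eta$, obtaining $\exp\{\tfrac{t^2}{2}\|B^\top g\|^2\}$;
\item evaluate the Gaussian determinant, $\det(I-t^2BB^\top)^{-1/2}$;
\item use $-\tfrac12\log(1-x)\le x$ for $x\le 1/2$ on each eigenvalue.
\end{itemize}

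The main point needing care is the linearization step. One must check that Fubini/Tonelli justifies swapping the expectations over $g$ and $\eta$ (everything is nonnegative). One must also check that the eigenvalues of $t^2BB^\top$ lie in $[0,1/2]$ under the hypothesis, which fixes $c_0=1/\sqrt2$ and $C_0=1$.
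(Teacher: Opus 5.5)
Your final argument is correct and is essentially the paper's proof. Both condition on $\eta$ and use $\cosh u\le e^{u^2/2}$, then bound the resulting positive semidefinite Rademacher quadratic form by Gaussian linearization, the determinant $\det(I-t^2BB^\top)^{-1/2}$, and $-\log(1-x)\le 2x$ on $[0,1/2]$. The only cosmetic differences are that you linearize through $B$ with $g\in\R^K$ rather than through $A^{1/2}$ with $A=B^\top B$, and that the decoupling detour you considered is unnecessary.
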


\begin{proof}
Conditioning on $\eta$ and using $\cosh u\le e^{u^2/2}$,
\begin{align*}
\E_\xi e^{t\xi^\top B\eta}
\le
\exp\left\{\frac{t^2}{2}\eta^\top B^\top B\eta\right\}.
\end{align*}
For a positive semidefinite matrix $A$ and $0\le\lambda\le (4\|A\|_{\mathrm{op}})^{-1}$, the standard Rademacher quadratic-form estimate gives
\begin{align*}
\E\exp\{\lambda\eta^\top A\eta\}
\le
\exp\{2\lambda\operatorname{tr}(A)\}.
\end{align*}
Indeed, using the Gaussian integral identity and the sub-Gaussianity of Rademacher sums, for $G\sim N(0,I_L)$,
\begin{align*}
\E_\eta e^{\lambda\eta^\top A\eta}
=
\E_\eta\E_G\exp\{\sqrt{2\lambda}\,G^\top A^{1/2}\eta\}
\le
\E_G\exp\{\lambda G^\top A G\}
=
\det(I-2\lambda A)^{-1/2}.
\end{align*}
If $2\lambda\|A\|_{\mathrm{op}}\le1/2$, then $-\log(1-x)\le2x$ on the relevant eigenvalue range, yielding the displayed bound.  Applying it with $A=B^\top B$ and $\lambda=t^2/2$ proves the lemma after decreasing $c_0$, since $\operatorname{tr}(B^\top B)=\|B\|_F^2$.
\end{proof}

We now build the adaptive mixture.  For large $N$, define
\begin{align*}
\ell_-:=\left\lceil
\frac{2\log_2N-\log_2B_N}{4s_+ +d}
\right\rceil,
\qquad
\ell_+:=\left\lfloor
\frac{2\log_2N-\log_2B_N}{4s_- +d}
\right\rfloor,
\end{align*}
and
\begin{align*}
\mathcal L_N:=\{\ell_-,\ell_-+1,\ldots,\ell_+\},
\qquad
J_N:=|\mathcal L_N|.
\end{align*}
Since $s_-<s_+$, for all sufficiently large $N$,
\begin{equation}
\label{eq:adaptive-lower-J-asymp}
0<c_J\log N\le J_N\le C_J\log N<\infty,
\end{equation}
with constants depending only on $(s_-,s_+,d)$.  For $\ell\in\mathcal L_N$, set $m_\ell=2^\ell$ and define $s_\ell$ by
\begin{equation*}
m_\ell^{-s_\ell}=B_N^{1/4}m_\ell^{d/4}N^{-1/2}.
\end{equation*}
Equivalently,
\begin{align*}
s_\ell=\frac{2\log N-\log B_N}{4\log m_\ell}-\frac d4 .
\end{align*}
The definitions of $\ell_-$ and $\ell_+$ imply
\begin{equation*}
s_\ell\in[s_-,s_+]
\qquad\text{for all }\ell\in\mathcal L_N .
\end{equation*}
Let $\delta>0$, to be chosen sufficiently small depending only on the fixed parameters, and set
\begin{equation}
\label{eq:adaptive-lower-amplitude}
a_\ell:=\delta m_\ell^{-s_\ell}
=
\delta B_N^{1/4}m_\ell^{d/4}N^{-1/2}
=
\delta B_N^{s_\ell/(4s_\ell+d)}N^{-2s_\ell/(4s_\ell+d)}.
\end{equation}
For $\theta\in\{-1,1\}^{\mathcal I_{m_\ell}}$, define
\begin{equation*}
f_{\ell,\theta}:=1+a_\ell H_{m_\ell,\theta}.
\end{equation*}
By \eqref{eq:adaptive-lower-H-basic}, these functions integrate to one.  By choosing $\delta$ sufficiently small and then $N$ sufficiently large, \eqref{eq:adaptive-lower-H-basic}, \eqref{eq:adaptive-lower-holder-scale}, and \eqref{eq:adaptive-lower-amplitude} imply
\begin{equation}
\label{eq:adaptive-lower-membership}
f_{\ell,\theta}\in\cF_{s_\ell}^d(L,M)
\qquad
\text{for all }\ell\in\mathcal L_N,
\quad
\theta\in\{-1,1\}^{\mathcal I_{m_\ell}}.
\end{equation}
Furthermore,
\begin{equation}
\label{eq:adaptive-lower-L1-separation}
\|f_{\ell,\theta}-1\|_1
=A_1a_\ell
=A_1\delta\,B_N^{s_\ell/(4s_\ell+d)}N^{-2s_\ell/(4s_\ell+d)}.
\end{equation}

Let $Q_\ell$ be the mixture distribution
\begin{align*}
Q_\ell
:=2^{-m_\ell^d}
\sum_{\theta\in\{-1,1\}^{\mathcal I_{m_\ell}}}
P_{f_{\ell,\theta}}^{\otimes N},
\end{align*}
and let $L_\ell=dQ_\ell/dP_0^{\otimes N}$.  Define the multiscale mixture
\begin{align*}
\overline Q:=\frac1{J_N}\sum_{\ell\in\mathcal L_N}Q_\ell,
\qquad
\overline L:=\frac1{J_N}\sum_{\ell\in\mathcal L_N}L_\ell.
\end{align*}

\begin{lemma}[Multiscale second-moment bound]
\label{lem:adaptive-lower-second-moment}
If $\delta>0$ is sufficiently small, then
\begin{align*}
\E_0\overline L^2-1\rightarrow0
\qquad\text{as }N\to\infty .
\end{align*}
Consequently, for every $\eta>0$, $\TV(\overline Q,P_0^{\otimes N})\le\eta$ for all sufficiently large $N$.
\end{lemma}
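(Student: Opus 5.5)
We expand the second moment of the multiscale mixture into pairwise scale interactions,
\begin{align*}
\E_0\overline L^2=\frac1{J_N^2}\sum_{\ell,\ell'\in\mathcal L_N}\E_0[L_\ell L_{\ell'}],
\end{align*}
and compute each term as in Step~4 of \Cref{prop:hypercube-construction}. Since the observations are i.i.d.\ under $P_0$ and $\int H_{m,\theta}=0$,
\begin{align*}
\E_0[L_\ell L_{\ell'}]=\E_{\theta,\theta'}\Bigl(1+a_\ell a_{\ell'}\,\theta^\top B_{m_\ell,m_{\ell'}}\theta'\Bigr)^N\le \E_{\theta,\theta'}\exp\bigl\{Na_\ell a_{\ell'}\,\theta^\top B_{m_\ell,m_{\ell'}}\theta'\bigr\},
\end{align*}
where $\theta,\theta'$ are independent Rademacher vectors.

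\textbf{Diagonal terms.} For $\ell=\ell'$, the product $\theta_i\theta'_i$ is itself Rademacher and $B_{m,m}=m^{-d}A_2I$. Using $\cosh u\le e^{u^2/2}$ gives
\begin{align*}
\E_0L_\ell^2\le \exp\bigl\{\tfrac12 N^2a_\ell^4A_2^2m_\ell^{-d}\bigr\}.
\end{align*}
By \eqref{eq:adaptive-lower-amplitude}, $N^2a_\ell^4m_\ell^{-d}=\delta^4B_N$, so $\E_0L_\ell^2\le \exp(C\delta^4\log\log N)=(\log N)^{C\delta^4}$. The diagonal contributes $J_N^{-1}\cdot(\log N)^{C\delta^4}$. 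By \eqref{eq:adaptive-lower-J-asymp} this is at most $C(\log N)^{C\delta^4-1}$, which tends to $0$ once $\delta$ is small enough that $C\delta^4<1$. This is exactly where the choice $B_N=\log\log N$ is calibrated.

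\textbf{Off-diagonal terms.} For $\ell<\ell'$, write $m=m_\ell$, $m'=m_{\ell'}$, and $t=Na_\ell a_{\ell'}$, so that $t=\delta^2B_N^{1/2}(mm')^{d/4}$. \Cref{lem:adaptive-lower-chaos-mgf} applies if $t\|B\|_{\mathrm{op}}\le c_0$. By \eqref{eq:adaptive-lower-gram-op},
\begin{align*}
t\|B\|_{\mathrm{op}}\le C\delta^2B_N^{1/2}(mm')^{-d/4}(m/m')^{R+1},
\end{align*}
and since $m\gtrsim N^{c}$ on $\mathcal L_N$ this is $o(1)$. The lemma then gives
\begin{align*}
\E_0[L_\ell L_{\ell'}]\le \exp\{C_0t^2\|B\|_F^2\}.
\end{align*}
By \eqref{eq:adaptive-lower-gram-frob},
\begin{align*}
t^2\|B\|_F^2\le C\delta^4B_N(mm')^{d/2}m'^{-d}(m/m')^{2R+2}=C\delta^4B_N\,2^{-(\ell'-\ell)(2R+2+d/2)}.
\end{align*}

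\textbf{Main obstacle.} The crux is the near-diagonal pairs with $\ell'-\ell$ small, where this exponent is still of order $\delta^4\log\log N$. Choose $R$ large, but the decisive saving is the geometric decay in the gap $\ell'-\ell$. For a gap $g\ge g_N:=\lceil C'\log_2\log\log N\rceil$ the exponent is $o(1)$, so each such term is $1+o(1)$ uniformly. The number of pairs with gap $g<g_N$ is at most $J_Ng_N$, and each is at most $(\log N)^{C\delta^4}$. Their total contribution, after dividing by $J_N^2$, is $\lesssim g_N(\log N)^{C\delta^4-1}\to0$. The remaining pairs contribute at most $(1+o(1))$ times their proportion, which is at most $1$.

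Summing the diagonal, near-diagonal and far off-diagonal contributions gives
\begin{align*}
\E_0\overline L^2\le 1+o(1),
\end{align*}
which is the first claim. For the second, Cauchy--Schwarz gives
\begin{align*}
\TV(\overline Q,P_0^{\otimes N})\le\tfrac12\sqrt{\E_0\overline L^2-1}\to0,
\end{align*}
so $\TV(\overline Q,P_0^{\otimes N})\le\eta$ for all sufficiently large $N$.
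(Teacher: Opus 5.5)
Your proof is correct and follows essentially the same route as the paper. Both arguments use the same pairwise expansion, apply the chaos exponential bound together with the Gram estimates to obtain $\E_0[L_\ell L_{\ell'}]\le\exp\{C\delta^4B_N2^{-\alpha_0|\ell-\ell'|}\}$, and split the pairs at a gap of order $\log B_N$. The differences in bookkeeping are immaterial: the paper controls the far pairs with $e^x-1\le 2x$ and a geometric sum instead of your cruder $1+o(1)$, and it handles the diagonal with the same chaos bound instead of your explicit $\cosh$ computation.

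The only step to add is the one-line justification that $1+a_\ell a_{\ell'}\theta^\top B\theta'\ge 0$. This holds because the quantity equals $\int f_{\ell,\theta}f_{\ell',\theta'}$, an integral of two nonnegative densities, and it is needed before you can apply $(1+u)^N\le e^{Nu}$.
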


\begin{proof}
Fix $\ell,r\in\mathcal L_N$, and write $m=m_\ell$, $m'=m_r$, $a=a_\ell$, and $a'=a_r$.  Let $\theta$ and $\vartheta$ be independent Rademacher sign vectors at resolutions $m$ and $m'$.  Since $\int H_{m,\theta}=0$,
\begin{align*}
\int f_{\ell,\theta}(x)f_{r,\vartheta}(x)\,dx
=
1+aa'\langle H_{m,\theta},H_{m',\vartheta}\rangle .
\end{align*}
Therefore
\begin{equation*}
\E_0[L_\ell L_r]
=
\E_{\theta,\vartheta}
\left(1+aa'\langle H_{m,\theta},H_{m',\vartheta}\rangle\right)^N .
\end{equation*}
The term inside parentheses is nonnegative because it is the integral of two densities.  Hence $(1+u)^N\le e^{Nu}$ applies, and
\begin{equation*}
\E_0[L_\ell L_r]
\le
\E_{\theta,\vartheta}
\exp\{Naa'\langle H_{m,\theta},H_{m',\vartheta}\rangle\}.
\end{equation*}
Assume without loss of generality that $m\le m'$, and put $h=|\ell-r|$, so that $m/m'=2^{-h}$.  With the Gram matrix $B_{m,m'}$ from \Cref{lem:adaptive-lower-gram-bounds},
\begin{align*}
\langle H_{m,\theta},H_{m',\vartheta}\rangle
=\theta^\top B_{m,m'}\vartheta .
\end{align*}
The amplitude choice \eqref{eq:adaptive-lower-amplitude} gives
\begin{align*}
t:=Naa'=\delta^2B_N^{1/2}m^{d/4}m'^{d/4}.
\end{align*}
Combining this with the operator-norm bound \eqref{eq:adaptive-lower-gram-op}, we obtain
\begin{align*}
t\|B_{m,m'}\|_{\mathrm{op}}
\le
C\delta^2B_N^{1/2}(mm')^{-d/4}2^{-(R+1)h}.
\end{align*}
The resolutions $m_\ell$ grow polynomially in $N$, uniformly over $\ell\in\mathcal L_N$; hence the last display is bounded by the constant $c_0$ from \Cref{lem:adaptive-lower-chaos-mgf} for all large $N$.  Applying \Cref{lem:adaptive-lower-chaos-mgf} and then \eqref{eq:adaptive-lower-gram-frob} gives
\begin{align*}
\E_0[L_\ell L_r]
&\le
\exp\{C_0t^2\|B_{m,m'}\|_F^2\} \\
&\le
\exp\left\{
C\delta^4B_N\left(\frac m{m'}\right)^{d/2+2R+2}
\right\}
=
\exp\{C\delta^4B_N2^{-\alpha_0h}\},
\end{align*}
where $\alpha_0:=d/2+2R+2>2$.  Consequently,
\begin{align*}
\E_0\overline L^2-1
\le
\frac1{J_N^2}
\sum_{\ell,r\in\mathcal L_N}
\left(\exp\{C\delta^4B_N2^{-\alpha_0|\ell-r|}\}-1\right).
\end{align*}
Let $\rho=C\delta^4$, and choose $\delta$ so small that $\rho<1/4$.  For each integer $h\ge0$, there are at most $2J_N$ pairs $(\ell,r)$ with $|\ell-r|=h$.  Therefore
\begin{equation*}
\E_0\overline L^2-1
\le
\frac{2}{J_N}
\sum_{h\ge0}
\left(\exp\{\rho B_N2^{-\alpha_0h}\}-1\right).
\end{equation*}
The term $h=0$ is at most $2J_N^{-1}\{(\log N)^\rho-1\}$, which tends to zero by \eqref{eq:adaptive-lower-J-asymp}.  For $h\ge1$, split the sum at the first $h_0$ such that $B_N2^{-\alpha_0h_0}\le1$.  The part $h<h_0$ has $O(\log B_N)$ summands, each bounded by $(\log N)^\rho-1$, so its contribution is
\begin{align*}
O\left(\frac{\log B_N}{\log N}(\log N)^\rho\right)=o(1).
\end{align*}
For $h\ge h_0$, using $e^x-1\le2x$ gives a contribution bounded by
\begin{align*}
\frac{C}{J_N}B_N\sum_{h\ge h_0}2^{-\alpha_0h}=O(J_N^{-1})=o(1).
\end{align*}
Thus $\E_0\overline L^2-1\to0$.  The total variation claim follows from
\begin{align*}
\TV(\overline Q,P_0^{\otimes N})
\le
\frac12\left(\E_0\overline L^2-1\right)^{1/2}.
\end{align*}
\end{proof}

\begin{proof}[Proof of \Cref{prop:adaptive-gof-lower-loglog}]
Let
\begin{align*}
\eta:=\frac{1-\gamma-\beta}{2}>0.
\end{align*}
By \Cref{lem:adaptive-lower-second-moment}, after choosing $\delta$ sufficiently small and then taking $N$ sufficiently large,
\begin{align*}
\TV(\overline Q,P_0^{\otimes N})\le\eta .
\end{align*}
If $\psi$ satisfies $\E_0\psi\le\gamma$, then
\begin{align*}
\frac1{J_N}\sum_{\ell\in\mathcal L_N}\E_{Q_\ell}\psi
=
\E_{\overline Q}\psi
\le
\E_0\psi+\TV(\overline Q,P_0^{\otimes N})
\le
\gamma+\eta
=
\frac{1+\gamma-\beta}{2}
<1-\beta .
\end{align*}
Hence there exists $\ell\in\mathcal L_N$ such that $\E_{Q_\ell}\psi<1-\beta$.  Since $Q_\ell$ is the uniform mixture over the vertices $f_{\ell,\theta}$, there exists a sign vector $\theta$ with
\begin{align*}
\E_{f_{\ell,\theta}}\psi<1-\beta .
\end{align*}
By \eqref{eq:adaptive-lower-membership}, $f_{\ell,\theta}\in\cF_{s_\ell}^d(L,M)$, with $s_\ell\in[s_-,s_+]$.  By \eqref{eq:adaptive-lower-L1-separation},
\begin{align*}
\|f_{\ell,\theta}-1\|_1
=A_1\delta\,B_N^{s_\ell/(4s_\ell+d)}N^{-2s_\ell/(4s_\ell+d)}.
\end{align*}
The proposition follows with $c=A_1\delta$, after increasing $N_0$ if necessary.
\end{proof}

\begin{proof}[Proof of \Cref{thm:adaptive-lower-loglog}]
Let $N=N_X\wedge N_Y$.  By symmetry in the two samples, it suffices to treat the case $N_X\le N_Y$, so $N=N_X$. Given a one-sample input $U_1,\ldots,U_N$, draw auxiliary observations $V_1,\ldots,V_{N_Y}\stackrel{\iid}{\sim}\Unif([0,1]^d)$, independently of the input and of all internal randomization, and run $\phi$ on $(U_{1:N},V_{1:N_Y})$.  Let $\psi(U_{1:N})$ be the resulting conditional rejection probability.

Under the one-sample null $U_i\stackrel{\iid}{\sim}\Unif([0,1]^d)$, both samples supplied to $\phi$ are uniform, so the assumed type~I control gives $\E_0\psi\le\gamma$.  Applying \Cref{prop:adaptive-gof-lower-loglog}, there exist $s\in[s_-,s_+]$ and $f\in\cF_s^d(L,M)$ such that
\begin{align*}
\|f-1\|_1\ge c(\log\log N)^{s/(4s+d)}N^{-2s/(4s+d)},
\qquad
\E_f\psi<1-\beta .
\end{align*}
Set $g\equiv1$.  Since the uniform density belongs to $\cF_s^d(L,M)$ for $L>1$ and $M>1$, we have $f,g\in\cF_s^d(L,M)$,
\begin{align*}
\|f-g\|_1=\|f-1\|_1
\ge c(\log\log N)^{s/(4s+d)}N^{-2s/(4s+d)},
\end{align*}
and
\begin{align*}
\E_{f,g}\phi=\E_f\psi<1-\beta .
\end{align*}
The case $N_Y<N_X$ is identical after interchanging the two samples, with the alternative $(f,g)=(1,f)$.  The proof did not use privacy, so it applies a fortiori to every $\varepsilon$-differentially private test.
\end{proof}

\clearpage

\subsection{Algorithmic statements}
\label{app:algorithmic-statements}

\begin{algorithm}[H]
\caption{Exact Rao--Blackwellized statistic from a contingency table}
\label{alg:rb-linear-time}
\KwIn{Sample sizes $N_X,N_Y$, split size
$1\le m\le\lfloor(N_X\wedge N_Y)/2\rfloor$, contingency table
$\mathcal P=\{(u,v,n_{u,v}):n_{u,v}>0,\ u+v>0\}$.}
\KwOut{The Rao--Blackwellized statistic $\overline Z$.}
Compute $\mathcal A_X:=\{u:\exists v,\ n_{u,v}>0\}$,
$\mathcal A_Y:=\{v:\exists u,\ n_{u,v}>0\}$,
and multiplicities $r_u:=\sum_v n_{u,v}$, $c_v:=\sum_u n_{u,v}$\;
\BlankLine
Precompute $d_m$\;
Set $\rho_m(0)\gets 1$ and $\rho_m(s)\gets 0$ for odd $s$\;
\For{$j=0,\dots,m-2$}{
    $\displaystyle\rho_m(2j+2)
    \gets
    \rho_m(2j)\,\frac{(m-j)(2j+1)}{(j+1)(2m-2j-1)}$\;
}
Set $d_m(0)\gets 0$\;
\For{$s=0,\dots,2m-1$}{
    $\displaystyle d_m(s+1)
    \gets
    \Bigl(1-\tfrac{1}{2m-s}\Bigr)d_m(s)+\rho_m(s)$\;
}
\BlankLine
Tabulate hypergeometric rows and prefix arrays, where $h_{N,a,\ell}$ denotes
the $\operatorname{Hypergeom}(N,a,\ell)$ pmf and $w_{N,\ell}(a)$ its support
size\;
\For{$u\in\mathcal A_X$}{
    Tabulate $h_{N_X,u,m}$ and compute
    $F_u^{(X)}(x):=\sum_{r\le x}h_{N_X,u,m}(r)$
    and $M_u^{(X)}(x):=\sum_{r\le x}r\,h_{N_X,u,m}(r)$\;
    Tabulate $h_{N_X,u,2m}$ and set
    $D_{N_X,u}\gets\sum_s h_{N_X,u,2m}(s)\,d_m(s)$\;
}
\For{$v\in\mathcal A_Y$}{
    Tabulate $h_{N_Y,v,m}$ and compute $F_v^{(Y)}$, $M_v^{(Y)}$ analogously\;
    Tabulate $h_{N_Y,v,2m}$ and set
    $D_{N_Y,v}\gets\sum_s h_{N_Y,v,2m}(s)\,d_m(s)$\;
}
\BlankLine
Accumulate\;
$\displaystyle\overline Z
\gets
{-}\sum_{u\in\mathcal A_X} r_u\,D_{N_X,u}
-\sum_{v\in\mathcal A_Y} c_v\,D_{N_Y,v}$\;
\For{$(u,v,n_{u,v})\in\mathcal P$}{
    \eIf{$w_{N_X,m}(u)\le w_{N_Y,m}(v)$}{
        $\displaystyle\Gamma_{u,v}
        \gets
        \sum_x h_{N_X,u,m}(x)
        \Bigl[\tfrac{mv}{N_Y}
        -2M_v^{(Y)}(x)+x\{2F_v^{(Y)}(x)-1\}\Bigr]$\;
    }{
        $\displaystyle\Gamma_{u,v}
        \gets
        \sum_y h_{N_Y,v,m}(y)
        \Bigl[\tfrac{mu}{N_X}
        -2M_u^{(X)}(y)+y\{2F_u^{(X)}(y)-1\}\Bigr]$\;
    }
    $\overline Z\gets\overline Z+2n_{u,v}\Gamma_{u,v}$\;
}
\Return{$\overline Z$\;}
\end{algorithm}
\FloatBarrier

\end{document}